\title{Stable Centres I: Wreath Products}
\author{Christopher Ryba}
\address{Department of Mathematics, University of California, Berkeley, CA 94720, USA}
\email{ryba@math.berkeley.edu}
\DeclareMathOperator{\End}{End}
\newcommand{\bs}{\boldsymbol}
\newcommand{\RGamma}{\mathcal{R}_{\Gamma}}
\newcommand{\LambdaG}{\Lambda(\Gamma_*)}
\newcommand{\FH}{\mathrm{FH}}
\newcommand{\FHG}{\mathrm{FH}_{\Gamma}}
\newtheorem{theorem}{Theorem}[section]
\newtheorem{lemma}[theorem]{Lemma}
\newtheorem{proposition}[theorem]{Proposition}
\newtheorem{corollary}[theorem]{Corollary}
\newtheorem{definition}[theorem]{Definition}
\newtheorem{remark}[theorem]{Remark}
\newtheorem{example}[theorem]{Example}
\begin{document}
\maketitle
\begin{abstract}
A result of Farahat and Higman shows that there is a ``universal'' algebra, $\FH$, interpolating the centres of symmetric group algebras, $Z(\mathbb{Z}S_n)$. We explain that this algebra is isomorphic to $\mathcal{R} \otimes \Lambda$, where $\mathcal{R}$ is the ring of integer-valued polynomials and $\Lambda$ is the  ring of symmetric functions. Moreover, the isomorphism is via ``evaluation at Jucys-Murphy elements'', which leads to character formulae for symmetric groups. Then, we generalise this result to wreath products $\Gamma \wr S_n$ of a fixed finite group $\Gamma$. This involves constructing wreath-product versions $\RGamma$ and $\LambdaG$ of $\mathcal{R}$ and $\Lambda$, respectively, which are interesting in their own right (for example, both are Hopf algebras). We show that the universal algebra for wreath products, $\FHG$, is isomorphic to $\RGamma \otimes \LambdaG$ and use this to compute the $p$-blocks of wreath products.
\end{abstract}

\section{Introduction}
\noindent
Let $S_n$ be the $n$-th symmetric group, and $Z(\mathbb{Z}S_n)$ the centre of its group ring over the integers. The centre has a basis consisting of conjugacy class sums. Farahat and Higman \cite{FarahatHigman} showed that the structure constants of the multiplication with respect to this basis are integer-valued polynomials in $n$. For example, multiplying two transposition can give either an element with two cycles of size 2, a 3-cycle, or the identity. If $X_\mu^\prime$ is the sum of all elements of cycle type $\mu$ in $S_n$, so that $X_{(2,1^{n-2})}$ is the sum of all transpositions, we have
\begin{equation}
(X_{(2,1^{n-2})}^\prime)^2 = 2X_{(2^2,1^{n-4})}^\prime + 3X_{(3,1^{n-3})}^\prime + {n \choose 2}X_{(1^{n})}^\prime.
\end{equation}
Here, we emphasise that ${n \choose 2}$ is an integer-valued polynomial. This property of structure constants allowed Farahat and Higman to define an ``interpolating'' algebra, which we denote $\FH$, with coefficients in the ring of integer-valued polynomials, $\mathcal{R}$. See Section \ref{FH_theory_section} for a precise explanation. By construction, there are surjective ``specialisation'' homomorphisms $\FH \to Z(\mathbb{Z}S_n)$ for any $n$. The original motivation for defining the algebra $\FH$ was to give a new proof of Nakayama's Conjecture about $p$-blocks of symmetric groups.
\newline \newline \noindent
We begin by reviewing the construction of the algebra $\FH$, in particular we explain that it is isomorphic to $\mathcal{R} \otimes \Lambda$, where $\Lambda$ is the ring of symmetric functions. The Jucys-Murphy elements are key in describing this isomorphism, which leads to character formulae for $S_n$. We then fix a finite group $\Gamma$, and generalise the theory to the wreath products $\Gamma \wr S_n$. An algebra analogous to $\FH$ was defined by Wang in \cite{Wang1}, and we denote it $\FHG$. (Wang mostly worked with an associated graded version of $\FHG$ which he used to study Hilbert schemes of points on crepant resolutions of certain plane singularities.) The main result of this paper (Theorem \ref{main_thm}) determines the algebra structure of $\FHG$ via a ``Jucys-Murphy evaluation'' map that implies character formulae of the same nature as those for the symmetric group.
\newline \newline \noindent
We define two rings, $\RGamma$ and $\LambdaG$, which are, respectively, versions of $\mathcal{R}$ and $\Lambda$ weighted by the conjugacy classes of $\Gamma$. Then we show that $\FHG = \RGamma \otimes \LambdaG$, and mimic the approach of Farahat and Higman to classify the $p$-blocks of $\Gamma \wr S_n$.  A sequel paper will prove analogous results for Iwahori-Hecke algebras of type $A$, and consider connections to representations of $GL_n(\mathbb{F}_q)$.
\newline \newline \noindent
In the appendix, we discuss some properties of $\RGamma$ and $\LambdaG$ that are not needed for the main theory, but may be of independent interest. In particular, both $\RGamma$ and $\LambdaG$ are Hopf algebras. The Hopf algebra structure on $\RGamma$ is a consequence of the fact that $\RGamma$ is a certain distribution algebra. We describe homomorphisms from $\RGamma$ to a field; similarly to the case of $\mathcal{R}$, such maps are parametrised by several $p$-adic numbers.
\newline \newline \noindent
The paper is organised as follows. In Section 2, we review basic facts about symmetric groups, wreath products, and symmetric functions. We also briefly summarise the modular representation theory that is necessary for our applications.
In Section 3, we review the theory of the Farahat-Higman algebra $\FH$, Jucys-Murphy elements, and explain how $\FH$ is used to prove Nakayama's Conjecture.
In Section 4, we define $\LambdaG$, which we call the ring of $\Gamma_*$-weighted symmetric functions.
We discuss the wreath-product Farahat-Higman algebra $\FHG$ and introduce $\RGamma$ in Section 5.
Then we prove the isomorphism $\FHG = \RGamma \otimes \LambdaG$ in Section 6, and use it to study $p$-blocks of wreath products in Section 7. In the appendix we study maps from $\RGamma$ to a field, and show that both $\RGamma$ and $\LambdaG$ are Hopf algebras.

\section{Background}
\subsection{Symmetric Groups and Wreath Products}
We now introduce the notation and basic properties of symmetric groups and  wreath products. Details may be found in Chapter 1 of \cite{Macdonald} (wreath products are discussed in Appendix B).
\newline \newline \noindent
A \emph{partition}, $\lambda = (\lambda_1, \ldots, \lambda_r)$, is a finite non-increasing sequence of positive integers. The entries $\lambda_i$ are called the \emph{parts} of $\lambda$. It is also common to write $\lambda = (1^{m_1} 2^{m_2} \cdots)$ to mean that $\lambda$ has $m_i$ parts of size $i$ (this information uniquely determines $\lambda$). We will write $m_i(\lambda)$ for the number of parts of $\lambda$ of size $i$. The size of a partition is the sum of its parts: $|\lambda| = \lambda_1 + \cdots + \lambda_r$. If $|\lambda| = n$ it is common to say $\lambda$ is a partition of $n$, and to write $\lambda \vdash n$. The length of a partition is the number of parts: $l(\lambda) = r$. We write $\mathcal{P}$ for the set of partitions of any size.
\newline \newline \noindent
We depict partitions with \emph{Young diagrams}. The Young diagram of $\lambda$ consists of $l(\lambda)$ rows, where the $i$-th row consists of $\lambda_i$ boxes, left-justified. For example, the Young diagram of the partition $(5,2,1)$ is:
\begin{figure}[H]
\ydiagram{5,2,1}
\end{figure}
\noindent
The \emph{content} of the box in the $i$-th row from the top and $j$-th column from the left is defined to be $j-i \in \mathbb{Z}$. If the boxes of a Young diagram are labelled (usually with positive integers), we call it a \emph{Young tableau}. If $\lambda$ is a partition of $n$, and a Young diagram is labelled using the numbers $1,\ldots,n$ (each number used once) such that numbers increase in each row from top to bottom, and increase in each column from left to right, then the corresponding Young tableau is called a \emph{standard Young tableau}. Suppose that $\lambda \vdash n$. For a standard Young tableau $T$ of shape $\lambda$, an element $r \in \{1,\ldots,n\}$ labels a unique box of $T$; we write $c_T(r)$ for the content of the box in $T$ labelled $r$. For example, if $T$ is the standard Young tableau of shape $(3,3,1)$ below,
\begin{figure}[H]
\ytableausetup{centertableaux}\begin{ytableau}
1 & 3 & 4 \\
2 & 6 & 7 \\
5
\end{ytableau}
\end{figure}
\noindent
then $c_T(5) = 1-3 = -2$, $c_T(6) = 2-2 = 0$, and $c_T(7) = 3-2 = 1$.
\newline \newline \noindent
A \emph{border strip} $R$ of $\lambda$ is a subset of the boxes of the Young diagram of $\lambda$ satisfying the following conditions. Firstly, $\lambda \backslash R$ (i.e. the diagram of $\lambda$ with the border strip $R$ removed) should be the Young diagram of a partition. Secondly, the subset $R$ should be contiguous (boxes sharing an edge are considered adjacent, but those sharing only a vertex are not). Finally, $R$ should not contain any $2\times2$ square of boxes. Below are some examples of border strips $R$ for the partition $(4,2,2)$, where in each case $R$ is indicated by the shaded squares:
\begin{figure}[H]
\centering
\ytableausetup{nosmalltableaux}
\begin{ytableau}
*(white) &*(white) &*(gray) &*(gray) \\
*(white) &*(white) \\
*(white) &*(white)
\end{ytableau}
\hspace{10mm}
\begin{ytableau}
*(white) &*(white) &*(white) &*(white) \\
*(white) &*(gray) \\
*(gray) &*(gray)
\end{ytableau}
\hspace{10mm}
\begin{ytableau}
*(white) &*(gray) &*(gray) &*(gray) \\
*(white) &*(gray) \\
*(white) &*(gray)
\end{ytableau}
\end{figure} 
\noindent
If $p$ is a prime number, the \emph{$p$-core} of $\lambda$ is the partition obtained by successively removing border strips of size $p$ from the diagram of $\lambda$ until it is no longer possible to do so. It turns out that the resulting partition is independent of the choice of how to remove rim hooks of size $p$.
\begin{example} \label{2-core_example}
The $2$-cores of the partitions $(3)$, $(2,1)$, and $(1,1,1)$ are $(1)$, $(2,1)$, and $(1)$ respectively. The diagrams below illustrate this (grey boxes indicate a border strip of size 2 to be removed):
\begin{figure}[H]
\centering
\ytableausetup{nosmalltableaux}
\begin{ytableau}
*(white) &*(gray) &*(gray)
\end{ytableau}
\hspace{10mm}
\begin{ytableau}
*(white) &*(white) \\
*(white)
\end{ytableau}
\hspace{10mm}
\begin{ytableau}
*(white) \\
*(gray) \\
*(gray)
\end{ytableau}
\end{figure}
\end{example}
\noindent
Consider a border strip $R$ of size $p$, viewed as a sequence of adjacent boxes, starting at the bottom-left-most square, with each subsequent box either above or to the left of the previous one. Then the content of each subsequent box is $1$ larger than the content of the preceding box as either the column index $j$ increases or the row index $i$ decreases. This means that the content of the boxes in $R$ attain each congruence class in $\mathbb{Z}/p\mathbb{Z}$ exactly once. It follows that two partitions of the same size with the same $p$-core have the same multiset of content modulo $p$. In fact, two partitions of the same size have the same multiset of content modulo $p$ if and only if the two partitions have the same $p$-core (e.g. in Example \ref{2-core_example} above). Details can be found in Examples 8 and 11 in Section 1.1 of \cite{Macdonald}.
\newline \newline \noindent
The \emph{$n$-th symmetric group}, $S_n$, is the set of bijections from the set $\{1, \ldots, n\}$ to itself, which is a group with the operation of function composition. The \emph{cycle type} of an element $\sigma \in S_n$ is a partition $\lambda$ such that $m_i(\lambda)$ is the number of cycles of size $i$ in $\sigma$. Two elements of $S_n$ are conjugate if and only if they have the same cycle type, so the conjugacy classes of $S_n$ are in bijection with partitions of size $n$.
\newline \newline \noindent
Let $\Gamma$ be a finite group. We let $\Gamma_* = \{c_1, \ldots, c_l\}$ be the set of conjugacy classes of $\Gamma$. Sometimes we will write $1$ to denote the identity conjugacy class. We will need to consider the set $\Gamma^*$ of irreducible representations of $\Gamma$ over $\mathbb{C}$ (we could work with any characteristic-zero splitting field of $\Gamma$, but we stick to the complex numbers for concreteness). By abuse of notation, we write $\chi \in \Gamma^*$ to mean both the irreducible representation and its character, for example we may write $\chi(1) = \dim(\chi)$. The centre of the group algebra $\mathbb{Z}\Gamma$ is a free $\mathbb{Z}$-module with a basis consisting of conjugacy-class sums. By further abuse of notation, we use the same symbol to denote a conjugacy class and the sum of its elements. This allows us to write $Z(\mathbb{Z}\Gamma) = \mathbb{Z}\Gamma_*$. In addition, we define the structure constants $A_{i,j}^k \in \mathbb{Z}$ via the following equations in $\mathbb{Z}\Gamma_*$:
\[
c_i c_j = \sum_k A_{i,j}^k c_k.
\]
By the Artin-Wedderburn theorem, $\mathbb{C}\Gamma$ is a product of matrix algebras indexed by $\Gamma^*$. Since the centre of a matrix algebra is one dimensional, we have an isomorphism of algebras $\mathbb{C}\Gamma_* = \mathbb{C}^{\Gamma^*}$, where the implied basis of the latter space consists of orthogonal central idempotents in $\mathbb{C}\Gamma_*$.
\newline \newline \noindent
The $n$-fold product of $\Gamma$ with itself has an action of the symmetric group by permutation of factors. Concretely, if $(g_1, \ldots, g_n) \in \Gamma^n$ and $\sigma \in S_n$, then
\begin{equation} \label{semidirect_mult}
\sigma (g_1, \ldots, g_n) = (g_{\sigma^{-1}(1)}, \ldots, g_{\sigma^{-1}(n)}).
\end{equation}
This defines an automorphism of $\Gamma^n$, and in fact we obtain a homomorphism $S_n \to \mathrm{Aut}(\Gamma^n)$. The corresponding semidirect product $\Gamma^n \rtimes S_n$ is called the \emph{wreath product} of $\Gamma$ with $S_n$, and is denoted $\Gamma \wr S_n$. As a set, $\Gamma \wr S_n$ is equal to $\Gamma^n \times S_n$. We write an element $(g_1, \ldots, g_n, \sigma)$ as $(\mathbf{g}, \sigma)$ where $\mathbf{g} \in \Gamma^n$. Then the group operation is
\[
(\mathbf{g}, \sigma) (\mathbf{h}, \rho) = (\mathbf{g} \sigma(\mathbf{h}), \sigma \rho),
\]
where $\sigma(\mathbf{h})$ has the meaning in Equation \ref{semidirect_mult}. Both $\Gamma^n$ and $S_n$ are subgroups of $\Gamma \wr S_n$ in the obvious way. If $g \in \Gamma$, we write $g^{(i)} \in \Gamma^n$ for the element whose $i$-th component is $g$, and all other components are the identity. This notation extends linearly to give us an embedding of $\mathbb{Z}\Gamma$ into $\mathbb{Z}\Gamma^n$, so for example we may write $c^{(i)} = \sum_{g \in c} g^{(i)}$ for the conjugacy-class sum $c$ embedded in the $i$-th component of $\mathbb{Z}\Gamma^n$. We also let $\mathbf{g}_i$ be the $i$-th entry in $\mathbf{g} \in \Gamma^n$, so we have the tautological equality
\[
\mathbf{g} = \prod_{i=1}^n \mathbf{g}_i^{(i)}.
\]
This also makes it simpler to write the $S_n$ action:
\[
\sigma(\mathbf{g}) = \prod_{i=1}^n \mathbf{g}_{i}^{(\sigma(i))}.
\]
\begin{example}
An important special case is $\Gamma = C_2$, the cyclic group of order 2. In that case, $C_2 \wr S_n$ is called \emph{the $n$-th hyperoctahedral group}. It arises in Lie theory as the Weyl group of types $B_n$ and $C_n$. 
We will use the case $\Gamma = C_2$ for examples throughout the paper.
\end{example}
\noindent
We now describe the conjugacy classes in $\Gamma \wr S_n$. Suppose that $(\mathbf{g}, \sigma) \in \Gamma \wr S_n$, and that $(i_1, \ldots, i_r)$ is a cycle of $\sigma \in S_n$. We also say that it is a cycle of $(\mathbf{g}, \sigma)$, and we define its \emph{type} to be the conjugacy class of $g_{i_r} \cdots g_{i_1}$. Note that $g_{i_1}(g_{i_r} \cdots g_{i_1})g_{i_1}^{-1} = g_{i_1} g_{i_r}\cdots g_{i_2}$, so that $g_{i_r} \cdots g_{i_1}$ and $g_{i_1} g_{i_r} \cdots g_{i_2}$ are conjugate. This means that the two (equal) cycles $(i_1, \ldots, i_r)$ and $(i_2, \ldots, i_r, i_1)$ have the same type, and iterating though all cyclic permutations we see that the type of a cycle is well defined. We record the sizes and types of an element of $\Gamma \wr S_n$ in a multipartition.
\begin{definition}
If $D$ is a set, we say that a \emph{multipartition} indexed by $D$ is a function from $D$ to the set of partitions. We denote the set of multipartitions indexed by $D$ by $\mathcal{P}(D)$. If $D$ is not specified, we take $D = \Gamma_*$. The \emph{size} of a multipartition $\bs \lambda$ is the sum of the sizes of its constituent partitions:
\[
|\bs \lambda| = \sum_{x \in D} |\bs\lambda(x)|.
\]
Similarly, the \emph{length} of a multipartition is the sum of the lengths of its consituent partitions:
\[
l(\bs \lambda) = \sum_{x \in D} l(\bs\lambda(x)).
\]
To express a multipartition we write the juxtaposition of its constituent partitions with the corresponding element of $D$ as a subscript (omitting empty partitions for brevity). If $\bs\lambda(y)$ is the empty partition for $y \neq x$, we say that $\bs\lambda$ is \emph{concentrated in type $x$}.
\end{definition}
\noindent
For example, $(2,1)_{c_1} (1,1,1)_{c_2}$ is the multipartition in $\mathcal{P}(\Gamma_*)$ taking the value $(2,1)$ at $c_1 \in \Gamma_*$ and taking the value $(1,1,1)$ at $c_2 \in \Gamma_*$, while $(3,1)_{c_3}$ is concentrated in type $c_3$.
\begin{definition}
The \emph{cycle type} of an element $(\mathbf{g}, \sigma)$ of $\Gamma \wr S_n$ is the $\Gamma_*$-indexed multipartition $\bs \mu$ such that for each $i \in \mathbb{Z}_{>0}$ and $c \in \Gamma_*$, $m_i(\bs\mu(c))$ is equal to the number of cycles of type $c$ and size $i$ in $(\mathbf{g}, \sigma)$.
\end{definition}
\noindent
By construction, the size of the cycle type of an element of $\Gamma \wr S_n$ is $n$. Two elements of $\Gamma \wr S_n$ are conjugate if and only if they have the same cycle type. So the conjugacy classes of $\Gamma \wr S_n$ correspond to $\Gamma_*$-indexed multipartitions of $n$.

\begin{lemma} \label{cycle_label_compatibility}
Let $\sigma = (i_1, \ldots, i_r)$ be an $r$-cycle in $S_n$. Let $X_\sigma(c) \in \mathbb{Z}\Gamma \wr S_n$ be the sum of all elements of the form $(\mathbf{g}, \sigma) \in \Gamma \wr S_n$ where the type of the cycle $\sigma$ is $c \in \Gamma_*$, and $\mathbf{g}_i = 1$ for all $i$ not in the cycle. Then for any $i_j$ in the cycle, we have
\[
X_\sigma(c) = c^{(i_j)}X_\sigma(1) = X_\sigma(1) c^{(i_j)},
\]
where $c^{(i)} = \sum_{g \in c} g^{(i)}$ is the conjugacy class sum $c$ embedded in the $i$-th component of $\mathbb{Z}\Gamma^n$. 
\end{lemma}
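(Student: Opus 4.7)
The plan is to unpack both sides combinatorially and set up an explicit bijection witnessing the equality. First I would note that $X_{\sigma}(1)$ is simply the sum over all tuples $(g_{i_1}, \dots, g_{i_r}) \in \Gamma^r$ satisfying $g_{i_r} g_{i_{r-1}} \cdots g_{i_1} = 1$, embedded into $\Gamma \wr S_n$ with identity in every coordinate outside the cycle and paired with $\sigma$. Thus $c^{(i_j)} X_{\sigma}(1)$ is the sum over $h \in c$ and admissible tuples of $(h^{(i_j)} \mathbf{g}, \sigma)$; the effect is to replace $g_{i_j}$ by $h g_{i_j}$, leaving the other coordinates in the cycle untouched, and clearly the coordinates outside the cycle remain trivial.

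Next I would compute the cycle type of this new element. Using cyclic invariance of the type (explicitly noted earlier in the paper: the type is the conjugacy class of any cyclic rotation of $g_{i_r} \cdots g_{i_1}$), I choose the rotation starting at position $i_j$, which reads
\[
(hg_{i_j}) g_{i_{j-1}} \cdots g_{i_1} g_{i_r} \cdots g_{i_{j+1}} = h \cdot \bigl( g_{i_j} g_{i_{j-1}} \cdots g_{i_1} g_{i_r} \cdots g_{i_{j+1}} \bigr).
\]
The parenthesised factor is a cyclic rotation of $g_{i_r} \cdots g_{i_1} = 1$, hence equals $1$, so the cycle type is the class of $h$, namely $c$. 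Conversely, given any $(\mathbf{g}', \sigma) \in X_{\sigma}(c)$, the element $h := g'_{i_j} g'_{i_{j-1}} \cdots g'_{i_1} g'_{i_r} \cdots g'_{i_{j+1}}$ is forced, and the remaining $g_{i_k}$ are determined by $g_{i_k} = g'_{i_k}$ for $k \ne j$ and $g_{i_j} = h^{-1} g'_{i_j}$; the product-equals-one condition then holds automatically. This gives a bijection, proving $c^{(i_j)} X_{\sigma}(1) = X_{\sigma}(c)$.

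For the right-multiplication $X_{\sigma}(1) c^{(i_j)}$, the semidirect product rule gives $(\mathbf{g}, \sigma)(h^{(i_j)}, 1) = (\mathbf{g}\, \sigma(h^{(i_j)}), \sigma)$, and since $\sigma(i_j) = i_{j+1}$ (with indices taken cyclically), this multiplication replaces $g_{i_{j+1}}$ by $g_{i_{j+1}} h$, not $g_{i_j}$ by anything. I would then run exactly the same argument, this time using the cyclic rotation of the cycle-type product starting at position $i_j$:
\[
g'_{i_j} g'_{i_{j-1}} \cdots g'_{i_1} g'_{i_r} \cdots g'_{i_{j+2}} g'_{i_{j+1}} = \bigl( g_{i_j} g_{i_{j-1}} \cdots g_{i_1} g_{i_r} \cdots g_{i_{j+2}} g_{i_{j+1}} \bigr) h = 1 \cdot h = h,
\]
so the cycle type is again $c$, and the analogous uniqueness argument yields the bijection.

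The only real subtlety I anticipate is the bookkeeping around the asymmetry of the semidirect product: multiplication on the left by $h^{(i_j)}$ edits coordinate $i_j$, while multiplication on the right edits coordinate $i_{j+1}$. Choosing the correct cyclic rotation in each case is what makes the algebra line up; once that is done the rest is a routine reindexing argument, so the main obstacle is purely notational rather than conceptual.
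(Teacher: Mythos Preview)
Your proof is correct and follows essentially the same bijective strategy as the paper. The one organizational difference worth noting: the paper dispatches the second equality in a single line by observing that $(\mathbf{g},\sigma)\,c^{(i_j)} = c^{(i_{j+1})}(\mathbf{g},\sigma)$ (since $\sigma$ carries the $i_j$-th slot to the $i_{j+1}$-th and $c$ is central in $\mathbb{Z}\Gamma^n$), which reduces $X_\sigma(1)\,c^{(i_j)}$ to $c^{(i_{j+1})}X_\sigma(1)$ and hence to the already-proved first equality with $i_{j+1}$ in place of $i_j$. This saves repeating the bijection. Your argument instead runs the combinatorics a second time, correctly tracking that right-multiplication edits coordinate $i_{j+1}$ rather than $i_j$; the cyclic-rotation trick you use to isolate $h$ at the end of the product is exactly the mechanism that makes this work, and is the counterpart to the paper's manipulation rewriting the constraint as $\mathbf{g}_{i_j} \in c\,(\mathbf{g}_{i_{j+1}}^{-1}\cdots\mathbf{g}_{i_{j-1}}^{-1})$.
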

\begin{proof}
The second equality follows from the first because
\[
(\mathbf{g}, \sigma) c^{(i_j)}  = (\mathbf{g}c^{(i_{j+1})}, \sigma) = c^{(i_{j+1})}(\mathbf{g}, \sigma),
\]
where $i_{r+1}$ is taken to mean $i_1$ when $j=r$. In the second step we use the fact that $c$ is a central element of $\mathbb{Z}\Gamma^n$. We write
\[
X_\sigma(c) = \sum_{\mathbf{g}_{i_r} \cdots \mathbf{g}_{i_1} \in c} (\mathbf{g}, \sigma)
\]
where $\mathbf{g}_i = 1$ for $i \neq i_1, \ldots, i_r$. The condition $\mathbf{g}_{i_r} \cdots \mathbf{g}_{i_1} \in c$ may be written as
\[
\mathbf{g}_{i_j} \in \mathbf{g}_{i_{j+1}}^{-1} \cdots \mathbf{g}_{i_{r}}^{-1}c\mathbf{g}_{i_1}^{-1} \cdots \mathbf{g}_{i_{j-1}}^{-1} = c (\mathbf{g}_{i_{j+1}}^{-1} \cdots \mathbf{g}_{i_{r}}^{-1}\mathbf{g}_{i_1}^{-1} \cdots \mathbf{g}_{i_{j-1}}^{-1}),
\]
where we have used the fact that $c$ is conjugation invariant to move it to the front of the product. In the case where $c=1$, choosing all the elements other than $\mathbf{g}_{i_j}$ arbitrarily uniquely determines $\mathbf{g}_j$. In the case of general $c$, we again may choose the elements other than $\mathbf{g}_{i_j}$ arbitrarily, and then $\mathbf{g}_{i_j}$ may be any element of $c$ multiplied by the value of $\mathbf{g}_{i_j}$ from the $c=1$ case.
\end{proof}
\begin{lemma} \label{cycle_label_product}
Let $r<s$ be positive integers and let $\sigma = (i_1, \ldots, i_r)$ and $\rho = (i_r, \ldots, i_s)$ be cycles in $S_n$ of lengths $r$ and $s-r+1$ respectively that intersect at a single element $i_r$. Let $X_\sigma(c_i)$ and $X_\rho(c_j)$ be as in Lemma \ref{cycle_label_compatibility}. Then
\[
X_\sigma(c_i) X_\rho(c_j) = \sum_{k} A_{i,j}^k X_{\sigma\rho}(c_k).
\]
\end{lemma}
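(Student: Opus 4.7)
The plan is to pull both conjugacy-class sums to the common position $i_r$, which lies in both cycles $\sigma$ and $\rho$, multiply them using the structure constants $A_{i,j}^k$ of $Z(\mathbb{Z}\Gamma)$, and then recognize the result with one more application of Lemma \ref{cycle_label_compatibility}. Specifically, Lemma \ref{cycle_label_compatibility} gives $X_\sigma(c_i) = X_\sigma(1)c_i^{(i_r)}$ and $X_\rho(c_j) = c_j^{(i_r)}X_\rho(1)$. Since $c_i^{(i_r)}$ and $c_j^{(i_r)}$ both sit inside the single factor $\Gamma$ at index $i_r$, they commute and their product is $(c_i c_j)^{(i_r)} = \sum_k A_{i,j}^k c_k^{(i_r)}$. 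So it suffices to show
\[
X_\sigma(1)\, c_k^{(i_r)}\, X_\rho(1) = X_{\sigma\rho}(c_k).
\]
Commuting $c_k^{(i_r)}$ past $X_\rho(1)$ via Lemma \ref{cycle_label_compatibility} (applied to the $\rho$-cycle, since $i_r$ is in it) and then absorbing it on the right of $X_{\sigma\rho}(1)$ via the same lemma (applied to the $\sigma\rho$-cycle, which still contains $i_r$), this reduces to the identity
\[
X_\sigma(1) X_\rho(1) = X_{\sigma\rho}(1),
\]
where one first observes that $\sigma\rho$ is the single $s$-cycle $(i_1,\ldots,i_s)$ because the cycles overlap only at $i_r$.

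The core work is this last identity, which is a direct computation in $\Gamma \wr S_n$. Using $(\mathbf{g},\sigma)(\mathbf{h},\rho) = (\mathbf{g}\,\sigma(\mathbf{h}), \sigma\rho)$ and the explicit formula for the $S_n$-action on $\Gamma^n$, I expect the nontrivial entries of $\mathbf{f} := \mathbf{g}\sigma(\mathbf{h})$, which are supported on $\{i_1,\ldots,i_s\}$, to be
\[
\mathbf{f}_{i_1} = \mathbf{g}_{i_1}\mathbf{h}_{i_r}, \qquad \mathbf{f}_{i_j} = \mathbf{g}_{i_j} \ (2 \le j \le r), \qquad \mathbf{f}_{i_k} = \mathbf{h}_{i_k} \ (r+1 \le k \le s);
\]
the key point is that the entries of $\sigma(\mathbf{h})$ at positions $i_2, \ldots, i_r$ read off $\mathbf{h}$ at positions $i_1, \ldots, i_{r-1}$, which all lie outside the support of $\rho$ and so are trivial, while at position $i_1$ one picks up $\mathbf{h}_{i_r}$. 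Checking that $(\mathbf{f},\sigma\rho)$ is of trivial type then collapses by telescoping:
\[
\mathbf{f}_{i_s}\cdots \mathbf{f}_{i_1} = (\mathbf{h}_{i_s}\cdots \mathbf{h}_{i_{r+1}})(\mathbf{g}_{i_r}\cdots \mathbf{g}_{i_1})\mathbf{h}_{i_r} = \mathbf{h}_{i_s}\cdots \mathbf{h}_{i_r} = 1,
\]
using both triviality constraints on $\mathbf{g}$ and on $\mathbf{h}$.

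The remaining obstacle, and the only delicate step, is the bijection between the index set of $X_\sigma(1)X_\rho(1)$ and that of $X_{\sigma\rho}(1)$: a priori, collisions from different $(\mathbf{g},\mathbf{h})$ pairs could spoil the identity. Here I would argue directly: given $\mathbf{f} \in X_{\sigma\rho}(1)$, the middle entries $\mathbf{f}_{i_2},\ldots,\mathbf{f}_{i_r}$ and $\mathbf{f}_{i_{r+1}},\ldots,\mathbf{f}_{i_s}$ recover $\mathbf{g}_{i_2},\ldots,\mathbf{g}_{i_r}$ and $\mathbf{h}_{i_{r+1}},\ldots,\mathbf{h}_{i_s}$, after which $\mathbf{g}_{i_1}$ and $\mathbf{h}_{i_r}$ are forced by the triviality constraints on $\mathbf{g}$ and $\mathbf{h}$; a short calculation using $\mathbf{f}_{i_s}\cdots \mathbf{f}_{i_1}=1$ then confirms that the resulting $\mathbf{g}_{i_1}\mathbf{h}_{i_r}$ indeed equals $\mathbf{f}_{i_1}$. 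This gives the bijection and completes the reduction, after which chaining the equalities above yields the desired formula.
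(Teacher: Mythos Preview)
Your proof is correct and follows essentially the same approach as the paper's: both reduce via Lemma~\ref{cycle_label_compatibility} to the identity $X_\sigma(1)X_\rho(1)=X_{\sigma\rho}(1)$, then verify it by the same explicit computation of $(\mathbf{g}\sigma(\mathbf{h}))_{i_p}$ and the same bijection argument. The only cosmetic difference is that the paper pulls $c_i^{(i_r)}$ and $c_j^{(i_r)}$ to the left of the product, whereas you push $c_k^{(i_r)}$ to the right; either way it is the same use of Lemma~\ref{cycle_label_compatibility}.
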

\begin{proof}
We may assume $c_i = c_j = 1$ because
\[
X_\sigma(c_i) X_\rho(c_j) = c_i^{(i_r)} X_{\sigma}(1) c_j^{(i_r)} X_{\rho} (1) = c_i^{(i_r)}c_j^{(i_r)} X_\sigma(1) X_\rho(1),
\]
and
\[
\sum_{k} A_{i,j}^k X_{\sigma\rho}(c_k) = \sum_{k} A_{i,j}^k c_k^{(i_r)} X_{\sigma\rho}(1) = c_i^{(i_r)}c_j^{(i_r)} X_{\sigma\rho}(1).
\]
Now we suppose that $(\mathbf{g}, \sigma)$ has all cycles of type $1$, so in particular
\[
\mathbf{g}_{i_r} \cdots \mathbf{g}_{i_1} = 1.
\]
Similarly we consider $(\mathbf{h}, \rho)$ with
\[
\mathbf{h}_{i_s} \cdots \mathbf{h}_{i_r} = 1.
\]
Then $(\mathbf{g}, \sigma)(\mathbf{h}, \rho) = (\mathbf{g} \sigma(\mathbf{h}), \sigma\rho)$, and we have 
\[
(\mathbf{g} \sigma(\mathbf{h}))_{i_p} =
 \left\{
        \begin{array}{ll}
            \mathbf{g}_{i_1} \mathbf{h}_{i_r} & \quad p=1 \\
            \mathbf{g}_{i_p} & \quad 1 < p \leq r \\
            \mathbf{h}_{i_p} & \quad r < p \leq s
        \end{array}
    \right.
\]
The type of this cycle is the conjugacy class of
\[
\mathbf{h}_{i_s} \cdots \mathbf{h}_{i_{r+1}} \mathbf{g}_{i_r} \cdots \mathbf{g}_{i_1}\mathbf{h}_{i_r}.
\]
But now, $\mathbf{g}_{i_r} \cdots \mathbf{g}_{i_1} = 1$, leaving $\mathbf{h}_{i_s} \cdots \mathbf{h}_{i_r}$ which equals $1$. Moreover the elements $\mathbf{g}_{i_2}, \ldots, \mathbf{g}_{i_r}$ and $\mathbf{h}_{i_{r+1}}, \ldots, \mathbf{h}_{i_s}$ may be chosen arbitrarily, and then $\mathbf{g}_{i_1}$ and $\mathbf{h}_{i_r}$ are determined by the condition that the type of the cycle should be $1$. As a result we get every term in the sum $X_{\sigma\rho}(1)$ exactly once.
\end{proof}

\noindent
We now turn our attention to describing the irreducible representations of $\Gamma \wr S_n$ over $\mathbb{C}$ in terms of those of $\Gamma$ and $S_n$. Recall that the irreducible representations $S^\lambda$ of the symmetric group $S_n$ in characteristic zero are called Specht modules and are labelled by partitions $\lambda$ of size $n$. Analogously, the irreducible representations $V^{\bs\lambda}$ of $\Gamma \wr S_n$ over $\mathbb{C}$ are in bijection with by $\Gamma^*$-indexed multipartitions $\bs\lambda$ of size $n$. We explain how they can be constructed.
\newline \newline \noindent
If $\tau = (\tau_1, \ldots, \tau_l)$ is a sequence of non-negative integers adding to $n$, then the product of the symmetric groups $S_{\tau_i}$ is a subgroup of $S_n$, called a \emph{Young subgroup}, and denoted $S_\tau$. Here the factors $S_{\tau_i}$ are viewed as a subgroups of $S_n$ by permuting disjoint contiguous blocks of $\tau_i$ elements of $\{1,\ldots, n\}$. We refer to the $S_{\tau_i}$ as the \emph{factor groups} of the Young subgroup $S_\tau$. The product
\[
\prod_{i} (\Gamma \wr S_{\tau_i}) = \Gamma^n \rtimes S_\tau
\]
is a subgroup of $\Gamma \wr S_n$ which we denote $\Gamma \wr S_\tau$. Secondly, if $\chi \in \Gamma^*$ and $\lambda$ is a partition of $\tau_i$, then 
\[
\chi^{\otimes \tau_i} \otimes S^{\lambda}
\]
is a representation of $\Gamma \wr S_{\tau_i}$ where $\Gamma^{\tau_i}$ acts on the first factor in the natural way, while $S_{\tau_i}$ acts by permuting the tensor factors in the first term, and in the usual way on the second term. Finally, the irreducible representation of $\Gamma \wr S_n$ indexed by $\bs\lambda$ is the following induced representation:
\[
V^{\bs\lambda} =
\mathrm{Ind}_{\Gamma \wr S_{\tau}}^{\Gamma \wr S_n} \left( \bigotimes_{\chi \in \Gamma^*} \chi^{\otimes \tau_\chi} \otimes S^{\bs\lambda(\chi)}
\right),
\]
where $\tau$ is the composition of $n$ whose parts are $\tau_\chi = |\bs\lambda(\chi)|$, indexed by $\chi \in \Gamma^*$ in some order.

\subsection{Symmetric Functions} 
\noindent
We recall the construction of the ring of symmetric functions, $\Lambda$. Macdonald's book \cite{Macdonald} is the authoritative reference on this topic. In Section \ref{LambdaG_section}, we will mimic this construction to define a related ring, $\LambdaG$.
\newline \newline \noindent
The $n$-th tensor power of $\mathbb{Z}[x]$ may be viewed as the polynomial ring $\mathbb{Z}[x_1, \ldots, x_n]$. The action of the symmetric group $S_n$ by permutation of tensor factors may be understood as permutation of the variables. The $S_n$-action preserves the grading where each variable has degree $1$, so the ring of $S_n$-invariants inherits the same grading. An important family of invariant elements are the \emph{monomial symmetric polynomials}, $m_\lambda$, indexed by partitions $\lambda$. By definition, $m_\lambda(x_1, \ldots, x_n)$ is the sum of all monomials $x_1^{a_1} x_2^{a_2} \cdots x_n^{a_n}$ such that the number of exponents equal to $i \in \mathbb{Z}_{>0}$ is $m_i(\lambda)$.
\newline \newline \noindent
Let $\Lambda_n$ be the $S_n$-invariants of $\mathbb{Z}[x_1, \ldots, x_n]$, and $\Lambda_n^{k}$ be the degree $k$ component of $\Lambda_n$. For $m > n$ we have a map $\rho_{m,n}: \Lambda_m^k \to \Lambda_n^k$ defined by setting the variables $x_{n+1}, \ldots, x_m$ to zero. For $m > k > n$, we have $\rho_{m,k} \circ \rho_{k,n} = \rho_{m,n}$, and so these $\rho_{m,n}$ define an inverse system on the $\mathbb{Z}$-modules $\Lambda_n^k$ (for fixed $k$ and $n \in \mathbb{Z}_{\geq 0}$), and we take
\[
\Lambda^k = \varprojlim \Lambda_n^{k}.
\]
In explicit terms, $\Lambda^k$ consists of sequences $(q_1, q_2, \ldots)$, where $q_n \in \Lambda_n^k$, that satisfy the condition $\rho_{m,n}(q_m) = q_n$. It is well known that $\rho_{m,n}$ is an isomorphism when $m > n\geq k$, and in that case $\Lambda_n^k$ has a basis of monomial symmetric functions $m_\lambda$ indexed by partitions $\lambda$ of size $k$. Moreover, the monomial symmetric functions with different numbers of variables are stable with respect to the maps $\rho_{m,n}$, i.e.
\[
m_\lambda(x_1, \ldots, x_n) = \rho_{m,n}(m_\lambda(x_1, \ldots, x_m)) = m_{\lambda}(x_1, \ldots, x_n, 0, \ldots, 0).
\]
Therefore the monomial symmetric polynomials define elements of the inverse limit $\Lambda^k$, which are called \emph{monomial symmetric functions} and denoted $m_\lambda$. From this it follows that $\Lambda^k$ is a free $\mathbb{Z}$-module with basis $m_\lambda$ indexed by partitions $\lambda$ of size $k$. Finally, the ring of symmetric functions is
\[
\Lambda = \bigoplus_{k=0}^\infty \Lambda^k,
\]
which is free as a $\mathbb{Z}$-module with basis consisting of the monomial symmetric functions. In fact, $\Lambda$ is a graded ring because the operation of setting a variable $x_m$ to zero respects degree and multiplication. By general properties of inverse limits, we have a canonical ring homomorphism $\Lambda \to \Lambda_n$ by sending $(q_1, q_2, \ldots)$ to $q_n$.
\newline \newline \noindent
The \emph{elementary symmetric functions} $e_r$ are elements of $\Lambda$ given by $e_r = m_{(1^r)}$. When evaluated in $n$ variables they are
\[
e_r(x_1, \ldots, x_n) = \sum_{i_1 < i_2 < \cdots < i_r} x_{i_1} x_{i_2} \cdots x_{i_r},
\]
namely, the sum of all products of $r$ distinct variables. Often it is convenient to work with generating functions, in which case we have
\[
\sum_{r \geq 0} e_r(x_1, \ldots, x_n) t^r = \prod_{i=1}^n (1 + x_i t).
\]
A key fact is that the elementary symmetric functions generate $\Lambda$ as a free polynomial algebra: $\Lambda = \mathbb{Z}[e_1, e_2, \ldots ]$.

\subsection{Central Characters and Blocks} \label{mod_rep_thy_subsect}
\noindent
In this subsection, we review the modular representation theory needed for our applications and for the appendix.
\newline \newline \noindent
The notion of central character is a slight variation of a character.
\begin{definition}
Suppose that $\Gamma$ is a finite group. The \emph{central character} $\omega_c^\chi$ of a conjugacy class $c \in \Gamma_*$ on an irreducible representation $\chi \in \Gamma^*$ is the scalar by which $c \in \mathbb{Z}\Gamma$ acts on $\chi$ (central elements always act by scalar multiplication on irreducible representations).
\end{definition}
\noindent
There is a formula for the central characters in terms of the usual characters:
\[
\omega_c^\chi = \frac{\sum_{g \in c} \chi(g)}{\chi(1)}.
\]
This formula is obtained by taking the trace of $c$, viewed as a linear operator on the irreducible representation, and dividing by the dimension (which is the trace of the identity). Since characters are constant on conjugacy classes,
\[
\omega_c^\chi = \frac{|c|\chi(g)}{\chi(1)},
\]
where $|c|$ is the size of the conjugacy class $c$, and $g$ is any element of $c$.
\newline \newline \noindent
Central characters satisfy a variety of properties, for example they are always algebraic integers (see Proposition 5.3.2 of \cite{EtingofEtAl}). We will use the fact that the central characters control the blocks of the modular representations of $\Gamma$. We briefly review some of the key definitions and properties of blocks.
\newline \newline \noindent
For the rest of this section, $\mathbb{F}$ is an algebraically closed field of any characteristic. We still have $Z(\mathbb{F}\Gamma) = \mathbb{F}\Gamma_*$. Recall that the centre $\mathbb{F}\Gamma_*$ acts on a simple $\mathbb{F}\Gamma$-module $M$ by multiplication by scalars; multiplication by an element of the centre commutes with the module action, and hence defines an element of $\End(M)$, which equals $\mathbb{F}$ by Schur's Lemma. This gives a homomorphism $\mathbb{F}\Gamma_* \to \mathbb{F}$, which is also called a central character. When $\mathbb{F} = \mathbb{C}$ and $M = \chi \in \Gamma^*$, $\omega_c^\chi$ is the value of this homomorphism on the conjugacy-class sum $c$.

\begin{definition}
The \emph{blocks} of the group algebra $\mathbb{F}\Gamma$ are the minimal indecomposable two-sided ideals $B_i$ of $\mathbb{F}\Gamma$.
\end{definition}
\noindent
It is well known that $\mathbb{F}\Gamma$ is the direct sum of its constituent blocks:
\begin{equation} \label{block_sum}
\mathbb{F}\Gamma = \bigoplus_i B_i.
\end{equation}
In particular, the intersection of distinct blocks is zero. Because the blocks are ideals, $B_i B_j \subseteq B_i \cap B_j = 0$ for $i \neq j$. This means that Equation \ref{block_sum} is a decomposition of $\mathbb{F}$-algebras. So if we express the identity element of $\mathbb{F}\Gamma$ as $1 = \sum_i e_i$ with $e_i \in B_i$, it follows that that $e_i$ is the identity element of $B_i$, and moreover $B_i = e_i \mathbb{F}\Gamma e_i$. The upshot of this is that if $M$ is any $\mathbb{F}\Gamma$-module, then
\[
M = \bigoplus_i e_i M.
\]
So if $M$ is indecomposable, $e_i M$ is nonzero for exactly one value of $i$, and for that particular $i$, $e_i M = M$. This makes $M$ into a module for some $B_i$. It is common to say that $M$ belongs to the block $B_i$. As a result, we have a decomposition of module categories
\[
\mathbb{F}\Gamma - \mathrm{mod} = \bigoplus_i \left(B_i -\mathrm{mod}\right),
\]
because any module $M$ splits as a direct sum of modules $e_iM$ belonging to each block $B_i$.
We can determine when two simple modules belong to the same block in terms of central characters.

\begin{proposition} \label{central_char_blocks}
Two simple modules for $\mathbb{F}\Gamma$ belong to the same block if and only if every element $Z(\mathbb{F}\Gamma)$ acts on each of them by the same scalar (i.e. they have the same central character). 
\end{proposition}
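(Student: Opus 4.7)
The plan is to exploit the block decomposition $\mathbb{F}\Gamma = \bigoplus_i B_i$ together with the primitive central idempotents $e_i \in B_i$ satisfying $1 = \sum_i e_i$, and to analyse the centre of each block algebra $B_i$.

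For the ``only if'' direction, suppose the simple modules $M$ and $N$ belong to distinct blocks $B_i$ and $B_j$. Since $e_i$ is the identity of $B_i$ and annihilates every other block, $e_i$ acts as $1$ on $M$ and as $0$ on $N$, so the central characters of $M$ and $N$ already disagree on the central element $e_i \in Z(\mathbb{F}\Gamma)$.

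For the ``if'' direction, I would show that any two simple modules lying in the same block $B_i$ have identical central characters. The key object is the centre $Z(B_i) = e_i Z(\mathbb{F}\Gamma)$ of the block algebra. This is a finite-dimensional commutative $\mathbb{F}$-algebra whose only idempotents are $0$ and $e_i$, since otherwise $e_i$ would split as a sum of orthogonal central idempotents of $\mathbb{F}\Gamma$, contradicting the indecomposability of $B_i$ as a two-sided ideal. An Artinian commutative ring with only trivial idempotents is local, so $Z(B_i)$ has a unique maximal ideal $\mathfrak{m}_i$ which coincides with its nilradical, and the residue field is $\mathbb{F}$ by the algebraic closedness assumption. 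Hence every $z \in Z(\mathbb{F}\Gamma)$ can be written $z e_i = \lambda_i(z)\, e_i + n_i(z)$ with $\lambda_i(z) \in \mathbb{F}$ and $n_i(z) \in \mathfrak{m}_i$ nilpotent. On any simple module $M$ belonging to $B_i$, the element $n_i(z)$ acts as a scalar by Schur's lemma, and nilpotent scalars must be zero; thus $z$ acts on $M$ as $\lambda_i(z)$, a quantity depending only on the block $B_i$ and not on the choice of simple within it.

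The main obstacle is the ``if'' direction: one has to translate the abstract statement that $e_i$ is a primitive central idempotent into the concrete assertion that the scalar by which a central element acts is the same on all simples of $B_i$, even though those simples may have very different dimensions and composition structure. The bridge is the locality of $Z(B_i)$ together with the algebraic closedness of $\mathbb{F}$; if either input is dropped (for example if $Z(B_i)$ had a residue field strictly larger than $\mathbb{F}$), the scalars $\lambda_i(z)$ would not be forced to live in $\mathbb{F}$ and the conclusion could fail. The ``only if'' direction is essentially formal once the block decomposition is in hand.
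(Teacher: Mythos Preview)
Your proof is correct and follows essentially the same line as the paper's. The paper cites from \cite{modrepfingrp} the fact that $Z(B_i)/J(Z(B_i)) = \mathbb{F}$ and then observes that the Jacobson radical annihilates simples; you instead prove this locality directly from the primitivity of $e_i$ (no nontrivial idempotents in $Z(B_i)$, hence Artinian local, hence residue field $\mathbb{F}$ by algebraic closedness) and argue via Schur's lemma that a nilpotent central element acts by a nilpotent scalar, hence by zero. The ``only if'' direction via the action of $e_i$ is identical to the paper's.
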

\begin{proof}
Suppose that $B_i$ is a block of $\mathbb{F}\Gamma$. Lemma 4.1 of \cite{modrepfingrp} shows that $Z(B_i)/J(Z(B_i)) = \mathbb{F}$, where $J$ is the Jacobson radical. According to  \cite{modrepfingrp}, a central character of a block $B_i$ is defined as the composite homomorphism
\[
Z(\mathbb{F}\Gamma) \to Z(B_i) \to Z(B_i)/J(Z(B_i)) = \mathbb{F}.
\]
By definition, the Jacobson radical acts by zero on any simple module, so the action of $Z(\mathbb{F}\Gamma)$ on a simple module factors through the above homomorphism. This shows that this definition of central character is consistent with ours. Moreover, two simple modules in the same block have the same central character. Simple modules in different blocks must have different central characters, because the central idempotents $e_i$ act differently ($e_i$ acts by the identity or zero depending on whether the module belongs to $B_i$ or not).
\end{proof}
\noindent
There is a procedure for taking a representation of $\Gamma$ in characteristic zero, and producing a representation in positive characteristic. The details are technical, so we only sketch the main idea, directing the interested reader to \cite{modrepfingrp}. We begin with a \emph{$p$-modular system} (here $p$ is a prime), which is a triple $(K, \mathcal{O}, k)$ defined as follows. Firstly, $\mathcal{O}$ is a complete discrete valuation ring. Secondly, $k$ is the residue field of $\mathcal{O}$ which is required to be of characteristic $p$. Thirdly, $K$ is the fraction field of $\mathcal{O}$, which is required to be of characteristic zero. We recall several facts:
\begin{itemize}
\item We may find a $p$-modular system such that every simple representation of $\Gamma$ over either $K$ or $k$ is absolutely irreducible. For example, we may take this $K$ to be a finite extension of the $p$-adic numbers, $\mathbb{Q}_p$, in which the polynomial $x^{|\Gamma|}-1$ splits. Then we may also take $\mathcal{O}$ to be the integral closure of $\mathbb{Z}$ in $K$, which makes $k$ a finite field. (See Section 3.3 of \cite{modrepfingrp}.)
\item The representation theory of $\Gamma$ over $\mathbb{C}$ and $K$ is essentially the same. In either case, irreducible representations are defined over $\mathbb{Q}[\zeta]$, where $\zeta$ is a primitive $|\Gamma|$-th root of unity. Then an identification of $\mathbb{Q}[\zeta]$ as a subring of $\mathbb{C}$ with $\mathbb{Q}[\zeta]$ as a subring of $K$ gives a correspondence of irreducible representations that respects central characters. (This follows from Theorem 2.7A of \cite{modrepfingrp}.)
\item Given a $K\Gamma$-module $M$, we may find a $\Gamma$-stable free $\mathcal{O}$-submodule $M_{\mathcal{O}}$ such that $KM_{\mathcal{O}} = M$ (i.e. $M_{\mathcal{O}}$ spans $M$ over $K$). Taking the quotient by the maximal ideal of $\mathcal{O}$ gives a $k\Gamma$-module, $M_k =M_{\mathcal{O}} \otimes_{\mathcal{O}} k$. While the isomorphism class of $M_k$ depends on the choice of integral form $M_{\mathcal{O}}$, the composition factors (with multiplicity) of $M_k$ do not. (This is Theorem 3.6 of \cite{modrepfingrp}.)
\end{itemize}
As a result, the operation $M \to M_k$ is well-defined on the level of Grothendieck groups:
\[
d: K_0(K\Gamma-\mathrm{mod}) \to K_0(k\Gamma-\mathrm{mod}).
\]
The map $d$ is called the \emph{decomposition matrix}, and its entries (with respect to the bases coming from simple modules) are called \emph{decomposition numbers}. These definitions turn out to be independent of the choice of $p$-modular system. Decomposition numbers are poorly understood outside of a few special cases, and are an active area of research (one recent breakthrough was the disproof of the \emph{James conjecture} \cite{Williamson} about decomposition numbers for the symmetric groups).
\newline \newline \noindent
An easy observation from the form of the map $d$, is that since the centre $K\Gamma_*$ acts by scalars on a simple $K\Gamma$-module $M$, any conjugacy class sum $c$ acts by scalars on $M_{\mathcal{O}}$ and also $M_{k}$. This implies that every composition factor of $M_k$ has the same central character. So if $M$ is a simple $K\Gamma$-module, $M_{k}$ belongs to a single block $B_i$. The (necessarily disjoint) subsets of the simple (characteristic zero) modules mapping to  a given (characteristic $p$) block are called \emph{$p$-blocks}. The $p$-blocks determine a block-diagonal structure of the decomposition matrix. We will determine the $p$-blocks of $\Gamma \wr S_n$ in terms of the $p$-blocks of $\Gamma$ in Theorem \ref{wreath_nakayama_thm}. This will use the following tool.
\begin{proposition}[Theorem 4.2B, \cite{modrepfingrp}] \label{block_prop}
Two irreducible representations $\chi_1, \chi_2$ of $K\Gamma$ are in the same $p$-block if and only if
\[
\omega_c^{\chi_1} \equiv \omega_c^{\chi_2} \hspace{5mm} (\mbox{$\mathrm{mod}$ $\pi$}),
\]
for all $c \in \Gamma_*$, where $\pi$ is a uniformiser for $\mathcal{O}$.
\end{proposition}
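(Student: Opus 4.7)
The plan is to translate the statement into one about central characters over $k$ and then invoke Proposition \ref{central_char_blocks}. The bridge between characteristic zero and characteristic $p$ data is the observation, already used in the excerpt, that for a simple $K\Gamma$-module $\chi$ with character $\chi$, each conjugacy-class sum $c$ acts on $\chi$ by the scalar $\omega_c^\chi$; since $\omega_c^\chi$ is an algebraic integer, after possibly enlarging the $p$-modular system we have $\omega_c^\chi \in \mathcal{O}$, so $c$ preserves any $\Gamma$-stable lattice $\chi_{\mathcal{O}}$ and acts on $\chi_{\mathcal{O}}$ by that same scalar. Reducing modulo $\pi$, the element $c \in Z(k\Gamma)$ acts on the $k\Gamma$-module $\chi_k$ by $\overline{\omega_c^\chi} \in k$. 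In particular, $c$ acts by the same scalar on every composition factor of $\chi_k$, so $\chi_k$ lies in a single block of $k\Gamma$ (by the central-character criterion of Proposition \ref{central_char_blocks}), and the central character of that block, evaluated on $c$, is precisely $\overline{\omega_c^\chi}$.

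With this setup, the proof of the proposition is immediate in both directions. If $\chi_1, \chi_2$ lie in the same $p$-block, then by definition $(\chi_1)_k$ and $(\chi_2)_k$ belong to the same $k$-block, so Proposition \ref{central_char_blocks} gives $\overline{\omega_c^{\chi_1}} = \overline{\omega_c^{\chi_2}}$ for every $c \in \Gamma_*$, which is exactly the stated congruence modulo $\pi$. Conversely, if $\omega_c^{\chi_1} \equiv \omega_c^{\chi_2} \pmod{\pi}$ for all $c$, then $(\chi_1)_k$ and $(\chi_2)_k$ have the same $k$-central character, so by Proposition \ref{central_char_blocks} their composition factors lie in a common block of $k\Gamma$, which is exactly the statement that $\chi_1$ and $\chi_2$ are in the same $p$-block.

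The only genuine obstacle is the preliminary verification that the reduction $\overline{\omega_c^\chi}$ is well-defined, i.e.\ that the $p$-modular system can be chosen so that $\omega_c^\chi \in \mathcal{O}$ for all $\chi \in \Gamma^*$ and $c \in \Gamma_*$. This is where the algebraic-integer property of central characters (recalled just after the definition of $\omega_c^\chi$) is used, together with the remark in the excerpt that one may enlarge $\mathcal{O}$ to a finite extension containing all required values. Once this is granted, the argument above is formal: it simply composes the definition of a $p$-block (as a fibre of the decomposition map) with the characterisation of ordinary blocks of $k\Gamma$ by their central characters.
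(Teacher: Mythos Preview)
Your proof is correct and follows essentially the same approach as the paper: reduce the central characters $\omega_c^\chi$ modulo $\pi$ to obtain the central characters of the composition factors of $\chi_k$, and then invoke Proposition~\ref{central_char_blocks}. The paper's version is terser and does not spell out the integrality of $\omega_c^\chi$ or the two directions separately, but the underlying argument is the same.
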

\begin{proof}
The $p$-block of an irreducible representation $\chi$ is determined by the central character of any composition factor of $(\chi)_k$ by Proposition \ref{central_char_blocks}. But the central character can be computed by taking the central character of $\chi$ and passing to $k = \mathcal{O}/(\pi)$. Since $\Gamma_*$ is a basis for the group algebra of $\Gamma$, the equality of central characters is equivalent to the stated equations.
\end{proof}
\noindent
Our classification will use the following well-known result.
\begin{proposition} \label{brauer_nesbitt_prop}
Suppose that $|\Gamma| = p^r m$, where $p \nmid m$ and $M$ is a simple $K\Gamma$-module. Then the block $B_i$ to which $M_k$ belongs is semisimple (as a $k$-algebra) if and only if $p^r | \dim_K(M)$, in which case $M_k$ is simple.
\end{proposition}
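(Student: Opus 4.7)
First, I would translate the divisibility condition into an integrality statement about the primitive central idempotent $e_M \in K\Gamma$ attached to $M$. Writing $d = \dim_K M$, we have
\[
e_M = \frac{d}{|\Gamma|} \sum_{g \in \Gamma} \chi_M(g^{-1}) g.
\]
Character values are algebraic integers and hence lie in $\mathcal{O}$, and $|\Gamma| = p^r m$ with $m$ a unit in $\mathcal{O}$. So $e_M \in \mathcal{O}\Gamma$ if and only if $p^r \mid d$. It then remains to identify this integrality condition with the assertion that $B_i$ is semisimple and $M_k$ is simple.

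For the forward direction, suppose $e_M \in \mathcal{O}\Gamma$. Its reduction $\bar e_M \in Z(k\Gamma)$ is a nonzero central idempotent acting as the identity on $M_k$, so it lies in the block $B_i$. Since $e_M K\Gamma \cong \End_K(M) \cong M_d(K)$ has $K$-dimension $d^2$, the lattice $e_M\mathcal{O}\Gamma$ is a free $\mathcal{O}$-module of rank $d^2$, and therefore $\bar e_M k\Gamma$ has $k$-dimension $d^2$. The module action provides a map $\bar e_M k\Gamma \to \End_k(M_k) \cong M_d(k)$; lifting a system of matrix units from the isomorphism $e_M K\Gamma \cong M_d(K)$ to the order $e_M\mathcal{O}\Gamma$ and reducing shows this map is surjective, hence an isomorphism by dimension count. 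Thus $\bar e_M$ is a primitive central idempotent of $k\Gamma$, $B_i \cong M_d(k)$ is semisimple, and $M_k$ is its unique simple module.

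For the reverse direction, suppose $B_i$ is semisimple. Then its Cartan matrix is the identity. Writing this Cartan matrix as $D^T D$ where $D$ is the decomposition matrix restricted to the $p$-block of $M$ forces the nonnegative integer matrix $D$ to be a permutation matrix; in particular the char-$0$ simples in the $p$-block are in bijection with the simples $N_j$ of $B_i$ via $M_j \leftrightarrow N_j = (M_j)_k$. Since $\mathcal{O}$ is complete, each primitive central idempotent $\bar e_{N_j} \in Z(k\Gamma)$ lifts uniquely to a central idempotent in $Z(\mathcal{O}\Gamma) \subset Z(K\Gamma)$, and the only central idempotent of $K\Gamma$ reducing to $\bar e_{N_j}$ is the primitive central idempotent $e_{M_j}$. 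Hence $e_{M_j} \in \mathcal{O}\Gamma$, so $p^r \mid \dim_K M_j$ by the first paragraph. Since $M$ appears as some $M_j$, we obtain $p^r \mid d$ and $M_k = N_j$ is simple. The main obstacle is justifying surjectivity of $\bar e_M k\Gamma \to M_d(k)$ in the forward direction, which is equivalent to $e_M\mathcal{O}\Gamma$ being a maximal order in $M_d(K)$; this is the point where care is needed with the lifting of matrix units.
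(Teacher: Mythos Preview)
Your approach is sound overall but differs from the paper in both directions.

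For the forward direction ($p^r \mid d$ implies $B_i$ semisimple), the paper simply cites this as the Brauer--Nesbitt theorem without argument. You give more detail, correctly reducing to the claim that $e_M\mathcal{O}\Gamma$ is a maximal order in $M_d(K)$, and you rightly flag this as the delicate point. The gap is fillable: from $p^r \mid d$ and $d \mid |\Gamma|$ one deduces that $|\Gamma|/d$ is a unit in $\mathcal{O}$, and then the Schur orthogonality relations show directly that the elements $\tfrac{d}{|\Gamma|}\sum_{g \in \Gamma}\rho(g^{-1})_{kl}\,g \in \mathcal{O}\Gamma$ map to the matrix units $E_{lk}$ under the action on $M_\mathcal{O}$, giving surjectivity of $e_M\mathcal{O}\Gamma \to M_d(\mathcal{O})$. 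So your outline for this half can be completed along the lines you indicate, though the phrase ``lifting a system of matrix units'' obscures that the real input is Schur orthogonality rather than any abstract lifting.

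For the reverse direction ($B_i$ semisimple implies $p^r \mid d$), your argument via the Cartan factorization $C = D^{T}D$ and idempotent lifting over the complete ring $\mathcal{O}$ is correct. The paper takes a much shorter path: if $B_i$ is semisimple then $M_k$ is projective over $B_i$, hence over $k\Gamma$; restricting to a Sylow $p$-subgroup $P$, the module $M_k$ remains projective over the local algebra $kP$, hence free, so $|P| = p^r$ divides $\dim_k M_k = d$. Your route has the merit of also producing the bijection between characteristic-zero and characteristic-$p$ simples in the block and the simplicity of $M_k$ in one stroke, but the paper's Sylow-restriction argument is considerably more elementary, avoiding Brauer reciprocity and idempotent lifting entirely.
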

\begin{proof}
The ``if'' direction is known as the Brauer-Nesbitt theorem (although there are also other results with that name). On the other hand, if $B_i$ is semisimple, then the $B_i$-module $M_k$ is projective for $B_i$ and therefore for $k\Gamma$. If $P$ is a $p$-Sylow subgroup of $\Gamma$, then $M_k$ is a projective $kP$-module. But since $p$-groups only have one irreducible representation in characteristic $p$ (the trivial representation), $kP$ is a (not necessarily commutative) basic local algebra. This implies that a projective $kP$-module is free. In particular, $\dim_k(M_k)$ is a multiple of $\dim_k(kP) = p^r$.
\end{proof}

\section{The Farahat-Higman algebra} \label{FH_theory_section}
\noindent
Let $X_{\mu}^\prime$ be the sum of all elements of cycle type $\mu$ in $S_n$. These conjugacy-class sums define a $\mathbb{Z}$-basis of $Z(\mathbb{Z}S_n)$ indexed by partitions of $n$. For example
\[
X_{(2,1^{n-2})}^\prime = \sum_{i<j} (i,j)
\]
which is the sum of all elements of cycle type $(2,1^{n-2})$ (i.e. transpositions) in $S_n$. Of course $Z(\mathbb{Z}S_n)$ is a ring, so the product of two conjugacy-class sums should be a linear combination of such sums. Taking the example from the introduction,
\begin{equation} \label{eqn:FH_alg_example}
(X_{(2,1^{n-2})}^\prime)^2 = 2X_{(2^2,1^{n-4})}^\prime + 3X_{(3,1^{n-3})}^\prime + {n \choose 2}X_{(1^{n})}^\prime,
\end{equation}
which we now verify by considering the product of two arbitrary transpositions $(i,j)$ and $(k,l)$. If $i,j,k,l$ are distinct, we have
\[
(i,j)(k,l) = (k,l)(i,j),
\]
so each element of cycle type $(2^2,1^{n-4})$ appears twice. If $\{i,j\}$ and $\{k,l\}$ have one element in common, $(i,j)(k,l)$ is a 3-cycle. Since
\[
(i,j,k) = (i,j)(j,k) = (k,i)(i,j) = (j,k)(k,i),
\]
$X_{(3,1^{n-3})}^\prime$ appears with multiplicity 3. Finally, if $(i,j)$ and $(k,l)$ move two elements in common, they are equal. Their product is therefore the identity element of $S_n$, and we obtain it once for each of the ${n \choose 2}$ transpositions in $S_n$. The most important feature of Equation \ref{eqn:FH_alg_example} is that the coefficients depend polynomially on $n$, the parameter of the symmetric group $S_n$. 
\begin{definition}
The \emph{reduced cycle type} of an element of a symmetric group is the partition obtained by subtracting $1$ from each part of the cycle type, and ignoring any resulting parts of size zero. 
\end{definition}
\noindent
For example, the reduced cycle type of the identity is the empty partition, which we denote $\varnothing$. The reduced cycle type of a transposition is $(1)$. Note that the cycle type of an element of $S_n$ can be recovered from the reduced cycle type, provided that $n$ is known. The condition for $S_n$ to have an element of reduced cycle type $\mu$ is $n \geq |\mu| + l(\mu)$. The upshot of working with reduced cycle types is that if we let $X_\mu$ be the sum of all elements of reduced cycle type $\mu$, Equation \ref{eqn:FH_alg_example} becomes
\begin{equation} \label{precise_FH_alg_example}
X_{(1)}^2 = 2X_{(1,1)} + 3X_{(2)} + {n \choose 2}X_{\varnothing}.
\end{equation}
This equation is valid in $Z(\mathbb{Z}S_n)$ for any $n \in \mathbb{Z}_{\geq 0}$, provided $X_\mu$ is interpreted as zero if there are no elements of reduced cycle type $\mu$ in $S_n$ (in that case $X_\mu$ as we have defined it would be an empty sum). It turns out that any product of the $X_\mu$ decomposes as a linear combination with coefficients that are polynomial in $n$.

\begin{definition}
The \emph{ring of integer-valued polynomials}, $\mathcal{R}$, is the subring of $\mathbb{Q}[t]$ consisting of elements $p(t)$ such that $p(m) \in \mathbb{Z}$ for all $m \in \mathbb{Z}$.
\end{definition}
\noindent
It is well known that $\mathcal{R}$ is free as a $\mathbb{Z}$-module, with basis ${t \choose r}$ for $r \in \mathbb{Z}_{\geq 0}$.

\begin{theorem}[Farahat-Higman, Theorem 2.2 \cite{FarahatHigman}] \label{thm:FH_alg_polys}
For any partitions $\lambda, \mu, \nu$, there exists a unique integer-valued polynomial $\phi_{\mu, \nu}^\lambda(n)$ such that the equation
\[
X_\mu X_\nu = \sum_\lambda \phi_{\mu, \nu}^\lambda(n)X_\lambda
\]
holds in $Z(\mathbb{Z}S_n)$ for all $n \in \mathbb{Z}_{\geq 0}$.
\end{theorem}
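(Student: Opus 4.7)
The plan is to prove uniqueness by Zariski-density on $\mathbb{Z}$, and existence by a direct counting argument combined with a stability observation that gives the polynomial structure in $n$.

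For uniqueness, once $n \geq |\lambda| + l(\lambda)$ the element $X_\lambda$ is a nonzero basis element of $Z(\mathbb{Z}S_n)$, and varying $\lambda$ gives distinct basis vectors. So for each fixed triple $(\mu,\nu,\lambda)$ and each sufficiently large $n$, the integer $\phi_{\mu,\nu}^\lambda(n)$ is literally pinned down as the coefficient of $X_\lambda$ in the product $X_\mu X_\nu \in Z(\mathbb{Z}S_n)$. A polynomial in $\mathbb{Q}[t]$ is determined by its values on all but finitely many integers, so if an integer-valued polynomial $\phi_{\mu,\nu}^\lambda$ representing this sequence exists at all, it is unique.

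For existence, fix a representative $\pi_\lambda \in S_n$ of reduced cycle type $\lambda$ with $\mathrm{supp}(\pi_\lambda) = \{1,\ldots,m\}$, where $m = |\lambda|+l(\lambda)$. Since the product $X_\mu X_\nu$ is central, the coefficient $\phi_{\mu,\nu}^\lambda(n)$ equals the number of pairs $(\sigma,\tau) \in S_n \times S_n$ with $\sigma$ of reduced cycle type $\mu$, $\tau$ of reduced cycle type $\nu$, and $\sigma\tau = \pi_\lambda$. For any such pair set $U(\sigma,\tau) = \mathrm{supp}(\sigma) \cup \mathrm{supp}(\tau)$; points outside $U$ are fixed by both $\sigma$ and $\tau$ hence by $\pi_\lambda$, so $\{1,\ldots,m\} \subseteq U$, while $|U|$ is bounded above by the constant $K := (|\mu|+l(\mu)) + (|\nu|+l(\nu))$ independent of $n$. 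Group the pairs by the value of $k = |U|$.

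The stability claim is that the number of valid pairs with a prescribed support-union $U$ depends only on $k = |U|$, not on $U$ or $n$. Given two admissible sets $U, U'$ of the same size (both containing $\{1,\ldots,m\}$), pick any bijection $U \setminus \{1,\ldots,m\} \to U' \setminus \{1,\ldots,m\}$ and extend to $\rho \in S_n$ fixing $\{1,\ldots,m\}$ pointwise. Then $\rho$ centralises $\pi_\lambda$ and conjugation by $\rho$ carries admissible pairs supported on $U$ bijectively to those supported on $U'$, preserving cycle types. Call the common count $N_k$. Then
\[
\phi_{\mu,\nu}^\lambda(n) = \sum_{k=m}^{K} N_k \binom{n-m}{k-m},
\]
since the extra $k-m$ points of $U$ are chosen freely from $\{m+1,\ldots,n\}$. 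Each binomial lies in $\mathcal{R}$, the $N_k$ are fixed non-negative integers, and the sum is finite, so $\phi_{\mu,\nu}^\lambda \in \mathcal{R}$.

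The main obstacle is the stability claim: it is what converts a purely combinatorial count into an integer-valued polynomial in $n$. Everything else is bookkeeping (bounding $|U|$, recognising binomial coefficients as elements of $\mathcal{R}$), but the existence of the conjugating permutation $\rho$ and the observation that it commutes with $\pi_\lambda$ because it fixes $\mathrm{supp}(\pi_\lambda)$ pointwise is the crux of the argument.
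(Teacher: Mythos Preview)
The paper does not supply its own proof of this theorem; it is quoted from Farahat--Higman with a citation. Your argument is correct and is essentially the classical Farahat--Higman approach: fix a target $\pi_\lambda$, group factorisations $(\sigma,\tau)$ by their joint support $U$, observe that conjugation by any $\rho$ fixing $\mathrm{supp}(\pi_\lambda)$ pointwise gives a bijection between factorisations with different joint supports of the same size, and conclude that the total count is $\sum_k N_k \binom{n-m}{k-m} \in \mathcal{R}$.
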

\noindent
Using this theorem we can depart from the setting of $Z(\mathbb{Z}S_n)$ for a specific $n$, and work with all $n$ at once. This comes at the price of working over $\mathcal{R}$.

\begin{definition}[Farahat-Higman, Section 2 \cite{FarahatHigman}] \label{def:FH_alg}
Let $\FH$ be the free $\mathcal{R}$-module with basis $K_\mu$ indexed by all partitions $\mu$. Define a bilinear multiplication on $\FH$ by
\[
K_\mu K_\nu = \sum_{\lambda} \phi_{\mu, \nu}^\lambda(t) K_\lambda
\]
where $\phi_{\mu, \nu}^\lambda(t)$ are the polynomials from Theorem \ref{thm:FH_alg_polys}, viewed as elements of the base ring $\mathcal{R}$. We call $\FH$ the Farahat-Higman algebra.
\end{definition}
\noindent
Because the equations in Theorem \ref{thm:FH_alg_polys} hold in $Z(\mathbb{Z}S_n)$ for any $n$, we can return from $\FH$ to $Z(\mathbb{Z}S_n)$ via an appropriate specialisation homomorphism that sends $K_\mu$ to $X_\mu$ and evaluates the polynomial coefficients at $n$. It is then routine to verify that $\FH$ is a commutative, associative, unital $\mathcal{R}$-algebra; this reduces to considering the respective properties in $Z(\mathbb{Z}S_n)$ for sufficiently large $n$. 

\begin{theorem}[Farahat-Higman, Theorem 2.4 \cite{FarahatHigman}] \label{thm:FH_spec_hom}
For each $n \in \mathbb{Z}_{\geq 0}$ there is a surjective ring homomorphism $\Phi_n: \FH \to Z(\mathbb{Z}S_n)$ defined by
\[
\Phi_n \left( \sum_{\mu} a_\mu(t) K_\mu \right) = \sum_{\mu} a_\mu(n) X_\mu,
\]
where $a_\mu(t) \in \mathcal{R}$, and $X_\mu$ is the sum of all elements of $S_n$ of reduced cycle type $\mu$ (or zero, if there are no such elements).
\end{theorem}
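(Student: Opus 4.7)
The plan is to verify that the stated formula unambiguously defines a $\mathbb{Z}$-linear map, that multiplicativity follows automatically from how $\FH$ was constructed, and that surjectivity is essentially a bookkeeping statement about cycle types versus reduced cycle types.

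First I would check the map is well-defined. By Definition \ref{def:FH_alg}, $\FH$ is a free $\mathcal{R}$-module on the symbols $K_\mu$, so every element has a unique expression $\sum_\mu a_\mu(t) K_\mu$ with $a_\mu(t) \in \mathcal{R}$ (and almost all of them zero). The ring $\mathcal{R}$ has a well-defined evaluation ring homomorphism $\mathrm{ev}_n \colon \mathcal{R} \to \mathbb{Z}$, $a(t) \mapsto a(n)$, so the formula $\Phi_n(\sum_\mu a_\mu(t) K_\mu) = \sum_\mu a_\mu(n) X_\mu$ specifies a $\mathbb{Z}$-linear map, where $X_\mu$ is interpreted as zero in $Z(\mathbb{Z}S_n)$ whenever $n < |\mu| + l(\mu)$ (so that $S_n$ has no element of reduced cycle type $\mu$). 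One can equivalently view $\Phi_n$ as the unique $\mathcal{R}$-module map extending $K_\mu \mapsto X_\mu$, where $Z(\mathbb{Z}S_n)$ is given the $\mathcal{R}$-module structure induced by $\mathrm{ev}_n$.

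Second, I would verify that $\Phi_n$ is a ring homomorphism. On basis elements, the definition of the multiplication in $\FH$ gives
\[
\Phi_n(K_\mu K_\nu) = \Phi_n\Bigl(\sum_\lambda \phi_{\mu,\nu}^\lambda(t)\, K_\lambda\Bigr) = \sum_\lambda \phi_{\mu,\nu}^\lambda(n)\, X_\lambda,
\]
and Theorem \ref{thm:FH_alg_polys} says precisely that the right-hand side equals $X_\mu X_\nu = \Phi_n(K_\mu)\Phi_n(K_\nu)$ in $Z(\mathbb{Z}S_n)$. Extending by $\mathbb{Z}$-bilinearity (and using that $\mathrm{ev}_n$ is itself a ring homomorphism to handle the scalar coefficients) gives multiplicativity in general. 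The unit goes to the unit: the empty partition $\varnothing$ is the reduced cycle type of the identity permutation, so $\Phi_n(K_\varnothing) = X_\varnothing = e \in S_n$, which is the identity of $Z(\mathbb{Z}S_n)$.

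Third, for surjectivity I would observe that $Z(\mathbb{Z}S_n)$ has a $\mathbb{Z}$-basis consisting of the conjugacy-class sums $X_\lambda'$ indexed by partitions $\lambda \vdash n$. Given such a $\lambda$, write $\mu$ for the partition obtained by subtracting $1$ from each part of $\lambda$ and discarding zeros; then $\mu$ is a reduced cycle type, the condition $n \geq |\mu| + l(\mu)$ holds automatically, and by construction $X_\mu = X_\lambda'$ in $S_n$. Hence $X_\lambda' = \Phi_n(K_\mu)$ lies in the image, so $\Phi_n$ is surjective. There is no serious obstacle here; the only mildly delicate point is articulating the well-definedness cleanly, since the $X_\mu$ of the theorem are indexed by arbitrary partitions $\mu$ whereas $Z(\mathbb{Z}S_n)$ only sees those with $|\mu| + l(\mu) \leq n$, but adopting the convention that $X_\mu = 0$ otherwise makes everything coherent.
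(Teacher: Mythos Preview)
Your proposal is correct and matches the paper's approach. The paper does not give a detailed proof of this theorem (it is cited from \cite{FarahatHigman}), but the surrounding text sketches exactly the argument you give: the multiplicativity of $\Phi_n$ is immediate from Theorem~\ref{thm:FH_alg_polys}, well-definedness from the freeness of $\FH$ over $\mathcal{R}$, and surjectivity from the fact that every cycle type of $S_n$ corresponds to some reduced cycle type.
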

\begin{remark}
Although we do not discuss it in this paper, there is a different construction and perspective of the algebra $\FH$ provided by Ivanov and Kerov \cite{IvanovKerov}. Their approach transparently explains the above properties of $\FH$. 
\end{remark}
\noindent
The original motivation of Farahat and Higman for constructing $\FH$ was to provide a simple proof of Nakayama's Conjecture (a theorem which describes the $p$-blocks of symmetric groups, originally proved by Brauer and Robinson \cite{BrauerRobinson}). In the remainder of this section, we show that $\FH = \mathcal{R} \otimes \Lambda$, explain Jucys-Murphy elements, and discuss the theory of content evaluation character formulae for $S_n$. This will allow us to succinctly describe Farahat-Higman's proof of Nakayama's Conjecture.

\subsection{Isomorphism with Symmetric Functions}
The main result of Farahat and Higman's paper is that $\FH$ is generated by a particular set of elements. With some care, we can infer that $\FH$ is isomorphic to the ring of symmetric functions with coefficients in $\mathcal{R}$ (Theorem \ref{thm:FH_lambda_iso}).
\begin{theorem}[Farahat-Higman, Theorem 2.5 \cite{FarahatHigman}] \label{thm:FH_generation_thm}
As an $\mathcal{R}$-algebra, $\FH$ is generated by the elements
\[
g_n = \sum_{\mu \vdash n} K_\mu
\]
where $n \in \mathbb{Z}_{>0}$.
\end{theorem}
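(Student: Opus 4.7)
The strategy is induction on $n = |\mu|$ to show that $K_\mu$ lies in the $\mathcal{R}$-subalgebra $A := \mathcal{R}[g_1, g_2, \ldots] \subseteq \FH$. The key organisational tool is the filtration
\[
\FH^{(\leq n)} := \bigoplus_{|\mu| \leq n} \mathcal{R} K_\mu,
\]
which is a ring filtration: the invariant $|\mu|$ equals the reflection length of any element of reduced cycle type $\mu$ (the minimum number of transpositions required to express it), and reflection length is subadditive under composition. Applying this inside $Z(\mathbb{Z}S_N)$ for $N$ large enough that all classes of reduced size $\leq a+b$ are nonempty forces $\phi_{\mu,\nu}^{\lambda}(t) = 0$ whenever $|\lambda| > |\mu| + |\nu|$. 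The base case $K_\varnothing = 1 \in A$ is immediate.

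For the inductive step, assume $\FH^{(\leq n-1)} \subseteq A$. For each partition $\lambda = (\lambda_1,\ldots,\lambda_r) \vdash n$ set $g_\lambda := g_{\lambda_1} g_{\lambda_2} \cdots g_{\lambda_r} \in A \cap \FH^{(\leq n)}$; then modulo $\FH^{(\leq n-1)}$ we have
\[
g_\lambda \equiv \sum_{\mu \vdash n} c_{\lambda\mu} K_\mu,
\]
with $c_{\lambda\mu} \in \mathbb{Z}$ (the top-degree structure constants are integers independent of $t$, as can be checked by comparing with $\Phi_N$ for $N$ sufficiently large). It therefore suffices to show that the $p(n) \times p(n)$ transition matrix $C = (c_{\lambda\mu})$ is invertible over $\mathbb{Z}$: each $K_\mu$ is then an integer combination of $g_\lambda$'s modulo $\FH^{(\leq n-1)} \subseteq A$, giving $K_\mu \in A$ and closing the induction.

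The crux, and main obstacle, is establishing invertibility of $C$. My approach would be to analyse the associated graded ring $\mathrm{gr}(\FH)$ and identify it, after tensoring with $\mathbb{Q}$, with the ring of symmetric functions via $[K_\mu] \leftrightarrow p_\mu/z_\mu$, where $z_\mu = \prod_i i^{m_i(\mu)} m_i(\mu)!$. Under this identification $[g_n] = \sum_{\mu \vdash n} [K_\mu]$ corresponds to $\sum_{\mu \vdash n} p_\mu/z_\mu = h_n$, the complete homogeneous symmetric function. Since $\{h_n\}_{n>0}$ generates $\Lambda$ as a polynomial algebra over $\mathbb{Z}$, the images $\{[g_\lambda]\}_{\lambda \vdash n}$ span $\mathrm{gr}^n(\FH)$ as a $\mathbb{Z}$-module (equivalently $\det C = \pm 1$). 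To justify the identification one computes the top-degree products directly: $[K_{(a)}][K_{(b)}]$ receives contributions only from pairs of cycles whose supports are disjoint (yielding a copy of $[K_{(a,b)}]$) or overlap in exactly one point (merging the two cycles and yielding $[K_{(a+b)}]$ with the appropriate multiplicity), while overlaps of two or more points strictly decrease the reduced size and are killed in the associated graded; a bookkeeping of these combinatorics matches the multiplication in the power-sum basis. A more elementary but longer alternative would be to order partitions of $n$ by decreasing length and verify that, because the ``fully-disjoint-supports'' contribution to $g_\lambda$ lands in $K_\lambda$, the matrix $C$ has a triangular structure whose determinant can be computed explicitly.
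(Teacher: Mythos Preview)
Your overall strategy---filter by reduced size, pass to the associated graded, and show the transition matrix $C=(c_{\lambda\mu})$ from $\{[g_\lambda]\}_{\lambda\vdash n}$ to $\{[K_\mu]\}_{\mu\vdash n}$ is unimodular---is exactly the right framing, and it matches how the paper packages Farahat--Higman's argument (the paper records that FH prove $\mathcal{F}^i = \mathcal{R}g_i + \sum_{j+k=i,\,j,k\ge 1}\mathcal{F}^j\cdot\mathcal{F}^k$, which is equivalent to your unimodularity claim once the induction is set up). The problem is that neither of your two proposed routes to unimodularity actually works.

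Your main route, the identification $[K_\mu]\leftrightarrow p_\mu/z_\mu$, does not respect the ring structure of $\mathrm{gr}(\FH)$. You yourself note that $[K_{(a)}][K_{(b)}]$ picks up a merging term $[K_{(a+b)}]$ from cycles whose supports overlap in exactly one point; but $(p_a/a)(p_b/b)=p_{(a,b)}/(ab)$ has no $p_{a+b}$ component. Concretely, already for $n=2$ the paper's Equation~(\ref{precise_FH_alg_example}) gives $[K_{(1)}]^2=2[K_{(1,1)}]+3[K_{(2)}]$, whereas under your identification $[K_{(1)}]\leftrightarrow p_1$ one would get $p_1^2=2\cdot(p_{(1,1)}/z_{(1,1)})$ with no $[K_{(2)}]$ term. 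So this is not a ring isomorphism, even over $\mathbb{Q}$, and nothing about $h_n$ generating $\Lambda$ can be transported across it. (There is also a second gap: even a correct identification ``after tensoring with $\mathbb{Q}$'' would only give $\det C\ne 0$, not $\det C=\pm 1$, unless you check the lattices match.)

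Your alternative---triangularity of $C$ in the length ordering---also fails. For $\lambda=(2)$ one has $[g_{(2)}]=[K_{(2)}]+[K_{(1,1)}]$, which contains a term of length $2>l(\lambda)$; and for $\lambda=(1,1)$ one has $[g_{(1,1)}]=2[K_{(1,1)}]+3[K_{(2)}]$, which contains a term of length $1<l(\lambda)$. So $C$ is triangular in neither direction for $n=2$ (though its determinant is indeed $-1$). The ``fully disjoint supports land in $K_\lambda$'' heuristic breaks because $g_{\lambda_i}$ already contains $K_\mu$'s of varying length.

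What does work is the Jucys--Murphy route the paper uses in its second proof of Theorem~\ref{thm:FH_lambda_iso}: one shows $m_\mu(L_1,\ldots,L_N)=X_\mu+(\text{lower in }|\nu|\text{, or equal }|\nu|\text{ and lower }l(\nu))$ (Proposition~\ref{murphy_prop}), which gives an \emph{integral} identification $\Lambda\to\mathrm{gr}(\FH)$ that is unitriangular in the refined $(|\mu|,l(\mu))$ ordering and sends $e_n$ to $[g_n]$. Since the $e_\lambda$ are a $\mathbb{Z}$-basis of $\Lambda$, this yields $\det C=\pm 1$ immediately. Farahat and Higman's original argument avoids JM elements and proves the filtration identity above by a direct combinatorial induction; either way, the crux is an integral (not rational) comparison with $\Lambda$.
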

\noindent
Recall that $\Lambda = \mathbb{Z}[e_1, e_2, \ldots]$. It is graded:
\[
\Lambda = \bigoplus_{k \geq 0} \Lambda^k
\]
where $e_r$ has degree $r$. In particular, if $\nu = (\nu_1, \nu_2, \ldots, \nu_l)$ is a partition, then $e_\nu = e_{\nu_1} e_{\nu_2} \cdots e_{\nu_l}$ is in degree $|\nu|$, and a basis of $\Lambda_n$ as a $\mathbb{Z}$-module is given by the set of $e_\nu$ as $\nu$ varies across partitions of $n$. The grading on $\Lambda$ induces a filtration 
\[
\Lambda = \bigcup_{i \geq 0} \Lambda^{\leq i},
\]
where
\[
\Lambda^{\leq i} = \bigoplus_{k \leq i} \Lambda^k,
\]
and $\Lambda^{\leq i}$ has $\mathbb{Z}$-basis consisting of $e_\nu$ where $|\nu| \leq i$.
\newline \newline \noindent
In fact $\FH$ is also a filtered $\mathcal{R}$-algebra,
\[
\FH = \bigcup_{i \geq 0} \mathcal{F}^i
\]
where $\mathcal{F}^i$ is the $\mathcal{R}$-submodule of $\FH$ spanned by $K_\mu$ with $|\mu| \leq i$ (see Lemma 3.9 of \cite{FarahatHigman}). This filtration may be interpreted as follows. A $k$-cycle in a symmetric group may be written as a product of $k-1$ transpositions (and no fewer), for example:
\[
(i_1, i_2, i_3, \ldots, i_k) = (i_1, i_2)(i_2, i_3)\cdots(i_{k-1},i_k).
\]
For an arbitrary permutation of cycle type $\nu = (\nu_1, \nu_2, \ldots, \nu_l)$, the number of transpositions needed is
\[
\sum_i (\nu_i - 1),
\]
which is precisely the size of the corresponding reduced cycle type $(\nu_1-1, \nu_2-1, \ldots, \nu_k-1)$. So $\mathcal{F}^i$ may be seen as filtering permutations according to how many transpositions are needed to construct them.

\begin{theorem} \label{thm:FH_lambda_iso}
There is an isomorphism $\Psi: \mathcal{R} \otimes_{\mathbb{Z}} \Lambda \to \FH$ of filtered $\mathcal{R}$-algebras defined by
\[
\Psi(e_n) = g_n = \sum_{\mu \vdash n} K_\mu.
\]
\end{theorem}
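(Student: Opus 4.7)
The approach is to exploit the polynomial presentation $\mathcal{R} \otimes \Lambda \cong \mathcal{R}[e_1, e_2, \ldots]$: since $\FH$ is a commutative $\mathcal{R}$-algebra, the assignment $e_n \mapsto g_n$ extends uniquely to a well-defined $\mathcal{R}$-algebra homomorphism $\Psi$. The filtration is respected because $g_n \in \mathcal{F}^n$ and the filtration on $\FH$ is multiplicative, so $\Psi(\Lambda^{\leq k} \otimes \mathcal{R}) \subseteq \mathcal{F}^k$. Surjectivity of $\Psi$ is immediate from Theorem \ref{thm:FH_generation_thm}, which exhibits the $g_n$ as $\mathcal{R}$-algebra generators of $\FH$. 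The real content is therefore injectivity.

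To prove injectivity I would pass to the associated graded pieces. For each $n$, both $\Lambda^n \otimes \mathcal{R}$ and $\mathcal{F}^n/\mathcal{F}^{n-1}$ are free $\mathcal{R}$-modules of the same finite rank $p(n)$, with bases $\{e_\nu\}_{|\nu|=n}$ and $\{\bar{K}_\mu\}_{|\mu|=n}$ respectively. The induced $\mathcal{R}$-linear map
\[
\mathrm{gr}(\Psi)_n : \Lambda^n \otimes \mathcal{R} \longrightarrow \mathcal{F}^n/\mathcal{F}^{n-1}, \qquad e_\nu \longmapsto \bar{g}_\nu := \bar{g}_{\nu_1}\cdots \bar{g}_{\nu_l},
\]
would be shown to be an isomorphism. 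Once $\mathrm{gr}(\Psi)_n$ is known to be surjective, choosing $\mathcal{R}$-bases on both sides presents it as a surjective endomorphism of $\mathcal{R}^{p(n)}$, which is automatically bijective by the classical fact that a surjective endomorphism of a finitely generated module over a commutative ring is an isomorphism. A short five-lemma induction on the filtration then upgrades the graded isomorphisms $\mathrm{gr}(\Psi)_n$ to a filtered isomorphism $\Psi$, completing the proof.

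The main obstacle is proving surjectivity of $\mathrm{gr}(\Psi)_n$, i.e.\ that $\{\bar{g}_\nu\}_{|\nu|=n}$ $\mathcal{R}$-spans $\mathcal{F}^n/\mathcal{F}^{n-1}$. The plan is to describe the multiplication on $\mathrm{gr}(\FH)$ explicitly: a product $\bar{K}_\mu \cdot \bar{K}_\nu$ is a nonnegative integer combination of those $\bar{K}_\lambda$ with $|\lambda|=|\mu|+|\nu|$, the coefficients counting pairs $(\sigma, \tau)$ of permutations of reduced cycle types $\mu, \nu$ whose product has reduced type $\lambda$ and whose reduced sizes add without dropping. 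Expanding each $\bar{g}_\nu$ in the basis $\{\bar{K}_\lambda\}$ via this rule yields an explicit integer transition matrix, and the task is to verify that it is unimodular. One can either exhibit a partial order on partitions of $n$ (for example, refining length) in which the matrix is triangular with unit determinant, or observe that after tensoring with $\mathbb{Q}$ the map is an isomorphism via the Jucys–Murphy machinery developed in the next subsection, and then use the integrality of the explicit combinatorial expansion to promote this to unimodularity over $\mathbb{Z}$.
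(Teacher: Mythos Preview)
Your overall architecture is sound and close to the paper's first proof, but there are two points where the paper proceeds more cleanly, and one of your two options for the ``main obstacle'' does not work as stated.

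\textbf{Filtered surjectivity is already available.} You correctly identify that global surjectivity (Theorem~\ref{thm:FH_generation_thm}) does not by itself give surjectivity on each graded piece, and you flag this as the main obstacle. The paper sidesteps this entirely: the proof of Theorem~\ref{thm:FH_generation_thm} in \cite{FarahatHigman} actually establishes the stronger identity
\[
\mathcal{F}^i \;=\; \mathcal{R}g_i \;+\; \sum_{\substack{j+k=i\\ j,k\geq 1}} \mathcal{F}^j \cdot \mathcal{F}^k,
\]
from which filtered surjectivity $\Psi(\mathcal{R}\otimes\Lambda^{\leq i}) = \mathcal{F}^i$ follows by a one-line induction. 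Injectivity is then obtained by tensoring with the fraction field $\mathbb{Q}(t)$ of $\mathcal{R}$: both sides become $\mathbb{Q}(t)$-vector spaces of the same finite dimension, so the surjection is a bijection, and injectivity over $\mathbb{Q}(t)$ implies injectivity over $\mathcal{R}$. Your surjective-endomorphism lemma for finitely generated modules is a perfectly valid alternative to the fraction-field step and buys you the same conclusion.

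\textbf{Your option (a) is the paper's second proof.} The triangularity route is exactly what the paper's Proof~2 does, but with the monomial basis rather than the elementary basis: Murphy's result (Proposition~\ref{murphy_prop}) gives $m_\mu(L_1,\ldots,L_n) = X_\mu + (\text{lower terms})$, hence $\Psi(m_\mu) = K_\mu + (\text{lower})$ in $\FH$, so $\Psi$ carries one $\mathcal{R}$-basis to another up to a unitriangular change.

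\textbf{Your option (b) has a gap.} Knowing that the transition matrix has integer entries and is invertible over $\mathbb{Q}$ does not make it unimodular over $\mathbb{Z}$: the diagonal matrix with entries $2,1$ is a counterexample. ``Integrality of the expansion'' gives you nothing extra here. To make this route work you would need either integrality of the \emph{inverse} matrix or surjectivity over $\mathcal{R}$ itself, which is precisely the point at issue.
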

\noindent
We give two proofs. The first proof is intended to be in the spirit of the original work of Farahat and Higman. The second proof relies on the later work of Jucys, and it is this second proof that will generalise to the wreath-product setting.
\begin{proof}[Proof 1]
The homomorphism $\Psi$ is well defined because $\Lambda$ is a free polynomial algebra, so there are no relations that need to be checked. Since $\Psi(e_i) = g_i \in \mathcal{F}^i$, it is immediate that $\Psi$ respects the filtrations on both spaces. Additionally, Theorem \ref{thm:FH_generation_thm} shows $\Psi$ is surjective. However, the proof in \cite{FarahatHigman} proceeds by showing that \[
\mathcal{F}^i = \mathcal{R}g_i + \sum_{\substack{j+k = i \\ j,k \geq 1}}\mathcal{F}^j \cdot \mathcal{F}^k.
\]
From this, it follows by induction on $i$ that the restriction of $\Psi$ to a map from the $i$-th filtered component of $\Lambda$ to the $i$-th filtered component of $\FH$ is a surjection:
\begin{eqnarray*}
\mathcal{F}^i &=& \mathcal{R}g_i + \sum_{\substack{j+k = i \\ j,k \geq 1}}\mathcal{F}^j \cdot \mathcal{F}^k \\
&=& \mathcal{R}\Psi(e_i) + \sum_{\substack{j+k = i \\ j,k \geq 1}}\Psi(\Lambda^{\leq j}) \cdot \Psi(\Lambda^{\leq k}) \\
&\subseteq& \Psi(\Lambda^{\leq i}).
\end{eqnarray*}
We may pass to the fraction field of $\mathcal{R}$, namely the field of rational functions $\mathbb{Q}(t)$. Now $\Psi$ restricts to a surjection of vector spaces:
\[
\left.\Psi\right|_{\Lambda^{\leq i}}:
\mathbb{Q}(t) \otimes_{\mathbb{Z}} \Lambda^{\leq i} \to \mathbb{Q}(t) \otimes_{\mathcal{R}} \mathcal{F}^i.
\]
Since both spaces have the same dimension (each has a basis indexed by partitions of size at most $i$), standard linear algebra shows that since the linear map $\left.\Psi\right|_{\Lambda^{\leq_i}}$ is surjective, it must also be injective. This in turn implies that $\Psi$ is injective.
\end{proof}
\begin{proof}[Proof 2]
By Theorem \ref{thm:elem_JM} below, $\Phi_n(\Psi(e_r)) \in Z(\mathbb{Z}S_n)$ may be interpreted as the evaluation of the elementary symmetric function $e_r$ at the Jucys-Murphy elements $L_1, \ldots, L_n$. By Proposition \ref{murphy_prop} below, applying the same operation to the monomial symmetric functions $m_\mu$, we get $X_\mu$ plus terms lower in a certain partial order. Since $\Phi_n(\Psi(m_\mu))$ is equal to $X_\mu$ plus lower-order terms, $\Psi(m_\mu)$ is equal to $K_\mu$ plus lower-order terms. Since the $m_\mu$ and $K_\mu$ are $\mathcal{R}$-bases of the respective spaces, this shows that $\Psi$ is an isomorphism.
\end{proof}
\noindent
Theorem \ref{thm:FH_lambda_iso} is essentially a strengthening of Theorem 3.1 of \cite{CorteelGoupilSchaeffer}, the difference being that they work rationally (i.e. over $\mathbb{Q} \otimes_{\mathbb{Z}} \mathcal{R} = \mathbb{Q}[t]$), while we work integrally (i.e. over $\mathcal{R}$). This will be essential when we consider modular representation theory in Section \ref{applications_section}.
\newline \newline \noindent
At this point it may be unclear why we choose to work with the ring of symmetric functions when the only property we have used is that it is a free polynomial algebra with one generator in each positive degree. In the next subsection, we will summarise the theory of Jucys-Murphy elements, which will allow us to interpret the isomorphism $\Psi$ as ``Jucys-Murphy evaluation'' of symmetric functions. This will give us formulae for central characters. To that end, we will need the following definition.
\begin{definition} \label{char_sym_fun_def}
For a partition $\mu$, the \emph{character symmetric function} is $f_\mu = \Psi^{-1}(K_\mu)$ (it is an element of $\mathcal{R} \otimes_{\mathbb{Z}} \Lambda$).
\end{definition}
\begin{example} \label{char_sym_fn_example}
By the definition of $\Psi$, $\Psi(e_1) = K_{(1)}$ and $\Psi(e_2) = K_{(1,1)} + K_{(2)}$. Additionally, Equation \ref{precise_FH_alg_example} shows that 
\[
\Psi(e_1^2) = 2 K_{(1,1)} + 3K_{(2)} + {n \choose 2}.
\]
From this we conclude that $f_{(1)} = e_1$, $f_{(2)} = e_1^2 - 2e_2 - {n \choose 2}$, and $f_{(1,1)} = 3e_2 - e_1^2 + {n \choose 2}$.
\end{example}

\noindent
One may consult Section 5.4 of \cite{CST} for an alternative exposition of character symmetric functions. Note however, that our indexing variable $\mu$ is a partition corresponding to a reduced cycle type, while some of the literature uses partitions corresponding to cycle types, but with parts of size $1$ removed.

\subsection{Jucys-Murphy Elements} The Jucys-Murphy (``JM") elements are a key part of what has come to be known as the Okounkov-Vershik approach to the representations of symmetric groups (\cite{OkounkovVershik} and \cite{Kleshchev} are both excellent references), some parts of which we briefly review before explaining the connection to the Farahat-Higman algebra.
\newline \newline \noindent
The JM elements $L_1, \ldots, L_n \in \mathbb{C}S_n$ are sums of certain transpositions:
\[
L_m = \sum_{1\leq i < m} (i,m).
\]
For example $L_1 = 0$, $L_2 = (1,2)$, and $L_3 = (1,3)+(2,3)$. It is well known that the JM elements pairwise commute, so if $P(x_1, \ldots, x_n) \in \mathbb{Z}[x_1, \ldots, x_n]$, the expression $P(L_1, \ldots, L_n) \in \mathbb{Z}S_n$ is unambiguous. Furthermore, if $P(x_1, \ldots, x_n)$ is symmetric in the $x_i$, then $P(L_1, \ldots, L_n)$ is known to be a central element of $\mathbb{Z}S_n$.

\begin{proposition}[Murphy, Theorem 1.9 \cite{Murphy}] \label{murphy_prop}
If $m_\mu$ is the monomial symmetric function, then $m_\mu(L_1, \ldots, L_n)$ is equal to $X_\mu$ plus a linear combination of $X_\nu$ such that either $|\nu| < |\mu|$ or $|\nu| = |\mu|$ and $l(\nu) < l(\mu)$.
\end{proposition}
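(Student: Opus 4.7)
The plan is to expand $m_\mu(L_1, \ldots, L_n)$ explicitly and analyse which permutations appear. Writing $\mu = (\mu_1, \ldots, \mu_l)$ with $l = l(\mu)$, and using that the JM elements commute, one has
\[
m_\mu(L_1, \ldots, L_n) = \frac{1}{z'_\mu} \sum_{(i_1, \ldots, i_l)} L_{i_1}^{\mu_1} \cdots L_{i_l}^{\mu_l},
\]
summed over $l$-tuples of distinct indices, with $z'_\mu = \prod_r m_r(\mu)!$. Expanding each $L_{i_j} = \sum_{a < i_j}(a, i_j)$ turns this into a sum over choices $(a_{j,p})$ with $a_{j,p} < i_j$, of products of transpositions of the form $(a_{j,1}, i_j)(a_{j,2}, i_j) \cdots (a_{j,\mu_j}, i_j)$. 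The elementary identity
\[
(a_1, b)(a_2, b) \cdots (a_r, b) = (b, a_r, a_{r-1}, \ldots, a_1)
\]
holds whenever $a_1, \ldots, a_r$ are pairwise distinct; otherwise the product still lands in a single cycle through $b$ (or the identity), but on strictly fewer than $r+1$ points.

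First I would call a term \emph{generic} if, within each factor, the $a_{j,p}$ are pairwise distinct, and if the supports $\{i_j\} \cup \{a_{j,p}\}_p$ for different $j$ are pairwise disjoint. A generic term evaluates to a disjoint union of $l(\mu)$ cycles of lengths $\mu_j + 1$, hence to an element of reduced cycle type $\mu$; moreover, in each such cycle the pivot $i_j$ is forced to be the maximum element (since $a_{j,p} < i_j$). For a fixed target $\sigma$ of reduced cycle type $\mu$, one counts generic preimages by first choosing a bijection between the non-trivial cycles of $\sigma$ and the parts of $\mu$ of the corresponding sizes (yielding $z'_\mu$ choices), and then reading off the uniquely determined $i_j$ (the maximum of each cycle) and the $a_{j,p}$ (by the identity above). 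After dividing by $z'_\mu$, the coefficient of each such $\sigma$ is exactly $1$, so the generic contribution is $X_\mu$.

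Next I would analyse the non-generic contributions and show each lies in $\sum X_\nu$ with $\nu < \mu$ in the stated partial order. If within some factor two of the $a_{j,p}$ coincide, the resulting cycle has length strictly less than $\mu_j + 1$, and a parity check on the identity above shows the number of non-fixed points drops by at least $2$, forcing $|\nu| < |\mu|$. If instead the factors are internally generic but their supports overlap, two or more model cycles merge: if only one element is shared between two cycles of lengths $\mu_j+1$ and $\mu_{j'}+1$, a direct computation shows they combine into a single cycle of length $\mu_j + \mu_{j'} + 1$, preserving $|\nu|$ but decreasing $l(\nu)$ by one; sharing more elements reduces $|\nu|$ strictly.

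The main obstacle is the last step, the bookkeeping when several factors overlap simultaneously. The cleanest way to dispatch it is the submultiplicativity of reflection length: the reduced cycle type $\nu$ of any term satisfies $|\nu| \leq \sum_j \mu_j = |\mu|$, with equality only when the individual factor cycles multiply without cancellation, and whenever equality holds but the factor cycles are not disjoint, at least one merger has occurred, forcing $l(\nu) < l(\mu)$. Combined with the generic count giving exactly $X_\mu$, this yields the proposition.
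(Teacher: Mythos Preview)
Your argument is correct and follows essentially the same route as the paper: expand $m_\mu$ in transpositions, identify the ``all-merges, disjoint supports'' terms as giving exactly $X_\mu$, and bound the rest using that each transposition multiplication changes the reflection length by $\pm 1$ (your ``submultiplicativity of reflection length'' is exactly the paper's merge/split observation). One minor slip that does not affect the argument: your parenthetical claim that when the $a_p$ are not all distinct the product $(a_1,b)\cdots(a_r,b)$ is still a single cycle through $b$ (or the identity) is false in general---for instance $(a,b)(c,b)(a,b)=(a,c)$ fixes $b$---but you correctly fall back on the reflection-length bound in the final paragraph, so the conclusion stands.
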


\begin{proof}
Suppose that $g \in S_n$ has reduced cycle type $\mu$. Then multiplying $g$ by $(i,j)$ either merges two (possibly trivial) cycles if $i$ and $j$ are in distinct cycles of $g$, or splits an individual cycle into two cycles if $i$ and $j$ are in the same cycle of $g$. The merging of two cycles has the effect of increasing the size of the reduced cycle type by 1, while splitting subtracts 1 from the size. Since each JM element is a sum of transpositions, $m_\mu(L_1, \ldots, L_n)$ is a sum of products of $|\mu|$ transpositions. So to compute the leading order term, we only consider products that merge cycles at each step.
\newline \newline \noindent
We consider $L_j^r$. This is a sum of products
\[
(i_1, j)\cdots (i_r,j)
\]
of $r$ transpositions $(i_k,j)$ where the $i_k$ ($k=1,\ldots,r$) may be any numbers less than $j$. In order for the cycle to grow with each multiplication, it is necessary and sufficient that values of $i_k$ must be distinct. The result of such a product will be the $(r+1)$-cycle
\[
(i_r, i_{r-1}, \ldots, i_1, j).
\]
So we get every $(r+1)$ cycle with largest element $j$ exactly once because the elements of the cycle determine the transpositions involved in the product, and their order in the cycle determines the order of the transpositions in the product.
\newline \newline \noindent
Finally, we note that $m_\mu(L_1, \ldots, L_n)$ is a sum of products of $L_j^r$ where the exponents $r$ are the parts of $\mu$ in some order. By the previous paragraph, up to leading order, we get a product of $(r+1)$-cycles, and the length of the reduced cycle type is maximised when they do not intersect. In that case we get an element of reduced cycle type $\mu$, and moreover each such element arises exactly once because the length of the cycle whose largest element is $j$ must have been the exponent of $L_j$ in the monomial that gave rise to the cycle in consideration.
\end{proof}
\noindent
In the case where $\mu = (1^r)$ so that $m_\mu = e_r$, the above argument simplifies; it never happens that a transposition splits a cycle. This makes it possible to keep track of all the resulting permutations; we get the sum of all elements in $S_n$ with $n-r$ cycles.

\begin{theorem}[Jucys, Section 3 \cite{Jucys}] \label{thm:elem_JM}
If $e_r$ is the $r$-th elementary symmetric function, then
\[
e_r(L_1, \ldots, L_n) = \Phi_n(g_r) = \sum_{\mu \vdash r} X_\mu,
\]
where $X_\mu$ is the sum of elements of reduced cycle type $\mu$ in $S_n$.
\end{theorem}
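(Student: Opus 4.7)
The plan is to expand $e_r(L_1,\ldots,L_n)$ directly and identify which permutations arise. By definition,
\[
e_r(L_1,\ldots,L_n) = \sum_{1 \leq i_1 < i_2 < \cdots < i_r \leq n} L_{i_1} L_{i_2} \cdots L_{i_r} = \sum_{\substack{1 \leq i_1 < \cdots < i_r \leq n \\ j_k < i_k \text{ for } k = 1,\ldots,r}} (j_1, i_1)(j_2, i_2) \cdots (j_r, i_r),
\]
and I must show this equals the sum of all elements of $S_n$ with exactly $n-r$ cycles, each appearing exactly once. (A permutation has reduced cycle type of size $r$ if and only if it has $n-r$ cycles, since subtracting $1$ from every part of a partition of $n$ having $n-r$ parts leaves a partition of size $r$.)

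The first step is to check that each product $(j_1, i_1)(j_2, i_2) \cdots (j_r, i_r)$ occurring in the sum has exactly $n-r$ cycles. Set $\sigma_0 = \mathrm{id}$ and $\sigma_k = (j_1, i_1)\cdots(j_k, i_k)$. All indices appearing in the transpositions defining $\sigma_{k-1}$ lie in $\{i_1,\ldots,i_{k-1}\} \cup \{j_1,\ldots,j_{k-1}\}$, and the constraints $i_1 < \cdots < i_{k-1} < i_k$ together with $j_\ell < i_\ell$ force every such index to be strictly less than $i_k$. Hence $i_k$ is a fixed point of $\sigma_{k-1}$, and right-multiplying $\sigma_{k-1}$ by $(j_k, i_k)$ merges the cycle containing $j_k$ with the singleton $\{i_k\}$, lengthening that cycle by one and decreasing the cycle count by one. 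After $r$ such merges, $\sigma_r$ has exactly $n-r$ cycles.

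The second step is to show that each $\sigma \in S_n$ with $n-r$ cycles arises exactly once; I proceed by induction on $r$, with the case $r = 0$ trivial. Let $m$ be the largest non-fixed point of $\sigma$. In any admissible decomposition $\sigma = (j_1, i_1)\cdots(j_r, i_r)$, the first step shows that $\sigma$ fixes every integer strictly greater than $i_r$ (these indices appear in no transposition) and moves $i_r$ itself (because $\sigma(i_r) = \sigma_{r-1}(j_r)$, and since $\sigma_{r-1}$ fixes $i_r$ we would need $j_r = i_r$ to get $\sigma(i_r) = i_r$, contradicting $j_r < i_r$). Thus $i_r = m$ is forced. Applying $\sigma^{-1} = (j_r, i_r)\,\sigma_{r-1}^{-1}$ to $i_r$ and using that $\sigma_{r-1}$ (hence $\sigma_{r-1}^{-1}$) fixes $i_r$ gives $j_r = \sigma^{-1}(m)$, also forced. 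The residual $\sigma_{r-1} = \sigma \cdot (j_r, i_r)$ fixes $m$ and every larger integer and has $n-r+1$ cycles, so by the inductive hypothesis it admits a unique decomposition of the required form, with $i_{r-1} < m = i_r$ automatic. This completes the induction, hence the theorem.

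The main obstacle is the bookkeeping around conventions: one must simultaneously track the cycle structure of the partial products $\sigma_k$, the strict ordering $i_1 < \cdots < i_r$, and the direction of multiplication used to recover $j_r$ from $\sigma$. Once the merging observation in step one is in place, both existence and uniqueness drop out with no further combinatorial input, and in particular no refined analysis of the cycle types beyond counting cycles is needed.
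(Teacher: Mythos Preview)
Your proof is correct and follows essentially the same approach as the paper, which only sketches the argument by noting that for $e_r = m_{(1^r)}$ the merge/split analysis from the proof of Proposition~\ref{murphy_prop} simplifies because ``it never happens that a transposition splits a cycle''; your step one makes this precise. Your inductive peel-off in step two (recovering $i_r$ as the largest non-fixed point and $j_r = \sigma^{-1}(i_r)$) is a clean way to establish uniqueness and is the natural completion of what the paper leaves implicit.
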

\noindent
In view of Theorem \ref{thm:FH_spec_hom}, we have the following commutative diagram for each $n$,
\begin{equation}
\begin{tikzcd}
\mathcal{R} \otimes_{\mathbb{Z}} \Lambda    \arrow[r, "\Psi"]    \arrow[dr, "ev_n"]    &    \FH    \arrow[d, "\Phi_n"]\\
&    Z(\mathbb{Z}S_n)
\end{tikzcd}
\end{equation}
\noindent
where $ev_n: \mathcal{R} \otimes_{\mathbb{Z}} \Lambda \to Z(\mathbb{Z}S_n)$ evaluates an integer-valued polynomial at $n$ and evaluates a symmetric function at $L_1, \ldots, L_n$. To see that the diagram commutes, it suffices to check the generating elements $e_r$ of $\Lambda$, which is the statement of Theorem \ref{thm:elem_JM}. This suggests that $\FH$ can be thought of as being ``the ring of symmetric functions evaluated at JM elements''.
\newline \newline \noindent
Recall that the irreducible representations of $\mathbb{C}S_n$ are the Specht modules $S^\lambda$, indexed by partitions $\lambda$ of size $n$. The Specht module $S^\lambda$ possesses a Gelfand-Zetlin (``GZ'') basis $v_T$ indexed by standard Young tableaux $T$ of shape $\lambda$. A very special property of the GZ basis is that it is diagonal for the action of the JM elements, and the eigenvalues are given by the content of boxes in $T$:
\[
L_i v_T = c_T(i) v_T.
\]
\noindent
Now suppose that $P(x_1, \ldots, x_n)$ is a symmetric polynomial. Evaluating at the JM elements and applying it to $v_T$, we obtain
\[
P(L_1, \ldots, L_n)v_T = P(c_T(1), \ldots, c_T(n))v_T.
\]
Since $P$ is symmetric, $P(c_T(1), \ldots, c_T(n))$ does not depend on the order of the content of $T$. But for any $T$ of shape $\lambda$, the multiset of $c_T(i)$ is precisely the content of the boxes in $\lambda$, in particular, it is the same for every $T$ of shape $\lambda$. This implies that $P(L_1, \ldots, L_n)$ acts by the same scalar on any $v_T$ of fixed shape $\lambda$. This implies that $P(L_1, \ldots, L_n) \in \mathbb{Z}S_n$ acts by scalar multiplication on each irreducible representation of the symmetric group, and is therefore a central element of $\mathbb{Z}S_n$ as stated earlier.

\begin{theorem} \label{sym_gp_eval_thm}
The central character for $S_n$, $\omega_\mu^\lambda$ ($\mu$ defines a conjugacy class via reduced cycle type), is equal to the character symmetric function $f_\mu$ evaluated at the content of the partition $\lambda$, with the integer-valued polynomial variable $t$ evaluated at $n$.
\end{theorem}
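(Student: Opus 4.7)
The plan is to combine the commutative diagram just displayed with the eigenvalue description of Jucys–Murphy elements on the Gelfand–Zetlin basis.

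First I would unpack what needs to be shown. By Definition \ref{char_sym_fun_def}, $f_\mu = \Psi^{-1}(K_\mu) \in \mathcal{R} \otimes_{\mathbb{Z}} \Lambda$, so $f_\mu$ has two kinds of variables: the integer-valued polynomial variable $t$ and the symmetric function variables. Chasing the commutative diagram, $ev_n(f_\mu) = \Phi_n(\Psi(f_\mu)) = \Phi_n(K_\mu) = X_\mu$ in $Z(\mathbb{Z}S_n)$. Concretely this says that $f_\mu$, with $t$ set to $n$ and the symmetric function variables set to $L_1, \ldots, L_n$, equals the conjugacy class sum $X_\mu$ as an element of $\mathbb{Z}S_n$.

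Next I would evaluate both sides of this identity on the Specht module $S^\lambda$. On one hand, $X_\mu$ is a central element and acts on $S^\lambda$ by the scalar $\omega_\mu^\lambda$ (this is just the definition of the central character, for the class of reduced cycle type $\mu$). On the other hand, pick any standard Young tableau $T$ of shape $\lambda$ and the corresponding GZ basis vector $v_T$. Using $L_i v_T = c_T(i) v_T$ and the fact that JM elements commute, any polynomial expression in the $L_i$ acts diagonally; in particular
\[
f_\mu(L_1, \ldots, L_n)\big|_{t=n}\, v_T \;=\; f_\mu(c_T(1), \ldots, c_T(n))\big|_{t=n}\, v_T.
\]
Since $f_\mu$ is symmetric in its $\Lambda$-variables, the scalar depends only on the multiset $\{c_T(1),\ldots,c_T(n)\}$, which is exactly the multiset of contents of the boxes of $\lambda$ and is independent of the choice of $T$.

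Equating the two scalars yields the desired formula $\omega_\mu^\lambda = f_\mu(\mbox{contents of }\lambda)\big|_{t=n}$. There is no real obstacle: the whole content of the statement is encoded in the commutative diagram and Proposition/Theorem \ref{thm:elem_JM}; the only thing one must be careful about is distinguishing the two roles of the variables in $f_\mu$ (the arithmetic variable $t$, set to $n$, versus the symmetric function variables, set to the contents), and noting that the argument works uniformly in $n$ because the formula for $f_\mu$ is the same in all ranks by Theorem \ref{thm:FH_lambda_iso}.
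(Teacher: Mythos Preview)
Your proposal is correct and follows essentially the same approach as the paper: use the commutative diagram to identify $ev_n(f_\mu)$ with $X_\mu$, then compute the scalar action on $S^\lambda$ via a GZ basis vector $v_T$, where the JM elements act by the contents of $\lambda$. Your write-up is simply a more detailed version of the paper's terse three-sentence argument.
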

\begin{proof}
By definition $\omega_\mu^\lambda$ is the scalar by which $X_\mu$ acts on $S^\lambda$. To calculate the scalar, we may choose an arbitrary GZ basis vector $v_T$ and act on it by $ev_n(f_\mu) = X_\mu$. But $ev_n$ evaluates elements of $\mathcal{R}$ at $t=n$ and evaluates the symmetric function variables at JM elements, which act on $v_T$ by the contents of $\lambda$.
\end{proof}

\begin{example}
We know from Example \ref{char_sym_fn_example} that $f_{(1)} = e_1$. As $(1)$ is the reduced cycle type of a transposition, we get that sum of all transpositions acts on the Specht module $S^\lambda$ as multiplication by the sum of the contents of the boxes in the Young diagram $\lambda$. For a box $\square$ in the diagram of $\lambda$, let $row(\square)$ and $col(\square)$ be the number of the row and column containing $\square$ respectively. Then the sum of the contents of $\lambda$ is
\[
\sum_{\square \in \lambda} col(\square) - row(\square) = \sum_{\square \in \lambda} col(\square) - \sum_{\square \in \lambda} row(\square) = \sum_i {\lambda_i +1 \choose 2} - \sum_{j} {\lambda_j^\prime +1 \choose 2},
\]
where $\lambda_j^\prime$ are the lengths of the columns in the diagram of $\lambda$ (equivalently, the parts of the partition dual to $\lambda$). Here we have used the identity $1 + 2 + \cdots + m = {m +1 \choose 2}$ to sum each row/column. We have recovered the celebrated Frobenius formula \cite{Frobenius}, see also Example 7 of Section 1.7 of \cite{Macdonald}. 
\end{example}

\noindent
We are now able to give a proof of Nakayama's Conjecture, which is a characterisation of the $p$-blocks of $S_n$. Although it was first proved by Brauer and Robinson \cite{BrauerRobinson}, we take the simpler approach of Farahat and Higman.
\begin{theorem}[Farahat-Higman, \cite{FarahatHigman}] \label{nakayama_conjecture}
Two irreducible representations of $\mathbb{C}S_n$ are in the same $p$-block if and only if they are labelled by partitions with the same $p$-core.
\end{theorem}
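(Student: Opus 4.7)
The plan is to combine the blocks-via-central-characters criterion of Proposition \ref{block_prop} with the character formula of Theorem \ref{sym_gp_eval_thm} to reduce the block membership problem to a statement about multisets of contents modulo $p$, which then yields the $p$-core characterisation via the result recalled after Example \ref{2-core_example}.

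First I would observe that by Proposition \ref{block_prop}, the Specht modules $S^{\lambda_1}$ and $S^{\lambda_2}$ lie in the same $p$-block if and only if $\omega_\mu^{\lambda_1} \equiv \omega_\mu^{\lambda_2} \pmod{\pi}$ for every partition $\mu$ labelling a conjugacy class by reduced cycle type. By Theorem \ref{sym_gp_eval_thm}, this congruence can be rewritten as
\[
f_\mu\bigl(c(\square) : \square \in \lambda_1\bigr)\big|_{t=n} \equiv f_\mu\bigl(c(\square) : \square \in \lambda_2\bigr)\big|_{t=n} \pmod{\pi}
\]
for all $\mu$. Both sides are rational integers (each is a value of an integer-valued polynomial at integer inputs), so the congruence modulo $\pi$ coincides with the congruence modulo $p$ inside $\mathbb{F}_p \subset k$.

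Next I would use that by Definition \ref{char_sym_fun_def} and Theorem \ref{thm:FH_lambda_iso}, the character symmetric functions $\{f_\mu\}$ form an $\mathcal{R}$-basis of $\mathcal{R}\otimes\Lambda$. Expanding an arbitrary $F \in \Lambda \subset \mathcal{R}\otimes\Lambda$ in this basis as $F = \sum_\mu a_\mu(t) f_\mu$ with $a_\mu(t)\in\mathcal{R}$, we see that congruence of all $f_\mu$-values modulo $p$ forces $F\bigl(C_{\lambda_1}\bigr) \equiv F\bigl(C_{\lambda_2}\bigr) \pmod{p}$ for every symmetric function $F$, where $C_\lambda$ denotes the multiset of contents of $\lambda$. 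Conversely, since each $f_\mu$ is itself a $\mathbb{Z}$-linear combination (over $\mathcal{R}$, hence after specialising $t=n$, over $\mathbb{Z}$) of symmetric functions, the two conditions are equivalent. Taking $F = e_r$ for $r = 1, \ldots, n$, the resulting congruences say that the polynomials $\prod_{\square\in\lambda_i}(x - c(\square)) \in \mathbb{F}_p[x]$ agree for $i = 1,2$, which is equivalent to $C_{\lambda_1}$ and $C_{\lambda_2}$ having the same multiset of reductions modulo $p$.

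Finally, the paper has recorded (in the paragraph following Example \ref{2-core_example}) that two partitions of the same size have identical multisets of contents modulo $p$ if and only if they have the same $p$-core, completing the equivalence. The main obstacle is bookkeeping in the middle step: one must justify carefully that the basis $\{f_\mu\}$ of the integral ring $\mathcal{R}\otimes\Lambda$ translates an a priori infinite family of congruences on central characters into the single statement about elementary symmetric polynomials of contents over $\mathbb{F}_p$, and that this is strong enough to recover the multiset of contents modulo $p$ rather than some weaker invariant.
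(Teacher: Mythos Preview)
Your proposal is correct and follows essentially the same route as the paper: reduce via Proposition~\ref{block_prop} to congruences of integer-valued central characters modulo $p$, then use the isomorphism $\FH\cong\mathcal{R}\otimes\Lambda$ to reduce further to the elementary symmetric functions of the contents, whence unique factorisation in $\mathbb{F}_p[x]$ recovers the multiset of contents modulo $p$ and hence the $p$-core. The only cosmetic difference is that the paper bypasses your basis-expansion bookkeeping by observing that central characters are ring homomorphisms and therefore need only be compared on the algebra generators $e_r$ (the $\mathcal{R}$-part acting independently of $\lambda$), which dissolves the obstacle you flagged.
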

\begin{proof}
The definition of $p$-block makes reference to a $p$-modular system $(K, \mathcal{O}, k)$. Proposition \ref{block_prop} provides a way to determine the blocks in terms of central characters. Note that since $\mathcal{O}/(\pi) = k$ has characteristic $p$, $p \in \mathcal{O}$ is contained in the ideal $(\pi)$. Suppose that $a, b \in \mathbb{Z} \subseteq \mathcal{O}$ are integers viewed as elements of $\mathcal{O}$. Then if $p | a-b$, certainly $\pi | a-b$. However, if $\pi | a-b$, then $a-b$ is an integer divisible by $\pi$. But if $\pi$ divides an integer $m$ coprime to $p$, then by B\'{e}zout's identity, $\pi$ divides $\gcd(m,p)=1$, which is a contradiction as $\pi$ is not a unit. Thus $\pi | a-b$ implies $p | a-b$, and we conclude that the $a$ and $b$ are congruent modulo $\pi$ if and only if the are congruent modulo $p$.
\newline \newline \noindent
Theorem \ref{sym_gp_eval_thm} shows that the central characters of $S_n$ are integers because the value of an element of $\mathcal{R}$ at $n$ is by definition an integer, and evaluating symmetric polynomials with integer coefficients at integers (contents of $\lambda$) will give integers. So we are left to determine them modulo $\pi$, or equivalently, modulo $p$. The centre of the group algebra of $S_n$ is $ev_n(\mathcal{R} \otimes \Lambda)$. Because $\mathcal{R} \otimes \Lambda$ is generated by $\mathcal{R}$ and the elementary symmetric functions $e_i$, it is enough to consider the action of $ev_n(\mathcal{R})$ and $ev_n(e_i)$ on Specht modules $S^\lambda$. The action of $ev_n(\mathcal{R})$ is independent of the partition $\lambda$. To understand the action of the elementary symmetric polynomials, we assemble them into a generating function:
\[
\sum_i ev_n(e_i) t^i = 
\sum_i e_i(L_1, \ldots, L_n)t^i = \prod_{i=1}^n (1 + L_i t),
\]
which acts on a GZ basis vector $v_T$ ($T$ is a standard Young tableau of shape $\lambda$) by the scalar
\[
\prod_{i=1}^n (1 + c_T(i) t).
\]
By the unique factorisation of polynomials in $k[t]$, this generating function determines, and is determined by, the content of $\lambda$ viewed as elements of $k$, i.e. taken modulo $\mathrm{char}(k)=p$. Two irreducibles are in the same $p$-block if and only if they have the same central characters, which holds if and only if they are labelled by partitions with the same content modulo $p$, which holds if and only if the partitions have the same $p$-core. 
\end{proof}

\begin{remark}
Some authors (e.g. \cite{Wang1} and Example 25 of Section 1.7 of \cite{Macdonald}) consider the associated graded algebra of $\FH$ (with respect to the filtration defined immediately before Theorem \ref{thm:FH_lambda_iso}). One advantage of this is that the structure constants in the associated graded algebra are integers, rather than arbitrary elements of $\mathcal{R}$, so one may avoid working over $\mathcal{R}$ altogether. One disadvantage is that the maps $ev_n: \FH \to Z(\mathbb{Z}S_n)$ become maps to an associated graded version of the centre of the group algebra (where $X_\mu$ is in degree $|\mu|$). Although this obstructs applications to modular representation theory, it turns out to be the right thing to do in the setting of Hilbert schemes. In fact, this associated graded version of $Z(\mathbb{Z}S_n)$ is isomorphic to $H^*(\mathrm{Hilb}^n(\mathbb{A}_{\mathbb{C}}^2), \mathbb{Z})$, the cohomology ring (with $\mathbb{Z}$ coefficients) of the Hilbert scheme of $n$ points in the plane. This was shown in \cite{LehnSorger}. We will not discuss Hilbert schemes any further in this paper.
\end{remark}

\section{\texorpdfstring{$\Gamma_*$}{Gamma}-Weighted Symmetric Functions} \label{LambdaG_section}
\noindent
We now generalise the construction of the ring of symmetric functions in a way that incorporates $\Gamma_*$, which will define a ring $\LambdaG$ that will play a central role in this paper. Let $Q$ be the subring of $\mathbb{Z}\Gamma_*[x]$ consisting of polynomials whose constant term is a multiple of the identity (rather than an arbitrary element of $\mathbb{Z}\Gamma_*$). Consider the $n$-th tensor power (over $\mathbb{Z}$) of $Q$, which has an action of $S_n$ by permutation of tensor factors. As shorthand, for $c \in \mathbb{Z}\Gamma_*$ we write 
\[
x_i^r(c) = 1^{\otimes (i-1)} \otimes cx^r \otimes 1^{\otimes (n-i)}.
\]
This means that $x_i^r(c) x_i^s(c^\prime) = x_i^{r+s}(cc^\prime)$, and that $x_i^r(c) + x_i^r(c^\prime) = x_i^r(c+c^\prime)$.
Since $Q$ has a grading (inherited from $\mathbb{Z}\Gamma_*[x]$), there is a grading on the ring of $S_n$ invariants of $Q^{\otimes n}$. We write $\Lambda_n(\Gamma_*)$ for the $S_n$-invariants of $Q^{\otimes n}$, and $\Lambda_n^k(\Gamma_*)$ for the degree $k$ component of $\Lambda_n(\Gamma_*)$. As before, for $m>n$ there are homomorphisms $\rho_{m,n}: \Lambda_{m}^k(\Gamma) \to \Lambda_{n}^k(\Gamma)$ which evaluate evaluate the polynomial variables of all tensor factors past the $n$-th at zero. These define an inverse system, and we write
\[
\Lambda^k(\Gamma_*) = \varprojlim \Lambda_n^k(\Gamma_*).
\]
\begin{remark}
It may seem unnatural to work with the ring $Q$ rather than the full polynomial ring $\mathbb{Z}\Gamma_*[x]$. The reason we do this is that evaluating elements of $Q$ at zero yields an element of $\mathbb{Z}$ rather than $\mathbb{Z}\Gamma_*$, so the codomain of $\rho_{m,n}$ is
\[
\Lambda_n^k(\Gamma_*) \otimes \mathbb{Z}^{\otimes (m-n)} = \Lambda_n^k(\Gamma_*),
\]
rather than
\[
\Lambda_n^k(\Gamma_*) \otimes (\mathbb{Z}\Gamma_*)^{\otimes (m-n)},
\]
which, being different from $\Lambda_n^k(\Gamma_*)$, would not allow us to construct an inverse system. Later we will account for these ``missing'' constant terms (see Theorem \ref{r_gamma_hom_thm}), using the ring $\RGamma$ which is defined in Section \ref{rgamma_section}.
\end{remark}
\noindent
The ring $Q$ has a basis consisting of elements of the form $c x^j$ where $c \in \Gamma_*$ and $j \in \mathbb{Z}_{\geq 0}$ (we require $c = 1$ if $j=0$). Thus the ring $Q^{\otimes n}$ has a basis consisting of pure tensors in this basis of $Q$. We refer to such a pure tensor as a \emph{$\Gamma_*$-weighted monomial} and note that the $S_n$ action sends $\Gamma_*$-weighted monomials to other $\Gamma_*$-weighted monomials.
\begin{definition}
Let $\bs \lambda$ be a multipartition indexed by $\Gamma_*$. The \emph{$\Gamma_*$-weighted monomial symmetric polynomial}, $m_{\bs \lambda}(x_1, \ldots, x_n) \in Q^{\otimes n}$ is the sum of all $\Gamma_*$-weighted monomials in $Q^{\otimes n}$ that contain $c x^j$ with $j \geq 1$ as a tensor factor exactly $m_j(\bs \lambda(c))$ times. There is no restriction on the number of times $1$ may appear as a tensor factor.
\end{definition}
\noindent
It is immediate that the degree of $m_{\bs\lambda}(x_1, \ldots, x_n)$ is $|\bs\lambda|$. Since the $\Gamma_*$-weighted monomial symmetric polynomials are orbit sums for the $S_n$ action on our basis of $Q^{\otimes n}$, it follows that $m_{\bs\lambda}(x_1, \ldots, x_n)$ with $|\bs\lambda|=k$ span $\Lambda_{n}^k(\Gamma_*)$, and the nonzero ones form a basis of this space. Moreover $m_{\bs\lambda}(x_1, \ldots, x_n)$ is nonzero as soon as there are enough tensor factors to accommodate all the basis vectors prescribed by $\bs \lambda$, i.e. as soon as $n \geq l(\bs\lambda)$. Finally, we observe that
\[
\rho_{m,n}(m_{\bs\lambda}(x_1,\ldots, x_m)) = m_{\bs\lambda}(x_1,\ldots, x_n),
\]
so these elements define an element of the inverse limit $\Lambda^k(\Gamma_*)$.
\begin{definition}
The \emph{$\Gamma_*$-weighted monomial symmetric function} $m_{\bs\lambda}$ is the element of $\Lambda^k(\Gamma_*)$ defined by the sequence of elements $m_{\bs\lambda}(x_1, \ldots, x_n)$ as $n$ varies (here $k = |\bs\lambda|$).
\end{definition}
\noindent
It now follows that $\Lambda^k(\Gamma_*)$ is free as a $\mathbb{Z}$-module with basis $m_{\bs\lambda}$ indexed by all multipartitions $\bs\lambda$ of size $k$.
\begin{definition}
The ring of $\Gamma_*$-weighted symmetric functions is
\[
\LambdaG = \bigoplus_{k=0}^{\infty} \Lambda^k(\Gamma_*).
\]
\end{definition}
\noindent
Similarly to the case of ordinary symmetric functions, the fact that evaluating a polynomial variable at zero is a ring homomorphism implies that $\LambdaG$ inherits the structure of a graded ring. In fact, if $\Gamma$ is the trivial group, then $\LambdaG = \Lambda$. Just as for $\Lambda$, formal properties of inverse limits automatically give us ring homomorphisms
\[
\LambdaG \to \Lambda_n(\Gamma_*).
\]

\begin{example} \label{degree_2_example}
We demonstrate how to multiply two degree 1 elements of $\LambdaG$. Suppose that $c_r \in \Gamma_*$ is a conjugacy class. Then $m_{(1)_{c_r}}$ is the element of $\Lambda^1(\Gamma)$ defined by the sequence of elements $\sum_{i=1}^n x_i(c_r) \in \Lambda_n^1(\Gamma_*)$. So we must express products of such elements in terms of $\Gamma_*$-weighted monomial symmetric polynomials $m_{\bs\lambda}(x_1, \ldots, x_n)$. We find that for any number of variables,
\begin{eqnarray*}
m_{(1)_{c_r}}^2 &=& \left( \sum_i x_i(c_r) \right)^2 \\
&=& \sum_i x_i^2(c_r^2) +  \sum_{i \neq j} x_i(c_r) x_j(c_r) \\
&=& \sum_i \sum_{c_s \in \Gamma_*} A_{r,r}^s x_i^2(c_s) + 2 \sum_{i<j} x_i(c_r) x_j(c_r) \\
&=& \sum_{c_s \in \Gamma_*} A_{r,r}^s m_{(2)_{c_s}} + 2 m_{(1,1)_{c_r}},
\end{eqnarray*}
and therefore the equality between the first and last quantities may be interpreted as holding in $\Lambda^2(\Gamma_*)$. Similarly, if $c_r,c_s \in \Gamma_*$ are distinct conjugacy classes, then 
\begin{eqnarray*}
m_{(1)_x}^2 &=& \left( \sum_i x_i(c_r) \right)\left( \sum_j x_j(c_s) \right) \\
&=& \sum_i x_i^2(c_rc_s) +  \sum_{i \neq j} x_i(c_r) x_j(c_s) \\
&=& \sum_{c_t \in \Gamma_*} A_{r,s}^t m_{(2)_t} + 2 m_{(1)_{c_r}(1)_{c_s}}.
\end{eqnarray*}
\end{example}
\noindent
Analogously to how $\mathbb{Z}$-linear combinations of conjugacy-class sums and $\mathbb{Z}$-linear combinations of central idempotents in $\mathbb{C}\Gamma$ define different integral forms of $\mathbb{C}\Gamma_*$, the ring $\LambdaG$ is a different integral form of the $|\Gamma^*|$-th tensor power of $\Lambda$.
\begin{proposition} \label{complex_bas_change_prop}
We have
\[
\mathbb{C} \otimes \LambdaG = \mathbb{C} \otimes \Lambda^{\otimes |\Gamma^*|}.
\]
\end{proposition}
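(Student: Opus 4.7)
The plan is to exhibit the isomorphism by exploiting the change of basis in $\mathbb{C}\Gamma_*$ from conjugacy classes to central idempotents. Let $\{e_\chi : \chi \in \Gamma^*\}$ be the orthogonal central idempotents of $\mathbb{C}\Gamma$, so that $\mathbb{C}\Gamma_* = \bigoplus_\chi \mathbb{C} e_\chi$ and $e_\chi e_{\chi'} = \delta_{\chi,\chi'} e_\chi$. First I would observe that $\mathbb{C} \otimes Q$ admits the $\mathbb{C}$-basis $\{1\} \cup \{e_\chi x^j : \chi \in \Gamma^*,\, j \geq 1\}$. Running the constructions of Section \ref{LambdaG_section} verbatim using this $\Gamma^*$-indexed basis in place of the $\Gamma_*$-indexed one yields, for each $\Gamma^*$-indexed multipartition $\bs\mu$, a $\Gamma^*$-weighted analogue of $m_{\bs\mu}$; since this is merely a $\mathbb{C}$-linear change of basis on each tensor factor, the resulting family $\{m_{\bs\mu}\}_{\bs\mu \in \mathcal{P}(\Gamma^*)}$ is a $\mathbb{C}$-basis of $\mathbb{C} \otimes \LambdaG$.

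Next, for each $\chi \in \Gamma^*$ and $k \geq 1$, set
\[
p_k^\chi := \sum_i e_\chi x_i^k \in \mathbb{C} \otimes \LambdaG,
\]
which is well-defined by stability under the maps $\rho_{m,n}$. Since $\mathbb{C} \otimes \Lambda = \mathbb{C}[p_1, p_2, \ldots]$ is a free polynomial ring on the power sums, there is a unique $\mathbb{C}$-algebra homomorphism
\[
\Phi : \mathbb{C} \otimes \Lambda^{\otimes |\Gamma^*|} \to \mathbb{C} \otimes \LambdaG
\]
sending the $k$-th power sum in the $\chi$-th tensor factor to $p_k^\chi$.

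To show $\Phi$ is an isomorphism, I would argue it is a bijection between bases. For a partition $\mu$, let $m_\mu^\chi$ be the image under $\Phi$ of $m_\mu$ sitting in the $\chi$-th factor; this is the universal polynomial expressing $m_\mu$ in terms of power sums, evaluated on the $p_k^\chi$, and one checks it coincides with the $\Gamma^*$-weighted monomial symmetric function concentrated in type $\chi$ with entry $\mu$. The central computation is then that for any $\bs\mu \in \mathcal{P}(\Gamma^*)$,
\[
\Phi\Bigl(\bigotimes_\chi m_{\bs\mu(\chi)}\Bigr) \;=\; \prod_\chi m_{\bs\mu(\chi)}^\chi \;=\; m_{\bs\mu},
\]
where the second equality uses orthogonality: $e_\chi e_{\chi'} = 0$ for $\chi \neq \chi'$ annihilates every cross-term in the product in which two factors land on the same tensor position, so only the terms occupying pairwise distinct positions survive, and these assemble exactly into the orbit sum defining $m_{\bs\mu}$. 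Since both sides of the displayed equation range over $\mathbb{C}$-bases indexed by the same set $\mathcal{P}(\Gamma^*)$ (the tensor product of monomial-symmetric-function bases on the source, and the $\Gamma^*$-weighted monomial symmetric functions on the target), $\Phi$ is bijective on bases and hence an isomorphism.

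The main obstacle is the careful articulation of the two parallel $\mathbb{C}$-bases of $\mathbb{C} \otimes \LambdaG$ (one $\Gamma_*$-indexed, one $\Gamma^*$-indexed) and the verification of the orthogonality identity $\prod_\chi m_{\bs\mu(\chi)}^\chi = m_{\bs\mu}$; once these are in place, the remainder of the argument is a formal consequence of the universal property of the polynomial algebra on power sums.
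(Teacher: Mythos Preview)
Your proposal is correct and follows essentially the same route as the paper: both arguments pass to the idempotent basis $\{e_\chi\}$ of $\mathbb{C}\Gamma_*$, introduce the $\Gamma^*$-indexed monomial symmetric functions (the paper calls them $m_{\bs\lambda}^{irr}$), and use the orthogonality $e_\chi e_{\chi'}=\delta_{\chi,\chi'}e_\chi$ to establish the product decomposition $m_{\bs\mu}=\prod_\chi m_{\bs\mu(\chi)}^\chi$. The only cosmetic difference is that you build the map $\Phi$ explicitly via power sums and then verify it carries monomial bases to monomial bases, whereas the paper works directly with the monomial functions and reads off the tensor decomposition $\mathbb{C}\otimes\LambdaG=\bigotimes_\chi\Lambda(\chi)$ from the product formula before identifying each factor with $\mathbb{C}\otimes\Lambda$; the substantive computation is identical.
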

\begin{proof}
We use the fact that $\mathbb{C} \otimes \mathbb{Z}\Gamma_* = \mathbb{C} \Gamma_* = \mathbb{C}^{\Gamma^*}$, where standard basis vectors in $\mathbb{C}^{\Gamma^*}$ are the orthogonal central idempotents $e_\chi$ in the group algebra $\mathbb{C}\Gamma$ associated to irreducible representations $\chi \in \Gamma^*$. Then $e_\chi e_{\psi} = \delta_{\chi, \psi} e_{\chi}$. Working over $\mathbb{C}$ allows us to define elements $m_{\bs\lambda}^{irr} \in \LambdaG$ similar to the $\Gamma_*$-weighted monomial symmetric functions, but instead of taking sums of pure tensors in the basis $\Gamma_*$ of $\mathbb{C}\Gamma_*$, we use the basis $e_\chi$. So $x_i^r (e_\chi)$ form a basis of (the positive degree part of) $\mathbb{C} \otimes Q$, and $m_{\bs\lambda}^{irr}$ is constructed by taking a sum of a $S_n$-orbit of a pure tensor of elements of this basis. Hence $\bs\lambda \in \mathcal{P}(\Gamma^*)$ is indexed by $\Gamma^*$ rather than $\Gamma_*$. The upshot of this is that since 
\[
x_i^r(e_\chi) x_i^s( e_\psi) = \delta_{\chi, \psi} x_i^{r+s}(e_\chi),
\]
where $\delta_{\chi, \psi}$ is the Kronecker delta, terms corresponding to different $\chi \in \Gamma^*$ do not interact. More precisely, suppose that $\bs \lambda$ is a multipartition and let ${\bs \lambda}_\chi = {\bs \lambda}(\chi)_{\chi}$ be the multipartition concentrated in type $\chi$ taking the value $\bs \lambda(\chi)$. Then we have
\[
m_{\bs\lambda}^{irr} = \prod_{\chi \in \Gamma^*} m_{{\bs\lambda}_\chi}^{irr}.
\]
This is because each monomial in $m_{\bs\lambda}^{irr}$ arises uniquely as a product of monomials from each $m_{{\bs\lambda}_\chi}^{irr}$, each involving distinct variables. Conversely, any product of monomials from $m_{{\bs\lambda}_\chi}^{irr}$ involving the same variable for different $\chi$ is zero because $x_i^r(e_\chi) x_i^s( e_\psi) = \delta_{\chi, \psi} x_i^{r+s}(e_\chi)$. This immediately shows that
\[
\mathbb{C} \otimes \LambdaG = \bigotimes_{\chi \in \Gamma^*} \Lambda(\chi),
\]
where $\Lambda(\chi)$ has $\mathbb{C}$-basis $m_{\bs \lambda}^{irr}$, where $\bs\lambda$ is concentrated in type $\chi$. Finally, it remains to observe that $\Lambda(\chi)$ is a ring isomorphic to $\mathbb{C} \otimes \Lambda$ via the $\mathbb{C}$-linear map taking $m_{\bs\lambda}^{irr}$ to $m_{\bs\lambda(\chi)}$. This is clearly a bijection. To see that it respects multiplication note that the multiplication in $\Lambda(\chi)$ is determined by the following rule for products in $\mathbb{C} \otimes Q$: $x_i^r(e_\chi) x_i^s( e_\chi) = x_i^{r+s}(e_\chi)$. In $\Lambda$ the corresponding relation reads $x_i^r \cdot x_i^s = x_i^{r+s}$.
\end{proof}

\begin{example}
Suppose that $\Gamma = C_2 = \{1, \gamma\}$ is the cyclic group of order two (so $\gamma^2=1$). We show that $\Lambda({C_2}_*)$ is not isomorphic to $\Lambda \otimes \Lambda$ as a graded ring by considering the module of indecomposables. Suppose that $A = \bigoplus_{i=0}^\infty A_i$ is a graded algebra and $I = \bigoplus_{i=1}^\infty A_i$ is the corresponding augmentation ideal. The \emph{module of indecomposables} is defined to be $I/I^2$, which is a (graded) module for $A/I$. We compute the degree two component of $I/I^2$ when $A$ is either $\Lambda({C_2}_*)$ or $\Lambda \otimes \Lambda$.
\newline \newline \noindent
Recall that $\Lambda = \mathbb{Z}[e_1, e_2, \ldots]$. If $A = \Lambda \otimes \Lambda$, then the degree $1$ component of $A$ has basis $e_1 \otimes 1, 1 \otimes e_1$, while the degree $2$ component has basis $e_2 \otimes 1, e_1^2 \otimes 1, e_1 \otimes e_1, 1 \otimes e_2, 1 \otimes e_1^2$. Then the degree 2 component of $I^2$ has basis $e_1^2 \otimes 1, e_1 \otimes e_1, 1 \otimes e_1^2$. We conclude that the degree $2$ component of the module of indecomposables of $A$ is $\mathbb{Z}^2$.
\newline \newline \noindent
Analogously, for $A = \Lambda({C_2}_*)$, the degree $1$ component has basis $m_{(1)_1}, m_{(1)_\gamma}$, while the degree $2$ component has basis $m_{(1,1)_1}, m_{(2)_1}, m_{(1)_1 (1)_\gamma}, m_{(1,1)_\gamma}, m_{(2)_\gamma}$. The degree $2$ component of $I^2$ is spanned by 
\begin{eqnarray*}
m_{(1)_1}^2 &=& m_{(2)_1} + 2m_{(1,1)_1} \\
m_{(1)_1}m_{(1)_\gamma} &=&  m_{(2)_\gamma} + m_{(1)_1 (1)_\gamma} \\
m_{(1)_\gamma}^2 &=& m_{(2)_1} + 2 m_{(1,1)_\gamma},
\end{eqnarray*}
which we computed in Example \ref{degree_2_example}. From this we see that the degree $2$ component of $I/I^2$ is isomorphic to $\mathbb{Z}^{2} \oplus \mathbb{Z}/2\mathbb{Z}$. We conclude that $\Lambda({C_2}_*) \neq \Lambda^{\otimes 2}$. We discuss the algebraic structure of $\LambdaG$ further in the appendix.
\end{example}

\section{Wreath-Product Farahat-Higman Algebras} \label{rgamma_section}
\noindent
We now extend the results of Section \ref{FH_theory_section} from the symmetric group $S_n$ to the wreath products $\Gamma \wr S_n = \Gamma ^n \rtimes S_n$, where $\Gamma$ is a finite group. There is a wreath-product version of the Farahat-Higman algebra, which is also related to the ring of symmetric functions. The homomorphisms $\Phi_n$, $ev_n$, $\Psi$ all generalise to the wreath-product setting. We use the same notation for the wreath-product versions of these maps because taking $\Gamma$ to be the trivial group (so that $\Gamma \wr S_n = S_n$), we recover the maps from the Section \ref{FH_theory_section}.
\newline \newline \noindent
Recall that the conjugacy classes of $\Gamma \wr S_n$ correspond to multipartitions of total size $n$ indexed by $\Gamma_*$. We use the boldface Greek letters $\bs \lambda, \bs \mu, \bs \nu$ to indicate multipartitions. Thus $\bs{\mu}(c)$ means the partition in $\bs \mu$ indexed by $c \in \Gamma_*$. Similarly to symmetric groups, we have a notion of reduced cycle type. 
\begin{definition}[Wang, Subsection 2.3 \cite{Wang1}]
The \emph{partially-reduced cycle type} of an element of $\Gamma \wr S_n$ of cycle type $\bs{\mu}$ is the multipartition obtained by subtracting $1$ from each part of $\bs \mu(1)$, and ignoring any resulting parts of size zero. 
\end{definition}
\noindent
This is the same as the case of symmetric groups, but applied only to the partition labelled by the identity conjugacy class. The group $\Gamma \wr S_n$ contains an element of partially-reduced cycle type $\bs \mu$ if and only if $n \geq |\bs \mu| + l(\bs \mu(1))$. Later in this section we will need to introduce the notion of \emph{fully-reduced cycle type} which will involve reducing each partition in $\bs \mu$ (this is why we do not use the terminology of ``modified type'' from \cite{Wang1}).
\newline \newline \noindent
The centre of the integral group ring of $\Gamma \wr S_n$ has a $\mathbb{Z}$-basis consisting of conjugacy class sums. We let $X_{\bs \mu}$ be the sum of all elements of partially-reduced cycle type $\bs \mu$ (which if zero if there are no such elements). Then we have a direct analogue of Theorem \ref{thm:FH_alg_polys}.

\begin{theorem}[Wang, Theorem 2.13 \cite{Wang1}] \label{FH_g_alg_polys_thm}
For any multipartitions $\bs \lambda, \bs\mu, \bs\nu$, there exists a unique integer-valued polynomial $\phi_{\bs\mu, \bs\nu}^{\bs\lambda}(n)$ such that the equation
\[
X_{\bs\mu} X_{\bs\nu} = \sum_{\bs\lambda} \phi_{\bs\mu, \bs\nu}^{\bs\lambda}(n)X_{\bs\lambda}
\]
holds in $Z(\mathbb{Z} \Gamma \wr S_n)$ for all $n \in \mathbb{Z}_{\geq 0}$.
\end{theorem}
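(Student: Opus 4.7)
The plan is to adapt the Farahat--Higman argument (Theorem \ref{thm:FH_alg_polys}) from $S_n$ to $\Gamma \wr S_n$. Uniqueness is immediate: an integer-valued polynomial is determined by its values on any infinite subset of $\mathbb{Z}$, so at most one $\phi_{\bs\mu,\bs\nu}^{\bs\lambda}\in \mathcal{R}$ can satisfy the claimed equation for all sufficiently large $n$. For existence, fix $n$ large enough that $\Gamma \wr S_n$ contains an element of partially-reduced cycle type $\bs\lambda$, and fix one such representative $y$. Since $\Gamma \wr S_n$ acts transitively on its conjugacy class $C_{\bs\lambda}$ by conjugation, the structure constant equals
\[
\phi_{\bs\mu,\bs\nu}^{\bs\lambda}(n) = \#\{(a,b) \in C_{\bs\mu} \times C_{\bs\nu} : ab = y\},
\]
where $C_{\bs\mu}, C_{\bs\nu}$ are the conjugacy classes of partially-reduced types $\bs\mu, \bs\nu$.

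The key notion is the \emph{support} of an element $(\mathbf{g}, \sigma) \in \Gamma\wr S_n$: the set of indices $i \in \{1,\ldots,n\}$ with $\sigma(i) \neq i$ or $\mathbf{g}_i \neq 1$. Direct enumeration of cycles shows that an element of partially-reduced cycle type $\bs\mu$ has support of size exactly $k(\bs\mu) := |\bs\mu| + l(\bs\mu(1))$, matching the condition for existence of such an element in $\Gamma \wr S_n$. A quick check using the wreath-product multiplication $(\mathbf{g}, \sigma)(\mathbf{h}, \rho) = (\mathbf{g}\,\sigma(\mathbf{h}), \sigma\rho)$ establishes that $\mathrm{supp}(ab) \subseteq \mathrm{supp}(a) \cup \mathrm{supp}(b)$. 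Consequently, in any factorization $y = ab$ as above, the set $S := \mathrm{supp}(a) \cup \mathrm{supp}(b)$ contains $\mathrm{supp}(y)$ and satisfies $|S| \leq k(\bs\mu) + k(\bs\nu)$.

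Partition the factorizations according to $S$. The central claim is that the number $M_S$ of factorizations with $\mathrm{supp}(a) \cup \mathrm{supp}(b) = S$ depends only on $|S|$ (and on $\bs\lambda,\bs\mu,\bs\nu$). The reason is that any $\pi \in S_n$ fixing $\mathrm{supp}(y)$ pointwise centralises $y$ in $\Gamma \wr S_n$: the permutation $\sigma$ preserves $\mathrm{supp}(y)$ setwise (cycles of $\sigma$ of length $\geq 2$ lie entirely inside the support), so $\pi\sigma\pi^{-1} = \sigma$, and $\pi(\mathbf{g}) = \mathbf{g}$ since $\mathbf{g}_i = 1$ for $i \notin \mathrm{supp}(y)$. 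Conjugation by such $\pi$ therefore bijects factorizations with support $S$ onto factorizations with support $\pi(S)$. Since $S_n$ acts transitively on subsets of $\{1,\ldots,n\}$ of given size containing $\mathrm{supp}(y)$, the count reduces to a single value $M_k := M_S$ depending only on $k = |S|$ and the three cycle types. The number of such subsets of size $k$ is $\binom{n - k(\bs\lambda)}{k - k(\bs\lambda)}$, hence
\[
\phi_{\bs\mu,\bs\nu}^{\bs\lambda}(n) = \sum_{k = k(\bs\lambda)}^{k(\bs\mu) + k(\bs\nu)} \binom{n - k(\bs\lambda)}{k - k(\bs\lambda)} M_k.
\]
Each binomial coefficient lies in $\mathcal{R}$, so the right-hand side defines an element of $\mathcal{R}$, as required. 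By construction this formula matches the structure constant whenever $n \geq k(\bs\lambda)$, and for smaller $n$ the identity holds trivially because $X_{\bs\lambda} = 0$ in $Z(\mathbb{Z}\Gamma \wr S_n)$.

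The main obstacle is the invariance claim that $M_S$ depends only on $|S|$. This requires verifying that $\sigma$ really does preserve $\mathrm{supp}(y)$ setwise and that the centraliser action described above is transitive on the relevant subsets; it also tacitly requires $M_k$ to be independent of the chosen representative $y \in C_{\bs\lambda}$, which follows from the conjugation action on $C_{\bs\lambda}$. Beyond this, the argument is formal, reducing polynomial dependence on $n$ to the classical fact that shifted binomials $\binom{t-a}{b-a}$ lie in $\mathcal{R}$.
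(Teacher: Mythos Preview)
The paper does not give its own proof of this statement; it is quoted from Wang \cite{Wang1}. Your argument is the standard Farahat--Higman support argument transplanted to $\Gamma\wr S_n$, and it is essentially correct---indeed, the ``support'' you introduce is exactly what the paper later calls the set of indices \emph{affected} by an element (see the proof of the freeness theorem in Section~\ref{rgamma_section}), and the paper verifies the same two facts you use: that an element of partially-reduced type $\bs\mu$ affects exactly $|\bs\mu|+l(\bs\mu(1))$ indices, and that $\mathrm{supp}(ab)\subseteq\mathrm{supp}(a)\cup\mathrm{supp}(b)$.

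One point deserves a sentence you did not write: you must know that $M_k$ is independent of $n$, not only of the choice of $y$. This is immediate once observed, since any factorization with $\mathrm{supp}(a)\cup\mathrm{supp}(b)=S$ lives entirely in the copy of $\Gamma\wr\mathrm{Sym}(S)\cong\Gamma\wr S_k$ sitting inside $\Gamma\wr S_n$, so the count sees only $k$ and not $n$; but without this remark the displayed formula is not obviously a polynomial in $n$. Also, your closing sentence is phrased misleadingly: for small $n$ the identity does not ``hold trivially because $X_{\bs\lambda}=0$''; rather, for every $\bs\lambda$ with $X_{\bs\lambda}\neq 0$ (equivalently $n\ge k(\bs\lambda)$) your counting argument already applies verbatim, and the terms with $X_{\bs\lambda}=0$ simply contribute nothing to the sum.
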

\noindent
This allows us to define the an algebra analogous to $\FH$, but for wreath products.


\begin{definition} 
Let $\FHG$ be the free $\mathcal{R}$-module with basis $K_{\bs \mu}$ indexed by all multipartitions $\bs \mu \in \mathcal{P}(\Gamma_*)$. Define a bilinear multiplication on $\FHG$ by
\[
K_{\bs \mu} K_{\bs \nu} = \sum_{\lambda} \phi_{\bs\mu, \bs\nu}^{\bs\lambda}(t) K_{\bs \lambda}
\]
where $\phi_{\bs\mu, \bs\nu}^{\bs\lambda}(t)$ are the polynomials from Theorem \ref{FH_g_alg_polys_thm}, viewed as elements of the base ring $\mathcal{R}$.
\end{definition}
\noindent
As in the case of $\FH$, $\FHG$ is a commutative, associative, unital $\mathcal{R}$-algebra, and has specialisation homomorphisms.
\begin{theorem} [Wang, Section 2.5 \cite{Wang1}] \label{thm:FH_g_spec_hom}
For each $n \in \mathbb{Z}_{\geq 0}$ there is a surjective ring homomorphism $\Phi_n: \FHG \to Z(\mathbb{Z}\Gamma \wr S_n)$ defined by
\[
\Phi_n \left( \sum_{\bs \mu} a_{\bs \mu}(t) K_{\bs\mu} \right) = \sum_{\bs \mu} a_{\bs \mu}(n) X_{\bs \mu},
\]
where $a_{\bs\mu}(t) \in \mathcal{R}$, and $X_{\bs\mu}$ is the sum of all elements of $\Gamma \wr S_n$ of partially-reduced cycle type $\mu$ (or zero, if there are no such elements).
\end{theorem}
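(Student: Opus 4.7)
\noindent The plan is to reduce the theorem directly to Theorem \ref{FH_g_alg_polys_thm}, which packages the hard combinatorial fact that structure constants for the partially-reduced basis are integer-valued polynomials in $n$. The three properties to check --- well-definedness, multiplicativity, and surjectivity --- each follow essentially formally from that result together with the construction of $\FHG$.

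\medskip
\noindent First I would note that $\FHG$ is, by construction, a free $\mathcal{R}$-module on the basis $\{K_{\bs\mu}\}_{\bs\mu \in \mathcal{P}(\Gamma_*)}$, and that evaluation at $t = n$ defines a ring homomorphism $\mathrm{ev}_n: \mathcal{R} \to \mathbb{Z}$. These two facts imply that the formula in the statement unambiguously defines a $\mathbb{Z}$-linear map $\Phi_n: \FHG \to Z(\mathbb{Z}\Gamma \wr S_n)$, which is in addition $\mathcal{R}$-linear when the target is given the $\mathcal{R}$-module structure induced by $\mathrm{ev}_n$. For multiplicativity it suffices to check the identity on basis pairs: by the defining product in $\FHG$ and the formula for $\Phi_n$,
\[
\Phi_n(K_{\bs\mu} K_{\bs\nu}) = \sum_{\bs\lambda} \phi_{\bs\mu,\bs\nu}^{\bs\lambda}(n) X_{\bs\lambda},
\]
and Theorem \ref{FH_g_alg_polys_thm} says precisely that this equals $X_{\bs\mu} X_{\bs\nu} = \Phi_n(K_{\bs\mu}) \Phi_n(K_{\bs\nu})$ in $Z(\mathbb{Z}\Gamma \wr S_n)$. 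The unit axiom is immediate, as $K_{\bs\varnothing}$ maps to $X_{\bs\varnothing}$, which is the identity of $\Gamma \wr S_n$.

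\medskip
\noindent For surjectivity, I would observe that partial reduction is a bijection between $\Gamma_*$-indexed multipartitions $\bs\nu$ of size $n$ (i.e.\ cycle types of elements of $\Gamma \wr S_n$) and those multipartitions $\bs\mu$ satisfying $n \geq |\bs\mu| + l(\bs\mu(1))$: the inverse adds $1$ to each part of $\bs\mu(1)$ and then appends $n - |\bs\mu| - l(\bs\mu(1))$ parts of size $1$ to $\bs\mu(1)$, leaving the other constituent partitions unchanged. Hence the nonzero $X_{\bs\mu}$ coincide with the conjugacy class sums of $\Gamma \wr S_n$, which form a $\mathbb{Z}$-basis of $Z(\mathbb{Z}\Gamma \wr S_n)$, and each such sum equals $\Phi_n(K_{\bs\mu})$. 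There is no substantive obstacle in the theorem itself; the only combinatorial content that could plausibly give trouble has been absorbed into Theorem \ref{FH_g_alg_polys_thm}, and the rest is bookkeeping about partially-reduced cycle types.
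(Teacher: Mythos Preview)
Your proposal is correct and matches the paper's treatment: the paper does not give a standalone proof of this theorem but cites Wang and remarks (just before the statement) that ``as in the case of $\FH$'' the algebra $\FHG$ has specialisation homomorphisms, the symmetric-group case having already been described as routine once the polynomial-structure-constants theorem is in hand. Your argument --- reducing multiplicativity to Theorem~\ref{FH_g_alg_polys_thm} and surjectivity to the bijection between cycle types and partially-reduced cycle types satisfying $n \geq |\bs\mu| + l(\bs\mu(1))$ --- is exactly this routine verification spelled out.
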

\noindent
As we now show, the family of homomorphisms $\Phi_n$ separates elements of $\FHG$. So to prove an identity in $\FHG$, it is enough to pass to $Z(\mathbb{Z}\Gamma \wr S_n)$ by applying $\Phi_n$.
\begin{proposition} \label{specialisation_distinguishing_prop}
Suppose $x,y \in \FHG$ sayisfy $\Phi_n(x) = \Phi_n(y)$ for all sufficiently large positive integers $n$. Then $x=y$.
\end{proposition}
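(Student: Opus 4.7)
The plan is to consider the difference $z = x - y \in \FHG$ and show that the hypothesis $\Phi_n(z) = 0$ for all sufficiently large $n$ forces $z = 0$. Writing $z$ in the basis of $\FHG$ over $\mathcal{R}$ gives a \emph{finite} sum
\[
z = \sum_{\bs\mu \in S} a_{\bs\mu}(t) K_{\bs\mu},
\]
where $S \subset \mathcal{P}(\Gamma_*)$ is a finite set of multipartitions and each $a_{\bs\mu}(t) \in \mathcal{R}$ is a nonzero integer-valued polynomial (we may discard $\bs\mu$ with $a_{\bs\mu} = 0$). The goal is to show $S = \varnothing$.

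The first step is to observe that once $n$ is large enough (concretely, once $n \geq \max_{\bs\mu \in S} (|\bs\mu| + l(\bs\mu(1)))$), every partially-reduced cycle type $\bs\mu \in S$ is realised by some element of $\Gamma \wr S_n$, so every $X_{\bs\mu}$ appearing in $\Phi_n(z)$ is a \emph{nonzero} conjugacy-class sum. Moreover, distinct partially-reduced cycle types $\bs\mu \neq \bs\nu$ give rise to distinct cycle types in $\Gamma \wr S_n$ (since from the partially-reduced cycle type and the value of $n$ one recovers the full cycle type uniquely), hence to disjoint conjugacy classes. Therefore the $\{X_{\bs\mu}\}_{\bs\mu \in S}$ are part of the $\mathbb{Z}$-basis of $Z(\mathbb{Z}\Gamma \wr S_n)$ consisting of conjugacy-class sums, and in particular are $\mathbb{Z}$-linearly independent.

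Now apply $\Phi_n$. By Theorem \ref{thm:FH_g_spec_hom},
\[
\Phi_n(z) = \sum_{\bs\mu \in S} a_{\bs\mu}(n) X_{\bs\mu}.
\]
For every $n$ exceeding the bound above, the hypothesis gives $\Phi_n(z) = 0$, and the linear independence just established forces $a_{\bs\mu}(n) = 0$ for every $\bs\mu \in S$. Thus each $a_{\bs\mu}(t)$ is an element of $\mathcal{R} \subset \mathbb{Q}[t]$ vanishing on all sufficiently large positive integers, hence has infinitely many roots, hence is the zero polynomial. This contradicts the assumption that the $a_{\bs\mu}$ were nonzero, so $S = \varnothing$ and $z = 0$, as required.

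There is no genuine obstacle: the only point to verify with care is that for $n$ sufficiently large, the finitely many $X_{\bs\mu}$ in play are simultaneously nonzero and pairwise distinct as conjugacy-class sums, which is immediate from the realisability condition $n \geq |\bs\mu| + l(\bs\mu(1))$ and the fact that the partially-reduced cycle type together with $n$ determines the full cycle type.
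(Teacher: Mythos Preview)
Your proof is correct and follows essentially the same approach as the paper: both arguments use that for $n$ large enough the $X_{\bs\mu}$ appearing are nonzero and linearly independent (being distinct conjugacy-class sums), so that $\Phi_n(z)=0$ forces each coefficient $a_{\bs\mu}(n)$ to vanish for infinitely many $n$, whence each $a_{\bs\mu}(t)\in\mathcal{R}\subset\mathbb{Q}[t]$ is the zero polynomial. The only cosmetic difference is that the paper phrases it as ``$\Phi_n(z)$ determines $a_{\bs\mu}(n)$ on a Zariski-dense set'' while you set it up as a contradiction on a finite support set $S$.
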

\begin{proof}
Consider an element
\[
z = \sum_{\bs \mu} a_{\bs \mu}(t) K_{\bs\mu}.
\]
Then
\[
\Phi_n \left(z \right) = \sum_{\bs \mu} a_{\bs \mu}(n) X_{\bs \mu},
\]
where $X_{\bs \mu}$ is nonzero provided $n \geq |\bs \mu| + l(\bs \mu(1))$, and such nonzero elements form a basis of $Z(\mathbb{Z}\Gamma \wr S_n)$. This means that $\Phi_n(z)$ determines $a_{\bs \mu}(n)$ for all $n$ sufficiently large, which is a Zariski-dense subset of $\mathbb{Z}$. Hence $\Phi_n(z)$ determines each coefficient $a_{\bs \mu}(t) \in \mathcal{R}$, and so $\Phi_n(z)$ determines $z$ itself. 
\end{proof}
\noindent
We have introduced $\FHG$ as a $\mathcal{R}$-algebra. However, it will turn out that it is better to view is as an algebra over a $\Gamma_*$-weighted version of $\mathcal{R}$, which we now work towards defining.
\begin{definition}
Consider formal power series in variables $x_c$ indexed by $c \in \Gamma_*$. We encode monomials in the $x_c$ using functions $\mathbf{N}: \Gamma_* \to \mathbb{Z}_{\geq 0}$ that record the exponent of each variable:
\[
x^{\mathbf{N}} = \prod_{c \in \Gamma_*} x_c^{\mathbf{N}(c)}.
\]
Additionally, let $|\mathbf{N}| = \sum_c \mathbf{N}(c)$, so that $\deg(x^{\mathbf{N}}) = |\mathbf{N}|$.
\end{definition}
\noindent
Note that the set of such functions $\mathbf{N}$ is $\mathbb{Z}_{\geq 0}^{\Gamma_*}$. 
\begin{definition}
Let $\mathfrak{g} = \mathbb{Q}\Gamma_* = Z(\mathbb{Q}\Gamma)$, viewed as a Lie algebra via the associative multiplication (it is abelian). There is a canonical map $\iota: \mathfrak{g} \to \mathcal{U}(\mathfrak{g})$ from $\mathfrak{g}$ to its universal enveloping algebra, which we extend to formal power series coefficientwise. We define
\[
T: \mathfrak{g}[[x_{c_1}, \ldots, x_{c_l}]] \to \mathcal{U}(\mathfrak{g})[[x_{c_1},  \ldots, x_{c_l}]]
\]
via
\[
T\left(
\sum_{\mathbf{N} \in \mathbb{Z}_{\geq 0}^{\Gamma_*}} u_{\mathbf{N}} x^{\mathbf{N}}
\right) = \sum_{\mathbf{N} \in \mathbb{Z}_{\geq 0}^{\Gamma_*}} \iota(u_{\mathbf{N}}) x^{\mathbf{N}}.
\]
where $u_{\mathbf{N}} \in \mathfrak{g}$.
\end{definition}
\begin{remark}
The map $T$ allows us to distinguish between the associative multiplication in $\mathfrak{g}$, and the multiplication in the universal enveloping algebra. For example, if $u, v \in \mathfrak{g}$, then $uv \in \mathfrak{g}$, while $T(u)T(v)$ is an element of $\mathcal{U}(\mathfrak{g})$ that lies in degree two with respect to the Poincar\'{e}-Birkhoff-Witt (PBW) filtration. It may seem peculiar to consider the universal enveloping algebra of an abelian Lie algebra when the result is simply the symmetric algebra of the Lie algebra. The ring we are about to define, $\RGamma$, is actually a distribution algebra, and so the universal enveloping algebra is actually the natural place to construct it. This is discussed further in the appendix. There is also similar situation when interpolating Grothendieck rings of wreath products (rather than centres of group algebras of wreath products) where $\mathfrak{g}$ is the Grothendieck ring of a tensor category. In that case, $\mathfrak{g}$ is not necessarily abelian and the universal enveloping algebra is the correct construction. The interested reader is directed to \cite{Ryba1} and \cite{Ryba2}.
\end{remark}
\begin{definition} \label{rgamma_constr_defn}
Let $\Omega \in \mathcal{U}(\mathfrak{g})[[x_{c_1}, \ldots, x_{c_l}]]$ be the generating function defined as follows,
\[
\Omega(x_{c_1}, \ldots, x_{c_l}) = \exp\left(T\left(\log\left(1 + \sum_{c \in \Gamma_*} c x_c\right)\right)\right).
\]
Here $\log\left(1 + \sum_{c \in \Gamma_*} c x_c\right)$ is viewed as an element of $\mathfrak{g}[[x_{c_1}, x_{c_2}, \ldots, x_{c_l}]]$ (note that expanding the power series involves the associative algebra structure of $\mathfrak{g}$, not just the Lie algebra structure). Writing $\Omega$ in terms of monomials, we let $B_{\mathbf{N}} \in U(\mathfrak{g})$ be the coefficients:
\[
\Omega(x_{c_1}, \ldots, x_{c_l}) = \sum_{\mathbf{N} \in \mathbb{Z}_{\geq 0}^{\Gamma_*}} B_{\mathbf{N}} x^{\mathbf{N}}.
\]
We let $\RGamma$ be the $\mathbb{Z}$-submodule of $\mathcal{U}(\mathfrak{g})$ spanned by all the $B_{\mathbf{N}}$, and we call $\RGamma$ \emph{the ring of $\Gamma_*$-weighted integer-valued polynomials}.
\end{definition}
\noindent
We will shortly show that $\RGamma$ is indeed a ring, but first, the following example justifies the analogy to integer-valued polynomials.
\begin{example} \label{trivial_rg_example}
Suppose that $\Gamma$ is the trivial group, so that there is only one conjugacy class $c$, and it obeys $c^2 = c$. Then,
\begin{eqnarray*}
\log\left(1 + \sum_{c \in \Gamma_*} c x_c\right) &=& 
\sum_{i\geq 1} \frac{(-1)^{i-1}}{i} c^i x_c^i \\
&=& c \sum_{i\geq 1} \frac{(-1)^{i-1}}{i} x_c^i \\
&=& c \log(1+x_c).
\end{eqnarray*}
Hence $\Omega$ becomes
\begin{eqnarray*}
\Omega(x_{c}) &=& \exp(T(c \log(1+x_c))) \\
&=& \exp(\log(1+x_c) T(c)) \\
&=& (1+x_c)^{T(c)} \\
&=& \sum_{N \geq 0} {T(c) \choose N} x_c^N.
\end{eqnarray*}
So the $B_{\mathbf{N}}$ are simply binomial coefficients (with parameter $T(c)$) and $\RGamma = \mathcal{R}$.
\end{example}

\begin{proposition} \label{theta_product_prop}
We have the following relation:
\[
\Omega(x_{c_1}, \ldots, x_{c_l}) \Omega(y_{c_1}, \ldots, y_{c_l}) = \Omega(z_{c_1}, \ldots, z_{c_l}),
\]
where
\[
z_{c_i} = x_{c_i} + y_{c_i} + \sum_{j,k}A_{j,k}^{i}x_{c_j} y_{c_k}.
\]
\end{proposition}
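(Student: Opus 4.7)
The plan is to verify the identity by factoring the argument of the exponential. Let $X=\sum_{c\in\Gamma_*}c\,x_c$ and $Y=\sum_{c\in\Gamma_*}c\,y_c$, viewed as elements of $\mathfrak{g}[[x_{c_1},\ldots,x_{c_l},y_{c_1},\ldots,y_{c_l}]]$, with no constant term so that $\log(1+X)$ and $\log(1+Y)$ make sense as formal power series. Compute the product $(1+X)(1+Y)$ using the \emph{associative} multiplication of $\mathfrak{g}=\mathbb{Q}\Gamma_*$, where $c_j c_k=\sum_i A_{j,k}^i c_i$. The result is
\[
(1+X)(1+Y)=1+\sum_{i}c_i\Bigl(x_{c_i}+y_{c_i}+\sum_{j,k}A_{j,k}^{i}x_{c_j}y_{c_k}\Bigr)=1+\sum_i c_i\,z_{c_i}.
\]

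Next, exploit that $\mathfrak{g}$ is a commutative associative ring (hence its formal power series ring is too), so the usual identity $\log(ab)=\log(a)+\log(b)$ holds for any $a,b$ of the form $1+(\text{positive degree})$. Applying it to $a=1+X$ and $b=1+Y$ gives
\[
\log\Bigl(1+\sum_i c_i\,z_{c_i}\Bigr)=\log(1+X)+\log(1+Y)
\]
as an identity in $\mathfrak{g}[[x_c,y_c]]$.

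Now apply $T$. Since $T$ is $\mathbb{Q}$-linear in the coefficients and acts trivially on the monomials $x^\mathbf{N}y^\mathbf{M}$, it preserves sums, so
\[
T\!\left(\log\Bigl(1+\sum_i c_i\,z_{c_i}\Bigr)\right)=T(\log(1+X))+T(\log(1+Y))
\]
in $\mathcal{U}(\mathfrak{g})[[x_c,y_c]]$. Finally, exponentiate. Because $\mathfrak{g}$ is abelian as a Lie algebra, $\mathcal{U}(\mathfrak{g})$ is a commutative ring, so any two formal power series over it commute; in particular, for commuting series $A,B$ of positive order we have $\exp(A+B)=\exp(A)\exp(B)$. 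This yields
\[
\Omega(z_{c_1},\ldots,z_{c_l})=\exp\bigl(T(\log(1+X))\bigr)\,\exp\bigl(T(\log(1+Y))\bigr)=\Omega(x_{c_1},\ldots,x_{c_l})\,\Omega(y_{c_1},\ldots,y_{c_l}),
\]
which is the desired identity.

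The only real subtlety, and what to guard against in the write-up, is the interplay between the two different multiplications involved: one must perform the product $(1+X)(1+Y)$ and take the logarithm \emph{before} applying $T$, inside the associative algebra $\mathfrak{g}$, since $T(uv)\neq T(u)T(v)$ in general. Once we have passed through $T$, the abelianness of $\mathfrak{g}$ as a Lie algebra guarantees commutativity of $\mathcal{U}(\mathfrak{g})$, which is precisely what is needed to split the final exponential into a product. Convergence is automatic in the formal power series setting because each $x_{c_i},y_{c_i}$ has positive degree.
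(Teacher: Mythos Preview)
Your proof is correct and follows essentially the same approach as the paper's: both compute $(1+X)(1+Y)=1+\sum_i c_i z_{c_i}$ in the associative algebra $\mathfrak{g}$, use $\log(ab)=\log a+\log b$ there, pass through the linear map $T$, and then split the exponential using commutativity of $\mathcal{U}(\mathfrak{g})$. Your added discussion of why the order of operations (log before $T$, exp after) matters is a nice clarification that the paper leaves implicit.
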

\begin{proof}
Let $S(x) = 1 + \sum_{c \in \Gamma_*} c x_c$, and similarly define $S(y)$ and $S(z)$. Then
\begin{eqnarray*}
S(x)S(y) &=& \left(1 + \sum_{c \in \Gamma_*} c x_c\right)\left(1 + \sum_{c \in \Gamma_*} c y_c\right) \\
&=& 1 + \sum_{c_j \in \Gamma_*} c_j x_{c_j} + \sum_{c_k \in \Gamma_*} c_k y_{c_k} + \sum_{c_i} A_{j,k}^i c_i x_{c_j}y_{c_k} \\
&=& S(z).
\end{eqnarray*}
Because we are working with commutative algebras, we may compute as follows:
\begin{eqnarray*}
& & \Omega(x_{c_1}, \ldots, x_{c_l}) \Omega(y_{c_1}, \ldots, y_{c_l}) \\
&=& \exp\left(T\left(\log\left(S(x)\right)\right)\right) \exp\left(T\left(\log\left(S(y)\right)\right)\right) \\
&=& \exp\left(T\left(\log\left(S(x)\right)\right) + T\left(\log\left(S(y)\right)\right)\right) \\
&=&\exp\left(T\left(\log\left(S(x)\right)+\log\left(S(y)\right)\right)\right) \\
&=&\exp\left(T\left(\log\left(S(x)S(y)\right)\right)\right) \\
&=&\exp\left(T\left(\log\left(S(z)\right)\right)\right) \\
&=& \Omega(z_{c_1}, \ldots, z_{c_l}).
\end{eqnarray*}
\end{proof}

\begin{lemma} \label{leading_term_lemma}
As an element of $\mathcal{U}(\mathfrak{g})$, $B_{\mathbf{N}}$ lies in PBW filtration degree $|\mathbf{N}|$ and has leading order term
\[
\prod_{c \in \Gamma_*} \frac{T(c)^{\mathbf{N}(c)}}{\mathbf{N}(c)!}.
\]
\end{lemma}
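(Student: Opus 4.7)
The plan is to extract $B_{\mathbf{N}}$ by expanding $\Omega$ as a nested power series and carefully track PBW filtration degrees. The crucial observation is that $T$ is applied to an element of $\mathfrak{g}[[x_{c_1}, \ldots, x_{c_l}]]$, so each coefficient of a monomial $x^{\mathbf{N}}$ inside $T(\log(S(x)))$ (where $S(x) = 1 + \sum_c cx_c$) is a single element of $\mathfrak{g}$ pushed into $\mathcal{U}(\mathfrak{g})$ via $\iota$, and therefore lives in PBW filtration degree at most $1$. Multiplying $k$ such coefficients in $\mathcal{U}(\mathfrak{g})$ keeps us in PBW filtration degree at most $k$, so expanding the outer exponential controls the filtration degree of $B_{\mathbf{N}}$ via the number of factors.

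First I would unpack $\log(S(x)) = \sum_{k \geq 1} \frac{(-1)^{k-1}}{k} \bigl(\sum_c c x_c\bigr)^k$ inside $\mathfrak{g}[[x_{c_1},\ldots,x_{c_l}]]$, noting that the associative products $c \cdot c'$ are evaluated in $\mathfrak{g}$ using the structure constants $A^k_{i,j}$. For each $\mathbf{N}$ with $|\mathbf{N}| \geq 1$, let $L_{\mathbf{N}} \in \mathfrak{g}$ denote the coefficient of $x^{\mathbf{N}}$ in this series. After applying $T$, we have $T(\log(S(x))) = \sum_{|\mathbf{N}| \geq 1} \iota(L_{\mathbf{N}})\, x^{\mathbf{N}}$, where each $\iota(L_{\mathbf{N}})$ sits in PBW degree $\leq 1$. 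Writing
\[
\Omega = \sum_{k \geq 0} \frac{1}{k!}\, T(\log(S(x)))^k,
\]
the coefficient of $x^{\mathbf{N}}$ is a sum over $k$-tuples $(\mathbf{N}_1, \ldots, \mathbf{N}_k)$ with each $|\mathbf{N}_i| \geq 1$ and $\sum_i \mathbf{N}_i = \mathbf{N}$ of products $\frac{1}{k!}\iota(L_{\mathbf{N}_1}) \cdots \iota(L_{\mathbf{N}_k})$, each of PBW degree at most $k$. Because $|\mathbf{N}_i| \geq 1$ forces $k \leq |\mathbf{N}|$, this proves $B_{\mathbf{N}}$ lies in PBW filtration degree $|\mathbf{N}|$.

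For the leading order term, I would extract precisely the contributions with $k = |\mathbf{N}|$, which forces every $|\mathbf{N}_i| = 1$. From the $k=1$ term of the log expansion, the coefficient of $x_c$ in $\log(S(x))$ is simply $c \in \mathfrak{g}$, so $L_{\delta_c} = c$ and $\iota(L_{\delta_c}) = T(c)$. The leading contribution is then
\[
\frac{1}{|\mathbf{N}|!} \sum_{(c_1, \ldots, c_{|\mathbf{N}|})} T(c_1) \cdots T(c_{|\mathbf{N}|}),
\]
where the sum runs over sequences in which each $c \in \Gamma_*$ appears exactly $\mathbf{N}(c)$ times. Since $\mathfrak{g}$ is abelian, $\mathcal{U}(\mathfrak{g})$ is commutative and each product equals $\prod_c T(c)^{\mathbf{N}(c)}$. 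The number of such sequences is the multinomial coefficient $|\mathbf{N}|!/\prod_c \mathbf{N}(c)!$, yielding $\prod_c T(c)^{\mathbf{N}(c)}/\mathbf{N}(c)!$ as claimed.

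The only real obstacle is bookkeeping: one must check that no lower-degree contributions accidentally inflate to filtration degree $|\mathbf{N}|$ (they cannot, because $\iota$ strictly raises filtration degree by at most $1$), and that the multinomial count matches cleanly. A small sanity check against Example \ref{trivial_rg_example}, where the leading term of $B_N$ is $T(c)^N/N!$ (matching $\binom{T(c)}{N} = T(c)(T(c)-1)\cdots(T(c)-N+1)/N!$), confirms the formula.
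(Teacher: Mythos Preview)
Your proof is correct and follows essentially the same approach as the paper: both observe that $T(\log(S(x)))$ has no constant term and each of its coefficients lies in PBW degree $1$, so expanding the outer exponential bounds the PBW degree of $B_{\mathbf{N}}$ by $|\mathbf{N}|$, with the leading term coming from the linear part $\sum_c T(c)x_c$ of the logarithm. The only cosmetic difference is that the paper extracts the leading term via the factorisation $\exp(\sum_c T(c)x_c) = \prod_c \exp(T(c)x_c)$, whereas you do the equivalent multinomial count by hand.
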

\begin{proof}
The $B_{\mathbf{N}}$ are defined by the equation
\[
\Omega(x_{c_1}, \ldots, x_{c_l}) = \exp\left(T\left(\log\left(1 + \sum_{c \in \Gamma_*} c x_c\right)\right)\right) = \sum_{\mathbf{N} \in \mathbb{Z}_{\geq 0}^{\Gamma_*}} B_{\mathbf{N}} x^{\mathbf{N}}.
\]
The expression
\[
T\left(\log\left(1 + \sum_{c \in \Gamma_*} c x_c\right)\right)
\]
is in PBW degree 1. When the logarithm is expanded as a power series in the $x_c$ variables, the lowest order term is
\[
\sum_{c \in \Gamma_*} T(c) x_c.
\]
In particular, there is no constant term. This means that $\Omega(x_{c_1}, \ldots, x_{c_l})$ is a sum of products of terms whose PBW degree is less than or equal to their degree in the $x_c$ variables. Since $B_{\mathbf{N}}$ is the coefficient of $x^{\mathbf{N}}$, it is contained in PBW filtration degree $|\mathbf{N}|$. Moreover, to compute the leading term of the $B_N$, we neglect all but the lowest degree monomials in $x_c$; to leading order, $\Omega$ is approximated by
\[
\exp\left( \sum_{c \in \Gamma_*} T(c) x_c\right) = 
\prod_{c \in \Gamma_*} \exp\left( T(c) x_c\right).
\]
The leading term of $B_{\mathbf{N}}$ is found by taking the coefficient of $x^{\mathbf{N}}$.
\end{proof}

\begin{proposition} \label{r_gamma_is_an_algebra_prop}
The $B_{\mathbf{N}}$ are a $\mathbb{Z}$-basis of $\RGamma$. Additionally, the multiplication in $\mathcal{U}(\mathfrak{g})$ induces a multiplication on $\RGamma$, making it into a unital commutative ring.
\end{proposition}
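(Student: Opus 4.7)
The plan is to prove linear independence of the $B_{\mathbf{N}}$ first, and then to use Proposition \ref{theta_product_prop} as the central tool to deduce that $\RGamma$ is closed under multiplication with integer structure constants.

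For linear independence, I would invoke Lemma \ref{leading_term_lemma}. Since $\mathfrak{g}$ is abelian, $\mathcal{U}(\mathfrak{g})$ is the symmetric algebra $\mathrm{Sym}(\mathfrak{g})$, and its associated graded with respect to the PBW filtration is itself. The lemma shows that the top-degree part of $B_{\mathbf{N}}$ is the monomial $\prod_{c} T(c)^{\mathbf{N}(c)}/\mathbf{N}(c)!$, and distinct $\mathbf{N}$ yield distinct monomials in the PBW associated graded. Hence the $B_{\mathbf{N}}$ form a linearly independent set, and so they are a $\mathbb{Z}$-basis of their $\mathbb{Z}$-span $\RGamma$.

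For multiplicative closure, I would extract coefficients from the identity in Proposition \ref{theta_product_prop}. Expanding the left-hand side $\Omega(x)\Omega(y) = \sum_{\mathbf{N}, \mathbf{M}} B_{\mathbf{N}} B_{\mathbf{M}} x^{\mathbf{N}} y^{\mathbf{M}}$ and the right-hand side $\Omega(z) = \sum_{\mathbf{K}} B_{\mathbf{K}} z^{\mathbf{K}}$, and using that each $z_{c_i}$ is a polynomial in the $x$'s and $y$'s with integer coefficients (the $A_{j,k}^i$), one has $z^{\mathbf{K}} = \sum_{\mathbf{N}, \mathbf{M}} a^{\mathbf{K}}_{\mathbf{N}, \mathbf{M}} x^{\mathbf{N}} y^{\mathbf{M}}$ with $a^{\mathbf{K}}_{\mathbf{N}, \mathbf{M}} \in \mathbb{Z}$ and vanishing unless $|\mathbf{N}|+|\mathbf{M}| \geq |\mathbf{K}|$. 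Comparing coefficients of $x^{\mathbf{N}} y^{\mathbf{M}}$ gives
\[
B_{\mathbf{N}} B_{\mathbf{M}} = \sum_{\mathbf{K}} a^{\mathbf{K}}_{\mathbf{N}, \mathbf{M}} B_{\mathbf{K}},
\]
a finite $\mathbb{Z}$-linear combination of the $B_{\mathbf{K}}$. So $\RGamma$ is closed under the multiplication of $\mathcal{U}(\mathfrak{g})$.

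Finally, for the unit, setting all $x_c = 0$ in the definition of $\Omega$ gives $\Omega(0, \ldots, 0) = \exp(T(\log 1)) = 1$, so $B_{\mathbf{0}} = 1 \in \RGamma$, which is the identity element for the multiplication. Commutativity is automatic since $\mathcal{U}(\mathfrak{g}) = \mathrm{Sym}(\mathfrak{g})$ is commutative. The main obstacle I anticipate is the careful bookkeeping of the coefficient extraction in the product identity, in particular verifying that $a^{\mathbf{K}}_{\mathbf{N}, \mathbf{M}}$ is indeed an integer and that the sum over $\mathbf{K}$ is finite; both follow from the fact that each $z_{c_i}$ is a polynomial of total degree at least $1$ in the $x$'s and $y$'s with integer coefficients, so $z^{\mathbf{K}}$ lies in $\mathbb{Z}[x, y]$ and contributes only when $|\mathbf{K}| \leq |\mathbf{N}| + |\mathbf{M}|$.
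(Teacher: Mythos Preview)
Your proposal is correct and follows essentially the same approach as the paper: linear independence via Lemma \ref{leading_term_lemma} and the PBW basis, the unit via $B_{\mathbf{0}}=1$, and closure under multiplication by extracting integer coefficients from the identity $\Omega(x)\Omega(y)=\Omega(z)$ of Proposition \ref{theta_product_prop}. Your explicit bookkeeping of the finiteness of the sum over $\mathbf{K}$ and the integrality of the $a^{\mathbf{K}}_{\mathbf{N},\mathbf{M}}$ is a nice touch that the paper leaves implicit.
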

\begin{proof}
The $B_{\mathbf{N}}$ span $\RGamma$ by definition. It follows from Lemma \ref{leading_term_lemma} and the PBW theorem that the $B_{\mathbf{N}}$ form a $\mathbb{Q}$-basis of $\mathcal{U}(\mathfrak{g})$, so they are also linearly independent.
\newline \newline \noindent
We note that when $\mathbf{N}$ is zero, $B_{\mathbf{N}}$ is the constant term of $\Omega$, namely the identity in $\mathcal{U}(\mathfrak{g})$, so $\RGamma$ has an identity element. To show that $\RGamma$ is a commutative ring, we note must show that the product of two basis elements is a linear combination of basis elements. This amounts to showing that the coefficient of any monomial $x^{\mathbf{N}} y^{\mathbf{M}}$ in 
\[
\Omega(x_{c_1}, \ldots, x_{c_l}) \Omega(y_{c_1}, \ldots, y_{c_l})
\]
is contained in $\RGamma$. We invoke Proposition \ref{theta_product_prop}, which expresses this product as 
\[
\Omega(z_{c_1}, \ldots, z_{c_l}) = \sum_{\mathbf{K} \in \mathbb{Z}_{\geq 0}^{\Gamma_*}} B_{\mathbf{K}} z^{\mathbf{K}},
\]
where $z_{c_i} = x_{c_i} + y_{c_i} + \sum_{j,k}A_{j, k}^{i}x_{c_j} y_{c_k}$, which is a $\mathbb{Z}$-linear combination of monomials in the $x$ and $y$ variables. Thus the expansion in terms of $x^{\mathbf{N}} y^{\mathbf{M}}$ will have $\mathbb{Z}$-linear combinations of the $B_{\mathbf{K}}$ as coefficients.
\end{proof}

\begin{proposition} \label{rgamma_free_over_r_prop}
The set of $B_{\mathbf{N}}$ in $\RGamma$ indexed by $\mathbf{N}$ with $\mathbf{N}(c) = 0$ for $c \neq 1$ forms a subring of $\RGamma$ isomorphic to $\mathcal{R}$. As a module over $\mathcal{R}$, $\RGamma$ is free with basis $B_{\mathbf{M}}$ indexed by $\mathbf{M}$ with $\mathbf{M}(1) = 0$.
\end{proposition}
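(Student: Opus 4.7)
The plan is to derive a clean expansion formula for $B_{\mathbf{N}}$ by factoring the defining generating function $\Omega$ along the identity conjugacy class; both assertions then follow quickly. The key computation is that, inside $\mathfrak{g}[[x_{c_1}, \ldots, x_{c_l}]]$,
\[
1 + x_1 + \sum_{c \neq 1} c\, x_c \;=\; (1 + x_1)\left(1 + \sum_{c \neq 1} c \cdot \tfrac{x_c}{1+x_1}\right),
\]
where $(1+x_1)^{-1}$ is expanded as the usual geometric series. Taking $\log$, applying $T$, and exponentiating, and using that $\mathfrak{g}$ is commutative so formal $\log$ and $\exp$ behave as expected, one obtains
\[
\Omega(x_{c_1}, \ldots, x_{c_l}) \;=\; (1 + x_1)^{T(1)} \cdot \Omega\!\left(0,\, \tfrac{x_{c_2}}{1+x_1},\, \ldots,\, \tfrac{x_{c_l}}{1+x_1}\right).
\]

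Expanding the right-hand factor as $\sum_{\mathbf{M}:\,\mathbf{M}(1)=0} B_{\mathbf{M}}\,(1+x_1)^{-|\mathbf{M}|} \prod_{c \neq 1} x_c^{\mathbf{M}(c)}$ and collecting the coefficient of $x^{\mathbf{N}}$ yields the key identity
\[
B_{\mathbf{N}} \;=\; \binom{T(1) - |\mathbf{M}|}{\mathbf{N}(1)} B_{\mathbf{M}},
\]
where $\mathbf{M}$ agrees with $\mathbf{N}$ off the identity class and has $\mathbf{M}(1) = 0$. Specialising to $\mathbf{M} = 0$ gives $B_{(N,0,\ldots,0)} = \binom{T(1)}{N}$, so the $\mathbb{Z}$-span of the $B_{\mathbf{N}}$ with $\mathbf{N}(c) = 0$ for $c \neq 1$ is exactly the binomial ring in $T(1)$ inside $\mathcal{U}(\mathfrak{g}) = \mathrm{Sym}(\mathfrak{g})$. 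This is closed under the polynomial multiplication of $\mathcal{U}(\mathfrak{g})$ by Chu--Vandermonde, and the assignment $t \mapsto T(1)$ gives the required isomorphism with $\mathcal{R}$.

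The freeness statement has two halves. Spanning is immediate from the key identity, since $\binom{t - |\mathbf{M}|}{\mathbf{N}(1)} \in \mathcal{R}$ exhibits every $B_{\mathbf{N}}$ as an $\mathcal{R}$-multiple of the corresponding $B_{\mathbf{M}}$. For linear independence I would observe that the set $\{\binom{T(1)}{j} B_{\mathbf{M}} : j \geq 0,\ \mathbf{M}(1) = 0\}$ is itself a $\mathbb{Z}$-basis of $\RGamma$: by Vandermonde's identity the transition to the old basis $\{B_{\mathbf{N}}\}$ is block-diagonal in $\mathbf{M}$, and each block is unitriangular over $\mathbb{Z}$ (with $\binom{-|\mathbf{M}|}{0} = 1$ on the diagonal), hence unimodular. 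Equivalently, one can read off linear independence directly from the PBW leading terms supplied by Lemma \ref{leading_term_lemma}, since the leading monomials $T(1)^j \prod_{c \neq 1} T(c)^{\mathbf{M}(c)}$ in $\mathrm{Sym}(\mathfrak{g})$ are distinct for distinct pairs $(j, \mathbf{M})$. The main obstacle is justifying the logarithm factorisation as a clean identity in the power series ring; once it is in hand, both claims are essentially bookkeeping in the basis $\{B_{\mathbf{N}}\}$ provided by Proposition \ref{r_gamma_is_an_algebra_prop}.
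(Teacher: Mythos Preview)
Your proof is correct and takes a genuinely different route from the paper's. The paper argues entirely via the PBW filtration: it observes that the leading term of $B_{\mathbf{N}}$ factors as the leading term of $\binom{T(1)}{\mathbf{N}(1)}$ times the leading term of $B_{\mathbf{M}}$, and deduces both spanning (by induction on PBW degree) and $\mathcal{R}$-linear independence from the distinctness of leading monomials. It never writes down an exact relation between $B_{\mathbf{N}}$ and $B_{\mathbf{M}}$.

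You instead factor the generating function $\Omega$ along the identity class and extract the closed-form identity $B_{\mathbf{N}} = \binom{T(1) - |\mathbf{M}|}{\mathbf{N}(1)}\, B_{\mathbf{M}}$. This is strictly more information: it makes spanning a one-line consequence, and your unitriangularity argument for linear independence (change of $\mathbb{Z}$-basis block-diagonal in $\mathbf{M}$, diagonal entries $\binom{-|\mathbf{M}|}{0}=1$) is clean and avoids the filtration entirely. In fact your identity anticipates Corollary~\ref{coeff_compatibility_cor}, where the paper obtains the same binomial coefficient $\binom{t-|\mathbf{M}|}{\mathbf{N}(1)}$ by a quite different route through the specialisation maps $ev_n^{\mathcal{R}}$ and $\Phi_n$. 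The paper's approach has the virtue of being short and parallel to the proof of Proposition~\ref{r_gamma_is_an_algebra_prop}; yours buys an explicit structural formula for free and makes the $\mathcal{R}$-module structure completely transparent.
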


\begin{proof}
The $B_{\mathbf{N}}$ with $\mathbf{N}(c) = 0$ for $c \neq 1$ can be found by considering the generating function $\Omega$ and setting all $x_c$ to zero other than $x_1$. Then the calculation in Example \ref{trivial_rg_example} shows $B_{\mathbf{N}} = {T(1) \choose \mathbf{N}(1)}$, and binomial coefficients are a $\mathbb{Z}$-basis of $\mathcal{R}$.
\newline \newline \noindent
To show that $\RGamma$ is free over $\mathcal{R}$ with the stated basis, we pass to the associated graded algebra with respect to the PBW filtration. Lemma \ref{leading_term_lemma} showed the leading order term of $B_{\mathbf{N}}$ is 
\[
\prod_{c \in \Gamma_*} \frac{T(c)^{\mathbf{N}(c)}}{\mathbf{N}(c)!} = 
\frac{T(1)^{\mathbf{N}(1)}}{\mathbf{N}(1)!}  \prod_{c \neq 1} \frac{T(c)^{\mathbf{N}(c)}}{\mathbf{N}(c)!},
\]
which we recognise as the leading term of ${T(1) \choose \mathbf{N}(1)} \in \mathcal{R}$ multiplied by the leading term of $B_{\mathbf{M}}$, where $\mathbf{M}(c) = \mathbf{N}(c)$ for $c \neq 1$ and $\mathbf{M}(1) = 0$. This shows that $\RGamma$ is spanned by the $B_{\mathbf{M}}$ over $\mathcal{R}$, and also shows the $\mathcal{R}$-linear independence of the $B_{\mathbf{M}}$.
\end{proof}

\begin{definition}
For any $\mathbf{N} \in \mathbb{Z}_{\geq 0}^{\Gamma_*}$ with $|\mathbf{N}| \leq n$, we define $b_{\mathbf{N}} \in \mathbb{Q}\Gamma \wr S_n$ as follows
\[
b_{\mathbf{N}} = \frac{1}{(n - |\mathbf{N}|)! \prod_{c \in \Gamma_*} \mathbf{N}(c)!}
\sum_{\sigma \in S_n} \sigma \left( 1^{\otimes (n - |\mathbf{N}|)} \otimes \bigotimes_{c \in \Gamma_*} c^{\otimes \mathbf{N}(c)} \right) \sigma^{-1}.
\]
The parenthesised tensor product is an element of $(\mathbb{Q} \Gamma)^{\otimes n} = \mathbb{Q}\Gamma^n \subseteq \mathbb{Q}\Gamma \wr S_n$ (because of the averaging over $S_n$, the choice of order of the tensor factors does not matter).
\end{definition}
\begin{lemma}
The element $b_{\mathbf{N}}$ is actually contained in $Z(\mathbb{Z}\Gamma \wr S_n)$.
\end{lemma}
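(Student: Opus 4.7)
There are two things to check: that $b_{\mathbf{N}}$ is central in $\mathbb{Q}\Gamma \wr S_n$, and that the rational coefficients combine to give an element of the integral group ring. I would address them separately.

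For centrality, it suffices to show that $b_{\mathbf{N}}$ commutes with both subgroups $\Gamma^n$ and $S_n$, since these generate $\Gamma \wr S_n$. Commutation with $S_n$ is built into the definition: conjugating the sum by $\tau \in S_n$ amounts to reindexing $\sigma \mapsto \tau \sigma$, which leaves the sum unchanged. For $\Gamma^n$-centrality, each tensor factor of the inner element $1^{\otimes(n-|\mathbf{N}|)} \otimes \bigotimes_c c^{\otimes \mathbf{N}(c)}$ is either $1$ or a conjugacy class sum of $\Gamma$, hence central in $\mathbb{Q}\Gamma$. So the inner element lies in $Z(\mathbb{Q}\Gamma)^{\otimes n} \subseteq Z(\mathbb{Q}\Gamma^n)$, and conjugating by any $\sigma \in S_n$ merely permutes the tensor factors, yielding another central element of $\mathbb{Q}\Gamma^n$. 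Summing such elements preserves centrality in $\Gamma^n$.

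For integrality, I would recognise the prefactor $(n-|\mathbf{N}|)!\prod_c \mathbf{N}(c)!$ as the order of the stabiliser, under the permutation action of $S_n$, of the pure tensor $1^{\otimes(n-|\mathbf{N}|)} \otimes \bigotimes_c c^{\otimes \mathbf{N}(c)}$. Indeed, this stabiliser is the Young subgroup $S_{n-|\mathbf{N}|} \times \prod_c S_{\mathbf{N}(c)}$. Consequently, the sum over $\sigma \in S_n$ of the conjugates of this pure tensor counts each distinct arrangement of the factors exactly $(n-|\mathbf{N}|)! \prod_c \mathbf{N}(c)!$ times, and after dividing by this denominator we obtain the (unweighted) sum over orbit representatives. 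Each such arrangement is a pure tensor of conjugacy class sums in the respective $\mathbb{Z}\Gamma$-factors, hence lies in $\mathbb{Z}\Gamma^n \subseteq \mathbb{Z}\Gamma \wr S_n$, and the total sum does too.

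Neither step seems to be the genuine obstacle: the only subtlety is matching the combinatorial prefactor to the stabiliser, which is a direct counting argument. Combining the two parts gives $b_{\mathbf{N}} \in Z(\mathbb{Z}\Gamma \wr S_n)$, as claimed.
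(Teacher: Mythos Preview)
Your proof is correct and follows essentially the same approach as the paper: centrality via commutation with the generating subgroups $S_n$ and $\Gamma^n$, and integrality via the Young subgroup $S_{n-|\mathbf{N}|}\times\prod_c S_{\mathbf{N}(c)}$ acting trivially on the pure tensor. One small imprecision: when $\mathbf{N}(1)>0$ this Young subgroup is properly contained in the stabiliser (the ``assigned $1$'' and ``unassigned'' slots are indistinguishable), so after dividing you get each orbit representative with multiplicity $\binom{n-|\mathbf{N}|+\mathbf{N}(1)}{\mathbf{N}(1)}$ rather than $1$---but this is still an integer, so your integrality conclusion stands.
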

\begin{proof}
Since $b_{\mathbf{N}}$ is defined by averaging over $S_n$, it commutes with $S_n \subseteq \Gamma \wr S_n$. As the element being averaged is a tensor product of central elements of $\mathbb{Z}\Gamma$, $b_{\mathbf{N}}$ will commute with $\Gamma^n \subseteq \Gamma \wr S_n$. This shows that $b_{\mathbf{N}}$ is central. To see that $b_{\mathbf{N}}$ has integer coefficients, note that the conjugation action of the subgroup 
\[
S_{n - |\mathbf{N}|} \times \prod_{c \in \Gamma_*} S_{\mathbf{N}(c)} \subseteq S_n
\]
is trivial. Any element of a fixed coset of this subgroup acts the same way, and the size of this coset cancels out the denominator.
\end{proof}

\begin{lemma} \label{specialised_gen_fun_lemma}
We have the following expression for the generating function for the elements $b_{\mathbf{N}}$:
\[
\sum_{|\mathbf{N}| \leq n} b_{\mathbf{N}} x^{\mathbf{N}} = (1 + \sum_c c x_c)^{\otimes n}.
\]
\end{lemma}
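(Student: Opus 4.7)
The plan is to prove the identity by expanding both sides as polynomials in the variables $x_c$ (with coefficients in $\mathbb{Q}\Gamma^n$), and matching coefficients of each monomial $x^{\mathbf{N}}$. The right-hand side, $(1 + \sum_c c x_c)^{\otimes n}$, can be written as $\prod_{i=1}^n (1 + \sum_c c^{(i)} x_c)$, where $c^{(i)}$ denotes the conjugacy-class sum $c$ placed in the $i$-th tensor factor. Expanding by distributivity, the coefficient of $x^{\mathbf{N}}$ is a sum over functions $f:\{1,\ldots,n\} \to \{1\} \cup \Gamma_*$ with $|f^{-1}(c)| = \mathbf{N}(c)$ for each $c \in \Gamma_*$; equivalently, it is a sum over tuples $(S_c)_{c \in \Gamma_*}$ of pairwise disjoint subsets $S_c \subseteq \{1, \ldots, n\}$ with $|S_c| = \mathbf{N}(c)$, of the element $\prod_{c \in \Gamma_*} \prod_{i \in S_c} c^{(i)} \in \mathbb{Z}\Gamma^n$.

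Next I would compare this to $b_{\mathbf{N}}$. Writing $T_{\mathbf{N}} = 1^{\otimes (n - |\mathbf{N}|)} \otimes \bigotimes_{c \in \Gamma_*} c^{\otimes \mathbf{N}(c)}$ for the fixed pure tensor, conjugation by $\sigma \in S_n$ permutes the positions of the tensor factors: $\sigma T_{\mathbf{N}} \sigma^{-1}$ has entry $(T_{\mathbf{N}})_{\sigma^{-1}(i)}$ in position $i$, using the convention of Equation (\ref{semidirect_mult}). Thus $\sigma T_{\mathbf{N}} \sigma^{-1}$ has $c$ in positions $\sigma(V_c)$, where $V_c$ is the set of positions of $T_{\mathbf{N}}$ holding $c$, and $1$ elsewhere. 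The stabilizer of $T_{\mathbf{N}}$ under this conjugation action is the Young subgroup $S_{n - |\mathbf{N}|} \times \prod_{c \in \Gamma_*} S_{\mathbf{N}(c)}$, of order exactly $(n - |\mathbf{N}|)! \prod_c \mathbf{N}(c)!$, which matches the normalization in the definition of $b_{\mathbf{N}}$. By orbit-stabilizer, the averaged sum produces each distinct configuration $(S_c)$ exactly once, yielding the same sum computed on the right-hand side.

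The argument is essentially bookkeeping, so there is no serious obstacle; the only point requiring care is the conjugation convention (so that conjugation by $\sigma \in S_n$ acts as a permutation of tensor positions rather than of values), which lets the stabilizer computation match the denominator in the definition of $b_{\mathbf{N}}$ exactly. Since this convention is precisely the one established at the start of the paper, identifying the coefficient of $x^{\mathbf{N}}$ on both sides completes the proof.
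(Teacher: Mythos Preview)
Your proof is correct and follows the same approach as the paper: both identify the coefficient of $x^{\mathbf{N}}$ on the right-hand side as the sum of all pure tensors with $\mathbf{N}(c)$ factors equal to $c$ (distinguishing positions ``assigned'' $c=1$ from unassigned positions), and then recognise this as precisely $b_{\mathbf{N}}$. Your version is more explicit about the orbit-stabiliser step that matches the normalising denominator, whereas the paper simply asserts that $b_{\mathbf{N}}$ is this sum of pure tensors.
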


\begin{proof}
We observe that $b_{\mathbf{N}}$ is the sum of pure tensors with $\mathbf{N}(c)$ tensor factors set equal to $c$ and all remaining tensor factors equal to $1$. Here we distinguish tensor factors assigned $c=1$ from unassigned tensor factors which also take the value 1. But the coefficient of $x^{\mathbf{N}}$ on the right hand side will be the sum of all pure tensors with $\mathbf{N}(c)$ factors equal to $c$. 
\end{proof}

\begin{theorem} \label{r_gamma_hom_thm}
There is a homomorphism $ev_n^{\mathcal{R}}: \RGamma \to Z(\mathbb{Z} \Gamma \wr S_n)$ defined by $ev_n^{\mathcal{R}}(B_{\mathbf{N}}) = b_{\mathbf{N}}$ if $|\mathbf{N}| \leq n$, and $ev_n^{\mathcal{R}}(b_{\mathbf{N}}) = 0$ otherwise.
\end{theorem}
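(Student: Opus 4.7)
The plan is to realise the desired $ev_n^{\mathcal{R}}$ as the restriction to $\RGamma$ of a ring homomorphism $\psi_n\colon \mathcal{U}(\mathfrak{g}) \to Z(\mathbb{Q}\Gamma \wr S_n)$. Since $\mathfrak{g}$ is abelian, $\mathcal{U}(\mathfrak{g})$ is the symmetric algebra on $\mathfrak{g}$, i.e. the free commutative polynomial algebra on the generators $T(c) = \iota(c)$ for $c \in \Gamma_*$. Thus to define $\psi_n$ it suffices to prescribe the images of these generators. The elements $\sum_{i=1}^{n} c^{(i)}$ are all central in $\mathbb{Z}\Gamma \wr S_n$ (they clearly commute with $\Gamma^n$, and $S_n$ permutes the summands), so the assignment $T(c) \mapsto \sum_{i=1}^{n} c^{(i)}$ extends to a well-defined ring homomorphism $\psi_n$, with image landing in $Z(\mathbb{Q}\Gamma \wr S_n)$.

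Extending $\psi_n$ coefficient-wise to $\mathcal{U}(\mathfrak{g})[[x_{c_1}, \ldots, x_{c_l}]]$, the goal becomes the identity
\[
\psi_n(\Omega(x_{c_1}, \ldots, x_{c_l})) = \Bigl(1 + \sum_c c\,x_c\Bigr)^{\otimes n},
\]
which is the generating function for the $b_{\mathbf{N}}$ by Lemma \ref{specialised_gen_fun_lemma}. Matching coefficients of $x^{\mathbf{N}}$ then gives $\psi_n(B_{\mathbf{N}}) = b_{\mathbf{N}}$ when $|\mathbf{N}| \leq n$, and $\psi_n(B_{\mathbf{N}}) = 0$ otherwise, since the right-hand side is a polynomial of total degree at most $n$. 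Restricting $\psi_n$ to $\RGamma$ yields the desired $ev_n^{\mathcal{R}}$, whose image automatically lies in the integral centre $Z(\mathbb{Z}\Gamma \wr S_n)$ because the $b_{\mathbf{N}}$ do.

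The main calculation is the identity above. Using that $\psi_n$ and $\exp$ commute, one reduces to computing $\psi_n(T(\log(1 + \sum_c c\,x_c)))$. Expanding the logarithm in $\mathfrak{g}[[x]]$ yields a series whose coefficients are products $c_{i_1} \cdots c_{i_k}$ computed in the associative algebra $\mathfrak{g}$. The role of $T$ is precisely to embed each such product as a single element $\iota(c_{i_1} \cdots c_{i_k}) \in \mathcal{U}(\mathfrak{g})$. Since $\psi_n\circ\iota$ sends $u\in\mathfrak{g}$ to $\sum_{j=1}^n u^{(j)}$ by $\mathbb{Q}$-linearity, and since $u \mapsto u^{(j)}$ is a ring homomorphism $\mathbb{Q}\Gamma \to \mathbb{Q}\Gamma^n$, we get $\psi_n(\iota(c_{i_1} \cdots c_{i_k})) = \sum_{j=1}^{n} c_{i_1}^{(j)} \cdots c_{i_k}^{(j)}$. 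Resumming and interchanging sums yields
\[
\psi_n\bigl(T(\log(1 + \textstyle\sum_c c\,x_c))\bigr) = \sum_{j=1}^{n} \log\Bigl(1 + \sum_c c^{(j)} x_c\Bigr).
\]
The summands on the right commute, since $c^{(j)}$ and $d^{(j')}$ commute for $j \neq j'$, and exponentiating produces $\prod_{j=1}^{n}(1 + \sum_c c^{(j)} x_c) = (1 + \sum_c c\,x_c)^{\otimes n}$, completing the proof.

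The main obstacle is not conceptual but bookkeeping: one must carefully distinguish the three multiplicative structures in play — the associative multiplication on $\mathfrak{g}$ used to expand the logarithm, the commutative multiplication of $\mathcal{U}(\mathfrak{g})$ used to define $\exp$, and the multiplication in $\mathbb{Z}\Gamma \wr S_n$ — and observe that the map $T$ converts the first into a "diagonal" embedding along the $n$ tensor factors once $\psi_n$ is applied. This is exactly what the exponential-of-a-logarithm definition of $\Omega$ was engineered to achieve.
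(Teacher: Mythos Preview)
Your proof is correct and takes a genuinely different route from the paper's. The paper defines $ev_n^{\mathcal{R}}$ as a linear map on the basis $B_{\mathbf{N}}$ and then checks multiplicativity directly: it uses Lemma~\ref{specialised_gen_fun_lemma} together with Proposition~\ref{theta_product_prop} (the identity $\Omega(x)\Omega(y)=\Omega(z)$) to show that $ev_n^{\mathcal{R}}(\Omega(x))\,ev_n^{\mathcal{R}}(\Omega(y)) = ev_n^{\mathcal{R}}(\Omega(x)\Omega(y))$ as generating functions, and then reads off $ev_n^{\mathcal{R}}(B_{\mathbf{N}})ev_n^{\mathcal{R}}(B_{\mathbf{M}}) = ev_n^{\mathcal{R}}(B_{\mathbf{N}}B_{\mathbf{M}})$ from the coefficients. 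You instead realise $ev_n^{\mathcal{R}}$ as the restriction of a single algebra homomorphism $\psi_n: \mathcal{U}(\mathfrak{g}) \to Z(\mathbb{Q}\Gamma\wr S_n)$, defined on free generators by $T(c)\mapsto \sum_j c^{(j)}$, so that multiplicativity is automatic and the only task is the one-variable computation $\psi_n(\Omega) = (1+\sum_c c\,x_c)^{\otimes n}$.

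The paper's approach leverages the structural identity of Proposition~\ref{theta_product_prop} already proved, so it is short given that groundwork. Your approach is more conceptual: it makes transparent that $ev_n^{\mathcal{R}}$ is literally ``evaluation of $T(c)$ at $\sum_j c^{(j)}$'', which anticipates the distribution-algebra viewpoint of the appendix (Theorem~\ref{dist_thm}) and avoids needing the two-variable product formula at all. Both are clean; yours arguably explains better \emph{why} the exponential-of-logarithm definition of $\Omega$ was made.
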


\begin{proof}
We apply $ev_n^{\mathcal{R}}$ to $\Omega$ coefficientwise.
Using the generating function from Lemma \ref{specialised_gen_fun_lemma},
\begin{eqnarray*}
ev_n^{\mathcal{R}}(\Omega(x_{c_1}, \ldots, x_{c_l}))ev_n^{\mathcal{R}}( \Omega(y_{c_1}, \ldots, y_{c_l})) &=& 
(1 + \sum_c cx_c)^{\otimes n} (1 + \sum_c c y_c)^{\otimes n} \\
&=& ((1 + \sum_c cx_c)(1 + \sum_c c y_c))^{\otimes n} \\
&=& (1 + \sum_c c z_c)^{\otimes n} \\
&=& ev_n^{\mathcal{R}}(\Omega(z_{c_1}, \ldots, z_{c_l})) \\
&=& ev_n^{\mathcal{R}}(\Omega(x_{c_1}, \ldots, x_{c_l})\Omega(y_{c_1}, \ldots, y_{c_l}))
\end{eqnarray*}
where $z_{c_i} = x_{c_i} + y_{c_i} + \sum_{j,k}A_{j,k}^{i} x_{c_j} y_{c_k}$ as in Proposition \ref{r_gamma_is_an_algebra_prop}. Considering the coefficients of monomials $x^{\mathbf{N}}y^{\mathbf{M}}$ shows $ev_n^{\mathcal{R}}(B_{\mathbf{N}})ev_n^{\mathcal{R}}(B_{\mathbf{M}}) = ev_n^{\mathcal{R}}(B_{\mathbf{N}}B_{\mathbf{M}})$, i.e. that $ev_n^{\mathcal{R}}$ is a homomorphism.
\end{proof}

\noindent
The elements $B_{\mathbf{N}}$ can be thought of as interpolating the $b_{\mathbf{N}}$. To make this precise, we show that $ev_n^{\mathcal{R}}$ factors through $\FHG$.

\begin{corollary} \label{coeff_compatibility_cor}
We obtain a canonical homomorphism $\Psi: \RGamma \to \FHG$ via
\[
\Psi( B_{\mathbf{N}}) = {t - |\mathbf{N}| + \mathbf{N}(1) \choose \mathbf{N}(1)} K_{\bs \mu},
\]
where $t$ is the polynomial variable of $\mathcal{R}$, $\bs \mu(c) = (1^{\mathbf{N}(c)})$ for $c \neq 1$, and $\bs \mu(1)$ is the empty partition. This homomorphism obeys $\Phi_n \circ \Psi = ev_n^{\mathcal{R}}$.
\end{corollary}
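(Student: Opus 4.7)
The plan is to first verify the compatibility $\Phi_n \circ \Psi = ev_n^{\mathcal{R}}$ on the basis $\{B_{\mathbf{N}}\}$, and then deduce the ring-homomorphism property from the fact that the specialisations $\Phi_n$ jointly separate elements of $\FHG$ (Proposition \ref{specialisation_distinguishing_prop}).

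Since $\{B_{\mathbf{N}}\}_{\mathbf{N} \in \mathbb{Z}_{\geq 0}^{\Gamma_*}}$ is a $\mathbb{Z}$-basis of $\RGamma$ by Proposition \ref{r_gamma_is_an_algebra_prop}, the stated formula extends unambiguously to a $\mathbb{Z}$-linear map $\Psi \colon \RGamma \to \FHG$. The first step is to compute $ev_n^{\mathcal{R}}(B_{\mathbf{N}}) = b_{\mathbf{N}}$ in the basis of conjugacy class sums. Starting from the generating function identity $\sum_{\mathbf{N}} b_{\mathbf{N}} x^{\mathbf{N}} = (1 + \sum_c c x_c)^{\otimes n}$ of Lemma \ref{specialised_gen_fun_lemma}, I would extract the coefficient of a fixed tuple $\mathbf{g} \in \Gamma^n$: for each position $i$ with $\mathbf{g}_i$ in a class $c \neq 1$, the factor $c x_c$ is forced, while for each position with $\mathbf{g}_i = 1_\Gamma$ one may freely choose between the constant $1$ and the labelled term $x_1$. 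Counting these choices yields
\[
b_{\mathbf{N}} = \binom{n - |\mathbf{N}| + \mathbf{N}(1)}{\mathbf{N}(1)} X_{\bs\mu},
\]
with $\bs\mu(c) = (1^{\mathbf{N}(c)})$ for $c \neq 1$ and $\bs\mu(1) = \varnothing$. Applying $\Phi_n$ to $\Psi(B_{\mathbf{N}})$ produces the same expression. To be careful, I would check the three regimes $n \geq |\mathbf{N}|$, $|\mathbf{N}| - \mathbf{N}(1) \leq n < |\mathbf{N}|$, and $n < |\mathbf{N}| - \mathbf{N}(1)$ separately: in the middle regime the binomial coefficient vanishes because $0 \leq n - |\mathbf{N}| + \mathbf{N}(1) < \mathbf{N}(1)$ (matching $b_{\mathbf{N}} = 0$), and in the last regime $X_{\bs\mu} = 0$ anyway.

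With $\Phi_n \circ \Psi = ev_n^{\mathcal{R}}$ in hand for all $n$, multiplicativity of $\Psi$ follows by a density argument. For any $a, b \in \RGamma$,
\[
\Phi_n(\Psi(ab)) = ev_n^{\mathcal{R}}(ab) = ev_n^{\mathcal{R}}(a) \, ev_n^{\mathcal{R}}(b) = \Phi_n(\Psi(a)) \, \Phi_n(\Psi(b)) = \Phi_n(\Psi(a)\Psi(b)),
\]
using that $ev_n^{\mathcal{R}}$ is a ring homomorphism (Theorem \ref{r_gamma_hom_thm}) and $\Phi_n$ is a ring homomorphism (Theorem \ref{thm:FH_g_spec_hom}). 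Since this holds for every $n$, Proposition \ref{specialisation_distinguishing_prop} forces $\Psi(ab) = \Psi(a)\Psi(b)$. Unitality is immediate: $B_{\mathbf{0}} = 1$ by the constant term of $\Omega$, and $\Psi(B_{\mathbf{0}}) = \binom{t}{0} K_\varnothing = 1$.

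The main obstacle is the explicit computation of $b_{\mathbf{N}}$. One must carefully distinguish the positions in the tensor that carry the class label $1 \in \Gamma_*$ from positions where $\mathbf{g}_i$ simply happens to equal the identity; it is precisely this overcounting that produces the binomial factor $\binom{n - |\mathbf{N}| + \mathbf{N}(1)}{\mathbf{N}(1)}$ and reconciles the two sides of the identity. After that, the density argument via Proposition \ref{specialisation_distinguishing_prop} is routine.
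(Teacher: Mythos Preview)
Your proof is correct and follows essentially the same approach as the paper: first verify $b_{\mathbf{N}} = \binom{n-|\mathbf{N}|+\mathbf{N}(1)}{\mathbf{N}(1)} X_{\bs\mu}$ by counting the ways to choose which identity positions are ``assigned'' the class $1$, then deduce multiplicativity via the separation property of the $\Phi_n$. Your explicit treatment of the three regimes for $n$ (ensuring the formula remains valid when $b_{\mathbf{N}}$ or $X_{\bs\mu}$ degenerates to zero) is a careful addition that the paper leaves implicit.
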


\begin{proof}
We first show that $\Phi_n \circ \Psi = ev_n^{\mathcal{R}}$. This amounts to checking that
\[
b_{\mathbf{N}} = {n - |\mathbf{N}| + \mathbf{N}(1) \choose \mathbf{N}(1)} X_{\bs \mu}.
\]
Each pure tensor in $b_{\mathbf{N}}$ is an element of generalised cycle type $\bs \mu$. However, such a pure tensor has $\mathbf{N}(1)$ assigned factors equal to $1$ and $n - |\mathbf{N}|$ unassigned factors equal to $1$. So each such pure tensor arises in ${n - |\mathbf{N}| + \mathbf{N}(1) \choose \mathbf{N}(1)}$ ways according to how the $\mathbf{N}(1)$ assigned factors are chosen. This accounts for the multiplicative factor. To check that $\Psi$ respects multiplication, let $x,y \in \RGamma$. Because $ev_n^{\mathcal{R}}$ and $\Phi_n$ are homomorphisms,
\begin{eqnarray*}
\Phi_n(\Psi(x)\Psi(y)) &=& \Phi_n(\Psi(x))\Phi_n(\Psi(y)) \\
&=& ev_n^{\mathcal{R}}(x) ev_n^{\mathcal{R}}(y) \\
&=& ev_n^{\mathcal{R}}(xy) \\
&=& \Phi_n(\Psi(xy)).
\end{eqnarray*}
We now use Proposition \ref{specialisation_distinguishing_prop} to conclude that $\Psi(x)\Psi(y) = \Psi(xy)$.
\end{proof}
\noindent
The upshot is that we may now view $\FHG$ as a $\RGamma$-algebra, not merely a $\mathcal{R}$-algebra. But the fact that our ground ring now incorporates all conjugacy classes of $\Gamma$ means that rather than considering partially reduced cycle types, we will have to reduce all parts of a multipartition.
\begin{definition}
If an element of $\Gamma \wr S_n$ has cycle type $\bs \mu$, we say it has \emph{fully-reduced cycle type} $\bs \nu$, where for each $c \in \Gamma_*$, $\bs \nu(c)$ is obtained from $\bs \mu(c)$ by subtracting $1$ from each part, and ignoring any resulting parts of size zero. We let $\bm\hat{\bs \mu}$ be the multipartition obtained from $\bs \mu$ by adding 1 to each nonzero part of $\bs \mu(c)$ for each $c \neq 1$, and leaving $\bs \mu(1)$ unchanged.
\end{definition}
\noindent
For an element of $\Gamma \wr S_n$, passing from the cycle type to the fully-reduced cycle type is generally not reversible, even if $n$ is known. While we may recover the number of $1$-cycles, their types may be arbitrary conjugacy classes. The notation has been set up so that $X_{\bm\hat{\bs \mu}}$ is the sum of all elements of $\Gamma \wr S_n$ with all 1-cycles labelled by the trivial conjugacy class and having fully-reduced cycle type $\bs \mu$.
\begin{theorem}
As $\RGamma$-module, $\FHG$ is free with basis $K_{\bm \hat{\bs \mu}}$ where $\mu \in \mathcal{P}(\Gamma_*)$.
\end{theorem}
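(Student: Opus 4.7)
The plan is to show that the set $\{B_{\mathbf{M}} K_{\bm \hat{\bs \mu}} : \mathbf{M}(1) = 0,\, \bs\mu \in \mathcal{P}(\Gamma_*)\}$ is an $\mathcal{R}$-basis of $\FHG$; combined with Proposition \ref{rgamma_free_over_r_prop}, which gives $\{B_{\mathbf{M}} : \mathbf{M}(1) = 0\}$ as an $\mathcal{R}$-basis of $\RGamma$, this yields the desired freeness over $\RGamma$ with basis $\{K_{\bm \hat{\bs \mu}}\}$. The indexing set is in bijection with $\mathcal{P}(\Gamma_*)$ via $(\mathbf{M}, \bs\mu) \mapsto \bs\lambda$, where $\bs\lambda(1) = \bs\mu(1)$ and $\bs\lambda(c) = \bm \hat{\bs \mu}(c) \cup (1^{\mathbf{M}(c)})$ for $c \neq 1$; the inverse extracts $\mathbf{M}(c) = m_1(\bs\lambda(c))$ and subtracts $1$ from the remaining parts of $\bs\lambda(c)$, $c \neq 1$, to recover $\bs\mu$. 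Since $\{K_{\bs\lambda}\}$ is already an $\mathcal{R}$-basis of $\FHG$, the problem reduces to showing the corresponding change-of-basis matrix is unimodular, which I will establish by identifying a filtration that makes it unitriangular.

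Using Proposition \ref{specialisation_distinguishing_prop}, it suffices to study the products in $Z(\mathbb{Z}\Gamma \wr S_n)$ for all large $n$. By Corollary \ref{coeff_compatibility_cor}, when $\mathbf{M}(1) = 0$ we have $\Psi(B_{\mathbf{M}}) = K_{\bs\nu}$ with $\bs\nu(1) = \varnothing$ and $\bs\nu(c) = (1^{\mathbf{M}(c)})$ for $c \neq 1$, so the relevant computation is the expansion of $X_{\bs\nu} \cdot X_{\bm \hat{\bs \mu}}$ in the basis $\{X_{\bs\lambda'}\}$. An element summed in $X_{\bs\nu}$ has identity $S_n$-part with $\mathbf{g}$ nontrivial at exactly $|\mathbf{M}|$ positions (the positions labelled by $c \neq 1$), while an element of $X_{\bm \hat{\bs \mu}}$ is $(\mathbf{h}, \sigma)$ with a nontrivial $\sigma$ supported on the cycles recorded by $\bm \hat{\bs \mu}$. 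The product $(\mathbf{g}, e)(\mathbf{h}, \sigma) = (\mathbf{g}\mathbf{h}, \sigma)$ inherits the $S_n$-cycle structure of $\sigma$, and the generic case in which $\mathbf{g}$'s support avoids the non-fixed-points of $\sigma$ produces an element of partially-reduced cycle type exactly $\bs\lambda$. Moreover, given such a product $y$, the pair $(x, z)$ producing it is forced because the $\mathbf{M}(c)$ 1-cycles of type $c \neq 1$ appearing in $y$ (note that $\bm \hat{\bs \mu}$ contributes none of these) recover $x$ uniquely, so $X_{\bs\lambda}$ appears with coefficient exactly $1$.

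For non-generic contributions a position of $\mathbf{g}$ labelled $c$ lies on a cycle of $\sigma$ of length $k \geq 2$ and type $c'$; a short case analysis by whether the resulting conjugacy class $c''$ equals $1$ or not shows that the modified cycle remains a single cycle while the absorbed 1-cycle of $x$ disappears, so the total length $l(\bs\lambda') = \sum_{c \in \Gamma_*} l(\bs\lambda'(c))$ drops by exactly $1$ per overlap. Multiple overlaps combine additively, so every non-generic $X_{\bs\lambda'}$ appearing in $X_{\bs\nu} \cdot X_{\bm \hat{\bs \mu}}$ satisfies $l(\bs\lambda') < l(\bs\lambda)$. Applying Proposition \ref{specialisation_distinguishing_prop} lifts this to
\[
B_{\mathbf{M}} K_{\bm \hat{\bs \mu}} \;=\; K_{\bs\lambda} \,+\, \sum_{l(\bs\lambda') < l(\bs\lambda)} c_{\bs\lambda'}(t)\, K_{\bs\lambda'}
\]
in $\FHG$ with $c_{\bs\lambda'}(t) \in \mathcal{R}$, so under any total order refining the filtration by decreasing $l$, the change-of-basis matrix is upper unitriangular over $\mathcal{R}$ and therefore invertible. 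The main technical obstacle is the case $c'' = 1$ (arising when the class product $c \cdot c'$ has nonzero structure constant onto the trivial class): here the affected cycle's contribution to the partially-reduced size shrinks but its length contribution is still $1$, so the net length decrease per overlap remains $1$; verifying this uniformly across cycle lengths and conjugacy class products, together with handling multiple simultaneous overlaps on the same cycle, is the combinatorial heart of the argument.
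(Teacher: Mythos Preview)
Your argument is correct and follows the same overall strategy as the paper: reduce to showing that $\{B_{\mathbf{M}} K_{\bm\hat{\bs\mu}} : \mathbf{M}(1)=0\}$ is an $\mathcal{R}$-basis of $\FHG$ by exhibiting a unitriangular change of basis to $\{K_{\bs\lambda}\}$, computed via the specialisations $\Phi_n$.

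The one genuine difference is the filtration statistic. The paper places $K_{\bs\lambda}$ in degree $|\bs\lambda| + l(\bs\lambda(1))$, interprets this as the number of positions ``affected'' by a representative element, and first proves this defines an \emph{algebra} filtration on $\FHG$; the leading term of $K_{\bm\hat{\bs\mu}} B_{\mathbf{M}}$ is then read off in the associated graded. You instead filter by the total length $l(\bs\lambda') = \sum_c l(\bs\lambda'(c))$ and argue directly for this particular product that every non-generic contribution has strictly smaller length (each overlap of a nontrivial $\mathbf{g}$-position with a nontrivial $\sigma$-cycle removes one $1$-cycle of nontrivial type while leaving the cycle's length contribution unchanged). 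Your approach avoids having to check compatibility with general multiplication, at the cost of a small case analysis; the paper's ``affected positions'' filtration is a reusable algebra filtration but requires verifying that property. Both give the same unitriangularity and hence the theorem.
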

\begin{proof}
By Proposition \ref{rgamma_free_over_r_prop}, $\RGamma$ is free over $\mathcal{R}$ with basis $B_{\mathbf{M}}$ indexed by $\mathbf{M}$ with $\mathbf{M}(1) = 0$. So it is enough to show that $K_{\bm\hat{\bs\mu}}B_{\mathbf{M}}$ is a $\mathcal{R}$-basis of $\FHG$. To accomplish this, we introduce a filtration of $\FHG$ by $\mathcal{R}$-modules by placing $K_{\bs \mu}$ in filtration degree $|\bs \mu| + l(\bs\mu(1))$. We now show that this respects multiplication, by passing to $Z(\mathbb{Z} \Gamma \wr S_n)$.
\newline \newline \noindent
Let us say that $(\mathbf{g}, \sigma) \in \Gamma \wr S_n$ \emph{affects} $i \in \{1,\ldots, n\}$ if either $\sigma(i) \neq i$, or $\mathbf{g}_i \neq 1$. The number of elements of $\{1,\ldots,n\}$ that are affected by an element of partially-reduced cycle type $\bs \mu$ is precisely $|\bs \mu| + l(\bs\mu(1))$. Consider two elements $(\mathbf{g}, \sigma), (\mathbf{h}, \rho) \in \Gamma \wr S_n$, and suppose that their product $(\mathbf{g} \sigma(\mathbf{h}), \sigma \rho)$ affects $i$. Then either $\rho(\sigma(i)) \neq i$, or $\mathbf{g}_i\sigma(\mathbf{h})_i \neq 1$. If $\rho(\sigma(i)) \neq i$, then either $\sigma(i) \neq i$, or $\rho(i) \neq i$, which implies that one of $(\mathbf{g}, \sigma), (\mathbf{h}, \rho)$ affects $i$. If instead $\mathbf{g}_i\sigma(\mathbf{h})_i \neq 1$, either $\mathbf{g}_i \neq 1$ or $\sigma(\mathbf{h})_i \neq 1$. In the first of these cases, $(\mathbf{g}, \sigma)$ affects $i$. In the second case, either $\sigma(i) \neq i$, or $\mathbf{h}_i \neq 1$. Hence we conclude that if $(\mathbf{g} \sigma(\mathbf{h}), \sigma \rho)$ affects $i$, then one of $(\mathbf{g}, \sigma)$ and $(\mathbf{h}, \rho)$ affects $i$. This implies that the set of elements in $\{1,\ldots,n\}$ affected by the product of two group elements is a subset of the union of the elements affected by each factor. This proves that we have a filtration, and shows that only products of elements affecting disjoint subsets of $\{1,\ldots,n\}$ contribute to the leading term in the associated graded ring.
\newline\newline\noindent
To compute $K_{\bm\hat{\bs\mu}}B_{\mathbf{M}}$, we again pass to $\mathbb{Z}\Gamma \wr S_n$. This gives $X_{\bm\hat{\bs\mu}}b_{\mathbf{M}}$. The leading order term arises from $X_{\bm\hat{\bs\mu}}$ and $b_{\mathbf{M}}$ affecting disjoint subsets of $\{1,\ldots, n\}$. In that case, we obtain $X_{\bs \nu}$, where $\bs \nu(c) = \bm \hat{\bs \mu}(c) \cup (1^{\mathbf{M}(c)})$ for $c \in \Gamma_*$. Any multipartition $\bs \nu$ arises from exactly one pair $(\bm\hat{\bs\mu}, \mathbf{M})$ in this way. In particular $\bs \mu$ is the fully-reduced cycle type of corresponding to the partially-reduced cycle type $\bs \nu$, while $\mathbf{M}$ records how many parts of $\bs \nu$ of size $1$ have a given label in $\Gamma_*\backslash\{1\}$. We conclude that $K_{\bm\hat{\bs\mu}}B_{\mathbf{M}}$ equals $K_{\bs \nu}$ plus lower order terms. In particular the $K_{\bm\hat{\bs\mu}}B_{\mathbf{M}}$ form an $\mathcal{R}$-basis of $\FHG$ and the theorem follows.
\end{proof}

\section{Isomorphism with \texorpdfstring{$\Gamma_*$}{Gamma*}-Weighted Symmetric Functions}
\noindent
This section is dedicated to proving that $\FHG$ is isomorphic to $\RGamma \otimes \LambdaG$ via Jucys-Murphy evaluation, analogously to Theorem \ref{thm:FH_lambda_iso}.
\newline \newline \noindent
We introduce two filtrations. Let us say that $\sigma \in S_n$ \emph{moves} an element $m \in \{1,\ldots, n\}$ if $\sigma(m) \neq m$ (i.e. $m$ is not a fixed point of $\sigma$). The first filtration is obtained by placing an element $(\mathbf{g}, \sigma) \in \Gamma \wr S_n$ in degree $i$, where $i$ is the number of elements of $\{1, \ldots, n\}$ which are moved by $\sigma$. 
\begin{definition}
The \emph{moving filtration}, $\mathcal{F}_{mov}^i$, is the family of $\RGamma$-submodules of $\FHG$ spanned by $K_{\bm\hat{\bs \mu}}$ with $|\bs \mu| + l(\bs \mu) \leq i$.
\end{definition}

\begin{proposition} \label{moving_filt_prop}
The moving filtration is an algebra filtration. In the associated graded algebra, we have the following equation:
\[
K_{\bm\hat{\bs \mu}}K_{\bm\hat{\bs \nu}} = \left( \prod_{c \in \Gamma_*} \prod_{i \geq 1} \frac{m_i(\bs\lambda(c))!}{m_i(\bs\mu(c))!m_i(\bs\nu(c))!}\right) K_{\bm\hat{\bs \lambda}},
\]
where $m_i(\bs\lambda(c)) = m_i(\bs\mu(c)) + m_i(\bs\nu(c))$.
\end{proposition}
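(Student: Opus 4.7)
My plan is to verify both assertions by applying the specialisation maps $\Phi_n \colon \FHG \to Z(\mathbb{Z}\Gamma \wr S_n)$ and invoking Proposition \ref{specialisation_distinguishing_prop}. The filtration degree admits a geometric interpretation: for $(\mathbf{g},\sigma)$ in the support of $X_{\bm\hat{\bs\mu}}$, every $1$-cycle of $\sigma$ is trivially labelled, so the set of entries of $\{1,\ldots,n\}$ moved by $\sigma$ is precisely the union of the cycles of length at least $2$; tallying across the components of $\bm\hat{\bs\mu}$ gives exactly $|\bs\mu|+l(\bs\mu)$. Thus the moving filtration on $\FHG$ corresponds under $\Phi_n$ to filtering $Z(\mathbb{Z}\Gamma \wr S_n)$ by the number of points moved by the underlying permutation.

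Writing $\mathrm{mov}(\mathbf{g},\sigma)$ for this set of moved points, if $i \notin \mathrm{mov}(\mathbf{g},\sigma) \cup \mathrm{mov}(\mathbf{h},\rho)$ then both $\sigma$ and $\rho$ fix $i$, and hence so does $\sigma\rho$. Therefore
\[
\mathrm{mov}(\mathbf{g}\sigma(\mathbf{h}),\sigma\rho) \subseteq \mathrm{mov}(\mathbf{g},\sigma) \cup \mathrm{mov}(\mathbf{h},\rho),
\]
which both confirms that the moving filtration is an algebra filtration and shows that only pairs with disjoint moving sets contribute to the leading term of $X_{\bm\hat{\bs\mu}} X_{\bm\hat{\bs\nu}}$ in the associated graded algebra. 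For such a disjoint pair, $(\mathbf{g},\sigma)$ and $(\mathbf{h},\rho)$ commute, since they act on disjoint subsets of indices and have trivial labels outside those subsets. Using $(\mathbf{g}\sigma(\mathbf{h}))_i = \mathbf{g}_i \mathbf{h}_{\sigma^{-1}(i)}$ together with the triviality of the $1$-cycle labels in each factor, the product has cycle structure obtained by juxtaposing the two cycle structures, with all surviving $1$-cycles still trivially labelled. Hence the product lies in the support of $X_{\bm\hat{\bs\lambda}}$ where $\bs\lambda(c) = \bs\mu(c) \cup \bs\nu(c)$, so that $m_i(\bs\lambda(c)) = m_i(\bs\mu(c)) + m_i(\bs\nu(c))$.

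To finish, I would count how many disjoint-support pairs in the supports of $X_{\bm\hat{\bs\mu}}$ and $X_{\bm\hat{\bs\nu}}$ produce a fixed element of type $\bm\hat{\bs\lambda}$: for each conjugacy class $c \in \Gamma_*$ and each $i \geq 1$, one independently chooses which of the $m_i(\bs\lambda(c))$ cycles of the corresponding size and label are to belong to the first factor, giving the multinomial count $\prod_{c,i \geq 1} \binom{m_i(\bs\lambda(c))}{m_i(\bs\mu(c))}$; the remaining wreath-product data of each factor is then uniquely determined by this assignment. Summing over all elements of type $\bm\hat{\bs\lambda}$, combined with Proposition \ref{specialisation_distinguishing_prop}, yields the desired identity in the associated graded of $\FHG$. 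The step requiring the most care is the bookkeeping of the $\Gamma$-components on indices fixed by one or both factors; this remains tractable because the triviality of $1$-cycle labels in each factor propagates correctly through the wreath-product multiplication, ensuring that no extraneous label data appears and that the decomposition is genuinely bijective.
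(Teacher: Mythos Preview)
Your proposal is correct and follows essentially the same approach as the paper: both arguments specialise via $\Phi_n$ to $Z(\mathbb{Z}\Gamma \wr S_n)$, interpret the filtration degree as the number of points moved by the underlying permutation, use the containment of moved sets to establish the filtration and isolate the disjoint-support pairs as the sole contributors to the top degree, and then count these pairs by distributing the cycles of $\bm\hat{\bs\lambda}$ between the two factors. Your explicit appeal to Proposition~\ref{specialisation_distinguishing_prop} and your careful remark about the triviality of $1$-cycle labels propagating through the product make the argument slightly more explicit than the paper's, but the substance is identical.
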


\begin{proof}
If $\sigma \in S_n$ has reduced cycle type $\mu$, the number of elements moved by $\sigma$ is $|\mu| + l(\mu)$. Similarly if $(\mathbf{g}, \sigma) \in \Gamma \wr S_n$ has fully-reduced cycle type $\bs\mu$, then the number of elements moved by $\sigma$ is $|\bs \mu| + l(\bs \mu)$. Note that this corresponds to the condition for membership in $\mathcal{F}_{mov}^i$, where $i = |\bs \mu| + l(\bs \mu)$. 
\newline \newline \noindent
Fix $\bs \mu$ and $\bs \nu$ with $|\bs \mu| + l(\bs \mu) = i$ and $|\bs \nu| + l(\bs \nu) = j$. Consider $K_{\bm\hat{\bs \mu}}K_{\bm\hat{\bs \nu}}$. To compute this product, we pass to $Z(\mathbb{Z}\Gamma \wr S_n)$ for large $n$. A term in $X_{\bm\hat{\bs \mu}}$ is of the form $(\mathbf{g}, \sigma)$, where $\sigma$ moves $i$ elements. Similarly a term in $X_{\bm\hat{\bs \nu}}$ is of the form $(\mathbf{h}, \rho)$, where $\rho$ moves $j$ elements. Now, $(\mathbf{g}, \sigma) \cdot (\mathbf{h}, \rho) = (\mathbf{g}\sigma(\mathbf{h}), \sigma\rho)$. The permutation $\sigma \rho$ is obtained by moving $j$ elements, and then moving $i$ elements. This means that $\sigma \rho$ moves at most $i+j$ elements, with equality if and only if the elements moved by $\sigma$ and $\rho$ are disjoint. This proves the assertion that $\mathcal{F}_{mov}^i$ defines a filtration.
\newline \newline \noindent
If $\sigma$ and $\rho$ move disjoint elements, then $\sigma(\mathbf{h}) = \mathbf{h}$ because the $m$-th factor in $\mathbf{h}$ is equal to $1$ if $m$ is not moved by $\rho$ (so in particular if $m$ is moved by $\sigma$). Hence the cycles (of size larger than 1) of $(\mathbf{g}, \sigma) \cdot (\mathbf{h}, \rho)$ are the constituent cycles of $(\mathbf{g}, \sigma)$ and $(\mathbf{h}, \rho)$. This shows that in the associated graded algebra, $K_{\bm\hat{\bs \mu}}K_{\bm\hat{\bs \nu}}$ is equal to a multiple of $K_{\bm\hat{\bs \lambda}}$. The multiple is equal to the number of ways to split the cycles of $\bs \lambda$ among $\bs \mu$ and $\bs \nu$.
\end{proof}

\noindent
The second filtration is obtained by placing an element $(\mathbf{g}, \sigma) \in \Gamma \wr S_n$ in degree $i$, where $i$ is the smallest number of factors required to express $\sigma$ as a product of transpositions. 
\begin{definition}
The \emph{transposition filtration}, $\mathcal{F}_{tsp}^i$, is the family of $\RGamma$-submodules of $\FHG$ spanned by $K_{\bm\hat{\bs \mu}}$ with $|\bs \mu| \leq i$.
\end{definition}

\begin{proposition} \label{length_filtration_prop}
The transposition filtration is an algebra filtration. \end{proposition}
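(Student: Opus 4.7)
The plan is to extend the transposition filtration to all of $\FHG$ and then show the extension coincides with $\mathcal{F}_{tsp}^\bullet$. For a partially-reduced cycle type $\bs\gamma$, set $\mathrm{fr}(\bs\gamma) = |\bs\gamma(1)| + \sum_{c \neq 1}(|\bs\gamma(c)| - l(\bs\gamma(c)))$; this is the size of the associated fully-reduced cycle type, and equals the minimum number of transpositions required to write the underlying permutation $\sigma$ of any $(\mathbf{g},\sigma)$ of type $\bs\gamma$, since a $k$-cycle needs $k-1$ transpositions while fixed points (of any type) need none. Let $\tilde{\mathcal{F}}_{tsp}^i$ be the $\mathcal{R}$-span of the $K_{\bs\gamma}$ with $\mathrm{fr}(\bs\gamma) \leq i$.

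To verify that $\tilde{\mathcal{F}}_{tsp}^\bullet$ is an algebra filtration, I would apply $\Phi_n$: the product $X_{\bs\gamma} X_{\bs\delta}$ consists of terms $(\mathbf{g}\sigma(\mathbf{h}), \sigma\rho)$, and $\sigma\rho$ needs at most $\mathrm{fr}(\bs\gamma) + \mathrm{fr}(\bs\delta)$ transpositions, so the partially-reduced cycle type $\bs\epsilon$ of each summand satisfies $\mathrm{fr}(\bs\epsilon) \leq \mathrm{fr}(\bs\gamma) + \mathrm{fr}(\bs\delta)$. Writing $K_{\bs\gamma} K_{\bs\delta} = \sum_{\bs\epsilon} a_{\bs\epsilon}(t) K_{\bs\epsilon}$ and invoking Proposition \ref{specialisation_distinguishing_prop} then forces $a_{\bs\epsilon}(t) = 0$ whenever $\mathrm{fr}(\bs\epsilon) > \mathrm{fr}(\bs\gamma) + \mathrm{fr}(\bs\delta)$.

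The inclusion $\mathcal{F}_{tsp}^i \subseteq \tilde{\mathcal{F}}_{tsp}^i$ follows from $\mathrm{fr}(\bm\hat{\bs\mu}) = |\bs\mu|$ together with the observation, visible from Corollary \ref{coeff_compatibility_cor}, that every $B_{\mathbf{N}}$ maps in $\FHG$ to a scalar multiple of $K_{\bs\mu'}$ where $\bs\mu'(c)=(1^{\mathbf{N}(c)})$ for $c \neq 1$ and $\bs\mu'(1)=\varnothing$; since $\mathrm{fr}(\bs\mu')=0$, we get $\RGamma \subseteq \tilde{\mathcal{F}}_{tsp}^0$, and combining with the algebra filtration property just established yields the inclusion.

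The main obstacle is the reverse inclusion $\tilde{\mathcal{F}}_{tsp}^i \subseteq \mathcal{F}_{tsp}^i$, which I would establish by induction on the number of elements moved by the underlying permutation of an element with partially-reduced cycle type $\bs\gamma$. The base case consists of those $\bs\gamma$ in the image of $\RGamma$, for which the inclusion is immediate. For the inductive step with $\mathrm{fr}(\bs\gamma) \leq i$, the proof of the preceding basis theorem produces $\bs\eta$ and $\mathbf{M}$ (with $\mathbf{M}(1)=0$ and $|\bs\eta|=\mathrm{fr}(\bs\gamma)$) satisfying $K_{\bm\hat{\bs\eta}} B_{\mathbf{M}} = K_{\bs\gamma} + L$, where $L$ is an $\mathcal{R}$-linear combination of $K_{\bs\gamma'}$ each having strictly fewer moved elements. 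The left side lies in $\mathcal{F}_{tsp}^{\mathrm{fr}(\bs\gamma)} \subseteq \mathcal{F}_{tsp}^i$ because $\mathcal{F}_{tsp}^{\mathrm{fr}(\bs\gamma)}$ is a $\RGamma$-submodule by definition. Meanwhile $L \in \tilde{\mathcal{F}}_{tsp}^{\mathrm{fr}(\bs\gamma)}$ as a difference of two elements of that subspace, so when expanded in the $K$-basis only $K_{\bs\gamma'}$ with $\mathrm{fr}(\bs\gamma') \leq \mathrm{fr}(\bs\gamma)$ appear; the inductive hypothesis places each such $K_{\bs\gamma'}$ in $\mathcal{F}_{tsp}^{\mathrm{fr}(\bs\gamma)}$. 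Therefore $K_{\bs\gamma} = K_{\bm\hat{\bs\eta}} B_{\mathbf{M}} - L \in \mathcal{F}_{tsp}^i$, completing the induction. Once the two filtrations agree, the algebra filtration property for $\mathcal{F}_{tsp}^\bullet$ follows from that for $\tilde{\mathcal{F}}_{tsp}^\bullet$.
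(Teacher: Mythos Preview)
Your overall strategy is sound and is actually more careful than the paper's own argument: the paper just checks the transposition-count bound (essentially your $\tilde{\mathcal{F}}_{tsp}^\bullet$ filtration) and silently treats the equality $\mathcal{F}_{tsp}^i = \tilde{\mathcal{F}}_{tsp}^i$ as obvious. You correctly recognise that this equality is the real content and set out to prove it.

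There is, however, a genuine gap in your inductive step for $\tilde{\mathcal{F}}_{tsp}^i \subseteq \mathcal{F}_{tsp}^i$. You induct on the number of elements \emph{moved} by the underlying permutation, and claim that the lower-order terms $L$ in $K_{\bm\hat{\bs\eta}} B_{\mathbf{M}} = K_{\bs\gamma} + L$ have strictly fewer moved elements. This is false. The image of $B_{\mathbf{M}}$ under $\Phi_n$ lies in $\mathbb{Z}\Gamma^n$, i.e.\ every constituent has identity permutation part. Multiplying $(\mathbf{g},\sigma)$ by such an element gives $(\mathbf{g}\mathbf{h},\sigma)$ with the \emph{same} $\sigma$, so every $K_{\bs\gamma'}$ occurring in the expansion of $K_{\bm\hat{\bs\eta}} B_{\mathbf{M}}$ has moved count equal to $|\bs\eta|+l(\bs\eta)$, exactly the same as $K_{\bs\gamma}$. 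Your induction therefore never progresses.

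The fix is easy: induct instead on the ``affected'' count $a(\bs\gamma)=|\bs\gamma|+l(\bs\gamma(1))$, which is precisely the filtration degree used in the proof of the basis theorem you are citing. That proof shows the terms of $L$ have strictly smaller affected count. Moreover, since the permutation part is unchanged, every $K_{\bs\gamma'}$ in $L$ satisfies $\mathrm{fr}(\bs\gamma')=|\bs\eta|=\mathrm{fr}(\bs\gamma)$ exactly, so the inductive hypothesis places each in $\mathcal{F}_{tsp}^{\mathrm{fr}(\bs\gamma)}$ as you need. With this single change of induction variable, your argument is complete and gives a cleaner justification than the paper's brief sketch.
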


\begin{proof}
Similarly to the other filtration, we pass to $Z(\mathbb{Z}\Gamma \wr S_n)$ for $n$ sufficiently large. Fix $\bs \mu$ and $\bs \nu$ with $|\bs \mu| = i$ and $|\bs \nu| = j$. A term in $X_{\bm\hat{\bs \mu}}$ is of the form $(\mathbf{g}, \sigma)$, where $\sigma$ may be written as the product of $i$ transpositions. Similarly a term in $X_{\bm\hat{\bs \nu}}$ is of the form $(\mathbf{h}, \rho)$, where $\rho$ is a product of $j$ transpositions. Then, $(\mathbf{g}, \sigma) \cdot (\mathbf{h}, \rho) = (\mathbf{g}\sigma(\mathbf{h}), \sigma\rho)$. The permutation $\sigma \rho$ may be written as a product of $i+j$ transpositions (although possibly fewer). We conclude that $\mathcal{F}_{tsp}^i$ defines a filtration.
\end{proof}

\begin{remark}
Using Lemma 3.2 of \cite{FarahatHigman} and Proposition 2.9 of \cite{Wang1}, one can show that the structure constants of the $K_{\bm\hat{\bs\mu}}$ elements in the associated graded algebra are integers rather than arbitrary elements of $\RGamma$. We will not need this fact.
\end{remark}

\subsection{Jucys-Murphy Elements}
Analogously to the JM elements associated to symmetric groups, Pushkarev \cite{Pushkarev} and Wang \cite{Wang2} independently introduced the following elements of $\mathbb{Z} \Gamma \wr S_n$:
\[
L_j =
\sum_{i<j} \sum_{g \in \Gamma} g^{(i)} (g^{-1})^{(j)} (i, j).
\]
In the case where $\Gamma$ is the trivial group, we recover the JM elements for symmetric groups. However, these elements are not sufficient for our purposes, and we must introduce further generalisations.

\begin{definition}
For $c \in \Gamma_*$ and $j \leq n$, the \emph{wreath-product Jucys-Murphy elements} are
\[
L_j(c) = \sum_{i < j} \sum_{\substack{g_1, g_2 \in \Gamma \\ g_2g_1 \in c}} g_1^{(i)} g_2^{(j)} (i, j),
\]
which are elements of $\mathbb{Z} \Gamma \wr S_n$. We extend this notation by linearity in $c$: if $c = \sum_i  n_i c_i$ for some $n_i \in \mathbb{Z}$, we let
\[
L_j(c) = \sum_{i=1}^l n_i L_j(c_i).
\]
\end{definition}
\noindent
In the case where $c = 1$, the condition $g_2g_1 \in c$ becomes $g_2 = g_1^{-1}$, so $L_j(1)$ recovers the definition of Pushkarev and Wang.
\begin{proposition}
Let $c \in \mathbb{Z}\Gamma_*$. We have
\[
L_j(c) = c^{(j)} L_j(1) = L_j(1) c^{(j)}.
\]
\end{proposition}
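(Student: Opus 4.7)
The plan is to recognize that $L_j(c)$ is literally a sum of the elements $X_\sigma(c)$ introduced in Lemma \ref{cycle_label_compatibility}, so that the proposition reduces to invoking that lemma term by term.

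First I would unpack the definitions. For a fixed $i<j$, the transposition $\sigma = (i,j)$ is a 2-cycle in $S_n$, and the sum
\[
\sum_{\substack{g_1,g_2 \in \Gamma \\ g_2 g_1 \in c}} g_1^{(i)} g_2^{(j)} (i,j)
\]
is exactly $X_{(i,j)}(c)$ as defined in Lemma \ref{cycle_label_compatibility}: such a term has $\mathbf{g}_k = 1$ for $k \neq i,j$, and the type of the cycle $(i,j)$ is the conjugacy class of $\mathbf{g}_j \mathbf{g}_i = g_2 g_1$, which by assumption lies in $c$. Hence
\[
L_j(c) = \sum_{i < j} X_{(i,j)}(c).
\]

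Next I would apply Lemma \ref{cycle_label_compatibility} to each summand. Since $j$ belongs to every cycle $(i,j)$ appearing on the right, the lemma gives
\[
X_{(i,j)}(c) = c^{(j)} X_{(i,j)}(1) = X_{(i,j)}(1)\, c^{(j)}
\]
for every $i<j$. Summing over $i<j$ and pulling $c^{(j)}$ out of the sum, we obtain
\[
L_j(c) = c^{(j)} \sum_{i<j} X_{(i,j)}(1) = \left(\sum_{i<j} X_{(i,j)}(1)\right) c^{(j)} = c^{(j)} L_j(1) = L_j(1)\, c^{(j)},
\]
using that $L_j(1) = \sum_{i<j} X_{(i,j)}(1)$ (the case $c=1$ of the above identification). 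Finally, the case of general $c \in \mathbb{Z}\Gamma_*$ follows by $\mathbb{Z}$-linear extension, since both sides of the asserted equation are linear in $c$ and agree on the $\mathbb{Z}$-basis $\Gamma_*$ of $\mathbb{Z}\Gamma_*$.

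There is no real obstacle here: the only point requiring care is the correct identification of the "type" of the 2-cycle $(i,j)$ in an element $(\mathbf{g},(i,j))$ — namely the conjugacy class of $\mathbf{g}_j \mathbf{g}_i$ rather than $\mathbf{g}_i \mathbf{g}_j$ — which is precisely what makes $L_j(c)$ match up with the generic $X_\sigma(c)$ notation. Once that matching is in place, Lemma \ref{cycle_label_compatibility} delivers both equalities simultaneously.
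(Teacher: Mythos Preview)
Your proof is correct. The route differs from the paper's: the paper argues directly, checking for each $i<j$ that $c^{(j)}\sum_{g_2g_1=1}g_1^{(i)}g_2^{(j)}=\sum_{h_2h_1\in c}h_1^{(i)}h_2^{(j)}$ by an explicit change of variables, and then handles the second equality by moving $c^{(j)}$ past $(i,j)$. You instead observe that each summand is literally $X_{(i,j)}(c)$ and invoke Lemma~\ref{cycle_label_compatibility}, which already contains exactly this computation for general cycles. Your approach is cleaner in that it avoids reproving a special case of an earlier lemma; the paper's direct computation is self-contained and does not require the reader to match up conventions with Lemma~\ref{cycle_label_compatibility}. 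Both arguments reduce to linearity in $c$ for the extension to $\mathbb{Z}\Gamma_*$.
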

\begin{proof}
By lienarity, it suffices to consider $c \in \Gamma_*$. To show the first equality, it suffices that for $i<j$,
\[
c^{(j)} \sum_{\substack{g_1, g_2 \in \Gamma \\ g_2g_1 = 1}} g_1^{(i)} g_2^{(j)} = \sum_{\substack{h_1, h_2 \in \Gamma \\ g_2g_1 \in c}} h_1^{(i)} h_2^{(j)},
\]
as we may right-multiply by $(i,j)$ and sum over $i$ less than $j$. We may set $g_2 = g_1^{-1}$ and use the fact that $c^{(j)}$ commutes with $g_1^{(i)}$ and $g_2^{(j)}$. The left hand side becomes
\[
\sum_{g_1 \in \Gamma} g_1^{(i)} (g_1^{-1})^{(j)} c_j^{(j)} = \sum_{g_1 \in \Gamma, k \in c} g_1^{(i)} (g_1^{-1} k)^{(j)},
\]
which agrees with the right hand side upon identifying $h_1 = g_1$ and $h_2 = g_1^{-1}k$. The second equality is similar, except that moving $c^{(j)}$ past the transposition $(i, j)$ turns it into $c^{(i)}$, which means that the manipulations take place in the $i$-index, rather than the $j$-index.
\end{proof}

\begin{definition}
For $c \in \Gamma_*$, let
\[
M_c^{(i,i+1)} = \sum_{\substack{g,h \in \Gamma \\ hg \in c}} g^{(i)}h^{(i+1)}.
\]
\end{definition}

\noindent
The following lemmas have routine proofs, so we omit them.
\begin{lemma}
We have $M_c^{(i,i+1)} = c^{(i)} M_1^{(i,i+1)}  = c^{(i+1)} M_1^{(i,i+1)}$, and $M_c^{(i,i+1)}$ commutes with the transposition $s_i = (i, i+1)$.
\end{lemma}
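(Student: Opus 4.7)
The statement is a pair of routine identities about the element $M_c^{(i,i+1)} = \sum_{hg \in c} g^{(i)}h^{(i+1)}$, and the proof amounts to careful bookkeeping. I will verify the three claims in turn.

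For the identity $M_c^{(i,i+1)} = c^{(i)} M_1^{(i,i+1)}$, I would start by expanding
\[
c^{(i)} M_1^{(i,i+1)} = \sum_{k \in c}\sum_{g \in \Gamma} k^{(i)} g^{(i)} (g^{-1})^{(i+1)} = \sum_{k \in c, g \in \Gamma} (kg)^{(i)} (g^{-1})^{(i+1)}.
\]
Setting $g' = kg$ and $h = g^{-1} = g'^{-1}k$, the product $hg' = g'^{-1} k g'$ lies in $c$ because $c$ is conjugation-invariant. Conversely, given any pair $(g', h)$ with $hg' \in c$, the element $k := g'h$ lies in $c$ (since $g'h$ and $hg'$ are conjugate), and this $k$ is the unique element of $c$ for which $h = g'^{-1}k$. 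Hence the substitution $(k,g) \mapsto (g', h)$ is a bijection onto $\{(g', h) : hg' \in c\}$, and the sum becomes $M_c^{(i,i+1)}$.

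The identity $M_c^{(i,i+1)} = c^{(i+1)} M_1^{(i,i+1)}$ is analogous. Writing
\[
c^{(i+1)} M_1^{(i,i+1)} = \sum_{k \in c, g \in \Gamma} g^{(i)} (kg^{-1})^{(i+1)},
\]
and setting $h = kg^{-1}$, we have $hg = k \in c$, and conversely every pair $(g, h)$ with $hg \in c$ arises uniquely from $k = hg$. Hence the right-hand side equals $M_c^{(i,i+1)}$.

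For commutativity with $s_i = (i, i+1)$, conjugation by $s_i$ swaps the $i$-th and $(i{+}1)$-th tensor factors, so $s_i g^{(i)} h^{(i+1)} s_i^{-1} = h^{(i)} g^{(i+1)}$. Therefore
\[
s_i M_c^{(i,i+1)} s_i^{-1} = \sum_{hg \in c} h^{(i)} g^{(i+1)}.
\]
Relabelling $(g', h') = (h, g)$, the summand becomes $g'^{(i)} h'^{(i+1)}$ and the indexing condition becomes $g'h' \in c$. Since $g'h' = g'(h'g')g'^{-1}$ is conjugate to $h'g'$, and $c$ is conjugation-invariant, $g'h' \in c \iff h'g' \in c$. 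Hence the sum equals $M_c^{(i,i+1)}$, as desired. The only real issue is keeping the indices and direction of multiplication consistent; there is no substantive obstacle.
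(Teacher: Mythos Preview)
Your proof is correct; each of the three verifications is carried out cleanly and the bijections you set up are valid. The paper itself omits the proof as routine, so there is nothing to compare against, but your argument is exactly the kind of direct bookkeeping the authors had in mind.
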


\begin{lemma}[Pushkarev, Proposition 5 \cite{Pushkarev}]
We have that the $L_j(1)$ ($j=1,\ldots,n$) commute with each other. The $L_j(1)$ commute with with $\Gamma^n$. Furthermore
\[
s_i L_i(1) s_i + M_1^{(i,i+1)} s_i = L_{i+1}(1).
\]
\end{lemma}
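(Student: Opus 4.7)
The plan is to prove the three claims in turn: the recursion $s_i L_i(1) s_i + M_1^{(i,i+1)}s_i = L_{i+1}(1)$, commutativity of the $L_j(1)$'s with $\Gamma^n$, and the pairwise commutativity $[L_j(1), L_k(1)]=0$. I will dispatch the first two, which amount to short direct computations, before tackling the third, which is where the real work lies.

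For the recursion: the crucial observation is that $s_i = (i, i+1)$ fixes every index $k < i$. So conjugating a summand $g^{(k)}(g^{-1})^{(i)}(k,i)$ of $L_i(1)$ by $s_i$ simply replaces the label $i$ by $i+1$ throughout, giving exactly the summands of $L_{i+1}(1)$ indexed by $k < i$. The only summand of $L_{i+1}(1)$ missing from $s_i L_i(1) s_i$ is the one with $k = i$, namely $\sum_g g^{(i)}(g^{-1})^{(i+1)}(i,i+1)$. Recognising the $\Gamma^n$-part as $M_1^{(i,i+1)} = \sum_g g^{(i)}(g^{-1})^{(i+1)}$, this missing term is precisely $M_1^{(i,i+1)}s_i$.

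For commutativity with $\Gamma^n$: it suffices to check that for each $h^{(m)}$ and each summand $\sum_g g^{(i)}(g^{-1})^{(j)}(i,j)$ of $L_j(1)$, the element $h^{(m)}$ commutes with the whole summed expression. When $m \notin \{i,j\}$ this is immediate, since $h^{(m)}$ commutes both with the $\Gamma^n$-part (disjoint support) and with the transposition $(i,j)$ (which fixes $m$). When $m = i$, pushing $h^{(i)}$ from the right past $(i,j)$ turns it into $h^{(j)}$, so right multiplication yields $\sum_g g^{(i)}(g^{-1}h)^{(j)}(i,j)$; left multiplication yields $\sum_g (hg)^{(i)}(g^{-1})^{(j)}(i,j)$, and the change of variable $g \mapsto h^{-1} g$ in the latter returns the former. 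The case $m = j$ is symmetric.

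The substantive step is showing $[L_j(1), L_k(1)] = 0$. Write $L_j(1) = \sum_{a<j}\tau(a,j)$ where $\tau(p,q) := \sum_g g^{(p)}(g^{-1})^{(q)}(p,q)$, and note via the change of variable $g \mapsto g^{-1}$ that $\tau(p,q)=\tau(q,p)$. Without loss of generality, $j<k$. Two factors $\tau(a,j)$ and $\tau(b,k)$ commute automatically whenever $\{a,j\}\cap\{b,k\}=\emptyset$, so the commutator reduces to the overlap cases $b=a$ and $b=j$, giving
\[
[L_j(1), L_k(1)] = \sum_{a<j} \bigl[\tau(a,j),\; \tau(a,k)+\tau(j,k)\bigr].
\]
It therefore suffices to establish the two braiding-type identities $\tau(a,j)\tau(a,k) = \tau(j,k)\tau(a,j)$ and $\tau(a,j)\tau(j,k) = \tau(a,k)\tau(a,j)$ for each $a<j<k$. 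The strategy for each identity is to expand both products, move the intermediate transposition past the intervening $\Gamma^n$-element to consolidate positions and read off a common three-cycle (either $(a,k,j)$ or $(a,j,k)$), and then apply an appropriate substitution in the double sum over $\Gamma^2$ to bring both sides to a common form $\sum_{u,v} u^{(a)} v^{(j)}(v^{-1}u^{-1})^{(k)}$ or $\sum_{u,v} u^{(a)} v^{(j)}(u^{-1}v^{-1})^{(k)}$ times that three-cycle. The bookkeeping required to identify the correct substitution (a linear change of variables in $\Gamma^2$ involving inverses and products) is the main obstacle; once the identities are verified, the commutator vanishes termwise in $a$.
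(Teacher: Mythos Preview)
Your proof is correct. The paper omits the proof entirely, noting only that it is routine (and attributing the result to Pushkarev), so there is nothing to compare against; your direct verification via the summand decomposition $L_j(1)=\sum_{a<j}\tau(a,j)$ and the two braiding identities $\tau(a,j)\tau(a,k)=\tau(j,k)\tau(a,j)$, $\tau(a,j)\tau(j,k)=\tau(a,k)\tau(a,j)$ is exactly the kind of computation the paper has in mind.
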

\noindent
The following lemma is easily proven by induction.
\begin{lemma} \label{iterative_swap_lemma}
We have the two identities
\[
s_i L_i(1)^r = L_{i+1}(1)^r s_i - \sum_{p=0}^{r-1} L_{i+1}(1)^p M_1^{(i,i+1)} L_i(1)^{r-1-p}
\]
and
\[
s_i L_{i+1}(1)^r = L_{i}(1)^r s_i + \sum_{p=0}^{r-1} L_{i}(1)^p M_1^{(i,i+1)} L_{i+1}(1)^{r-1-p}.
\]
\end{lemma}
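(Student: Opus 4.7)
The plan is a straightforward induction on $r$, using the swap relation from the previous lemma as the $r=1$ base case. First I would rewrite the identity
\[
s_i L_i(1) s_i + M_1^{(i,i+1)} s_i = L_{i+1}(1)
\]
from the Pushkarev lemma in two equivalent forms convenient for commuting $s_i$ past a single $L$-factor. Right-multiplying by $s_i$ and using $s_i^2 = 1$ gives
\[
s_i L_i(1) = L_{i+1}(1) s_i - M_1^{(i,i+1)},
\]
which is the $r=1$ case of the first identity. For the second identity, left-multiply by $s_i$ instead to obtain
\[
L_i(1) s_i + s_i M_1^{(i,i+1)} s_i = s_i L_{i+1}(1),
\]
and then invoke the fact (from the unnumbered preceding lemma) that $s_i$ commutes with $M_1^{(i,i+1)}$, which collapses $s_i M_1^{(i,i+1)} s_i$ to $M_1^{(i,i+1)}$ and yields the $r=1$ case
\[
s_i L_{i+1}(1) = L_i(1) s_i + M_1^{(i,i+1)}.
\]

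For the inductive step of the first identity, assume the stated formula holds at level $r$ and compute
\[
s_i L_i(1)^{r+1} = \bigl(s_i L_i(1)^r\bigr) L_i(1) = L_{i+1}(1)^r \bigl(s_i L_i(1)\bigr) - \sum_{p=0}^{r-1} L_{i+1}(1)^p M_1^{(i,i+1)} L_i(1)^{r-p},
\]
then substitute the base case $s_i L_i(1) = L_{i+1}(1) s_i - M_1^{(i,i+1)}$ into the first term. The new error term is $-L_{i+1}(1)^r M_1^{(i,i+1)}$, which is precisely the $p=r$ summand needed to extend the sum to $p=0,\ldots,r$. The same pattern, with the roles of $L_i(1)$ and $L_{i+1}(1)$ swapped and a change of sign, handles the second identity, starting from $s_i L_{i+1}(1)^{r+1} = (s_i L_{i+1}(1))L_{i+1}(1)^r$ and using $s_i L_{i+1}(1) = L_i(1) s_i + M_1^{(i,i+1)}$.

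There is no real obstacle here: the only subtle ingredient is the commutation of $s_i$ with $M_1^{(i,i+1)}$, which is needed to get the base case of the second identity in the clean form stated, and which is already provided by the preceding lemma. The induction is then purely formal bookkeeping, and the sums are single telescopic extensions rather than anything requiring rearrangement.
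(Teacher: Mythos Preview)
Your proposal is correct and matches the paper's approach exactly: the paper simply states that the lemma ``is easily proven by induction'' and gives no further details, so your write-up is precisely the intended argument. The only ingredients you use---the Pushkarev relation for the base case and the commutation of $s_i$ with $M_1^{(i,i+1)}$---are exactly those supplied by the two preceding lemmas.
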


\begin{proposition} \label{special_commuting_proposition}
Suppose $c_a, c_b \in \Gamma_*$. The elements $L_i(1)^r c_a^{(i)}c_b^{(i+1)} + L_{i+1}(1)^r c_b^{(i)} c_a^{(i+1)}$ and $L_i(1)^r L_{i+1}(1)^r$ both commute with $s_i$.
\end{proposition}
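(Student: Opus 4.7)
The plan is to show $s_i X = X s_i$ for each of the two expressions by applying Lemma~\ref{iterative_swap_lemma} to expand $s_i L_i^r$ and $s_i L_{i+1}^r$, and then verifying that the resulting error terms cancel. I will use three commutation facts throughout: (a) $L_j(1)$ commutes with every element of $\mathbb{Z}\Gamma^n$, and in particular with $M_1^{(i,i+1)}$ (since $M_1^{(i,i+1)} \in \mathbb{Z}\Gamma^n$); (b) $c^{(i)} M_1^{(i,i+1)} = c^{(i+1)} M_1^{(i,i+1)}$ for any $c \in \mathbb{Z}\Gamma_*$, from the preceding lemma; and (c) $L_j(c) = c^{(j)}L_j(1) = L_j(1)c^{(j)}$, so that $L_i$ commutes with $c^{(i)}$ and $L_{i+1}$ with $c^{(i+1)}$.

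Write $S_1 = \sum_{p=0}^{r-1}L_{i+1}^p M_1 L_i^{r-1-p}$ and $S_2 = \sum_{p=0}^{r-1}L_i^p M_1 L_{i+1}^{r-1-p}$, so Lemma~\ref{iterative_swap_lemma} gives $s_i L_i^r = L_{i+1}^r s_i - S_1$ and $s_i L_{i+1}^r = L_i^r s_i + S_2$. Applying both identities in sequence to $L_i^r L_{i+1}^r$, and using that $L_i$ commutes with $L_{i+1}$, yields
\begin{equation*}
s_i L_i^r L_{i+1}^r - L_i^r L_{i+1}^r s_i = L_{i+1}^r S_2 - S_1 L_{i+1}^r.
\end{equation*}
Fact (a) lets $M_1$ slide past every $L_i$ and $L_{i+1}$, so both sides reduce to $\sum_{p=0}^{r-1} M_1 L_i^p L_{i+1}^{2r-1-p}$ (the second after reindexing $p \mapsto r-1-p$), and hence are equal.

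For $T = L_i^r c_a^{(i)}c_b^{(i+1)} + L_{i+1}^r c_b^{(i)}c_a^{(i+1)}$, the identity $s_i c_a^{(i)}c_b^{(i+1)} = c_b^{(i)}c_a^{(i+1)} s_i$ combined with Lemma~\ref{iterative_swap_lemma} reduces the problem to showing
\begin{equation*}
S_1 c_a^{(i)}c_b^{(i+1)} = S_2 c_b^{(i)}c_a^{(i+1)}.
\end{equation*}
To simplify $S_1 c_a^{(i)}c_b^{(i+1)}$ I use (c) to slide $c_a^{(i)}c_b^{(i+1)}$ through $L_i^{r-1-p}$, then (b) to rewrite $c_a^{(i)}c_b^{(i+1)}M_1$ as $c_a^{(i+1)}c_b^{(i+1)}M_1$, then (c) again to slide $c_a^{(i+1)}c_b^{(i+1)}$ through $L_{i+1}^p$, and finally (a) to move $M_1$ past $L_{i+1}^p$. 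The net result is $c_a^{(i+1)}c_b^{(i+1)}M_1 \sum_{p=0}^{r-1}L_{i+1}^p L_i^{r-1-p}$. The entirely analogous chain of moves applied to $S_2 c_b^{(i)}c_a^{(i+1)}$, combined with $c_b^{(i)}c_a^{(i+1)}M_1 = c_a^{(i+1)}c_b^{(i+1)}M_1$ from (b), produces the same expression.

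The main obstacle is the bookkeeping: $L_i$ commutes with $c^{(i)}$ but not with $c^{(i+1)}$, and vice versa for $L_{i+1}$, so the various central factors must be re-indexed onto the tensor slot that commutes with whichever JM element is encountered next. The identity $c^{(i)}M_1 = c^{(i+1)}M_1$ is precisely the tool that enables this re-indexing at the moment $M_1$ appears, and it is the essential content that makes the cancellation go through.
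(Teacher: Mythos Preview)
Your proof is correct and follows essentially the same strategy as the paper's. For the element involving $c_a,c_b$, both arguments multiply the two identities of Lemma~\ref{iterative_swap_lemma} by the appropriate conjugacy-class factors and use $M_1^{(i,i+1)}c_a^{(i)}c_b^{(i+1)} = M_1^{(i,i+1)}c_b^{(i)}c_a^{(i+1)}$ to cancel the error sums. For $L_i(1)^rL_{i+1}(1)^r$, the paper reduces to $r=1$ (since if $L_i(1)L_{i+1}(1)$ commutes with $s_i$ then so does any power), while you carry out the general-$r$ computation directly; both are fine, the paper's route is just shorter.

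One correction: your final paragraph claims that $L_i$ does not commute with $c^{(i+1)}$, but this contradicts your own fact~(a) and is false --- $L_j(1)$ commutes with all of $\Gamma^n$. Your chain of moves is valid precisely because of~(a), not~(c): you can slide $c_a^{(i)}c_b^{(i+1)}$ past every $L_j(1)$ freely and then equate $M_1 c_a^{(i)}c_b^{(i+1)} = M_1 c_b^{(i)}c_a^{(i+1)}$ in one step, which is exactly what the paper does. The ``re-indexing'' gymnastics you describe are unnecessary, and the stated obstacle does not exist.
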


\begin{proof}
For the first part, we add the first equation in Lemma \ref{iterative_swap_lemma} right-multiplied by $c_a^{(i)}c_b^{(i+1)}$ to the second equation right-multiplied by $ c_b^{(i)}c_a^{(i+1)}$. Because $L_i(1)$ and $L_{i+1}(1)$ commute with $\Gamma^n$, they commute with $M_1^{(i,i+1)}$. So since $M_1^{(i,i+1)}  c_a^{(i)}c_b^{(i+1)} =  M_1^{(i.i+1)}c_b^{(i)}c_a^{(i+1)}$, the sums over the variable $p$ cancel out, leaving the required identity. For the second part it suffices to consider the case $r=1$ as the general case is recovered by raising $L_i(1)L_{i+1}(1)$ to a suitable power. Then we have
\begin{eqnarray*}
s_i L_i(1) L_{i+1}(1) &=& (L_{i+1}(1)s_i - M_1^{(i,i+1)}) L_{i+1}(1) \\
&=& L_{i+1}(1) (L_i(1)s_i + M_1^{(i,i+1)}) - M_1^{(i,i+1)} L_{i+1}(1) \\
&=& L_{i+1}(1)L_i(1)s_i.
\end{eqnarray*}
Since $L_i(1)$ and $L_{i+1}(1)$ commute, $s_i L_i(1)L_{i+1}(1) = L_{i}(1)L_{i+1}(1)s_i$ and we are done.
\end{proof}

\begin{proposition}
The wreath-product JM elements commute with each other: $L_{j_1}(c_1) L_{j_2}(c_2)=L_{j_2}(c_2) L_{j_1}(c_1)$ for any $c_1, c_2 \in \Gamma_*$ and $j_1, j_2 \in \{1,\ldots, n\}$.
\end{proposition}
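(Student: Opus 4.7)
The plan is to reduce the statement to facts already established: that $L_j(1)$ commutes with every other $L_k(1)$ and with $\Gamma^n$ (Pushkarev's lemma), together with the identity $L_j(c) = c^{(j)} L_j(1) = L_j(1) c^{(j)}$ which lets us pull conjugacy class factors out. By linearity in $c_1$ and $c_2$, I may assume $c_1, c_2 \in \Gamma_*$.

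First, I would handle the easy case $j_1 \neq j_2$. Write
\[
L_{j_1}(c_1)\,L_{j_2}(c_2) = c_1^{(j_1)} L_{j_1}(1)\, c_2^{(j_2)} L_{j_2}(1).
\]
Since $L_{j_1}(1)$ commutes with all of $\Gamma^n$, and in particular with $c_2^{(j_2)}$, I can move it past; similarly $c_1^{(j_1)}$ and $c_2^{(j_2)}$ live in distinct tensor factors of $\Gamma^n$, so they commute. Combining this with the commutativity $L_{j_1}(1) L_{j_2}(1) = L_{j_2}(1) L_{j_1}(1)$ from Pushkarev's lemma, the expression equals $c_2^{(j_2)} L_{j_2}(1) c_1^{(j_1)} L_{j_1}(1) = L_{j_2}(c_2) L_{j_1}(c_1)$.

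Next, the case $j_1 = j_2 = j$. Here both terms involve only the $j$-th factor of $\Gamma^n$:
\[
L_j(c_1) L_j(c_2) = c_1^{(j)} L_j(1)\, c_2^{(j)} L_j(1) = c_1^{(j)} c_2^{(j)} L_j(1)^2,
\]
using again that $L_j(1)$ commutes with $\Gamma^n$. Since $c_1, c_2$ are central in $\mathbb{Z}\Gamma$, their images $c_1^{(j)}, c_2^{(j)}$ commute in $\mathbb{Z}\Gamma^n$, so the product equals $c_2^{(j)} c_1^{(j)} L_j(1)^2 = L_j(c_2) L_j(c_1)$.

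There is essentially no obstacle here: once the key identity $L_j(c) = c^{(j)} L_j(1)$ is in hand, the proof is a bookkeeping exercise using the commutativity of the $L_j(1)$ and the fact that conjugacy class sums in $\mathbb{Z}\Gamma$ commute. The only subtle point worth flagging is the distinction between the two cases $j_1 \neq j_2$ and $j_1 = j_2$; in the first we exploit that $\Gamma$-elements in different factors commute, while in the second we exploit centrality of the conjugacy class sums within a single factor.
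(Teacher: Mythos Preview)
Your proof is correct and follows the same approach as the paper: factor $L_j(c) = c^{(j)} L_j(1)$ and use that the $L_j(1)$ commute with each other and with $\Gamma^n$. The paper dispenses with the case split by noting directly that $L_{j_1}(1), c_1^{(j_1)}, L_{j_2}(1), c_2^{(j_2)}$ commute pairwise; your two cases are subsumed by the single observation that each $c^{(j)}$ is central in $\mathbb{Z}\Gamma^n$, so $c_1^{(j_1)}$ and $c_2^{(j_2)}$ commute whether or not $j_1 = j_2$.
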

\begin{proof}
We have $L_{j_1}(c_1) = L_{j_1}(1) c_1^{(j_1)}$ and $L_{j_2}(c_2) = L_{j_2}(1) c_2^{(j_2)}$ and $L_{j_1}(1), c_1^{(j_1)}, L_{j_2}(1), c_2^{(j_2)}$ commute pairwise.
\end{proof}

\noindent
Because the wreath-product JM elements commute, it makes sense to evaluate a polynomial in them.
\begin{proposition}
There is a ring homomorphism
\[
ev_n^Q: Q^{\otimes n} \to \mathbb{Z} \Gamma \wr S_n,
\]
defined by 
\[
ev_n^Q(x_d^r(c)) = L_d(1)^r c^{(d)}.
\]
Moreover, the image of $\Lambda_n(\Gamma_*)$ (the $S_n$-invariants of $Q^{\otimes n}$) is contained in the centre of $\mathbb{Z} \Gamma \wr S_n$.
\end{proposition}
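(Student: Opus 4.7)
The plan proceeds by separately constructing $ev_n^Q$ and then verifying that the image of $\Lambda_n(\Gamma_*)$ is central. For the construction, I would invoke the universal property of the tensor product. For each $i \in \{1, \ldots, n\}$, the assignment $cx^r \mapsto L_i(1)^r c^{(i)}$ (for $r \geq 1$, extended $\mathbb{Z}$-linearly in $c$ and by the identity on constants) defines a ring homomorphism $\phi_i : Q \to \mathbb{Z}\Gamma \wr S_n$: the relation $(cx^r)(c'x^s) = (cc')x^{r+s}$ is preserved because $L_i(1)$ commutes with all of $\Gamma^n$, in particular with $c^{(i)}$, and because $c^{(i)}(c')^{(i)} = (cc')^{(i)}$ in $\mathbb{Z}\Gamma^n$. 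For distinct $i, j$, the images of $\phi_i$ and $\phi_j$ commute elementwise, because the wreath-product JM elements commute pairwise, each commutes with $\Gamma^n$, and $c^{(i)}$, $(c')^{(j)}$ sit in disjoint tensor factors of $\mathbb{Z}\Gamma^n$. The universal property then delivers the desired map $ev_n^Q$.

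For centrality, it suffices to check commutation with $\Gamma^n$ and with the adjacent transpositions $s_k$, which together generate $\Gamma \wr S_n$. The image of $ev_n^Q$ is generated by elements of the form $L_j(1)^r c^{(j)}$, and each commutes with all of $\Gamma^n$: $L_j(1)$ does by a stated property, and $c^{(j)}$ is central in $\mathbb{Z}\Gamma^n$ since $c$ is a conjugacy-class sum. So commutation with $\Gamma^n$ holds for the entire image, irrespective of any $S_n$-invariance.

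The substantive step is commutation with $s_k$. I would first handle the $\Gamma_*$-weighted power sums $p_r(c) := \sum_{j=1}^n x_j^r(c) = m_{(r)_c}(x_1, \ldots, x_n)$ for $c \in \Gamma_*$ and $r \geq 1$. Under $ev_n^Q$, this becomes $\sum_j L_j(1)^r c^{(j)}$; the summands with $j \notin \{k, k+1\}$ commute with $s_k$ trivially, while the remaining piece $L_k(1)^r c^{(k)} + L_{k+1}(1)^r c^{(k+1)}$ commutes with $s_k$ by Proposition \ref{special_commuting_proposition} with $c_a = c$, $c_b = 1$. Since the $s_k$-centraliser is a subring, the image under $ev_n^Q$ of the $\mathbb{Z}$-subalgebra generated by the $p_r(c)$ commutes with every $s_k$. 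Provided the $p_r(c)$ generate $\Lambda_n(\Gamma_*) \otimes \mathbb{Q}$ as a $\mathbb{Q}$-algebra, it follows that $ev_n^Q(P)$ commutes with each $s_k$ in $\mathbb{Q}\Gamma \wr S_n$ for any $P \in \Lambda_n(\Gamma_*)$, and hence in $\mathbb{Z}\Gamma \wr S_n$ because $\mathbb{Z}\Gamma \wr S_n$ embeds in $\mathbb{Q}\Gamma \wr S_n$.

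The expected main obstacle is therefore the generation claim. To handle it, I would extend scalars to $\mathbb{C}$ and appeal to Proposition \ref{complex_bas_change_prop}, which factors $\mathbb{C} \otimes \Lambda_n(\Gamma_*)$ as a tensor product over $\chi \in \Gamma^*$ of ordinary finite-variable symmetric function algebras, each generated over $\mathbb{C}$ by ordinary power sums $\sum_j x_j^r(e_\chi)$. Since $\{c : c \in \Gamma_*\}$ and $\{e_\chi : \chi \in \Gamma^*\}$ are related by an invertible $\mathbb{C}$-linear change of basis on $\mathbb{C}\Gamma_*$, the $p_r(c)$ also generate $\mathbb{C} \otimes \Lambda_n(\Gamma_*)$ as a $\mathbb{C}$-algebra; comparing $\mathbb{Q}$-ranks of the subalgebra generated by the $p_r(c)$ with $\Lambda_n(\Gamma_*) \otimes \mathbb{Q}$ then concludes the argument.
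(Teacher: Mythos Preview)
Your construction of $ev_n^Q$ and the commutation with $\Gamma^n$ match the paper. For commutation with the $s_k$, however, you take a genuinely different route. The paper observes directly that an $S_n$-invariant element lies in $Q^{\otimes(k-1)} \otimes (Q \otimes Q)^{S_2} \otimes Q^{\otimes(n-k-1)}$ and that $(Q \otimes Q)^{S_2}$ is spanned by $x_k^r(c)x_{k+1}^r(c)$ and $x_k^r(c_a)x_{k+1}^s(c_b) + x_k^s(c_b)x_{k+1}^r(c_a)$; it then applies $ev_n^Q$ to each type and invokes both assertions of Proposition~\ref{special_commuting_proposition}. Your argument via power sums is more economical in that it only needs the first assertion (with $c_b=1$), at the cost of a generation statement and a pass through $\mathbb{Q}$.

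There is one wrinkle in your generation step: Proposition~\ref{complex_bas_change_prop} concerns the inverse limit $\mathbb{C}\otimes\LambdaG$, not $\mathbb{C}\otimes\Lambda_n(\Gamma_*)$, and the latter does \emph{not} factor as a tensor product of finite-variable symmetric function rings. Already for $n=1$ and $|\Gamma^*|=2$ the subalgebras generated by $\{p_r(e_\chi):r\geq 1\}$ for the two $\chi$ multiply with relations (e.g.\ $x_1(e_{\chi})x_1(e_{\psi})=0$ for $\chi\neq\psi$), so the multiplication map from their tensor product is far from injective. The fix is immediate and in the spirit of your argument: $\mathbb{C}\otimes\LambdaG$ \emph{is} generated by power sums (via Proposition~\ref{complex_bas_change_prop} and the fact that $\mathbb{C}\otimes\Lambda$ is), and the canonical map $\LambdaG\to\Lambda_n(\Gamma_*)$ is surjective and carries $p_r(c)$ to $p_r(c)$, so $\mathbb{C}\otimes\Lambda_n(\Gamma_*)$ is also generated by power sums. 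With that adjustment your proof goes through; the paper's approach trades this detour for a direct integral argument using the explicit spanning set of $(Q\otimes Q)^{S_2}$.
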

\begin{proof}
Since the wreath-product JM elements commute pairwise, it suffices to show that $ev_n^Q$ is well-defined on each tensor factor of $Q^{\otimes n}$. The $d$-th factor of $Q$ has a basis $x_d^r(c)$ for $r \geq 0$ and $c \in \Gamma_*$ (where we require $c=1$ if $r=0$). This basis obeys the multiplication rule
\[
x_d^r(c_i) x_d^s(c_j) = x_d^{r+s}(c_ic_j) = \sum_k A_{i,j}^k x_d^{r+s}(c_k).
\]
Correspondingly,
\begin{eqnarray*}
ev_n^Q(x_d^r(c_i))ev_n^Q(x_d^s(c_j)) &=& L_d(1)^r c_i^{(d)} L_d(1)^s c_j^{(d)} \\
&=& L_d(1)^{r+s} (c_ic_j)^{(d)} \\
&=& L_d(1)^{r+s} \sum_k A_{i,j}^k c_k^{(d)} \\
&=& \sum_k A_{i,j}^k ev_n^Q(x_d^{r+s}(c_k)).
\end{eqnarray*}
We conclude that $ev_n^Q$ is well defined. 
\newline \newline \noindent
Now suppose that $P(x_1, \ldots, x_n) \in (Q^{\otimes n})^{S_n}$. To show $ev_n^Q(P)$ is central, it suffices to show that it commutes with a generating set of $\Gamma \wr S_n$. In particular, we may take $\Gamma^n$ together with the adjacent transpositions $s_i = (i,i+1)$ for $i \in \{1,\ldots, n-1\}$. Since $L_j(1)$ commutes with $\Gamma^n$, the same is true of $L_j(c) = c^{(j)}L_j(1)$, from which it follows that the entire image of $ev_n^Q$ commutes with $\Gamma^n$. Now we show that $ev_n^Q(P)$ commutes with $s_i$. Since $P \in (Q^{\otimes n})^{S_n}$, $P$ is in particular an element of
\[
Q^{\otimes (i-1)} \otimes (Q \otimes Q)^{S_2} \otimes Q^{\otimes (n-i-1)},
\]
and $(Q \otimes Q)^{S_2}$ is spanned by elements of the form $x_i^r(c) x_{i+1}^r(c)$ and $x_i^r(c_a) x_{i+1}^s(c_b) + x_i^s(c_b) x_{i+1}^r(c_a)$. So it is enough that applying $ev_n$ to these elements gives something that commutes with $s_i$. In the first case,
\[
ev_n^Q(x_i^r(c) x_{i+1}^r(c)) = L_i(1)^r c^{(i)} L_{i+1}(1)^r c^{(i+1)},
\]
which commutes with $s_i$ because $L_i(1) L_{i+1}(1)$ and $c^{(i)}c^{(i+1)}$ do. For the second case, without loss of generality, we assume $s \leq r$. Now,
\begin{eqnarray*}
ev_n^Q(x_i^r(c_a) x_{i+1}^s(c_b) + x_i^s(c_b) x_{i+1}^r(c_a)) &=& L_i(1)^r c_a^{(i)} L_{i+1}(1)^s c_b^{(i+1)} + L_i(1)^r c_b^{(i)} L_{i+1}(1)^s c_a^{(i+1)}\\
&=& L_i(1)^s L_{i+1}(1)^s \left( L_i(1)^{r-s}c_a^{(i)} c_b^{(i+1)} + L_{i+1}(1)^{r-s}c_b^{(i)} c_a^{(i+1)} \right)
\end{eqnarray*}
is a product of elements that commute with $s_i$ from Proposition \ref{special_commuting_proposition}.
\end{proof}

\begin{definition} \label{ev_n_formal_def}
Let $ev_n^\Lambda: \LambdaG \to Z(\mathbb{Z}\Gamma \wr S_n)$ be the composition of the canonical map $\LambdaG \to \Lambda_n(\Gamma_*) \subseteq Q^{\otimes n}$ with $ev_n^Q: Q^{\otimes n} \to Z(\mathbb{Z}\Gamma \wr S_n)$.
\end{definition}
\noindent
Finally, we combine our evaluation maps into a single map.
\begin{definition}
For each $n\in \mathbb{Z}_{\geq 0}$ there is a ring homomorphism
\[
ev_n: \RGamma \otimes \LambdaG \to Z(\mathbb{Z}\Gamma \wr S_n)
\]
defined as 
\[
ev_n( a \otimes f) = ev_n^{\mathcal{R}}(a) ev_n^{\Lambda}(f),
\]
using the maps $ev_n^{\mathcal{R}}$ and $ev_n^{\Lambda}$ from Theorem \ref{r_gamma_hom_thm} and Definition \ref{ev_n_formal_def}, respectively.
\end{definition}

\begin{theorem} \label{wreath_map_exists}
There is a unique homomorphism $\Psi: \RGamma \otimes \LambdaG \to \FHG$ extending the map in Corollary \ref{coeff_compatibility_cor} such that the following diagram commutes.
\begin{equation*}
\begin{tikzcd}
\RGamma \otimes_{\mathbb{Z}} \LambdaG    \arrow[r, "\Psi"]    \arrow[dr, "ev_n"]    &    \FHG    \arrow[d, "\Phi_n"]\\
&    Z(\mathbb{Z}S_n)
\end{tikzcd}
\end{equation*}
\end{theorem}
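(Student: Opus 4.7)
The proof follows the pattern of Proof 2 of Theorem \ref{thm:FH_lambda_iso}. \textbf{Uniqueness} is immediate: if $\Psi_1, \Psi_2$ are two such homomorphisms, then for any $z \in \RGamma \otimes \LambdaG$ we have $\Phi_n(\Psi_1(z) - \Psi_2(z)) = ev_n(z) - ev_n(z) = 0$ for every $n$, so $\Psi_1(z) = \Psi_2(z)$ by Proposition \ref{specialisation_distinguishing_prop}.

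For \textbf{existence}, the plan is to first define $\Psi$ on $\LambdaG$ and then extend to $\RGamma \otimes \LambdaG$ by $\Psi(a \otimes f) := \Psi(a) \cdot \Psi(f)$ using the map of Corollary \ref{coeff_compatibility_cor} on the $\RGamma$ factor. The centrepiece is a wreath analogue of Proposition \ref{murphy_prop}: for each multipartition $\bs\lambda$,
\[
ev_n^{\Lambda}(m_{\bs\lambda}) = X_{\bm\hat{\bs \lambda}} + (\text{strictly lower moving-filtration terms}),
\]
proved by the same combinatorial argument as Murphy's proposition. Specifically, $m_{\bs\lambda}(x_1,\ldots,x_n)$ is the orbit sum under $S_n$ of a pure tensor $\prod_k x_{d_k}^{j_k}(c_k)$, and $ev_n^Q$ sends each such monomial to $\prod_k L_{d_k}(1)^{j_k} c_k^{(d_k)}$. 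The top-degree moving-filtration contribution arises when, for each $d_k$, the $j_k$ transpositions in the expansion of $L_{d_k}(1)^{j_k}$ involve $j_k$ distinct indices less than $d_k$ and the indices across different $d_k$ are pairwise disjoint; this produces a disjoint collection of cycles of type $c_k$ and length $j_k+1$, assembling after reduction into exactly $X_{\bm\hat{\bs \lambda}}$. All other expansions either shorten or merge cycles, strictly lowering the moving filtration.

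Given this leading-term identity, I construct $\Psi(m_{\bs\lambda}) \in \FHG$ by induction on $d(\bs\lambda) := |\bs\lambda| + l(\bs\lambda)$: set $\Psi(1) = 1$, and inductively write
\[
\Psi(m_{\bs\lambda}) = K_{\bm\hat{\bs \lambda}} + (\text{correction in strictly lower moving filtration}),
\]
where the correction is forced to be the unique element (Proposition \ref{specialisation_distinguishing_prop}) of $\FHG$ of lower filtration degree whose $\Phi_n$-image equals $ev_n^{\Lambda}(m_{\bs\lambda}) - X_{\bm\hat{\bs \lambda}}$ for all $n$. \textbf{The main obstacle} is proving that this correction actually exists, i.e.\ that the lower-order remainder genuinely comes from $\FHG$; equivalently, that its coefficients against $\{X_{\bs\nu}\}$ depend on $n$ as integer-valued polynomials. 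I would establish this by tracking the non-leading contributions in the JM expansion: each collision (two factors $L_{d_k}(1)$ sharing a transposition index, or a group-element factor $c_k^{(d_k)}$ meeting a transposition hitting $d_k$) rewrites as a product of an $ev_n^{\Lambda}(m_{\bs\mu})$ with $d(\bs\mu) < d(\bs\lambda)$ and an element of $ev_n^{\mathcal{R}}(\RGamma)$. Strong induction together with Corollary \ref{coeff_compatibility_cor} then realises the correction as $\Psi$ applied to an element of $\RGamma \otimes \LambdaG$ of strictly lower degree.

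Finally, \textbf{multiplicativity} of the extended map and \textbf{commutativity of the diagram} follow from one further application of Proposition \ref{specialisation_distinguishing_prop}: for any $z, z' \in \RGamma \otimes \LambdaG$,
\[
\Phi_n(\Psi(z)\Psi(z')) = ev_n(z)\, ev_n(z') = ev_n(zz') = \Phi_n(\Psi(zz'))
\]
for every $n$, forcing $\Psi(z)\Psi(z') = \Psi(zz')$; commutativity of the square and compatibility with Corollary \ref{coeff_compatibility_cor} on the $\RGamma$ summand are built into the construction.
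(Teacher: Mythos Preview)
Your approach is genuinely different from the paper's, and the difference matters. The paper does \emph{not} construct $\Psi$ inductively on the monomial basis. Instead it passes to $\mathbb{C}$ coefficients, where Proposition~\ref{complex_bas_change_prop} gives $\mathbb{C}\otimes\LambdaG \cong \mathbb{C}\otimes\Lambda^{\otimes|\Gamma^*|}$, a \emph{free polynomial algebra}. On a free polynomial algebra, defining a homomorphism is trivial: one simply specifies the images of the generators $e_r \in \Lambda(\chi)$. The paper computes $e_r(L_1(e_\chi),\ldots,L_n(e_\chi))$ directly using Theorem~\ref{thm:elem_JM} and Lemma~\ref{cycle_label_product}, observes that the result has the form $\sum_{|\bs\nu|=r} R_{r,\bs\nu}^\chi X_{\bm\hat{\bs\nu}}$ with coefficients $R_{r,\bs\nu}^\chi$ \emph{independent of $n$}, and sets $\Psi(e_r) = \sum R_{r,\bs\nu}^\chi K_{\bm\hat{\bs\nu}}$. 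Integrality (that the image lies in $\FHG$ rather than $\mathbb{C}\otimes\FHG$) is checked afterwards, using only that $ev_n^\Lambda$ already lands in $Z(\mathbb{Z}\Gamma\wr S_n)$. The leading-term computation you describe is the content of the paper's Proposition~6.10, which is proved \emph{after} $\Psi$ is constructed and is used only to show $\Psi$ is an isomorphism (Theorem~\ref{main_thm}).

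Your route has a real gap at exactly the point you flag as ``the main obstacle.'' You need the lower-order remainder $ev_n^\Lambda(m_{\bs\lambda}) - X_{\bm\hat{\bs\lambda}}$ to lift to $\FHG$, i.e.\ to have coefficients that are integer-valued polynomials in $n$. Your proposed mechanism---that each ``collision'' rewrites as a product of some $ev_n^\Lambda(m_{\bs\mu})$ with $d(\bs\mu)<d(\bs\lambda)$ and an element of $ev_n^{\mathcal{R}}(\RGamma)$---is asserted but not proved, and it is not obvious. A collision in the JM expansion produces a specific permutation-with-group-decorations, not an orbit sum; reorganising the totality of collision terms back into $S_n$-symmetric expressions of the required form is a substantial combinatorial claim. (Also, your filtration bookkeeping is slightly off: merging cycles need not lower the moving filtration, which is why the paper uses the transposition filtration first and the moving filtration second.) The paper's complexification trick sidesteps this entirely: over $\mathbb{C}$ there are no relations to verify, and the only thing to check is that the resulting coefficients are constants in $n$---which follows immediately from the explicit elementary-symmetric-function computation.
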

\begin{proof}
Corollary \ref{coeff_compatibility_cor} constructs $\Psi$ on $\RGamma$, so it remains to construct $\Psi$ on $\LambdaG$. To do this, we pass to complex coefficients, and then show that the map is defined integrally. Proposition \ref{complex_bas_change_prop} shows
\[
\mathbb{C} \otimes \LambdaG = \mathbb{C} \otimes \Lambda^{\otimes |\Gamma^*|}
\]
by changing basis in $\mathbb{C}\Gamma_*$ from conjugacy-class sums to primitive central idempotents $e_\chi$. To compute $ev_n$ on the tensor factor of $\Lambda$ corresponding to $\chi \in \Gamma^*$, we must evaluate symmetric functions in the variables
\[
L_j(e_\chi) = L_j(1) e_\chi^{(j)}.
\]
Since $\Lambda$ is freely generated by the elementary symmetric functions $e_r$, it suffices to consider the evaluation of $e_r$ at the $L_j(e_\chi)$. Momentarily ignoring the $\Gamma^n$ parts in $L_j(e_\chi)$, we see that the transpositions involved in $L_j(e_\chi)$ are the same as in the JM elements $L_j$ for the symmetric group. So the $S_n$ parts are the same as in Theorem \ref{thm:elem_JM}. Lemma \ref{cycle_label_product} tells us that the labels of intersecting cycles multiply. But each label is the idempotent $e_\chi$. So we get a sum of elements whose reduced cycle type has size $r$, and all of whose cycles are labelled by $e_\chi$ (which is a linear combination of the usual labels in $\Gamma_*$). We could write this
\[
e_r(L_1(e_\chi), \ldots, L_n(e_\chi)) = \sum_{|\mu| = r} X_{\mu_\chi}^{irr},
\]
where the meaning of $X_{\mu_\chi}^{irr}$ is analogous to $m_{\bs\lambda_\chi}$ from Proposition \ref{complex_bas_change_prop}. Changing bases of $\mathbb{C}\Gamma_*$ back from $e_\chi$ to conjugacy-class sums changes the labels back to conjugacy classes. This is a linear operation that is independent of $n$. Hence we obtain some element
\[
e_r(L_1(e_\chi), \ldots, L_n(e_\chi)) = \sum_{|\bs\nu| = r} R_{r, \bs\nu}^\chi X_{\bs\hat{\bs \nu}},
\]
where $R_{r, \bs\nu}^\chi$ is independent of $n$, and the multipartition $\bs\nu$ is indexed by $\Gamma_*$ (the sizes of the cycles of the elements are unchanged by changing the type so we still sum over multipartitions of size $r$). Thus for $e_r \in \Lambda(\chi)$, we may take
\[
\Psi(e_r) = \sum_{|\bs\nu| = r} R_{r, \bs\nu}^\chi K_{\bm\hat{\bs \nu}},
\]
which guarantees that $ev_n = \Phi_n \circ \Psi$.
\newline \newline \noindent
To see that the image of $\Psi$ on $\LambdaG$ is contained in $\FHG$ rather than $\mathbb{C}\otimes \FHG$, note that we can tell whether an element $f \in \mathbb{C} \otimes \FHG$ is contained in $\FHG$ by applying $\Phi_n$ for sufficiently large $n$ and checking whether the coefficients of the conjugacy class sums $X_{\bs\mu}$ are integers. This would imply that the coefficient of $K_{\bs\mu}$ in $f$, which is an element of $\mathbb{C} \otimes \mathcal{R}$, is integer-valued, i.e. an element of $\mathcal{R}$. On the other hand, we already know the image of $ev_n$ on $\LambdaG$ is contained in $Z(\mathbb{Z}\Gamma \wr S_n)$ and hence integral.

\end{proof}

\begin{proposition}
Recall that $\FHG$ is equipped with two filtrations (the moving and transposition filtrations). If we take the leading order term of $\Psi(m_{\bs\mu})$ with respect to the transposition filtration, and then the leading order term with respect to the  moving filtration, we get $X_{\bm\hat{\bs\mu}}$.
\end{proposition}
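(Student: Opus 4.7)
The plan is to reduce to a leading-order computation in $Z(\mathbb{Z}\Gamma \wr S_n)$ for sufficiently large $n$, in the spirit of the proof of Proposition~\ref{murphy_prop}. By Theorem~\ref{wreath_map_exists}, $\Phi_n(\Psi(m_{\bs\mu})) = ev_n^{\Lambda}(m_{\bs\mu})$. Since both the moving and the transposition filtrations on $\FHG$ are defined via the basis $\{K_{\bm\hat{\bs\nu}}\}$, and $\Phi_n$ sends $K_{\bm\hat{\bs\nu}}$ to $X_{\bm\hat{\bs\nu}}$ (nonzero for $n$ sufficiently large), it suffices by Proposition~\ref{specialisation_distinguishing_prop} to identify the iterated leading term of $ev_n^{\Lambda}(m_{\bs\mu})$ under the analogous filtrations on $Z(\mathbb{Z}\Gamma \wr S_n)$.

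First I would expand $m_{\bs\mu}$ as a sum over its constituent $\Gamma_*$-weighted monomials. Each such monomial corresponds to an injective assignment $\{i_s\}$ in $\{1, \ldots, n\}$ to the parts of $\bs\mu$, sending a part of size $p$ in $\bs\mu(c)$ to the factor $x_{i_s}^{p}(c)$; applying $ev_n^Q$ yields a product of factors $L_{i_s}(1)^{p} c^{(i_s)}$. By definition of the wreath-product Jucys-Murphy elements, each $L_{i_s}(1)^p$ expands as a sum of products of $p$ transpositions of the form $(j, i_s)$ with $j < i_s$, multiplied by $\Gamma^n$ elements. The total transposition count per summand is therefore exactly $\sum_s p_s = |\bs\mu|$, so the top transposition-filtration component consists of those summands in which no cancellation of transpositions occurs upon simplification.

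Within this top transposition component, I would apply the combinatorial analysis of Proposition~\ref{murphy_prop}. A single $L_{i_s}(1)^p$ attains its maximum moving degree precisely when the transposition indices $j_1, \ldots, j_p$ are distinct, in which case it produces the $(p+1)$-cycle $(j_p, \ldots, j_1, i_s)$ with maximum element $i_s$, summed over the $\Gamma^n$ labels giving this cycle type $1$; Lemma~\ref{cycle_label_compatibility} then shows that the factor $c^{(i_s)}$ relabels this cycle with type $c$. Multiplying across all $s$, the maximum joint moving degree is attained precisely when these cycles are pairwise disjoint, in which case the product is an element of $\Gamma \wr S_n$ having, for each part $p$ in $\bs\mu(c)$, one $(p+1)$-cycle of type $c$, with the remaining indices forming $1$-cycles of trivial type. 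This is exactly the description of partially-reduced cycle type $\bm\hat{\bs\mu}$.

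To finish, I would verify that the coefficient of $X_{\bm\hat{\bs\mu}}$ is exactly one. For any $\pi \in \Gamma \wr S_n$ of partially-reduced cycle type $\bm\hat{\bs\mu}$, each nontrivial cycle of $\pi$ has a unique maximum element, which determines uniquely both the monomial in $m_{\bs\mu}$ (the $i_s$'s are these maxima) and the specific summand of the corresponding $L_{i_s}(1)^p c^{(i_s)}$ that produces $\pi$. Hence the iterated leading term is precisely $X_{\bm\hat{\bs\mu}}$. The main technical subtlety is the interplay of the two filtrations: extracting top transposition degree rules out cancellation of transpositions (both within a single $L_{i_s}(1)^p$ and between factors), and the subsequent extraction of top moving degree further rules out merging of cycles between distinct factors; Lemma~\ref{cycle_label_compatibility} ensures that the $\Gamma_*$-labels transport correctly with the cycles, so that the resulting multipartition is precisely $\bm\hat{\bs\mu}$ and not some other multipartition of the same transposition and moving degrees.
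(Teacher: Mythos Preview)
Your proposal is correct and follows essentially the same route as the paper's proof: pass to $Z(\mathbb{Z}\Gamma\wr S_n)$ via $\Phi_n$, identify the leading transposition term of each factor $L_{i_s}(1)^{p}c^{(i_s)}$ as the sum of $X_\sigma(c)$ over $(p+1)$-cycles with largest element $i_s$, then take the leading moving term by requiring the cycles from different factors to be disjoint, and finally recover coefficient $1$ by the ``largest element in each cycle determines the monomial'' argument. The paper invokes Lemma~\ref{cycle_label_product} in addition to Lemma~\ref{cycle_label_compatibility} to track the $\Gamma$-labels when building up a single cycle from $X_{(i_k,j)}(1)$'s, which makes that step slightly more precise than your ``labels transport correctly'' remark, but otherwise the arguments coincide.
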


\begin{proof}
Consider $ev_n(m_{\bs\mu})$. In the notation of Lemmas \ref{cycle_label_compatibility} and \ref{cycle_label_product},
\[
L_j(c) = \sum_{i<j} X_{(i,j)}(c).
\]
Analogously to the proof of Proposition \ref{murphy_prop}, suppose that $\sigma$ is a cycle containing the index $j$. Multiplying $X_{\sigma}(1)$ by $X_{(i,j)}(c)$ inserts $i$ into the cycle if $i$ is not already in $\sigma$, or removes if it is. The size with respect to the transposition filtration is increased by 1 in the first case, and reduced by 1 in the second case. To find the leading order term of $L_j(1)^r$, we view it as a sum of products of $X_{(i_k,j)}(1)$ ($k=1,\ldots,r$) where each $i_k$ is less than $j$. The maximal degree with respect to the transposition filtration is achieved when the $i_k$ are distinct. In that case, Lemmas \ref{cycle_label_compatibility} and \ref{cycle_label_product} imply that the product of the $X_{(i_k,j)}(1)$ is $X_{\sigma}(1)$, where $\sigma = (i_r, i_{r-1}, \ldots, i_1, j)$ is an $(r+1)$-cycle, and furthermore the leading term of $L_j(1)^r c^{(j)}$ in the transposition filtration is the sum of $X_{\sigma}(c)$ across all $(r+1)$-cycles $\sigma$ whose largest element is $j$. Now, $ev_n(m_{\bs\mu})$ is a sum of products of $L_j(1)^r c^{(j)}$ where the pair of parameters $(r, c)$ occurs $m_r(\bs\mu(c))$ times. Each such term gives us an $(r+1)$-cycle of type $c$.
\newline \newline \noindent
We maximise the degree in the moving filtration when all these cycles are disjoint. In that case, the result is an element of fully-reduced cycle type $\bs\mu$. Every such element arises exactly once, because we may reconstruct the monomial which gave rise to it: an $(r+1)$ cycle whose largest element is $j$ and has type $c$ must have come from $ev_n(x_j^r(c))$.
\end{proof}

\begin{theorem} \label{main_thm}
The map $\Psi$ from Theorem \ref{wreath_map_exists} is an isomorphism and in particular $\FHG = \RGamma \otimes \LambdaG$.
\end{theorem}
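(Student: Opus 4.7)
The approach is to combine the triangularity statement in the preceding proposition with the free $\RGamma$-module structures on both sides. Since $\LambdaG$ is $\mathbb{Z}$-free with basis $\{m_{\bs\mu}\}_{\bs\mu \in \mathcal{P}(\Gamma_*)}$, the extension of scalars $\RGamma \otimes_{\mathbb{Z}} \LambdaG$ is $\RGamma$-free with basis $\{1 \otimes m_{\bs\mu}\}$. By the final theorem of Section \ref{rgamma_section}, $\FHG$ is $\RGamma$-free with basis $\{K_{\bm\hat{\bs\mu}}\}$ indexed by the same set. It therefore suffices to show that the $\RGamma$-linear map $\Psi$ has invertible transition matrix between these two bases.

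The plan is to define a partial order $\prec$ on $\mathcal{P}(\Gamma_*)$ by declaring $\bs\nu \prec \bs\mu$ when the pair $(|\bs\nu|, |\bs\nu| + l(\bs\nu))$ is coordinatewise $\leq (|\bs\mu|, |\bs\mu| + l(\bs\mu))$ and strictly less in at least one coordinate. The transposition filtration places $K_{\bm\hat{\bs\lambda}}$ at degree $|\bs\lambda|$, and the moving filtration places it at degree $|\bs\lambda| + l(\bs\lambda)$, so this partial order exactly records what it means to be strictly below $K_{\bm\hat{\bs\mu}}$ in the bifiltration. The preceding proposition then translates into the triangularity
\[
\Psi(1 \otimes m_{\bs\mu}) = K_{\bm\hat{\bs\mu}} + \sum_{\bs\nu \prec \bs\mu} a_{\bs\mu,\bs\nu} K_{\bm\hat{\bs\nu}}, \qquad a_{\bs\mu,\bs\nu} \in \RGamma,
\]
since first isolating the top transposition-filtration piece and then the top moving-filtration piece extracts precisely the coefficient of $K_{\bm\hat{\bs\mu}}$, which the proposition identifies as $1$.

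Linearizing $\prec$ to a total order, the transition matrix becomes upper unitriangular, and hence invertible over $\RGamma$. Therefore $\Psi$ carries a free $\RGamma$-basis bijectively to a free $\RGamma$-basis, and since it is already an $\RGamma$-algebra homomorphism by Theorem \ref{wreath_map_exists}, it is an isomorphism of $\RGamma$-algebras, giving $\FHG = \RGamma \otimes \LambdaG$.

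The one point requiring care is the translation between the iterated-leading-term language of the preceding proposition and the single unitriangularity relation displayed above; this amounts to checking that the two filtrations interact as expected and that no $K_{\bm\hat{\bs\lambda}}$ with $(|\bs\lambda|, l(\bs\lambda)) = (|\bs\mu|, l(\bs\mu))$ other than $\bs\lambda = \bs\mu$ appears in the bifiltered leading part. Everything else is formal linear algebra over the commutative ring $\RGamma$.
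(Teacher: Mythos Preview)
Your approach is correct and is precisely an elaboration of the paper's one-sentence proof, which simply observes that $\Psi$ sends the $\RGamma$-basis $\{m_{\bs\mu}\}$ to $\{K_{\bm\hat{\bs\mu}}\}$ plus lower-order terms and is therefore bijective. One small technical point: the coordinatewise partial order you define is not quite what the preceding proposition delivers, since it only bounds the moving degree of the \emph{top transposition-filtration piece} of $\Psi(m_{\bs\mu})$, not of the whole element; replacing your order by the lexicographic order on $(|\bs\nu|,\,l(\bs\nu))$ makes the translation immediate, and your own flagged caveat (``checking that the two filtrations interact as expected'') already anticipates this adjustment.
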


\begin{proof}
We see that up to lower order terms, $\Psi$ sends a $\RGamma$-basis of $\RGamma \otimes \LambdaG$, namely $m_{\bs\lambda}$, to a $\RGamma$-basis of $\FHG$, namely $K_{\bm\hat{\bs\lambda}}$. This shows that the homomorphism $\Psi$ is a bijection.
\end{proof}

\section{Applications and Further Directions} \label{applications_section}
\noindent
We prove a wreath-product version of Nakayama's Conjecture. The first step is to understand the action of the wreath-product JM elements on the irreducible (characteristic zero) representations $V^{\bs\lambda}$ of $\Gamma \wr S_n$.
\newline \newline \noindent
Analogously to the GZ basis for irreducible representations of symmetric groups, there is a similar construction for $\Gamma \wr S_n$. It is not a basis, but rather a decomposition into subspaces. What is important for us is that the wreath-product JM elements act by scalars on the GZ subspaces of an irreducible representation of $\Gamma \wr S_n$ by certain scalars.

\begin{definition}[Mishra-Srinivasan, Section 1 \cite{MS}]
Let $\bs\lambda$ be a $\Gamma^*$-indexed multipartition of size $n$. A \emph{Young $\Gamma$-tableau} $T$ of shape $\bs\lambda$ is a labelling of the boxes of all partitions in $\bs\lambda$ with numbers. Suppose that the labelling uses the numbers $\{1, \ldots, n\}$ (with each being used exactly once), such that for each partition on $\bs\lambda$, the labels increase from top to bottom in a column and increase from left to right in a row; in this case we say $T$ is \emph{standard}. If $T$ is a standard Young $\Gamma$-tableau, we write $\chi_{T(i)}$ for the irreducible representation $\chi \in \Gamma^*$ such that $T(\chi)$ contains the box with label $i$, and $c_T(i)$ for the content of the box containing $i$ in the partition corresponding to $\chi_{T(i)}$.
\end{definition}

\begin{example} \label{SYGT_example}
If $\Gamma = C_2$, the following pair of partitions, corresponding to the trivial and sign representations of $C_2$, define a standard Young $C_2$-tableau.
\begin{figure}[H]
\ytableausetup{centertableaux}\begin{ytableau}
1 & 3 & 6 \\
7
\end{ytableau}
\hspace{10mm}
\ytableausetup{centertableaux}\begin{ytableau}
2 & 5 \\
4
\end{ytableau}
\end{figure}
\noindent
In particular, the constituent tableaux of a standard Young $\Gamma$-tableau need not be standard Young tableaux because the labels of the tableau corresponding to $\chi \in \Gamma^*$ are allowed to come from the set $\{1, \ldots, |\bs\lambda|\}$, which may be larger than $\{1, \ldots, |\bs\lambda(\chi)|\}$.
\end{example}

\begin{definition}
For $m \leq n$, let $H_{m,n}$ be the subgroup of $\Gamma \wr S_n$ consisting of elements of the form $(\mathbf{g}, \sigma)$ where $\mathbf{g} \in \Gamma^n$ is arbitrary and $\sigma \in S_m$, where $S_m$ is viewed as the the subgroup of $S_n$ fixing all numbers larger than $m$. Equivalently, $H_{m,n} = \Gamma^n \rtimes S_m$ with the same embedding of $S_m$ into $S_n$.
\end{definition}

\begin{proposition}[Mishra-Srinivasan, Section 4 \cite{MS}]
The restriction of an irreducible representation of $H_{m,n}$ to $H_{m-1,n}$ is multiplicity free (i.e. has no repeated irreducible summands).
\end{proposition}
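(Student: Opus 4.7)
The plan is to exploit a product decomposition of $H_{m,n}$ to reduce the claim to the branching rule for wreath products. Observe that $H_{m,n} = \Gamma^n \rtimes S_m$ factors as
\[
H_{m,n} \;\cong\; (\Gamma \wr S_m) \times \Gamma^{n-m},
\]
where the first factor is $S_m$ acting semidirectly on the first $m$ copies of $\Gamma$, and the second factor consists of the remaining $n-m$ copies of $\Gamma$ (which $S_m$ leaves untouched). Under the analogous decomposition $H_{m-1,n} \cong (\Gamma \wr S_{m-1}) \times \Gamma^{n-m+1}$, the inclusion $H_{m-1,n} \hookrightarrow H_{m,n}$ becomes the product of the inclusion $(\Gamma \wr S_{m-1}) \times \Gamma \hookrightarrow \Gamma \wr S_m$ (the extra copy of $\Gamma$ being the $m$-th coordinate, which is a fixed point for $S_{m-1}$) with the identity on the residual $\Gamma^{n-m}$. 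Since irreducibles of a direct product are outer tensor products, every irreducible of $H_{m,n}$ has the form $V^{\bs\lambda} \boxtimes W$ with $W$ an irreducible of $\Gamma^{n-m}$, and its restriction is $\bigl(V^{\bs\lambda}|_{(\Gamma \wr S_{m-1}) \times \Gamma}\bigr) \boxtimes W$. Thus it suffices to show that $V^{\bs\lambda}$ restricts multiplicity-freely from $\Gamma \wr S_m$ to $(\Gamma \wr S_{m-1}) \times \Gamma$.

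I would then establish the branching rule
\[
V^{\bs\lambda}\bigg|_{(\Gamma \wr S_{m-1}) \times \Gamma} \;=\; \bigoplus_{\chi \in \Gamma^*} \bigoplus_{\square \in \mathrm{rem}(\bs\lambda(\chi))} V^{\bs\lambda \setminus \square_\chi} \boxtimes \chi,
\]
where $\mathrm{rem}(\bs\lambda(\chi))$ is the set of removable boxes of $\bs\lambda(\chi)$ and $\bs\lambda \setminus \square_\chi$ denotes the multipartition obtained from $\bs\lambda$ by removing $\square$ from its $\chi$-component. Multiplicity-freeness is then automatic: given a summand $V^{\bs\mu} \boxtimes \chi$, the pair $(\chi, \bs\mu)$ uniquely recovers $\square$ as the single cell of $\bs\lambda(\chi) \setminus \bs\mu(\chi)$, so distinct $(\chi, \square)$ yield pairwise non-isomorphic irreducible summands.

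To prove the branching rule I would apply Mackey's formula to the explicit induced construction $V^{\bs\lambda} = \mathrm{Ind}_{\Gamma \wr S_\tau}^{\Gamma \wr S_m}(U)$ with $U = \bigotimes_{\chi} \chi^{\otimes \tau_\chi} \otimes S^{\bs\lambda(\chi)}$ and $\tau_\chi = |\bs\lambda(\chi)|$, restricting along $\Gamma \wr S_{(m-1,1)} \hookrightarrow \Gamma \wr S_m$. Because $\Gamma^n$ is normal, the double cosets $\Gamma \wr S_\tau \backslash \Gamma \wr S_m / \Gamma \wr S_{(m-1,1)}$ reduce to $S_\tau$-orbits on $S_m / S_{(m-1,1)} = \{1, \ldots, m\}$, which are precisely the blocks of the composition $\tau$, and hence are indexed by those $\chi \in \Gamma^*$ with $\tau_\chi > 0$. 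For each such $\chi$, the relevant intersection equals $\bigl(\prod_{\chi' \neq \chi} \Gamma \wr S_{\tau_{\chi'}}\bigr) \times (\Gamma \wr S_{\tau_\chi - 1}) \times \Gamma$, and Mackey, combined with the classical branching $S^{\bs\lambda(\chi)}|_{S_{\tau_\chi - 1}} = \bigoplus_{\square} S^{\bs\lambda(\chi) \setminus \square}$ and transitivity of induction, produces exactly the claimed contribution $\bigoplus_\square V^{\bs\lambda \setminus \square_\chi} \boxtimes \chi$. The main obstacle is the combinatorial bookkeeping in this Mackey computation, specifically the identification of the double-coset parameter set and the resulting intersection subgroups; once this is pinned down the multiplicity-free statement follows immediately from the injectivity of $(\chi, \square) \mapsto (\chi, \bs\lambda \setminus \square_\chi)$.
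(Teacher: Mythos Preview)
The paper does not supply a proof of this proposition; it is stated with attribution to Mishra--Srinivasan \cite{MS} and used as a black box. Your argument is therefore not being compared against anything in the paper, and it is a correct and standard way to establish the result: the factorisation $H_{m,n} \cong (\Gamma \wr S_m) \times \Gamma^{n-m}$ reduces everything to the branching rule for $\Gamma \wr S_m \downarrow (\Gamma \wr S_{m-1}) \times \Gamma$, and your Mackey computation recovers exactly that rule, with multiplicity-freeness following from the injectivity of $(\chi,\square) \mapsto (\bs\lambda \setminus \square_\chi,\chi)$.
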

\noindent
The significance of this multiplicity-free statement is that the decomposition of the restricted representation into irreducibles is unique (since the irreducible summands coincide with isotypic components).

\begin{definition}[Mishra-Srinivasan, Section 3 \cite{MS}]
The \emph{GZ subspaces} of an irreducible representation $V$ of $H_{n,n} = \Gamma \wr S_n$ are defined inductively as follows. An irreducible representation of $H_{m,n}$ splits uniquely as a sum of irreducible representations of $H_{m-1,n}$, each of which may be again restricted. Iterating this for $m=n,n-1, \ldots, 0$, we are left with a canonical decomposition of $V$ into irreducible representations $V_T$ of $H_{0,n} = \Gamma^n$.
\end{definition}

\begin{proposition}[Mishra-Srinivasan, Section 4 \cite{MS}]
The $GZ$ subspaces $V_T$ of the irreducible representation $V^{\bs\lambda}$ are indexed by standard Young $\Gamma$-tableaux $T$ of shape $\bs\lambda$. We have
\[
V^{\bs\lambda} = \bigoplus_T V_T,
\]
where the sum is across standard Young $\Gamma$-tableaux of shape $\bs\lambda$. As a representation of $\Gamma^n$, \[
V_T = \chi_{T(1)} \otimes \cdots \otimes \chi_{T(n)}.
\]
\end{proposition}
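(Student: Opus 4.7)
The plan is to proceed by induction on $n$, the base case $n=0$ being trivial. The inductive step will rest on a branching rule for the restriction from $H_{n,n} = \Gamma \wr S_n$ to $H_{n-1,n}$, together with the multiplicity-free property just stated. Observe first that $H_{n-1,n}$ is naturally isomorphic to $(\Gamma \wr S_{n-1}) \times \Gamma$, where the distinguished $\Gamma$ factor is the $n$-th copy; hence its irreducibles are external tensor products $V^{\bs\mu} \boxtimes \chi$ with $\bs\mu \in \mathcal{P}(\Gamma^*)$ of size $n-1$ and $\chi \in \Gamma^*$.

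The key step is to establish the wreath-product branching rule
\[
\mathrm{Res}^{H_{n,n}}_{H_{n-1,n}} V^{\bs\lambda} \;=\; \bigoplus_{\chi \in \Gamma^*}\; \bigoplus_{\bs\mu} V^{\bs\mu} \boxtimes \chi,
\]
where the inner sum runs over those $\bs\mu$ obtained from $\bs\lambda$ by removing a single removable (corner) box from $\bs\lambda(\chi)$. I would derive this by applying Mackey's formula to the explicit induced construction $V^{\bs\lambda} = \mathrm{Ind}_{\Gamma \wr S_\tau}^{\Gamma \wr S_n}\bigl(\bigotimes_\chi \chi^{\otimes \tau_\chi} \otimes S^{\bs\lambda(\chi)}\bigr)$: the double cosets $H_{n-1,n} \backslash (\Gamma\wr S_n)/(\Gamma\wr S_\tau)$ are indexed by the choice of which Young block $S_{\tau_\chi}$ contains the index $n$; for each such $\chi$ the restriction peels off the $n$-th tensor factor as the representation $\chi$ of the $n$-th copy of $\Gamma$, while the symmetric-group branching rule applied to $S^{\bs\lambda(\chi)}$ decomposes the remaining part into Specht modules $S^{\bs\mu(\chi)}$ for removable-box shapes $\bs\mu(\chi)$. (Equivalently, one can avoid Mackey by using transitivity of induction together with the classical branching rule for $S_n$.)

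Now iterate: the multiplicity-free proposition guarantees the summands at each stage are uniquely determined subspaces, so applying the branching rule successively for $m = n, n-1, \ldots, 1$ yields a canonical decomposition of $V^{\bs\lambda}$ into $\Gamma^n$-irreducibles indexed by sequences of removable boxes. Recording at each stage the partition $\bs\lambda(\chi)$ from which the box is removed and the label of the removed box produces exactly the data of a standard Young $\Gamma$-tableau $T$ of shape $\bs\lambda$: the box labelled $i$ is the one removed at step $n-i+1$, and $\chi_{T(i)}$ is the corresponding irreducible of $\Gamma$. This establishes the bijection $T \leftrightarrow V_T$ and the direct-sum decomposition.

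Finally, for the $\Gamma^n$-module structure of $V_T$, the inductive hypothesis applied to $V^{\bs\mu}$ gives $V_{T'} = \chi_{T'(1)} \otimes \cdots \otimes \chi_{T'(n-1)}$ as a $\Gamma^{n-1}$-representation, where $T'$ is $T$ with the box labelled $n$ removed; combining with the extra external factor $\chi = \chi_{T(n)}$ from the branching step yields the claimed tensor product. The principal obstacle is a clean verification of the branching rule, as the Mackey double-coset bookkeeping for wreath products requires some care; everything downstream is then an inductive translation between removable-box sequences and standard Young $\Gamma$-tableaux.
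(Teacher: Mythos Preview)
The paper does not supply its own proof of this proposition; it is stated with attribution to Mishra--Srinivasan \cite{MS} and used as a black box. So there is nothing in the paper to compare your argument against.

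That said, your outline is the standard and correct route to this result. The identification $H_{n-1,n} \cong (\Gamma \wr S_{n-1}) \times \Gamma$ is right, the branching rule you state is the well-known one for wreath products, and the induction translating chains of removable boxes into standard Young $\Gamma$-tableaux is exactly how the GZ decomposition is indexed. The only place that requires genuine care is, as you note, the verification of the branching rule itself; either the Mackey computation or the transitivity-of-induction argument works, and the latter is slightly cleaner here since the inducing subgroup $\Gamma \wr S_\tau$ already sits inside $H_{n-1,n}$ after conjugating the index $n$ into the appropriate block.
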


\begin{example}
For the standard Young $C_2$-tableau $T$ in Example \ref{SYGT_example}, the corresponding $GZ$ subspace is isomorphic to
\[
\mathbb{C}_+ \otimes \mathbb{C}_- \otimes \mathbb{C}_+ \otimes \mathbb{C}_- \otimes \mathbb{C}_- \otimes \mathbb{C}_+ \otimes \mathbb{C}_+,
\]
where $\mathbb{C}_+$ is the trivial representation and $\mathbb{C}_-$ is the sign representation.
\end{example}

\begin{proposition} \label{wr_JM_eval_prop}
The wreath-product JM element $L_j(c)$ acts the GZ subspace $V_T$ via scalar multiplication by
\[
\frac{|\Gamma|}{\dim(\chi_{T(j)})} c_T(i) \omega_c^{\chi_{T(j)}}.
\]
\end{proposition}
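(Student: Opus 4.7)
The plan is to exploit the factorisation $L_j(c) = L_j(1)\, c^{(j)}$ established earlier in this section, and treat the two commuting factors separately. Under the identification $V_T \cong \chi_{T(1)} \otimes \cdots \otimes \chi_{T(n)}$ of $\Gamma^n$-modules, $c^{(j)}$ (the conjugacy-class sum $c$, a central element of $\mathbb{C}\Gamma$ embedded in the $j$-th factor) acts by its central character on $\chi_{T(j)}$, namely by the scalar $\omega_c^{\chi_{T(j)}}$, and trivially on the other tensor factors. It therefore suffices to show that $L_j(1)$ acts on $V_T$ by $\frac{|\Gamma|}{\dim(\chi_{T(j)})} c_T(j)$.

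For this, I would use an Okounkov--Vershik-style argument. Set $Z_j = \sum_{k=1}^{j} L_k(1)$. Direct inspection of the defining formula identifies $Z_j$ with the sum of all elements of $\Gamma \wr S_j$ whose generalised cycle type is $(2,1^{j-2})_1$, so $Z_j$ is central in $\mathbb{C}(\Gamma \wr S_j)$. Because the GZ subspace $V_T$ is built from an iterated multiplicity-free restriction along $\Gamma \wr S_j \supset \Gamma \wr S_{j-1}$ (the ambient $\Gamma^{n-j}$ factors contribute nothing to $Z_j$), there is a unique chain $V^{\bs\lambda^{(j)}} \supset V^{\bs\lambda^{(j-1)}}$ containing $V_T$, with $\bs\lambda^{(j-1)}$ obtained from $\bs\lambda^{(j)} = \bs\lambda$ by deleting the box labelled $j$ in $T$. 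Since $L_j(1) = Z_j - Z_{j-1}$ and since multiplicity-freeness forces $L_j(1)$ to act on $V_T$ by a scalar, that scalar equals $\omega_{Z_j}^{V^{\bs\lambda^{(j)}}} - \omega_{Z_{j-1}}^{V^{\bs\lambda^{(j-1)}}}$.

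The heart of the proof is therefore the wreath-product Frobenius formula
\[
\omega_{Z_j}^{V^{\bs\mu}} \;=\; \sum_{\chi \in \Gamma^*} \frac{|\Gamma|}{\dim(\chi)} \sum_{\square \in \bs\mu(\chi)} c(\square), \qquad |\bs\mu|=j.
\]
To prove it, I would write $Z_j = \sum_{i<k\leq j} M_{i,k}(i,k)$ with $M_{i,k} = \sum_{g\in \Gamma} g^{(i)}(g^{-1})^{(k)}$. Schur orthogonality shows that $M_{i,k}$ acts as $\frac{|\Gamma|}{\dim(\chi)}$ times the flip of the $i$-th and $k$-th tensor factors on a factor pair of type $(\chi,\chi)$, and as zero on factor pairs of distinct type. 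Working on the inducing representation $\bigotimes_\chi \chi^{\otimes \tau_\chi} \otimes S^{\bs\mu(\chi)}$ used to build $V^{\bs\mu}$, the $\chi$-component of $(i,k)$ cancels the flip produced by $M_{i,k}$, leaving the transposition $(i,k)$ acting on $S^{\bs\mu(\chi)}$ with coefficient $\frac{|\Gamma|}{\dim(\chi)}$. Summing and invoking the classical fact that $\sum_{i<k} (i,k)$ acts on $S^{\bs\mu(\chi)}$ by $\sum_{\square \in \bs\mu(\chi)} c(\square)$ (and reconciling with the induced construction via Frobenius reciprocity) yields the displayed formula. Once it is in hand, the difference along the chain telescopes to the contribution of the single box added at step $j$, which lies in $\bs\lambda(\chi_{T(j)})$ at content $c_T(j)$, giving exactly $\frac{|\Gamma|}{\dim(\chi_{T(j)})} c_T(j)$.

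The principal obstacle is the Frobenius formula: producing the normalisation constant $|\Gamma|/\dim(\chi)$ correctly requires coupling the Schur-orthogonality factor with the classical symmetric-group content calculation, rather than either ingredient in isolation. The remaining ingredients (centrality of $Z_j$, multiplicity-freeness of the restriction, and the commutation of $L_j(1)$ with $c^{(j)}$) are all already available from the material developed earlier in this section.
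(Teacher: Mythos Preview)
Your factorisation step $L_j(c) = L_j(1)\,c^{(j)}$ and the handling of $c^{(j)}$ via the central character on the $j$-th tensor slot is exactly what the paper does. The difference is in how the scalar for $L_j(1)$ is obtained: the paper simply cites Theorem~6.5 of Mishra--Srinivasan \cite{MS}, whereas you supply a self-contained Okounkov--Vershik argument (centrality of $Z_j = \sum_{k\le j} L_k(1)$, telescoping along the multiplicity-free branching chain, and a wreath-product Frobenius formula proved via Schur orthogonality on $M_{i,k}$). Your argument is correct; in particular the key point that off-block terms $M_{i,k}(i,k)$ with $i,k$ in different $\chi$-blocks annihilate the inducing module follows because after the swap $(i,k)$ the factors in positions $i,k$ carry non-isomorphic irreducibles, and $\sum_g g\otimes g^{-1}$ vanishes on such pairs by Schur orthogonality --- this is what makes ``reconciling with the induced construction'' painless. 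What you have written is essentially a proof of the cited result from \cite{MS}, so your route is longer but more self-contained; the paper's route is a one-line citation.
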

\begin{proof}
Theorem 6.5 of \cite{MS} asserts that $L_j(1)$ acts by the scalar $\frac{|\Gamma|}{\dim(\chi_{T(j)})} c_T(i)$. But $L_j(c) = L_j(1) c^{(j)}$, and $c^{(j)}$ acts on $\chi_{T(1)} \otimes \cdots \otimes \chi_{T(n)}$ as multiplication by $\omega_c^{\chi_{T(j)}}$.
\end{proof}

\begin{theorem} \label{wreath_nakayama_thm}
Suppose that $p$ is a prime number, and suppose that $B_i$ are the $p$-blocks of $\Gamma$ (the subsets of $\Gamma^*$ with a given central character in characteristic $p$). Let $\bs\lambda$ and $\bs \mu$ be two $\Gamma^*$-indexed multipartitions of size $n$. Then the irreducible representations $V^{\bs\lambda}, V^{\bs\mu}$ of $\Gamma \wr S_n$ are in the same $p$-block if and only both of the following conditions hold:
\begin{enumerate}
\item For all $p$-blocks $B_i$ of $\Gamma$,
\[
\sum_{\chi \in B_i} |\bs\lambda(\chi)| = \sum_{\chi \in B_i} |\bs\mu(\chi)|.
\]
\item For each $B_i$ containing only one irreducible $\chi$, the partitions $\bs\lambda(\chi)$ and $\bs\mu(\chi)$ have the same $p$-core.
\end{enumerate}
\end{theorem}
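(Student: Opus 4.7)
The plan is to apply Proposition \ref{block_prop}, which reduces the statement to showing that $V^{\bs\lambda}$ and $V^{\bs\mu}$ have central characters $\omega_{\bs\lambda}, \omega_{\bs\mu}$ that agree modulo a uniformiser $\pi$ of $\mathcal{O}$ on the integral centre $Z(\mathbb{Z}(\Gamma\wr S_n))$. By Theorem \ref{main_thm}, this centre is the image of $\RGamma \otimes \LambdaG$ under $ev_n$, with $\mathbb{Z}$-spanning set given by products $b_{\mathbf{N}} \cdot ev_n^\Lambda(m_{\bs\xi})$; since central characters are multiplicative, it is enough to verify mod-$\pi$ agreement separately on $ev_n^\mathcal{R}(\RGamma)$ and on $ev_n^\Lambda(\LambdaG)$, yielding conditions~(1) and~(2) respectively.

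For the $\RGamma$-factor, I will apply Lemma \ref{specialised_gen_fun_lemma} to identify $ev_n^{\mathcal{R}}(\Omega) = (1 + \sum_c c x_c)^{\otimes n}$; its $i$-th tensor factor acts on the $i$-th factor $\chi_{T(i)}$ of any GZ subspace $V_T$ as the scalar $1 + \sum_c \omega_c^{\chi_{T(i)}} x_c$, so the full element acts on $V^{\bs\lambda}$ as the central scalar $\prod_\chi (1 + \sum_c \omega_c^\chi x_c)^{|\bs\lambda(\chi)|}$, and $\omega_{\bs\lambda}(b_{\mathbf{N}})$ reads off as the coefficient of $x^{\mathbf{N}}$. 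Modulo $\pi$, Proposition \ref{block_prop} applied to $\Gamma$ gives $\omega_c^\chi \equiv \omega_c^{\chi'}$ for $\chi, \chi'$ in a common $p$-block $B_i$, so the product collapses to $\prod_{B_i}(1 + \sum_c \omega_c^{B_i} x_c)^{|\bs\lambda|_{B_i}}$ where $|\bs\lambda|_{B_i} := \sum_{\chi\in B_i}|\bs\lambda(\chi)|$; distinct $p$-blocks of $\Gamma$ yield distinct linear factors in $k[x_c : c \in \Gamma_*]$, so unique factorisation recovers the exponents $|\bs\lambda|_{B_i}$. This identifies mod-$\pi$ agreement on $\RGamma$ with condition~(1).

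For the $\LambdaG$-factor, I pass to $K$ via Proposition \ref{complex_bas_change_prop} to realise $\LambdaG_K \cong \bigotimes_\chi \Lambda_K$, with $K$-algebra generators $e_r(L_1(e_\chi), \ldots, L_n(e_\chi))$ for $r \ge 1$ and $\chi \in \Gamma^*$. Proposition \ref{wr_JM_eval_prop} yields
\[
\omega_{\bs\lambda}\bigl(e_r(L_1(e_\chi), \ldots, L_n(e_\chi))\bigr) = \bigl(|\Gamma|/\dim(\chi)\bigr)^r\, e_r\bigl(\mathrm{contents}(\bs\lambda(\chi))\bigr) \in \mathcal{O},
\]
and Proposition \ref{brauer_nesbitt_prop} shows that $|\Gamma|/\dim(\chi)$ is a unit of $\mathcal{O}$ exactly when the $p$-block of $\chi$ is semisimple (equivalently a singleton $\{\chi\}$), and lies in $\pi\mathcal{O}$ otherwise. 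Expanding $m_{\bs\xi}$ in the ``irreducible'' basis $m^{irr}_{\bs\eta}$ and then in these generators, $\omega_{\bs\lambda}(ev_n^\Lambda(m_{\bs\xi}))$ becomes a sum over decompositions of the cells of $\bs\xi$ among $\chi \in \Gamma^*$; modulo $\pi$, only decompositions supported on singleton-block indices contribute, and each surviving term is a symmetric function of the contents of $\bs\lambda(\chi)$ for singleton $\{\chi\}$. Condition~(1) restricted to singleton blocks gives $|\bs\lambda(\chi)| = |\bs\mu(\chi)|$, and condition~(2) gives the same $p$-core, so by the content--$p$-core correspondence recalled after Example \ref{2-core_example}, the multisets of contents mod $p$ agree and every symmetric function of them agrees mod $p$; this yields sufficiency. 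For necessity, for each singleton $\{\chi\}$ the element $e_r(L_1(e_\chi), \ldots, L_n(e_\chi))$ lies in $Z(\mathcal{O}(\Gamma\wr S_n))$ (as $e_\chi$ is integral in $\mathcal{O}\Gamma$ when $p^r \mid \dim(\chi)$), and the mod-$\pi$ Nakayama argument from the proof of Theorem \ref{nakayama_conjecture}, applied to the sequence of central characters over $r$, extracts the $p$-core of $\bs\lambda(\chi)$ and completes the deduction.

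The main obstacle I anticipate is the potential interference of $|\Gamma|$-denominators from the change of basis $m_{\bs\xi} \mapsto m^{irr}_{\bs\eta}$ with the mod-$\pi$ analysis. I plan to sidestep this by computing $\omega_{\bs\lambda}(ev_n^\Lambda(m_{\bs\xi}))$ directly as a manifestly integral sum over injective placements of the cells of $\bs\xi$ into $\{1, \ldots, n\}$ factorised by the label $\chi_{T(i)}$ of the destination: each $\chi$-contribution carries an explicit prefactor $(|\Gamma|/\dim(\chi))^{R_\chi}$, where $R_\chi$ is the total size of cells sent to $\chi$, which automatically annihilates all non-singleton-block contributions modulo $\pi$ without ever inverting $|\Gamma|$.
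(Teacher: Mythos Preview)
Your proposal is correct and follows essentially the same route as the paper. Both arguments split the centre as $ev_n(\RGamma)\cdot ev_n(\LambdaG)$, extract condition~(1) from the $\RGamma$-factor via the generating function of Lemma~\ref{specialised_gen_fun_lemma} and unique factorisation in $k[x_c]$, and handle the $\LambdaG$-factor by observing that $L_j(1)$ acts by a multiple of $p$ whenever $\chi_{T(j)}$ lies in a non-singleton block, so only singleton-block contents survive modulo $\pi$; necessity of~(2) then comes from evaluating $e_r(L_1(e_\chi),\ldots,L_n(e_\chi))$ for singleton $\chi$, exactly as the paper does. Your ``manifestly integral sum over placements'' for sufficiency is precisely the paper's ``multiset of scalars $\frac{|\Gamma|}{\dim(\chi)}c_T(j)\omega_c^\chi$ coincides'' argument written out termwise, and your integrality concern about $e_\chi$ is resolved the same way the paper resolves it (for singleton blocks the denominator $|\Gamma|/\dim(\chi)$ is a $p$-unit, so $e_\chi\in\mathcal{O}\Gamma$).
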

\begin{proof}
Analogously to the proof of Nakayama's conjecture (Theorem \ref{nakayama_conjecture}), we fix a $p$-modular system $(K, \mathcal{O}, k)$, and let $\pi$ be a uniformiser of $\mathcal{O}$. We determine when two central characters are equal modulo $(\pi)$, where we use the surjection $ev_n: \RGamma \otimes \LambdaG \to Z(\mathbb{Z}\Gamma \wr S_n)$ to compute the action of the centre. The action of $\RGamma \otimes \LambdaG$ is determined by the individual actions of $\RGamma$ and $\LambdaG$. 
\newline \newline \noindent
First we consider the action of $\RGamma$, whose image in $Z(\mathbb{Z}\Gamma \wr S_n)$ is contained in $Z(\mathbb{Z} \Gamma^n)$ (see Theorem \ref{r_gamma_hom_thm}). So to compute the action by scalars, we may restrict the representation $V^{\bs\lambda}$ from $\Gamma \wr S_n$ to $\Gamma^n$. This gives the sum of the GZ subspaces $V_T$, each of which is of the form
\[
\chi_{T(1)} \otimes \cdots \otimes \chi_{T(n)},
\]
where the number times a given irreducible $\chi \in \Gamma^*$ appears is $|\bs\lambda(\chi)|$. We compute the action of $\RGamma$ on a GZ subspace $V_T$. By Lemma \ref{specialised_gen_fun_lemma},
\[
\sum_{\mathbf{N} } ev_n(B_{\mathbf{N}}) x^{\mathbf{N}} = (1 + \sum_c c x_c)^{\otimes n}.
\]
Now, the action of $(1 + \sum_c c x_c)$ on $\chi$ is by multiplication by
\[
(1 + \sum_c \omega_{c}^\chi x_c),
\]
where $\omega_c^\chi$ is the central character of $\Gamma$ corresponding to the action of $c$ on $\chi$. Hence the action of $\sum_{\mathbf{N} } ev_n(B_{\mathbf{N}}) x^{\mathbf{N}}$ is
\[
\prod_{\chi \in \Gamma^*} (1 + \sum_c \omega_{c}^\chi x_c)^{|\bs\lambda(\chi)|}.
\]
This result remains true after modular reduction (i.e. passing to $k = \mathcal{O}/(\pi)$), although this has the effect of making central characters within a single $p$-block coincide. Since the $B_{\mathbf{N}}$ are a basis of $\RGamma$, the action of $\RGamma$ is determined by this generating function which is actually a polynomial. Polynomials over a field form a unique factorisation domain, so two such polynomials agree if and only if they have the same factors (note that all our factors are normalised to have constant term 1, so two factors are associates if and only if they are equal). But two factors for $\chi_1$ and $\chi_2$, coincide precisely when
\[
(1 + \sum_c \omega_{c}^{\chi_1} x_c) = (1 + \sum_c \omega_{c}^{\chi_2} x_c),
\]
i.e. when $\omega_c^{\chi_1} = \omega_c^{\chi_2}$, which is to say that the central characters of $\chi_1$ and $\chi_2$ agree. This in turn happens when $\chi_1$ and $\chi_2$ are in the same $p$-block. By factoring the generating function we determine the multiplicity of the $\chi$ from each $p$-block of $\Gamma$. We conclude that the $\RGamma$ acts by the same scalars (in $k$) on $V^{\bs\lambda}$ and $V^{\bs\mu}$ if and only if condition $(1)$ holds.
\newline \newline \noindent
To understand the action of $\LambdaG$, we again consider a GZ subspace $V_T$. By Proposition \ref{wr_JM_eval_prop}, the wreath-product JM element $L_j(c)$ acts on $V_T$ by the scalar
\[
\frac{|\Gamma|}{\dim(\chi_{T(j)})} c_T(j) \omega_c^{\chi_{T(j)}}.
\]
If $\frac{|\Gamma|}{\dim(\chi_{T(j)})}$ is a multiple of $p$, then this scalar is zero in $k$. Now suppose that conditions (1) and (2) are both satisfied for $\bs \lambda$ and $\bs \mu$. For each $\chi \in \Gamma_*$ and $c \in \Gamma_*$, the multiset of scalars
\[
\frac{|\Gamma|}{\dim(\chi)} c_T(j) \omega_c^{\chi}
\]
for $j$ such that $\chi_{T(j)} = \chi$ coincides for $T$ of shape $\bs \lambda$ and $\bs\mu$. To check this, we consider whether $\frac{|\Gamma|}{\dim(\chi)}$ is a multiple of $p$. If it is a multiple of $p$, then all these scalars are zero. If $\frac{|\Gamma|}{\dim(\chi)}$ is not a multiple of $p$, then the contents $c_T(j)$ coincide because $\bs\lambda(\chi)$ and $\bs\mu(\chi)$ have the same $p$-core. As a result, any element of $\LambdaG$ acts by the same scalar modulo $(\pi)$ on $V^{\bs\lambda}$ and $V^{\bs\mu}$. We conclude that the ``if'' implication of the theorem holds.
\newline \newline \noindent
For $\chi$ such that $|G|/\dim(\chi)$ is not divisible by $p$, the idempotent
\[
e_\chi = \frac{\dim(\chi)}{|G|}\sum_{g \in \Gamma} \chi(g)g^{-1}
\]
is well-defined in $k$. We may therefore consider the elements $L_j(e_\chi)$ in $k\Gamma \wr S_n$. By Proposition \ref{wr_JM_eval_prop}, the action of $L_j(e_\chi)$ on $V_T$ is
\[
\frac{|\Gamma|}{\dim(\chi)} c_T(j)
\]
if $\chi_{T(j)} = \chi$, and zero otherwise. This is because $\chi$ is in a $p$-block by itself (see Proposition \ref{brauer_nesbitt_prop}), and $e_\chi$ acts by zero on irreducibles in other $p$-blocks, i.e. on all other irreducibles.
\newline \newline \noindent
Just as in the proof of Theorem \ref{wreath_map_exists}, the elementary symmetric functions $e_r$ evaluated in $L_j(e_\chi)$,
\[
e_r(L_1(e_\chi), \ldots, L_n(e_\chi)),
\]
are well defined elements of $k \otimes \LambdaG$. So the action of $\LambdaG$ on $V_T$ determines the action of the generating function
\[
\sum_{r \geq 0} t^r e_r(L_1(e_\chi), \ldots, L_n(e_\chi)) = \prod_{j=1}^n (1 + tL_j(e_\chi)).
\]
But we have determined the action of $L_j(e_\chi)$, so we know that this generating function acts by
\[
\prod_{j} (1 + t \frac{|\Gamma|}{\dim(\chi)} c_T(j))
\]
where the product is only over those $j$ such that $\chi_{T(j)} = \chi$. Appealing to the unique factorisation of polynomials over $k$, the action of $\LambdaG$ determines the multiset of scalars $\frac{|\Gamma|}{\dim(\chi)} c_T(j)$ (viewed as elements of $k$). Now, $\frac{|\Gamma|}{\dim(\chi)}$ it is invertible in $k$, so we have determined the multiset $c_T(j)$ for $j$ such that $\chi_{T(j)} = \chi$, i.e. the contents of $\bs\lambda(\chi)$, viewed as elements of $k$. Together with $|\bs\lambda(\chi)|$, which was determined by the action of $\RGamma$, this determines the $p$-core of $\bs\lambda(\chi)$, which completes the proof of the ``only if'' direction.
\end{proof}
\begin{remark}
Appendix B to Chapter 1 of \cite{Macdonald} computes the characters of $\Gamma \wr S_n$ in terms of the characters of $\Gamma$ and $S_n$. This can be used to give an expression for the central characters, and in turn to prove Theorem \ref{wreath_nakayama_thm}.
\end{remark}

\begin{example}
As pointed out on the first page of \cite{DipperJames}, over a field of characteristic 2, every irreducible representation of $C_2 \wr S_n$ can be obtained by pulling back an irreducible representation of $S_n$ via the quotient map $C_2 \wr S_n \to S_n$. This does not, however, mean that the representation categories of $C_2 \wr S_n$ and $S_n$ are equivalent.  Note that there is only one 2-block $B_i$ of $C_2$, and this block is not semisimple. So in Theorem \ref{wreath_nakayama_thm}, condition (2) is vacuous, and condition (1) holds automatically since $\sum_{\chi \in B_i} |\bs\lambda(\chi)| = n$ as there is only one block. As a result $C_2 \wr S_n$ has only one 2-block for any $n$. By comparison, Example \ref{2-core_example} shows $S_3$ has two 2-blocks. In general, this argument shows that when $\Gamma$ is a non-trivial $p$-group, $\Gamma \wr S_n$ will have only one $p$-block.
\end{example}

\begin{remark}
It is straightforward to generalise the character symmetric function from Definition \ref{char_sym_fun_def} to the case of wreath products. For a partially-reduced conjugacy class $\bs\mu$, we have an element of $\RGamma \otimes \LambdaG$ given by $\psi^{-1}(K_{\bs\mu})$. Its ``content evaluation'' as in the proof of Theorem \ref{wreath_nakayama_thm} will yield central characters of wreath products $\Gamma \wr S_n$.
\end{remark}

\appendix
\section{Properties of \texorpdfstring{$\RGamma$}{R-Gamma} and \texorpdfstring{$\LambdaG$}{Lambda-Gamma}}
\noindent
In this appendix we show that both $\RGamma$ and $\LambdaG$ are Hopf algebras. We also explain that $\RGamma$ is the algebra of distributions on $\mathbb{Z}\Gamma_*$ (viewed as an affine monoid scheme). Furthering the analogy with integer-valued polynomials, we classify homomorphisms from $\RGamma$ to a field; in positive characteristic, they are indexed by several $p$-adic parameters, one for each $p$-block of $\Gamma$.
\newline \newline \noindent
First we show that $\LambdaG$ is a Hopf algebra.
\begin{theorem}
There is a Hopf algebra structure on $\LambdaG$ with comultiplication
\[
\Delta(m_{\bs\lambda}) = \sum_{\bs\mu, \bs\nu} m_{\bs\mu} \otimes m_{\bs\nu},
\]
where the sum is over all pairs of multipartitions $\bs\mu, \bs\nu$ such that $m_i(\bs\mu(c)) + m_i(\bs\nu(c)) = m_i(\bs\lambda(c))$ for all $i \in \mathbb{Z}_{>0}$ and $c \in \Gamma_*$. The counit $\varepsilon$ obeys $\varepsilon(m_{\bs\mu}) = 0$ when $|\bs\mu| > 0$ (and $\varepsilon(1) = 1$).
\end{theorem}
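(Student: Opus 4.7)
The plan is to mimic the classical variable-splitting construction of the Hopf algebra structure on $\Lambda$. For each $n$, the Young subgroup $S_n \times S_n \hookrightarrow S_{2n}$ associated to the split $\{1,\ldots,2n\} = \{1,\ldots,n\} \sqcup \{n+1,\ldots,2n\}$ induces an inclusion of invariants $\Lambda_{2n}(\Gamma_*) \hookrightarrow (Q^{\otimes 2n})^{S_n \times S_n}$. Under the identification $Q^{\otimes 2n} = Q^{\otimes n} \otimes Q^{\otimes n}$, the right-hand side is precisely $\Lambda_n(\Gamma_*) \otimes \Lambda_n(\Gamma_*)$: each $S_n \times S_n$-orbit of a pure tensor factors as a product of two $S_n$-orbits, so the identification is basis-preserving and requires no rationality. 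Restricting to the degree-$k$ component and taking $n \geq k$, the resulting map stabilises under the evaluate-at-zero maps $\rho_{m,n}$, and passing to the inverse limit yields
\[
\Delta \colon \Lambda^k(\Gamma_*) \to \bigoplus_{i+j=k} \Lambda^i(\Gamma_*) \otimes \Lambda^j(\Gamma_*).
\]
The stated formula on $m_{\bs\lambda}$ then follows by inspection: a $\Gamma_*$-weighted monomial contributing to $m_{\bs\lambda}(x_1,\ldots,x_{2n})$ contains $m_i(\bs\lambda(c))$ factors of the form $cx^i$, and sorting these factors into the two halves of variables produces exactly the tensor $m_{\bs\mu}(x_1,\ldots,x_n) \otimes m_{\bs\nu}(x_{n+1},\ldots,x_{2n})$ for each pair $(\bs\mu,\bs\nu)$ with $m_i(\bs\mu(c)) + m_i(\bs\nu(c)) = m_i(\bs\lambda(c))$.

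Next I would verify the bialgebra axioms. Multiplicativity of $\Delta$ follows because the canonical isomorphism $Q^{\otimes 2n} \cong Q^{\otimes n} \otimes Q^{\otimes n}$ is one of algebras, so the tensor-factor-wise product on $\Lambda_n(\Gamma_*) \otimes \Lambda_n(\Gamma_*)$ matches the ambient product on $(Q^{\otimes 2n})^{S_n \times S_n}$. Coassociativity follows in exactly the same spirit from the threefold decomposition $Q^{\otimes 3n} = Q^{\otimes n} \otimes Q^{\otimes n} \otimes Q^{\otimes n}$ together with the associativity of this identification. For the counit, $\varepsilon$ is the projection onto $\Lambda^0(\Gamma_*) = \mathbb{Z}$; then $(\varepsilon \otimes \mathrm{id})\Delta(m_{\bs\lambda})$ picks out the single term in which $\bs\mu$ is empty, which by the defining condition forces $\bs\nu = \bs\lambda$, recovering $m_{\bs\lambda}$, and symmetrically on the right.

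It remains to produce an antipode. Since $\LambdaG$ is a commutative $\mathbb{Z}_{\geq 0}$-graded bialgebra that is connected in the sense that $\Lambda^0(\Gamma_*) = \mathbb{Z}$, a standard fact guarantees that it is automatically a Hopf algebra, with antipode $S$ defined recursively on homogeneous elements of positive degree by $S(x) = -x - \sum S(x_{(1)}) x_{(2)}$ (using Sweedler notation for the reduced coproduct). I expect the main obstacle to be the careful bookkeeping required to check that the maps $\Delta$ defined at each level $n$ are compatible with the structure maps $\rho_{m,n}$, and that the resulting map lands in the honest tensor product $\LambdaG \otimes \LambdaG$ rather than a completion; but because $\Lambda^k(\Gamma_*)$ stabilises for $n \geq k$, this reduces to a direct combinatorial verification on $\Gamma_*$-weighted monomial symmetric polynomials.
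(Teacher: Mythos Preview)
Your proposal is correct and takes a genuinely different route from the paper. The paper simply \emph{defines} $\Delta$ by the stated formula on the basis $m_{\bs\lambda}$ and then verifies the Hopf algebra axioms directly: coassociativity is checked by writing out both $(\Delta\otimes 1)\Delta(m_{\bs\lambda})$ and $(1\otimes\Delta)\Delta(m_{\bs\lambda})$ as the same triple sum, the counit identity is checked term by term, and the antipode is produced by the same connected-graded recursion you invoke. Notably the paper never explicitly verifies that $\Delta$ is an algebra homomorphism; this is left implicit.

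Your variable-splitting construction is more conceptual: by realising $\Delta$ as the inclusion of $S_{2n}$-invariants into $S_n\times S_n$-invariants inside $Q^{\otimes 2n}$, multiplicativity of $\Delta$ becomes automatic (it is the restriction of an algebra isomorphism), and coassociativity follows from the evident associativity of the threefold split $Q^{\otimes 3n}\cong Q^{\otimes n}\otimes Q^{\otimes n}\otimes Q^{\otimes n}$. The price you pay is the bookkeeping about compatibility with $\rho_{m,n}$ and landing in the honest tensor product rather than a completion, which you correctly identify and which is harmless because each $\Lambda^k(\Gamma_*)$ is free of finite rank. Your approach mirrors the standard construction for $\Lambda$ and makes transparent why the comultiplication is the one stated; the paper's approach is shorter but relies on the reader accepting the bialgebra compatibility as routine.
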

\begin{proof}
We have that $(\Delta \otimes 1) \circ \Delta (m_{\bs\lambda}) = (1 \otimes \Delta) \circ \Delta (m_{\bs\lambda})$, both being equal to
\[
\sum_{\bs\mu, \bs\nu,\bs\rho} m_{\bs\mu} \otimes m_{\bs\nu} \otimes m_{\bs\rho},
\]
where the sum is over all multipartitions obeying $m_i(\bs\mu(c)) + m_i(\bs\nu(c)) + m_i(\bs\rho(c)) = m_i(\bs\lambda(c))$. This verifies that $\Delta$ is coassociative. To see that the $\varepsilon$ defines the counit of a bialgebra, we note that 
\[
(\varepsilon \otimes 1) \circ \Delta (m_{\bs\lambda}) = \sum_{\bs\mu, \bs\nu} \varepsilon(m_{\bs\mu}) \otimes m_{\bs\nu},
\]
and $\varepsilon(m_{\bs\mu})$ is nonzero only if $m_i(\bs\mu(c)) = 0$ for all $i$ and $c$, in which case $m_{\bs\nu} = m_{\bs\lambda}$. The case of $(1 \otimes \varepsilon) \circ \Delta$ is identical. Finally, we must show there is an antipode $S$. We do not give an explicit formula, but we prove it exists by induction on $|\bs\lambda|$. We take $S(1) = 1$, which serves as the base case $|\bs\lambda| = 0$. For $|\bs\lambda| \geq 1$, we write
\[
\Delta(m_{\bs\lambda}) = m_{\bs\lambda} \otimes 1 + \sum_{\bs\mu,\bs\nu} m_{\bs\mu} \otimes m_{\bs\nu},
\]
where we have written the term where to $\bs\nu$ is the empty partition (and $\bs\mu = \bs\lambda$) separately. The antipode axiom becomes
\[
S(m_{\bs\lambda}) + \sum_{\bs\mu, \bs\nu} S(m_{\bs\mu}) m_{\bs\nu} = 0.
\]
But every term in the sum has $|\bs\mu| < |\bs\lambda|$, so this serves to define $S(m_{\bs\lambda})$ inductively. By construction, the resulting map $S$ obeys the antipode axiom.
\end{proof}

\begin{remark}
The Hopf algebra structure on $\LambdaG$ agrees with the usual Hopf algebra structure on $\Lambda$ (see Section 1.5, Example 25 of \cite{Macdonald}) when we set $\Gamma$ to be the trivial group. In fact, it can be shown that the isomorphism
\[
\mathbb{C} \otimes \LambdaG = \mathbb{C} \otimes \Lambda^{\otimes |\Gamma^*|}
\]
from Proposition \ref{complex_bas_change_prop} is an isomorphism of Hopf algebras,  where $\Lambda^{\otimes |\Gamma^*|}$ is a tensor product of Hopf algebras, and is therefore a Hopf algebra.
\end{remark}

\begin{remark}
One of the most important tools in the theory of symmetric functions is the Cauchy identity:
\[
\prod_{i,j} \frac{1}{1-x_iy_j} = \sum_{\lambda} s_\lambda(x) s_\lambda(y).
\]
It would be desirable to have a similar identity for $\LambdaG$. However, the most natural formulation of the Cauchy identity is as a formula for the series
\[
\sum_i b_i \otimes b_i^*,
\]
where $b_i$ is a basis of $\Lambda$ and $b_i$ is the dual basis with respect to the Hall inner product on $\Lambda$. The equation above takes both $b_i$ and $b_i^*$ to be the Schur functions $s_\lambda$, which are self dual. For this notion to make sense in $\LambdaG$, we need an inner product on $\LambdaG$. The natural choice, $\langle -,- \rangle$, comes from the isomorphism
\[
\mathbb{C} \otimes \LambdaG = \mathbb{C} \otimes \Lambda^{\otimes |\Gamma^*|},
\]
where each tensor factor of $\Lambda$ is given the Hall inner product. The reason for this choice is that the Hall inner product defines a Hopf pairing on $\Lambda$, and since the Hopf algebra structure on $\LambdaG$ comes from the tensor product of Hopf algebra strutures on $\Lambda$, the tensor product of Hall inner products defines a Hopf pairing on $\mathbb{C} \otimes \LambdaG$. Now we encounter a problem: $\LambdaG$ is not self dual with respect to this inner product: if $b_i$ is a basis of $\LambdaG$, then $b_i^*$ will be contained in
\[
\LambdaG^* = \{f \in \mathbb{C} \otimes \Lambda^{\Gamma_*} \mid \langle f, g \rangle \in \mathbb{Z} \mbox{ for all $g \in \LambdaG$}\},
\]
which is not a subset of $\LambdaG$; roughly speaking it is the ``dual'' of $\LambdaG$, so $\LambdaG^*$ inherits the structure of a Hopf algebra. It would be interesting to give an independent description of $\LambdaG^*$.
\end{remark}
\noindent
It would also be interesting to have a presentation of $\LambdaG$ by generators and relations. The argument from Corollary 2.3 of \cite{Vaccarino} (filtration/induction on $l(\bs\lambda)$) shows that $\LambdaG$ is generated by $m_{(r^n)_c}$ for $r,n \in \mathbb{Z}_{>0}$ and $c \in \Gamma_*$. However the argument from Proposition 2.5 of \cite{Vaccarino} can be adapted to show that there are redundancies; the generator $m_{(r^n)_c}$ may be omitted if $c$ is an $n$-th power in the ring $\mathbb{Z}\Gamma_*$.
\newline \newline \noindent
Now we turn our attention to $\RGamma$. Let $R$ be an arbitrary commutative ring, and let us view $R\Gamma_* = \hom_{alg}(A, R)$ as the affine scheme represented by $A = \mathbb{Z}[t_{c_1}, \ldots, t_{c_l}]$, where the polynomial variables are indexed by elements of $\Gamma_*$. Now, $R\Gamma_*$ is a ring whose addition and multiplication maps are algebraic, i.e. the corresponding maps
\[
R\Gamma_* \times R\Gamma_* \to R\Gamma_*
\]
are morphisms of affine schemes. This amounts to saying that $R\Gamma_*$ is a (commutative) ring object in the category of affine schemes. On the level of coordinate rings, we get homomorphisms $\Delta^{(+)}$, $\Delta^{(\times)}$ from $A$ to $A \otimes A$. Explicitly, they are defined by
\[
\Delta^{(+)}(t_c) = t_c \otimes 1 + 1 \otimes t_c
\]
and
\[
\Delta^{(\times)}(t_c) = \sum_{d,e \in \Gamma_*} A_{d,e}^c t_d \otimes t_e.
\]
We call these two maps the \emph{coaddition} and \emph{comultiplication}, respectively. They satisfy relations coming from the associative and distributive laws in a ring. Because of these two maps, a commutative ring object in the category of affine schemes is sometimes called a \emph{biring}, although this is not the same thing as a bialgebra.
\newline \newline \noindent
Now we let $I$ be the ideal of $A$ vanishing at the multiplicative identity; $I$ is generated by $t_1-1$ and $t_c$ for $c \neq 1$. We have $\Delta^{(\times)}(I) \subseteq I \otimes A + A \otimes I$ (this follows from the fact that $I$ is the ideal corresponding to the identity element of $R\Gamma_*$, but can also be checked directly). We use $I$ to define the distribution algebra of $R\Gamma_*$ (see Chapter 7 of Part I of \cite{Jantzen_alggp}).
\begin{definition}
The \emph{distribution algebra} of $R\Gamma_*$ is
\[
D(R\Gamma_*) = \bigcup_{n \geq 0} (A/I^n)^*,
\]
where $(A/I^n)^*$ is viewed as a subspace of $A^* = \hom_{\mathbb{Z}}(A, \mathbb{Z})$.
\end{definition}
\noindent
So $D(R\Gamma_*)$ is the set of linear functionals on $A$ that vanish on some power of $I$. We obtain a multiplication on $D(R\Gamma_*)$ by dualising $\Delta^{(\times)}$. If $f_1, f_2 \in D(R\Gamma_*)$, then we take
\[
(f_1f_2)(a) = (f_1 \otimes f_2)(\Delta^{(\times)}(a))
\]
for $a \in A$. The product $f_1f_2$ vanishes on some power of $I$ because we have
\[
\Delta^{(\times)}(I^n) \subseteq \sum_{m=0}^n I^m \otimes I^{n-m},
\]
and taking $n$ sufficiently large we guarantee that each summand is annihilated by $f_1 \otimes f_2$. Finally, we see that the coassociativity of $\Delta^{(\times)}$ implies the associativity of the multiplication, and the canonical map $A \to A/I = \mathbb{Z}$ is the multiplicative identity. Thus $D(R\Gamma_*)$ is an associative ring.
\begin{theorem} \label{dist_thm}
We have that $D(R\Gamma_*) = \RGamma$. 
\end{theorem}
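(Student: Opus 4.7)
The plan is to produce an explicit $\mathbb{Z}$-linear ring homomorphism $\Psi: D(R\Gamma_*) \to \mathcal{U}(\mathfrak{g})$ with image exactly $\RGamma$. First I change coordinates on $A$ by setting $u_1 = t_1 - 1$ and $u_c = t_c$ for $c \neq 1$, so that $A = \mathbb{Z}[u_{c_1}, \ldots, u_{c_l}]$ and $I = (u_{c_1}, \ldots, u_{c_l})$. The monomials $u^{\mathbf{N}} := \prod_c u_c^{\mathbf{N}(c)}$ form a $\mathbb{Z}$-basis of $A$, and their dual functionals $\delta_{\mathbf{N}}$ vanish on $I^{|\mathbf{N}|+1}$, so they lie in $D(R\Gamma_*)$ and form a $\mathbb{Z}$-basis. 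I then identify the formal variables $x_c$ of the generating function $\Omega$ from Definition \ref{rgamma_constr_defn} with the $u_c$, so that $\Omega = \sum_{\mathbf{N}} B_{\mathbf{N}} \otimes u^{\mathbf{N}}$ becomes an element of $\mathcal{U}(\mathfrak{g}) \otimes \hat{A}$ (with $\hat{A}$ the $I$-adic completion of $A$), and define $\Psi(\delta) := (\mathrm{id} \otimes \delta)(\Omega)$, which gives $\Psi(\delta_{\mathbf{N}}) = B_{\mathbf{N}}$. By Proposition \ref{r_gamma_is_an_algebra_prop} the $B_{\mathbf{N}}$ form a $\mathbb{Z}$-basis of $\RGamma$, so $\Psi$ is a $\mathbb{Z}$-linear bijection onto $\RGamma$; unitality follows from $\Omega|_{u=0} = 1$.

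The crucial step is multiplicativity of $\Psi$. A direct computation using $t_1 = 1 + u_1$, $t_c = u_c$ for $c \neq 1$, together with the structure constants $A_{1,e}^c = \delta_{e,c}$ and $A_{d,1}^c = \delta_{d,c}$, shows
\[
\Delta^{(\times)}(u_c) = u_c \otimes 1 + 1 \otimes u_c + \sum_{d,e \in \Gamma_*} A_{d,e}^c\, u_d \otimes u_e,
\]
which is precisely the formula for $z_c$ in Proposition \ref{theta_product_prop} after setting $x_c = u_c \otimes 1$ and $y_c = 1 \otimes u_c$. Substituting into Proposition \ref{theta_product_prop} gives the identity
\[
(\mathrm{id} \otimes \Delta^{(\times)})(\Omega) = \Omega^{(1,2)} \cdot \Omega^{(1,3)} \quad \text{in } \mathcal{U}(\mathfrak{g}) \otimes \hat{A} \otimes \hat{A},
\]
where the superscripts indicate the tensor slots. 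Applying $\mathrm{id} \otimes \delta \otimes \delta'$ to both sides, the left-hand side equals $\sum_{\mathbf{N}} B_{\mathbf{N}}(\delta \cdot \delta')(u^{\mathbf{N}}) = \Psi(\delta \delta')$ by the definition of the convolution product in $D(R\Gamma_*)$, while the right-hand side equals $\Psi(\delta) \Psi(\delta')$.

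The main obstacle is the bookkeeping required to turn Proposition \ref{theta_product_prop} (a power-series identity for $\Omega$ in two independent sets of commuting variables) into the identity $(\mathrm{id} \otimes \Delta^{(\times)})(\Omega) = \Omega^{(1,2)} \Omega^{(1,3)}$ relating $\Omega$ to the comultiplication $\Delta^{(\times)}$; once the match $z_c = \Delta^{(\times)}(u_c)$ is recognized, the multiplicativity of $\Psi$ falls out formally and the theorem follows.
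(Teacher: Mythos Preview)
Your proof is correct and follows essentially the same route as the paper: the same coordinate change $u_c = t_c - \delta_{c,1}$, the same dual basis $\delta_{\mathbf{N}}$ of $D(R\Gamma_*)$, the same key computation of $\Delta^{(\times)}(u_c)$, and the same matching against the change of variables in Proposition~\ref{theta_product_prop} to conclude that $\delta_{\mathbf{N}} \mapsto B_{\mathbf{N}}$ is a ring isomorphism. The only cosmetic difference is that you package the multiplicativity via the identity $(\mathrm{id}\otimes\Delta^{(\times)})(\Omega)=\Omega^{(1,2)}\Omega^{(1,3)}$ and evaluation of functionals, whereas the paper simply says the structure constants agree.
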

\begin{proof}
Define the variables $z_1 = t_1 - 1$ and $z_c = t_c$ for $c \neq 1$. The coordinate ring $A$ has a basis consisting of monomials $\prod_{c \in \Gamma_*} z_c^{\mathbf{N}(c)}$, indexed by $\mathbf{N} \in \mathbb{Z}_{\geq 0}^{\Gamma_*}$. Such a monomial is contained in $I^{|\mathbf{N}|}$. Hence $D(R\Gamma_*)$ has $\mathbb{Z}$-basis consisting of the dual basis, which we denote $\beta_{\mathbf{N}}$. The structure constants of the multiplication with respect to the basis $\beta_{\mathbf{N}}$ are precisely the structure constants of $\Delta^{(\times)}$ with respect to the monomials $\prod_c z_c^{\mathbf{N}_c}$. To compute the multiplication, note that for any $c \in \Gamma_*$,
\[
\Delta^{(\times)} (z_c) = z_c \otimes 1 + 1 \otimes z_c + \sum_{d,e}A_{d,e}^c z_d \otimes z_e.
\]
We recognise this as the change of variables appearing in Proposition \ref{r_gamma_is_an_algebra_prop} (subject to the relabelling $z_i \otimes 1 = x_i$ and $1 \otimes z_i = y_i$) that describes the structure constants of the multiplication in $\RGamma$. This implies that the linear map $\RGamma \to D(R\Gamma_*)$ given by $B_{\mathbf{N}} \mapsto \beta_{\mathbf{N}}$ is a ring isomorphism.
\end{proof}
\noindent
It is well known that over a field of characteristic zero, the distribution algebra of an algebraic group $G$ coincides with the universal enveloping algebra of the Lie algebra of $G$. Proposition \ref{r_gamma_is_an_algebra_prop} witnesses this fact; the proof explains that $\mathbb{Q} \otimes \RGamma = \mathcal{U}(\mathbb{Q}\Gamma_*)$. Although $R\Gamma_*$ is a monoid rather than a group, we may consider the group of units, which has same distribution algebra. Since distribution algebras of algebraic groups are Hopf algebras, the same is true of $\RGamma$.
\begin{proposition}
We have that $\RGamma \subseteq \mathcal{U}(\mathbb{Q}\Gamma_*)$ is a Hopf subalgebra.
\end{proposition}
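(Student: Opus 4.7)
The plan is to verify that the standard Hopf operations on $\mathcal{U}(\mathbb{Q}\Gamma_*)$---the comultiplication $\Delta$, counit $\varepsilon$, and antipode $S$, determined on the primitive elements by $\Delta(T(c)) = T(c) \otimes 1 + 1 \otimes T(c)$, $\varepsilon(T(c)) = 0$, and $S(T(c)) = -T(c)$ for $c \in \Gamma_*$---all preserve the $\mathbb{Z}$-submodule $\RGamma$. Since $\RGamma$ is spanned by the coefficients $B_{\mathbf{N}}$ of the generating function $\Omega$, it suffices to compute the images of $\Omega$ under each operation applied coefficientwise and read off coefficients.

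For the comultiplication and counit, the computation is direct. Because $\Delta$ is an algebra homomorphism and $T(u) \otimes 1$ commutes with $1 \otimes T(u)$ in $\mathcal{U}(\mathfrak{g})^{\otimes 2}$, the map $\Delta$ commutes with the $\exp$ and $\log$ appearing in the definition of $\Omega$, yielding
\[
\Delta(\Omega(x)) = (\Omega(x) \otimes 1)(1 \otimes \Omega(x)).
\]
Extracting the coefficient of $x^{\mathbf{N}}$ gives $\Delta(B_{\mathbf{N}}) = \sum_{\mathbf{M} + \mathbf{K} = \mathbf{N}} B_{\mathbf{M}} \otimes B_{\mathbf{K}} \in \RGamma \otimes \RGamma$. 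Analogously, $\varepsilon(\Omega(x)) = \exp(0) = 1$, so $\varepsilon(B_{\mathbf{N}}) = \delta_{\mathbf{N}, 0} \in \mathbb{Z}$.

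The main obstacle is the antipode, which requires an explicit $\mathbb{Z}$-integral inverse of $\Omega$. The preceding identities show that $\Omega$ is grouplike, and so the antipode axiom forces $S(\Omega(x)) = \Omega(x)^{-1}$ in $\mathcal{U}(\mathfrak{g})[[x_{c_1}, \ldots, x_{c_l}]]$. By Proposition \ref{theta_product_prop}, $\Omega(x)\Omega(y) = \Omega(z(x,y))$ with $z_{c_i}(x,y) = x_{c_i} + y_{c_i} + \sum_{j,k} A_{j,k}^{i} x_{c_j} y_{c_k}$, so computing $\Omega(x)^{-1}$ reduces to solving $z(x, y(x)) = 0$ for $y(x)$. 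The resulting recursion
\[
y_{c_i}(x) = -x_{c_i} - \sum_{j,k} A_{j,k}^{i}\, x_{c_j}\, y_{c_k}(x)
\]
has a unique solution $y(x) \in \mathbb{Z}[[x_{c_1}, \ldots, x_{c_l}]]^{\Gamma_*}$ with no constant term, obtained degree-by-degree from the integer structure constants $A_{j,k}^i$. Substituting $y(x)$ into $\Omega$ then expresses each coefficient of $S(\Omega(x)) = \Omega(y(x))$ as a $\mathbb{Z}$-linear combination of the $B_{\mathbf{M}}$, so $S(B_{\mathbf{N}}) \in \RGamma$, completing the verification that $\RGamma$ is a Hopf subalgebra.
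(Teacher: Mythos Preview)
Your proof is correct. The computations for the comultiplication and counit are straightforward consequences of the fact that $L = T(\log(1+\sum_c cx_c))$ is a formal series whose $\mathcal{U}(\mathfrak{g})$-coefficients are primitive elements, so $\Omega = \exp(L)$ is grouplike; your coefficient extraction $\Delta(B_{\mathbf{N}}) = \sum_{\mathbf{M}+\mathbf{K}=\mathbf{N}} B_{\mathbf{M}}\otimes B_{\mathbf{K}}$ and $\varepsilon(B_{\mathbf{N}}) = \delta_{\mathbf{N},0}$ are then immediate. For the antipode, your use of Proposition~\ref{theta_product_prop} to solve $z(x,y(x))=0$ for $y(x)$ with integer coefficients, followed by the substitution $S(\Omega(x)) = \Omega(y(x))$, is clean; the key point that $y(x)$ has no constant term guarantees that each coefficient of $x^{\mathbf{N}}$ in $\Omega(y(x))$ is a finite $\mathbb{Z}$-linear combination of the $B_{\mathbf{M}}$.

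Your route differs from the paper's, which does not argue directly at all. The paper instead relies on Theorem~\ref{dist_thm}, identifying $\RGamma$ with the distribution algebra of the monoid scheme $R\Gamma_*$ (or of its group of units), and then invokes the general fact (from \cite{Jantzen_alggp}) that distribution algebras of algebraic groups carry a natural Hopf algebra structure compatible with that of $\mathcal{U}(\mathfrak{g})$ upon base change to $\mathbb{Q}$. The paper's approach explains \emph{why} $\RGamma$ is a Hopf subalgebra---the Hopf structure is inherited from the group structure on the scheme---while your approach is more elementary and self-contained, yielding explicit formulae for $\Delta$, $\varepsilon$, and $S$ on the basis $B_{\mathbf{N}}$ without appeal to algebraic group machinery. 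In particular, your formula $\Delta(B_{\mathbf{N}}) = \sum_{\mathbf{M}+\mathbf{K}=\mathbf{N}} B_{\mathbf{M}}\otimes B_{\mathbf{K}}$ is a useful byproduct not stated in the paper.
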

\noindent
Now we turn our attention to classifying maps into a field. For context and background, we direct the reader to \cite{HarmanHopkins}. We write $\mathbb{Z}_p$ for the set of $p$-adic integers. The motivating example is the following.
\begin{example}[Section 8 \cite{HarmanHopkins}] \label{hom_classification_example}
Homomorphisms from the ring $\mathcal{R}$ of integer-valued polynomials to a field $\mathbb{F}$ are given by the following. If $\mathbb{F}$ has characteristic zero, they are given by evaluating the polynomial variable $t$ at an arbitrary element of $\mathbb{F}$. If $\mathbb{F}$ has characteristic $p>0$, then they are given by evaluating the polynomial variable $t$ at a $p$-adic integer. Evaluation of an integer-valued polynomial at a $p$-adic integer yields another $p$-adic integer, which may be viewed as an element of $\mathbb{F}$ via the canonical map
\[
\mathbb{Z}_p \to \mathbb{Z}_p/(p) = \mathbb{Z}/p\mathbb{Z} \subseteq \mathbb{F}.
\]
In fact, if $t \in \mathbb{Z}_p$, ${t \choose p^r}$ is the $r$-th $p$-adic digit of $t$ (viewed as an element of $\mathbb{F}$).
\end{example}
\noindent
First we get the characteristic-zero case out of the way.
\begin{proposition}
Suppose that $\mathbb{F}$ is a field of characteristic zero. Homomorphisms $\RGamma \to \mathbb{F}$ are given by evaluating each $T(c)$ at an arbitrary element of $\mathbb{F}$ (and are hence parametrised by $\mathbb{F}^{\Gamma_*} = \mathbb{F}\Gamma_*$).
\end{proposition}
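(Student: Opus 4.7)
The plan is to reduce the classification to the well-known case of homomorphisms from a polynomial ring into a field. First, I would observe that since $\mathbb{F}$ has characteristic zero, $\mathbb{F}$ is a $\mathbb{Q}$-algebra; and since $\RGamma$ is $\mathbb{Z}$-free with basis $\{B_{\mathbf{N}}\}$ by Proposition \ref{r_gamma_is_an_algebra_prop}, it embeds into $\mathbb{Q} \otimes_{\mathbb{Z}} \RGamma$. Hence every ring homomorphism $\phi \colon \RGamma \to \mathbb{F}$ extends uniquely to a $\mathbb{Q}$-algebra homomorphism $\phi_{\mathbb{Q}} \colon \mathbb{Q} \otimes \RGamma \to \mathbb{F}$, and conversely, restriction to $\RGamma$ of any $\mathbb{Q}$-algebra homomorphism $\mathbb{Q} \otimes \RGamma \to \mathbb{F}$ yields a ring homomorphism $\RGamma \to \mathbb{F}$. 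These two operations are mutually inverse, so the two sets of homomorphisms are in natural bijection.

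The next step is to invoke the identification $\mathbb{Q} \otimes \RGamma = \mathcal{U}(\mathfrak{g})$ noted in the proof of Proposition \ref{r_gamma_is_an_algebra_prop}. Since $\mathfrak{g} = \mathbb{Q}\Gamma_*$ is an abelian Lie algebra with basis $\{T(c) : c \in \Gamma_*\}$, its universal enveloping algebra coincides with its symmetric algebra, namely the polynomial ring $\mathbb{Q}[T(c_1), \ldots, T(c_l)]$ in $|\Gamma_*|$ indeterminates. Ring homomorphisms from this polynomial ring into the field $\mathbb{F}$ are, by the universal property of polynomial rings, in bijection with assignments of elements of $\mathbb{F}$ to each generator $T(c)$, i.e.\ with elements of $\mathbb{F}^{\Gamma_*} = \mathbb{F}\Gamma_*$.

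Composing the two bijections produces the claimed parametrisation. There is no genuine obstacle: the argument is formal once the two identifications $\mathbb{Q} \otimes \RGamma = \mathcal{U}(\mathbb{Q}\Gamma_*)$ and $\mathcal{U}(\mathbb{Q}\Gamma_*) = \mathbb{Q}[T(c_1), \ldots, T(c_l)]$ are in hand, both of which follow from material already established. The only minor point to verify is the $\mathbb{Z}$-torsion-freeness of $\RGamma$, which is immediate from its explicit $\mathbb{Z}$-basis. (By contrast, the positive-characteristic case in the next result will be genuinely harder, because there one must keep track of how evaluating a $B_{\mathbf{N}}$ interacts with the $p$-adic structure, which is the point of the example from \cite{HarmanHopkins} recalled in Example \ref{hom_classification_example}.)
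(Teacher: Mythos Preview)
Your proposal is correct and follows essentially the same approach as the paper: both arguments pass to $\mathbb{Q}\otimes\RGamma = \mathcal{U}(\mathbb{Q}\Gamma_*)$, identify this with the polynomial ring $\mathbb{Q}[T(c_1),\ldots,T(c_l)]$ since $\mathfrak{g}$ is abelian, and then use the universal property of polynomial rings. Your version is simply more explicit about why the passage from $\RGamma$ to $\mathbb{Q}\otimes\RGamma$ loses no homomorphisms (torsion-freeness and the extension/restriction bijection), which the paper leaves implicit.
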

\begin{proof}
We have $\mathbb{Q} \subseteq \mathbb{F}$, and $\mathbb{Q} \otimes \RGamma = \mathcal{U}(\mathbb{Q}\Gamma_*)$ is the free polynomial algebra in variables $T(c)$ for $c \in \Gamma_*$. Homomorphisms from a polynomial algebra to $\mathbb{F}$ amount to evaluating each variable at an element of $\mathbb{F}$.
\end{proof}
\noindent
Now we move on to the positive characteristic case. Fix a prime $p$, and let $\mathbb{F}$ be a field of characteristic $p$. Since any field embeds in its algebraic closure, we may as well take $\mathbb{F}$ to be algebraically closed. 
\begin{definition}
Let $q \in \mathbb{F}\Gamma_*$ be $q = \sum_{c \in \Gamma_*} m_c c$. We define
\[
B_{q,r} = \sum_{|\mathbf{M}| = p^r} \prod_{c \in \Gamma_*} m_c^{\mathbf{M}(c)} B_{\mathbf{M}}.
\]
\end{definition}
\noindent
For example, if $q=c \in \Gamma_*$, we have
\[
B_{c,r} = B_{\mathbf{M}}
\]
where $\mathbf{M}(c) = p^r$ and $\mathbf{M}(c^\prime) = 0$ for $c^\prime \neq c$. Also, $B_{0,r} = 0$ for all $r$.

\begin{lemma} \label{rgamma_modular_generation_lemma}
The algebra $\mathbb{F} \otimes \RGamma$ is generated by the elements $B_{q,r}$, where $q$ varies across a basis of $\mathbb{F}\Gamma_*$ and $r \in \mathbb{Z}_{\geq 0}$,
\end{lemma}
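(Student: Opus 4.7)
The plan is to pass to the associated graded algebra with respect to the PBW filtration that $\RGamma$ inherits from $\mathcal{U}(\mathfrak{g})$, and reduce the statement to the classical fact that divided power algebras in characteristic $p$ are generated by divided powers of order $p^r$ of basis elements. By Lemma \ref{leading_term_lemma} the basis element $B_{\mathbf{N}}$ lies in PBW filtration degree $|\mathbf{N}|$ with leading term $\prod_c T(c)^{\mathbf{N}(c)}/\mathbf{N}(c)!$. Thus the associated graded $\mathrm{gr}(\mathbb{F} \otimes \RGamma)$ is canonically identified with the divided power algebra $\Gamma_\mathbb{F}(\mathbb{F}\Gamma_*)$ over $\mathbb{F}$, under which the image of $B_{\mathbf{N}}$ corresponds to the divided-power monomial $\prod_c c^{(\mathbf{N}(c))}$.

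Next I would compute the image of $B_{q,r}$ in this associated graded. The divided-power multinomial identity
\[
\Bigl( \sum_c m_c c \Bigr)^{(N)} = \sum_{|\mathbf{M}|=N} \prod_c m_c^{\mathbf{M}(c)} c^{(\mathbf{M}(c))},
\]
combined with the definition of $B_{q,r}$, shows that $B_{q,r}$ has PBW degree $p^r$ with associated graded image $q^{(p^r)}$, where $q = \sum_c m_c c$. The key structural fact I would then invoke is that for any finite-dimensional $\mathbb{F}$-vector space $V$ with basis $\{v_1, \ldots, v_l\}$, the divided power algebra factors as $\Gamma_\mathbb{F}(V) \cong \bigotimes_i \Gamma_\mathbb{F}(\mathbb{F} v_i)$, and each factor $\Gamma_\mathbb{F}(\mathbb{F} v)$ is generated as an $\mathbb{F}$-algebra by $\{v^{(p^r)} : r \geq 0\}$. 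The latter assertion is a Lucas'-theorem computation applied to the product rule $v^{(a)} v^{(b)} = \binom{a+b}{a} v^{(a+b)}$: writing $N = \sum_r a_r p^r$ in base $p$ with $0 \leq a_r < p$, one obtains $v^{(N)} = \prod_r (v^{(p^r)})^{a_r}/a_r!$, where each $a_r!$ is invertible in $\mathbb{F}$. Applied to $V = \mathbb{F}\Gamma_*$ with the given basis, the images $q^{(p^r)}$ of the $B_{q,r}$ therefore generate $\mathrm{gr}(\mathbb{F} \otimes \RGamma)$.

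Finally, a standard induction on PBW degree lifts generation from the associated graded to $\mathbb{F} \otimes \RGamma$: letting $\Sigma$ denote the subalgebra generated by the $B_{q,r}$, each $B_{\mathbf{N}}$ differs from some element of $\Sigma$ by an element of strictly smaller PBW degree, and the induction closes the argument. The main subtle step is the divided-power generation statement, but this is a classical Lucas-style computation.
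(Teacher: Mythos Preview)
Your proof is correct and follows essentially the same approach as the paper: pass to the associated graded via the PBW filtration, identify it with the divided power algebra on $\mathbb{F}\Gamma_*$ (the paper does this via the explicit leading terms $\prod_c T(c)^{\mathbf{N}(c)}/\mathbf{N}(c)!$), and use the Lucas-type identity to generate all divided powers from the $p^r$-th ones. Your handling of the arbitrary basis is slightly cleaner---you invoke the intrinsic tensor factorisation $\Gamma_\mathbb{F}(V) \cong \bigotimes_i \Gamma_\mathbb{F}(\mathbb{F} q_i)$ directly, whereas the paper first treats the conjugacy-class basis and then runs a separate change-of-basis induction on $r$.
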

\begin{proof}
Lemma \ref{leading_term_lemma} asserts that $B_{\mathbf{M}}$ has leading order term
\[
\prod_{c \in \Gamma_*} \frac{T(c)^{\mathbf{M}(c)}}{\mathbf{M}(c)!}.
\]
Writing $q = \sum_{c \in \Gamma_*} m_c c$ and using the multinomial theorem, we see that the leading order term of $B_{q,r}$ is
\[
\sum_{|\mathbf{M}| = p^r} \prod_{c \in \Gamma_*} m_c^{\mathbf{M}(c)} \frac{T(c)^{\mathbf{M}(c)}}{\mathbf{M}(c)!} = \frac{T(q)^{p^r}}{p^r!}.
\]
If the $p$-adic decomposition of $n$ is $\sum_r n_r p^r$, then over $\mathbb{F}$ we have
\[
\prod_r \left(\frac{T(q)^{p^r}}{p^r!}\right)^{n_r} = \frac{n!}{\prod_{r} (p^r!)^{n_r}} \cdot \frac{T(q)^n}{n!}.
\]
Here, the multinomial coefficient
\[
\frac{n!}{\prod_{r} (p^r!)^{n_r}}
\]
is invertible in $\mathbb{F}$. This implies that the leading order term of any $B_{\mathbf{M}}$ is a (scalar multiple of a) product of leading order terms of the $B_{c,r}$, where we are using the particular basis of conjugacy class sums, proving the lemma for that particular basis. To prove the lemma for an arbitrary basis $q_i$, we express $B_{c,r}$ in terms of the $B_{q_i,r}$. Let $c = \sum_i k_i q_i$, and observe that to leading order, $B_{c,r}$ may be written
\[
\frac{T(\sum_i k_i q_i)^{p^r}}{p^r!} = \sum_{|\mathbf{W}| = p^r} \prod_{i} k_i^{\mathbf{W}(q_i)} \frac{T(q_i)^{\mathbf{W}(q_i)}}{\mathbf{W}(q_i)!},
\]
where the sum is over functions $\mathbf{W}$ from the basis $q_i$ to $\mathbb{Z}_{\geq 0}$, such that $\sum_i \mathbf{W}(q_i) = p^r$. We observe that this is equal to
\[
\sum_i k_i^{p^r} \frac{T(q_i)^{p^r}}{p^r!}
\]
plus ``mixed'' terms whose exponents (values of $\mathbf{W}(q)$) are less than $p^r$ and may therefore be written in terms of $B_{q,s}$ for $s <r$. By induction on $r$, the generating set $B_{c,r}$ is contained in the subring generated by all elements of the form $B_{q,r}$, and this completes the proof.
\end{proof}

\begin{lemma}
View the map $ev_n^{\mathcal{R}}$ from Theorem \ref{r_gamma_hom_thm} as having domain $\mathbb{F} \otimes \RGamma$ and codomain $(\mathbb{F}\Gamma_*)^{\otimes n}$. We have that $ev_n^{\mathcal{R}}(B_{q,r})$ is equal to the sum of all pure tensors with $p^r$ entries set to $q$, and the remainder set to $1$.
\end{lemma}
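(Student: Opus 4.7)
The plan is to use the generating function from Lemma \ref{specialised_gen_fun_lemma} together with a suitable specialisation of the formal variables $x_c$. Writing $q = \sum_{c} m_c c$ and applying $ev_n^{\mathcal{R}}$ to the definition
\[
B_{q,r} = \sum_{|\mathbf{M}|=p^r} \prod_{c\in\Gamma_*} m_c^{\mathbf{M}(c)}\, B_{\mathbf{M}},
\]
one obtains $ev_n^{\mathcal{R}}(B_{q,r}) = \sum_{|\mathbf{M}|=p^r}\prod_c m_c^{\mathbf{M}(c)}\, b_{\mathbf{M}}$, so the task reduces to identifying this combination of $b_{\mathbf{M}}$'s with the required sum of pure tensors.

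First, I would introduce a new formal variable $t$ and substitute $x_c = m_c t$ into the generating function identity
\[
\sum_{|\mathbf{M}|\leq n} b_{\mathbf{M}}\, x^{\mathbf{M}} \;=\; \Bigl(1 + \sum_{c\in\Gamma_*} c\, x_c\Bigr)^{\otimes n}
\]
of Lemma \ref{specialised_gen_fun_lemma}. Under this substitution, $x^{\mathbf{M}} = t^{|\mathbf{M}|}\prod_c m_c^{\mathbf{M}(c)}$, while $1 + \sum_c c\, x_c$ becomes $1 + qt$. Extracting the coefficient of $t^{p^r}$ on the left-hand side produces precisely $ev_n^{\mathcal{R}}(B_{q,r})$.

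Next, I would extract the coefficient of $t^{p^r}$ on the right-hand side directly by expanding the tensor product: $(1+qt)^{\otimes n} = \bigotimes_{i=1}^n(1 + qt)$, and choosing, for each $i$, whether to take the factor $1$ or the factor $qt$. This gives
\[
(1+qt)^{\otimes n} = \sum_{S\subseteq\{1,\ldots,n\}} t^{|S|} \Bigl(\bigotimes_{i\in S} q\Bigr)\otimes\Bigl(\bigotimes_{i\notin S} 1\Bigr),
\]
and the coefficient of $t^{p^r}$ is exactly the sum of all pure tensors whose support has size $p^r$ and whose entries on the support are $q$ and off the support are $1$. Matching the two extractions yields the claim.

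The argument involves no serious obstacle; the only point that warrants care is that the generating function identity of Lemma \ref{specialised_gen_fun_lemma} holds in $(\mathbb{Z}\Gamma)^{\otimes n}[[x_{c_1},\ldots,x_{c_l}]]$, so the substitution $x_c \mapsto m_c t$ is legitimate after base change to $\mathbb{F}$, and the resulting equality is then an equality of polynomial coefficients in $(\mathbb{F}\Gamma)^{\otimes n}$.
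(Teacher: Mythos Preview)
Your proposal is correct and is essentially the same argument as the paper's, just packaged via the generating function: the paper expands $q^{\otimes p^r}=(\sum_c m_c c)^{\otimes p^r}$ directly to see that each pure tensor with $\mathbf{M}(c)$ factors equal to $c$ appears with weight $\prod_c m_c^{\mathbf{M}(c)}$, then inserts the remaining $n-p^r$ factors of $1$ to recover $\sum_{|\mathbf{M}|=p^r}\prod_c m_c^{\mathbf{M}(c)} b_{\mathbf{M}}$. Your substitution $x_c\mapsto m_c t$ in Lemma~\ref{specialised_gen_fun_lemma} and extraction of the $t^{p^r}$ coefficient performs exactly this computation in one stroke.
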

\begin{proof}
Expanding in terms of the basis $\Gamma_*$,
\[
q^{\otimes p^r} = (\sum_c m_c c)^{\otimes p^r},
\]
we get each tensor monomial with $\mathbf{M}(c)$ factors equal to $c$ with multiplicity 
in the conjugacy class sums $c$ with multiplicity $\prod_{c \in \Gamma_*} m_c^{\mathbf{M}(c)}$. Inserting $n-p^r$ tensor factors of $1$ in all possible ways completes the proof.
\end{proof}

\begin{lemma}
For any $q \in \mathbb{F}\Gamma_*$, we have the following equation in $\mathbb{F} \otimes \RGamma$;
\[
B_{q,r}^p =  B_{q^p,r},
\]
where $q^p$ is viewed as an element of $\mathbb{F}\Gamma_*$.
\end{lemma}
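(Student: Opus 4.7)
The plan is to apply the evaluation maps $ev_n^{\mathcal{R}}: \mathbb{F} \otimes \RGamma \to (\mathbb{F}\Gamma_*)^{\otimes n}$ from Theorem \ref{r_gamma_hom_thm} for each $n \geq 0$, verify the identity after each such map using the Frobenius endomorphism of $(\mathbb{F}\Gamma_*)^{\otimes n}$, and then deduce the identity in $\mathbb{F} \otimes \RGamma$ itself by showing the family $\{ev_n^{\mathcal{R}}\}_n$ is collectively injective on $\mathbb{F} \otimes \RGamma$.

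First I would apply $ev_n^{\mathcal{R}}$. Since it is a ring homomorphism, $ev_n^{\mathcal{R}}(B_{q,r}^p) = ev_n^{\mathcal{R}}(B_{q,r})^p$, and the preceding lemma gives $ev_n^{\mathcal{R}}(B_{q,r}) = \sum_{|S|=p^r} q^{(S)}$, where $q^{(S)}$ is the pure tensor with $q$ in the positions indexed by $S$ and $1$ elsewhere. Because $(\mathbb{F}\Gamma_*)^{\otimes n}$ is a commutative $\mathbb{F}$-algebra of characteristic $p$, the Frobenius $x \mapsto x^p$ on it is both additive and multiplicative, giving
\[
\Bigl(\sum_{|S|=p^r} q^{(S)}\Bigr)^{p} = \sum_{|S|=p^r} \bigl(q^{(S)}\bigr)^p = \sum_{|S|=p^r} (q^p)^{(S)},
\]
where the second equality uses the factorwise identity $(q^{(S)})^p = (q^p)^{(S)}$ (note $1^p = 1$ in the positions outside $S$). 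A further application of the preceding lemma to $q^p$ identifies this with $ev_n^{\mathcal{R}}(B_{q^p, r})$. Hence $ev_n^{\mathcal{R}}(B_{q,r}^p - B_{q^p, r}) = 0$ for every $n \geq 0$.

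To lift this to the identity in $\mathbb{F} \otimes \RGamma$ itself, I need the collective injectivity of $\{ev_n^{\mathcal{R}}\}_n$. Factoring $ev_n^{\mathcal{R}} = \Phi_n \circ \Psi$ via Corollary \ref{coeff_compatibility_cor}, this reduces to (a) injectivity of the $\mathbb{F}$-linear extension of $\Psi$, and (b) the maps $\Phi_n$ collectively distinguishing elements of $\mathbb{F} \otimes \FHG$. For (a), the formula $\Psi(B_{\mathbf{N}}) = \binom{t - |\mathbf{N}| + \mathbf{N}(1)}{\mathbf{N}(1)} K_{\bs\mu}$ (with $\bs\mu$ depending only on $(\mathbf{N}(c))_{c \neq 1}$) sends $B_{\mathbf{N}}$'s with a fixed such tuple to scalar multiples of the same $K_{\bs\mu}$ by the polynomials $\binom{t-s}{\mathbf{N}(1)}$, which have distinct degrees in $t$ and are therefore $\mathbb{F}$-linearly independent in $\mathbb{F} \otimes \mathcal{R}$; combined with the linear independence of the $K_{\bs\mu}$'s, this gives injectivity of $\Psi$. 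For (b), the proof of Proposition \ref{specialisation_distinguishing_prop} extends once one observes that any finite $\mathbb{F}$-linear combination $\sum_r a_r \binom{t}{r} \in \mathbb{F} \otimes \mathcal{R}$ vanishing at all sufficiently large integers must be identically zero; this holds because $\sum_r a_r \binom{n}{r}$ defines a polynomial function in $n$ of bounded degree over the field $\mathbb{F}$, so vanishing at infinitely many points forces all $a_r = 0$.

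The only substantive step is the Frobenius computation in the second paragraph; the main care is needed in verifying the injectivity argument, specifically the mild extension of Proposition \ref{specialisation_distinguishing_prop} to positive-characteristic coefficients, since over a finite field of characteristic $p$ the distinguishing of integer-valued polynomial-like elements by their integer evaluations is less transparent than in characteristic zero.
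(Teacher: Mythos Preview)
Your approach is essentially the paper's: apply $ev_n^{\mathcal{R}}$, use the Frobenius endomorphism on $(\mathbb{F}\Gamma_*)^{\otimes n}$, and then invoke collective injectivity of the evaluation maps. The Frobenius computation is identical to the paper's.

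The one genuine gap is your justification of (b). In characteristic $p$, the map $n \mapsto \sum_r a_r \binom{n}{r}$ is \emph{not} a polynomial function of $n$ with $\mathbb{F}$-coefficients once $r \geq p$ (since $r!$ is not invertible), and even if it were, the image of $\mathbb{Z}$ in $\mathbb{F}$ is the finite prime field $\mathbb{F}_p$, so vanishing at infinitely many integers does not force a polynomial over $\mathbb{F}$ to be zero (witness $n^p - n$). So the ``polynomial of bounded degree vanishing at infinitely many points'' argument fails exactly in the case you flagged as delicate.

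The conclusion of (b) is nonetheless correct, and the fix is short. Let $f(n) = \sum_{r=0}^d a_r \binom{n}{r}$ with $a_r \in \mathbb{F}$, and suppose $f(n) = 0$ for all $n \geq N$. The forward difference operator $(\Delta f)(n) = f(n+1) - f(n)$ satisfies $\Delta \binom{n}{r} = \binom{n}{r-1}$ by Pascal's rule (an identity in $\mathbb{Z}$, hence in $\mathbb{F}$), so $\Delta^d f$ is the constant function $a_d$. Since $f$ vanishes on $[N,\infty)$, so does $\Delta^d f$, giving $a_d = 0$; now induct on $d$. With this repair, your injectivity argument for $\{\Phi_n\}$ over $\mathbb{F}$ goes through, and combined with your (correct) argument for injectivity of $\Psi$ over $\mathbb{F}$, the proof is complete.
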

\begin{proof}
We use Theorem \ref{r_gamma_hom_thm} and the fact that the $b_{\mathbf{N}}$ are linearly independent. So it is enough to show that 
\[
ev_n^{\mathcal{R}}(B_{q,r})^p = ev_n^{\mathcal{R}}(B_{q^p,r})
\]
over $\mathbb{F}$ for all $n$. We note that $ev_n^{\mathcal{R}}(B_{q,p^r})$ is the sum of all pure tensors (with respect to the basis $\Gamma_*$) with $p^r$ factors set to $q$, and the remainder set to $1$ (see Lemma \ref{specialised_gen_fun_lemma}). Raising this sum to the $p$-th power is the same as raising each pure tensor to the $p$-th power because the binomial coefficients from the cross-terms vanish in characteristic $p$. So we have the sum of pure tensors with $p^r$ factors set to $q^p$, which agrees with $ev_n^{\mathcal{R}}(B_{q^p,r})$.
\end{proof}

\begin{proposition} \label{modular_evaluation_prop}
Let $\varphi: \mathbb{F} \otimes \RGamma \to \mathbb{F}$ be a ring homomoprhism. If $q \in J(\mathbb{F}\Gamma_*)$, then $\varphi(B_{q,r}) = 0$, where $J$ is the Jacobson radical. If $e \in \mathbb{F}\Gamma_*$ is idempotent, then $\varphi(B_{e,r})$ is an element of the prime subfield of $\mathbb{F}$ (isomorphic to $\mathbb{Z}/p\mathbb{Z}$).
\end{proposition}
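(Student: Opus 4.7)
The plan is to leverage the key identity $B_{q,r}^p = B_{q^p,r}$ proved just before the proposition. This identity, combined with the finite-dimensionality of $\mathbb{F}\Gamma_*$, will handle both statements very quickly.

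First I would note that iterating the identity gives $B_{q,r}^{p^N} = B_{q^{p^N}, r}$ for every $N \geq 0$. For the first statement, suppose $q \in J(\mathbb{F}\Gamma_*)$. Since $\mathbb{F}\Gamma_*$ is a finite-dimensional commutative $\mathbb{F}$-algebra, its Jacobson radical is a nilpotent ideal, so $q^M = 0$ for some $M \geq 1$. Choosing $N$ large enough that $p^N \geq M$, we get $q^{p^N} = 0$, and by the definition of $B_{\cdot,r}$ (which is linear in its first argument in the sense that $B_{0,r} = 0$) we obtain $B_{q,r}^{p^N} = B_{0,r} = 0$. Applying $\varphi$ yields $\varphi(B_{q,r})^{p^N} = 0$ in the field $\mathbb{F}$, so $\varphi(B_{q,r}) = 0$.

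For the second statement, suppose $e \in \mathbb{F}\Gamma_*$ is idempotent, so $e^p = e$ (in characteristic $p$, any idempotent is a $p$-th power of itself because $e^p = e \cdot e^{p-1} = e$). Then $B_{e,r}^p = B_{e^p, r} = B_{e,r}$. Applying $\varphi$ gives $\varphi(B_{e,r})^p = \varphi(B_{e,r})$, so $\varphi(B_{e,r})$ is a root of $x^p - x \in \mathbb{F}[x]$, and hence lies in the prime subfield $\mathbb{F}_p \subseteq \mathbb{F}$.

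There is essentially no obstacle here: the entire argument is an immediate consequence of the Frobenius-like identity $B_{q,r}^p = B_{q^p,r}$ together with nilpotence of the radical and the characterisation of $\mathbb{F}_p$ as the fixed points of Frobenius. The only point requiring a brief justification is that $J(\mathbb{F}\Gamma_*)$ is nilpotent, which follows because $\mathbb{F}\Gamma_*$ is finite-dimensional over $\mathbb{F}$.
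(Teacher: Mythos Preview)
Your proof is correct and follows essentially the same approach as the paper: both arguments iterate the identity $B_{q,r}^p = B_{q^p,r}$, use nilpotence of the Jacobson radical of the finite-dimensional algebra $\mathbb{F}\Gamma_*$ to force $\varphi(B_{q,r})$ to be nilpotent (hence zero) in $\mathbb{F}$, and use $e^p = e$ for an idempotent to conclude $\varphi(B_{e,r})$ is Frobenius-fixed. The only cosmetic remark is that your parenthetical ``linear in its first argument'' is imprecise---the map $q \mapsto B_{q,r}$ is homogeneous of degree $p^r$, not linear---but all you actually need (and use) is $B_{0,r} = 0$, which is immediate from the definition.
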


\begin{proof}
Because $\mathbb{F}\Gamma_*$ is a finite-dimensional algebra, its Jacobson radical is nilpotent. This means that $J(\mathbb{F}\Gamma_*)^{p^k} = 0$ for some $k$. Hence for $q$ in the Jacobson radical,
\begin{eqnarray*}
\varphi(B_{q,r})^{p^k} 
&=& \varphi(B_{q,r}^{p^k}) \\ 
&=& \varphi(B_{q^{p^k},r}) \\ 
&=& \varphi(B_{0,r}) \\
&=& 0.
\end{eqnarray*}
Since the only nilpotent element of a field is zero, $\varphi(B_{q,r}) = 0$. If $e$ is idempotent,
\[
\varphi(B_{e,r})^p = \varphi(B_{e^p,r}) = \varphi(B_{e,r}).
\]
This means $\varphi(B_{q,r})$ solves the equation $x^p = x$ (i.e. is Frobenius-fixed), but such solutions are precisely elements of the prime subfield of $\mathbb{F}$.
\end{proof}
\noindent
To avoid conflicts of notation, instead of writing $B_i$ for the blocks of $\mathbb{F}\Gamma$, we instead write $C_u$. Correspondingly, we write $\omega_c^{u} \in \mathbb{F}$ for the central character of the block $C_u$ evaluated at the conjugacy class sum $c$.

\begin{theorem}
Homomorphisms $\varphi: \RGamma \to \mathbb{F}$ are given by evaluating 
\[
T(c) \mapsto \sum_{u} \omega_c^{u} t_u,
\]
where the sum is over the blocks $C_u$ of $\mathbb{F}\Gamma$, and $t_u \in \mathbb{Z}_p$ are arbitrary $p$-adic integers (one for each block $C_u$).
\end{theorem}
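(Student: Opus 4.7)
The strategy follows Example~\ref{hom_classification_example}: work in a characteristic-zero lift via a $p$-modular system, construct the homomorphism through the generating function $\Omega$, and verify integrality so that the construction descends to $\mathbb{F}$. Fix a $p$-modular system $(K,\mathcal{O},\mathbb{F})$ with uniformiser $\pi$. For each block $C_u$ of $\mathbb{F}\Gamma$, pick an irreducible $K\Gamma$-module in the $p$-block reducing to $C_u$; its central character $\tilde{\omega}^u\colon K\Gamma_*\to K$ is a ring homomorphism taking integral values in $\mathcal{O}$, and reduces mod $\pi$ to $\omega^u$. Given arbitrary $t_u\in\mathbb{Z}_p\subseteq\mathcal{O}$ (one per block), the assignment $T(c)\mapsto\sum_u\tilde{\omega}_c^u t_u$ extends uniquely to a $K$-algebra homomorphism $\tilde{\varphi}\colon\mathcal{U}(K\Gamma_*)\to K$, since $\mathcal{U}(K\Gamma_*)$ is a polynomial ring in the $T(c)$. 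The plan is to show $\tilde{\varphi}(\RGamma)\subseteq\mathcal{O}$ and then compose with the reduction $\mathcal{O}\to\mathbb{F}$.

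The core computation is that, with $\tilde{\varphi}$ applied coefficient-wise,
\[
\tilde{\varphi}(\Omega)=\prod_u\Big(1+\sum_c \tilde{\omega}_c^u x_c\Big)^{t_u}.
\]
This rests on two observations: each $\tilde{\omega}^u$ is a ring homomorphism $K\Gamma_*\to K$ and so commutes with $\log$ applied coefficient-wise to power series valued in $K\Gamma_*$; and the composition $\tilde{\varphi}\circ T$ is $K$-linear in its $K\Gamma_*$-argument, letting the $t_u$ factor out before one exponentiates. Each factor on the right has coefficients in $\mathcal{O}$, because $(1+y)^{t_u}=\sum_n\binom{t_u}{n}y^n$ has $\mathbb{Z}_p$-valued binomial coefficients when $t_u\in\mathbb{Z}_p$ (the integrality at the heart of Example~\ref{hom_classification_example}). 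Hence $\tilde{\varphi}(B_{\mathbf{N}})\in\mathcal{O}$ for every $\mathbf{N}$, producing the desired ring homomorphism $\varphi\colon\RGamma\to\mathbb{F}$ whose action on $T(c)$ is $\sum_u\omega_c^u t_u$ as claimed.

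For the converse, combine Lemma~\ref{rgamma_modular_generation_lemma} with Proposition~\ref{modular_evaluation_prop}. The decomposition $\mathbb{F}\Gamma_*=\prod_u Z(C_u)$ into local factors with residue field $\mathbb{F}$ gives a basis of $\mathbb{F}\Gamma_*$ consisting of the block idempotents $e_u$ together with a basis of $J(\mathbb{F}\Gamma_*)$. Any $\varphi$ vanishes on $B_{q,r}$ for $q\in J(\mathbb{F}\Gamma_*)$ and lands in $\mathbb{F}_p\subseteq\mathbb{F}$ on each $B_{e_u,r}$, so $\varphi$ is determined by the data $a_{u,r}:=\varphi(B_{e_u,r})\in\mathbb{F}_p$. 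Define $t_u\in\mathbb{Z}_p$ as the unique $p$-adic integer with $r$-th digit $a_{u,r}$, and let $\varphi'$ be the homomorphism built from $(t_u)$ via the construction above. Specialising the product formula at $x_c\mapsto m_c^u x$, where $e_u=\sum_c m_c^u c$, collapses it modulo $\pi$ to $(1+x)^{t_u}$ (since $\omega^v(e_u)=\delta_{vu}$), whose coefficient of $x^{p^r}$ is $\binom{t_u}{p^r}\bmod p = a_{u,r}$; an analogous check gives $\varphi'(B_{q,r})=0$ for $q\in J(\mathbb{F}\Gamma_*)$. Thus $\varphi$ and $\varphi'$ agree on a generating set and hence coincide, finishing the classification.

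The main obstacle is the product-formula identity for $\tilde{\varphi}(\Omega)$ together with its $\mathcal{O}$-integrality. The operator $T$ was introduced in Definition~\ref{rgamma_constr_defn} precisely to distinguish the associative product in $\mathfrak{g}$ from that in $\mathcal{U}(\mathfrak{g})$, and it is crucial that the lifted central characters $\tilde{\omega}^u$ are ring homomorphisms (not merely linear functionals), so that $\tilde{\varphi}$ can \emph{see through} $T$ and produce a clean exponential answer. Once this identity and its integrality are secured, both implications of the theorem reduce to organised bookkeeping about $p$-adic digits.
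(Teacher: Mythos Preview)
Your argument is correct and, on the uniqueness side, essentially identical to the paper's: both use Lemma~\ref{rgamma_modular_generation_lemma} together with Proposition~\ref{modular_evaluation_prop} to reduce to the values $\varphi(B_{e_u,r})$, package these as $p$-adic digits, and recover them by specialising the product formula at $x_c\mapsto m_c^u x$.

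For existence you take a mildly different route. The paper writes the candidate map directly in $\mathbb{F}$ via the product formula $\sum_{\mathbf{N}}\varphi(B_{\mathbf{N}})x^{\mathbf{N}}=\prod_u(1+\sum_c\omega_c^u x_c)^{t_u}$ and then checks multiplicativity by hand, reproducing the $z$-variable change from Proposition~\ref{theta_product_prop}. You instead lift the central characters through a $p$-modular system, define $\tilde{\varphi}$ as an algebra homomorphism on the polynomial ring $\mathcal{U}(K\Gamma_*)$ (so multiplicativity is automatic), derive the same product formula from $\Omega$ by exploiting that $\tilde{\omega}^u$ commutes with $\log$ and that $\tilde{\varphi}\circ T$ is linear, and then reduce modulo $\pi$ after checking the coefficients lie in $\mathcal{O}$. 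Your approach trades a direct verification of multiplicativity for an integrality check (and an auxiliary choice of lift $\tilde{\omega}^u$, which you correctly observe washes out upon reduction); the paper's approach avoids the $p$-modular system entirely but must redo the Proposition~\ref{theta_product_prop} computation in $\mathbb{F}$. Both end at the same product formula and the same digit-extraction endgame.
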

\begin{proof}
For each block idempotent $e_u \in C_u$ ($e_u$ is the identity element of $C_u$, viewed as an element of $\mathbb{F}\Gamma_*$), $\varphi(B_{e_u, r})$ is an element of $\mathbb{Z}/p\mathbb{Z}$. Thus
\[
t_u = \sum_r \varphi(B_{e_u, r}) p^r
\]
are well-defined $p$-adic integers. First of all, we show that the $t_u$ determine $\varphi$. It is enough to show that the $t_u$ determine $\varphi$ on the generating set from Lemma \ref{rgamma_modular_generation_lemma}. As discussed in the proof of Proposition  \ref{central_char_blocks}, $Z(C_u)/J(Z(C_u)) = \mathbb{F}$, and since $J(\mathbb{F}\Gamma_*) = \bigoplus_u J(Z(C_u))$, any basis of $J(\mathbb{F}\Gamma_*)$ together with the identity elements $e_u$ of the blocks $C_u$ form a basis of $\mathbb{F}\Gamma_*$. Let $q$ be an element of this basis of $\mathbb{F}\Gamma_*$. Lemma \ref{rgamma_modular_generation_lemma} implies that $B_{q,r}$ generates $\mathbb{F} \otimes \RGamma$, but Proposition \ref{modular_evaluation_prop} implies that $\varphi(B_{q,r})=0$ for $q$ in $J(\mathbb{F}\Gamma_*)$. So $\varphi$ is determined by the values on $B_{e_u,r}$, and this is precisely what is encoded in the $t_u$.
\newline \newline \noindent
It remains to show that any choice of the $t_u$ gives rise to a homomorphism $\varphi$. We describe it via an equality of generating functions:
\[
\sum_{\mathbf{N}} \varphi(B_{\mathbf{N}}) x^{\mathbf{N}} = \prod_u (1 + \sum_c \omega_c^{u} x_c)^{t_u}.
\]
Here we are using the series expansion
\[
(1 + x)^t = \sum_r x^r {t \choose r}.
\]
Because an integer-valued polynomial (e.g. ${t \choose r}$) evaluated at a $p$-adic integer is again a $p$-adic integer, $\varphi(B_{\mathbf{N}})$ is a well-defined element of $\mathbb{F}$. It remains to check that $\varphi$ defines a homomorphism, and that it actually corresponds to the parameters $t_u$. To check $\varphi$ is a homomorphism, we first observe that because central characters are themselves homomorphisms $\mathbb{F}\Gamma_* \to \mathbb{F}$,
\[
\sum_{i} \omega_{c_i}^{u} x_{c_i}  \sum_{j} \omega_{c_j}^{u} y_{c_j} = 
\sum_k \sum_{i,j} A_{i,j}^k \omega_{c_k}^{u} x_{c_i}y_{c_j}.
\]
Now we compute
\begin{eqnarray*}
\sum_{\mathbf{N}} \varphi(B_{\mathbf{N}}) x^{\mathbf{N}} \sum_{\mathbf{M}} \varphi(B_{\mathbf{M}}) y^{\mathbf{M}} 
&=& \prod_i (1 + \sum_i \omega_{c_i}^{u} x_{c_i})^{t_u}\prod_i (1 + \sum_j \omega_{c_j}^{u} y_{c_j})^{t_u} \\
&=& \prod_u (1 + \sum_i \omega_{c_i}^{u} x_{c_i} + \sum_j \omega_{c_j}^{u} x_{c_j} + \sum_k \sum_{i,j} A_{i,j}^k \omega_{c_k}^{u} x_{c_i} y_{c_j})^{t_u} \\
&=& \prod_u (1 + \sum_i \omega_{c_i}^{u} (x_{c_i}+ y_{c_i} + \sum_{j,k} A_{j,k}^i x_jy_k))^{t_u} \\
&=& \prod_u (1 + \sum_i \omega_{c_i}^{u} z_{c_i})^{t_u} \\
&=& \sum_{\mathbf{K}} \varphi(B_{\mathbf{K}}) z^{\mathbf{K}},
\end{eqnarray*}
where $z_{c_i} = x_{c_i}+ y_{c_i} + \sum_{j,k} A_{j,k}^i x_jy_k$, as in Section \ref{rgamma_section}. Proposition \ref{theta_product_prop} now implies that $\varphi$ is a homomorphism. It remains to verify that $\varphi(B_{e_u, r})$ is equal to the $r$-th $p$-adic digit of $t_u$. We fix a block $C_v$ and write $e_v = \sum m_c c$. Now, to compute
\[
ev(B_{e_v,r}) = \sum_{|\mathbf{M}| = p^r} \prod_{c \in \Gamma_*} m_c^{\mathbf{M}(c)} ev(B_{\mathbf{M}}),
\]
we take the generating function we used to define $\varphi$ and evaluate at $x_c = \varepsilon m_c$, where $\epsilon$ is a formal variable. This gives
\[
\sum_{\mathbf{N}} \varphi(B_{\mathbf{N}}) \prod_c m_c^{\mathbf{N}(c)} \epsilon^{|\mathbf{N}|} = \prod_u (1 + \sum_c \omega_c^{u} m_c \epsilon)^{t_u}.
\]
We observe that $\sum_{c} \omega_c^{u} m_c$ is the central character of $C_u$ evaluated at $e_v$, and therefore this sum is equal to one if $u = v$, and zero otherwise. This shows that $\varphi(B_{e_v, r})$ is equal to the coefficient of $\epsilon^{p^r}$ in
\[
(1 + \varepsilon)^{t_v}.
\]
But this is precisely the $r$-th $p$-adic digit of $t_v$ by Example \ref{hom_classification_example}.
\end{proof}


\bibliographystyle{alpha}
\bibliography{ref.bib}

\newcommand{\etalchar}[1]{$^{#1}$}
\begin{thebibliography}{EGH{\etalchar{+}}11}

\bibitem[BR47]{BrauerRobinson}
R.~Brauer and G~de~B Robinson.
\newblock {On a conjecture by Nakayama}.
\newblock {\em Trans. Roy. Soc. Canada. Sect. III}, 41:20--25, 1947.

\bibitem[CGS04]{CorteelGoupilSchaeffer}
Sylvie Corteel, Alain Goupil, and Gilles Schaeffer.
\newblock Content evaluation and class symmetric functions.
\newblock {\em Advances in Mathematics}, 188(2):315--336, 2004.

\bibitem[CSST10]{CST}
Tullio Ceccherini-Silberstein, Fabio Scarabotti, and Filippo Tolli.
\newblock {\em Representation theory of the symmetric groups: the
  Okounkov-Vershik approach, character formulas, and partition algebras},
  volume 121.
\newblock Cambridge University Press, 2010.

\bibitem[DJ92]{DipperJames}
Richard Dipper and Gordon James.
\newblock {Representations of Hecke algebras of type $B_n$}.
\newblock {\em Journal of algebra}, 146(2):454--481, 1992.

\bibitem[EGH{\etalchar{+}}11]{EtingofEtAl}
Pavel~I Etingof, Oleg Golberg, Sebastian Hensel, Tiankai Liu, Alex Schwendner,
  Dmitry Vaintrob, and Elena Yudovina.
\newblock {\em Introduction to representation theory}, volume~59.
\newblock American Mathematical Soc., 2011.

\bibitem[FH59]{FarahatHigman}
HK~Farahat and Graham Higman.
\newblock {The centres of symmetric group rings}.
\newblock {\em Proceedings of the Royal Society of London. Series A.
  Mathematical and Physical Sciences}, 250(1261):212--221, 1959.

\bibitem[Fro00]{Frobenius}
Georg~Ferdinand Frobenius.
\newblock {\em {\"U}ber die Charaktere der symmetrischen Gruppe}.
\newblock K{\"o}nigliche Akademie der Wissenschaften, 1900.

\bibitem[HH17]{HarmanHopkins}
Nate Harman and Sam Hopkins.
\newblock Quantum integer-valued polynomials.
\newblock {\em Journal of Algebraic Combinatorics}, 45(2):601--628, 2017.

\bibitem[IK01]{IvanovKerov}
VN~Ivanov and SV~Kerov.
\newblock The algebra of conjugacy classes in symmetric groups and partial
  permutations.
\newblock {\em Journal of Mathematical Sciences}, 107(5):4212--4230, 2001.

\bibitem[Jan03]{Jantzen_alggp}
Jens~Carsten Jantzen.
\newblock {\em Representations of algebraic groups}, volume 107.
\newblock American Mathematical Soc., 2003.

\bibitem[Juc74]{Jucys}
A-AA Jucys.
\newblock Symmetric polynomials and the center of the symmetric group ring.
\newblock {\em Reports on Mathematical Physics}, 5(1):107--112, 1974.

\bibitem[Kle05]{Kleshchev}
Alexander Kleshchev.
\newblock {\em Linear and projective representations of symmetric groups}.
\newblock Number 163. Cambridge University Press, 2005.

\bibitem[LS01]{LehnSorger}
Manfred Lehn and Christoph Sorger.
\newblock {Symmetric groups and the cup product on the cohomology of Hilbert
  schemes}.
\newblock {\em Duke Mathematical Journal}, 110(2):345--357, 2001.

\bibitem[Mac95]{Macdonald}
I~.~G. Macdonald.
\newblock {\em {Symmetric functions and Hall polynomials}}.
\newblock Oxford mathematical monographs. Clarendon Press New York, Oxford,
  second edition, 1995.

\bibitem[MS16]{MS}
Ashish Mishra and Murali~K Srinivasan.
\newblock {The Okounkov--Vershik approach to the representation theory of $G
  \wr S_n$}.
\newblock {\em Journal of Algebraic Combinatorics}, 44(3):519--560, 2016.

\bibitem[Mur83]{Murphy}
GE~Murphy.
\newblock {The idempotents of the symmetric group and Nakayama's conjecture}.
\newblock {\em Journal of Algebra}, 81(1):258--265, 1983.

\bibitem[OV96]{OkounkovVershik}
Andrei Okounkov and Anatoly Vershik.
\newblock A new approach to representation theory of symmetric groups.
\newblock {\em Selecta Mathematica New Series}, 2(4):581--606, 1996.

\bibitem[PD77]{modrepfingrp}
Bannikuppe~M Puttaswamaiah and John~D Dixon.
\newblock {\em Modular representations of finite groups}.
\newblock Academic press, 1977.

\bibitem[Pus99]{Pushkarev}
Igor~A Pushkarev.
\newblock On the representation theory of wreath products of finite groups and
  symmetric groups.
\newblock {\em Journal of Mathematical Sciences}, 96(5):3590--3599, 1999.

\bibitem[Ryb19a]{Ryba1}
Christopher Ryba.
\newblock {Stable Grothendieck rings of wreath product categories}.
\newblock {\em Journal of Algebraic Combinatorics}, 49(3):267--307, 2019.

\bibitem[Ryb19b]{Ryba2}
Christopher Ryba.
\newblock {The Structure of the Grothendieck Rings of Wreath Product Deligne
  Categories and their Generalisations}.
\newblock {\em International Mathematics Research Notices}, 09 2019.
\newblock rnz144.

\bibitem[Vac05]{Vaccarino}
Francesco Vaccarino.
\newblock The ring of multisymmetric functions.
\newblock In {\em Annales de l'institut Fourier}, volume~55, pages 717--731,
  2005.

\bibitem[Wan04a]{Wang1}
Weiqiang Wang.
\newblock {The Farahat--Higman ring of wreath products and Hilbert schemes}.
\newblock {\em Advances in Mathematics}, 187(2):417--446, 2004.

\bibitem[Wan04b]{Wang2}
Weiqiang Wang.
\newblock Vertex algebras and the class algebras of wreath products.
\newblock {\em Proceedings of the London Mathematical Society}, 88(2):381--404,
  2004.

\bibitem[Wil17]{Williamson}
Geordie Williamson.
\newblock Schubert calculus and torsion explosion.
\newblock {\em Journal of the American Mathematical Society}, 30(4):1023--1046,
  2017.

\end{thebibliography}

\end{document}